
\documentclass[12pt]{amsart}
\usepackage{amsfonts}
\usepackage{amsmath}
\usepackage{amssymb}
\usepackage{mathrsfs}
\usepackage{geometry}
\usepackage{graphicx}
\usepackage{subfigure}
\usepackage{hyperref}
\usepackage{microtype}
\usepackage{geometry}
\geometry{left=1in,right=1in,top=1in,bottom=1in}
\usepackage{mathrsfs}
\usepackage{tikz}
\usepackage{tikz-cd}
\usepackage{color}
\numberwithin{equation}{section}

\newcommand*{\circled}[1]{\lower.7ex\hbox{\tikz\draw (0pt, 0pt)%
    circle (.5em) node {\makebox[1em][c]{\small #1}};}}

\newcommand{\la}{\lambda}
\newcommand{\al}{\alpha}
\newcommand{\be}{\beta}
\newcommand{\ga}{\gamma}
\newcommand{\Ga}{\Gamma}

\newcommand{\ve}{\varepsilon}

\newcommand{\R}{\mathbb{R}}
\newcommand{\N}{\mathbb{N}}
\newcommand{\C}{\mathbb{C}}

\newcommand{\Z}{\mathbb{Z}}
\newcommand{\T}{\mathbb{T}}

\newcommand{\ccc}{\cdot\cdot\cdot}

\newcommand{\n}[1]{\Vert #1\Vert }

\newcommand{\bbn}[1]{\Big\Vert #1 \Big \Vert }
\newcommand{\lr}[1]{\left\{ #1\right\} }
\newcommand{\lrc}[1]{\left[ #1\right] }
\newcommand{\lrs}[1]{\left( #1\right) }

\newcommand{\lra}[1]{\langle #1\rangle}

\newcommand{\babs}[1]{\big | #1 \big| }
\newcommand{\bbabs}[1]{\Big | #1 \Big| }
\newcommand{\wt}[1]{\widetilde{#1} }
\newcommand{\wq}{\infty}
\newcommand{\pa}{\partial}

\newcommand{\ch}{{\mathcal H}}

\newcommand{\cl}{{\mathcal L}}

\newcommand{\ol}{\overline}

\begin{document}

\newtheorem{theorem}{Theorem}[section]
\newtheorem{lemma}[theorem]{Lemma}

\theoremstyle{definition}
\newtheorem{definition}[theorem]{Definition}
\newtheorem{example}[theorem]{Example}
\newtheorem{remark}[theorem]{Remark}

\numberwithin{equation}{section}

\newtheorem{proposition}[theorem]{Proposition}
\newtheorem{corollary}[theorem]{Corollary}
\newtheorem{goal}[theorem]{Goal}
\newtheorem{algorithm}{Algorithm}

\renewcommand{\figurename}{Fig.}

\title[Unconditional uniqueness for the $H^{1}$-supercritical NLS]{The unconditional uniqueness for the energy-supercritical NLS}
\author[X. Chen]{Xuwen Chen}
\address{Department of Mathematics, University of Rochester, Rochester, NY 14627, USA}

\email{xuwenmath@gmail.com }

\author[S. Shen]{Shunlin Shen}
\address{School of Mathematical Sciences, Peking University, Beijing, 100871, China \& Department of Mathematics, University of Rochester, Rochester, NY 14627, USA}
\email{slshen100871@gmail.com; slshen@pku.edu.cn}

\author[Z. Zhang]{Zhifei Zhang}
\address{School of Mathematical Sciences, Peking University, Beijing, 100871, China}

\email{zfzhang@math.pku.edu.cn}

\subjclass[2010]{Primary 35Q55, 35A02, 81V70.}
\date{}

\dedicatory{}

\begin{abstract}

We consider the cubic and quintic nonlinear Schr\"{o}dinger equations (NLS) under the $\mathbb{R}^{d}$ and $\mathbb{T}^{d}$ energy-supercritical setting. Via a newly developed unified scheme, we prove the unconditional uniqueness for solutions to NLS at critical regularity for all dimensions. Thus, together with \cite{chen2019the,chen2020unconditional}, the unconditional uniqueness problems for $H^{1}$-critical and $H^{1}$-supercritical cubic and quintic NLS are completely and uniformly resolved at critical regularity for these domains.
One application of our theorem is to prove that defocusing blowup solutions of the type in \cite{merle2019blow} are the only possible $C([0,T);\dot{H}^{s_{c}})$ solutions if exist in these domains.

 \end{abstract}
\keywords{Energy-supercritical NLS, Gross-Pitaevskii Hierarchy, Klainerman-Machedon Board
Game, Multilinear Estimates}
\maketitle
\tableofcontents

\section{Introduction}
We consider the nonlinear Schr\"{o}dinger equation (NLS)
\begin{equation}\label{equ:NLS}
\begin{cases}
&i\pa_{t}u=-\Delta u\pm |u|^{p-1}u,\quad (t,x)\in [0,T]\times \Lambda^{d}\\
&u(0,x)=u_{0}(x)
\end{cases}
\end{equation}
where $\Lambda^{d}=\R^{d}$ or $\T^{d}$ and $\pm$ denotes defocusing/focusing.
In Euclidean spaces, the NLS $(\ref{equ:NLS})$ enjoys the scaling invariance
\begin{align}\label{equ:nls, scaling}
u_{\la}(t,x)=\la^{\frac{2}{p-1}}u(\la^{2}t,\la x),\quad \la>0.
\end{align}
which preserves the homogeneous Sobolev norm $\n{u_{0}}_{\dot{H}^{s_{c}}}$ where the critical scaling exponent is given by
\begin{align}\label{equ:scaling exponent}
s_{c}:=\frac{d}{2}-\frac{2}{p-1}.
\end{align}
 Accordingly, the initial value problem $(\ref{equ:NLS})$ for $u_{0}\in \dot{H}^{s_{c}}$ can be classified as energy subcritical, critical or
supercritical depending on whether the critical Sobolev exponent $s_{c}$ lies below, equal to
or above the energy exponent $s=1$.

In this paper, we focus on the cubic and quintic cases under the energy-supercritical setting $(s_{c}>1)$ where
\begin{equation}
s_{c}=\begin{cases}
\frac{d-2}{2}\quad &for\ d\geq 5,\ p=3,\\
\frac{d-1}{2}\quad &for\ d\geq 4,\ p=5.
\end{cases}
\end{equation}
In the energy-supercritical setting, the global well-posedness of $(\ref{equ:NLS})$ is fully open, away from the classical local well-posedness and $L^{2}$-supercritical blowup results \cite{cazenave1990the,glassey1977on}.
  But it has been, for a long time, believed that, even under the energy-supercritical setting, the defocusing version of $(\ref{equ:NLS})$ is globally well-posed and the solution scatters when $\Lambda=\R$, just like the energy-critical and subcritical cases, especially after the breakthrough
  \cite{bourgain1999global,colliander2008global,grillakis2000on,kenig2006global,ryckman2007global}\footnote{See \cite{dodson2019defocusing} for a more detailed survey.}
  on the $\R^{d}$ energy-critical cubic and quintic cases. (See, for example \cite{killip2010energy}.) Surprisingly, the recent work \cite{merle2019blow} unexpectedly constructed the first instance of finite time blowup solution for the defocusing energy-supercritical NLS.
Thus it is of interest to know if there could exist a scattering global solution in $\dot{H}^{s_{c}}$ but may not be in $C([0,T);\dot{H}^{s_{c}})\bigcap L_{t,x}^{p(d+2)/2}$ when blowups of this type exist.

 There are certainly multiple routes for such a problem. But one way is the classical unconditional uniqueness theorem in $\dot{H}^{s_{c}}$ which itself has remained open at least for $\T^{d}$. With an unconditional uniqueness result, we know that there could be at most one solution in $C([0,T);\dot{H}^{s_{c}})$ regardless of auxiliary spaces. One application is to prove that blowup solutions of the type in \cite{merle2019blow} is the only possible $C([0,T);\dot{H}^{s_{c}})$ solution if exist in these domains. In this paper, we prove the $\dot{H}^{s_{c}}$ unconditional uniqueness for $(\ref{equ:NLS})$ as follows and address this issue.

\begin{theorem}\footnote{One could extend the domain $\Lambda^{d}$ to more general manifolds, as long as the multilinear estimates which relies on Fourier analysis and Strichartz estimates in Section \ref{section:Multilinear Estimates} hold.}\label{thm:uniqueness for nls}
Let $s_{c}>1$ and $p=3$ or $5$. There is at most one $C([0,T_{0}];\dot{H}^{s_{c}}(\Lambda^{d}))$\footnote{We consider $H^{s_{c}}$ for the $\T^{d}$ case and $\dot{H}^{s_{c}}$ for the $\R^{d}$ case as $\dot{H}^{s_{c}}$ does not generate much differences for the $\mathbb{T}^{d}$ case.} solution to $(\ref{equ:NLS})$.
\end{theorem}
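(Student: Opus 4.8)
\emph{Proof proposal.} The plan is to lift the uniqueness question to the level of the associated Gross--Pitaevskii (GP) hierarchy and run an \emph{exact} Duhamel iteration, organised by the Klainerman--Machedon board game and closed by the multilinear estimates of Section \ref{section:Multilinear Estimates}. First I would reduce to hierarchy uniqueness. Given two solutions $u,v\in C([0,T_0];\dot H^{s_c}(\Lambda^d))$ with $u(0)=v(0)$, form for each the marginal densities $\gamma^{(k)}_w(t)$, the operator with kernel $\prod_{j=1}^k w(t,x_j)\overline{w(t,x_j')}$; a direct computation shows $\{\gamma^{(k)}_w\}_{k\ge1}$ solves the cubic (resp. quintic) GP hierarchy
$$i\pa_t\gamma^{(k)}_w=\sum_{j=1}^k\big(-\Delta_{x_j}+\Delta_{x_j'}\big)\gamma^{(k)}_w\pm\mu_p\sum_{j=1}^kB^{(p)}_{j}\,\gamma^{(k+\frac{p-1}{2})}_w,$$
where $B^{(p)}_j$ is the order-$\tfrac{p-1}{2}$ collision operator contracting the extra pair(s) of variables onto the $j$-th pair (so the level steps up by $1$ when $p=3$ and by $2$ when $p=5$). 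Then $\Ga^{(k)}:=\gamma^{(k)}_u-\gamma^{(k)}_v$ solves the same (linear in each level) hierarchy with $\Ga^{(k)}(0)=0$ for all $k$, and it suffices to prove $\Ga^{(1)}\equiv0$ on $[0,T_0]$: that identity forces $u(t,x)\overline{u(t,y)}=v(t,x)\overline{v(t,y)}$, hence $u(t)=e^{i\theta(t)}v(t)$ with $\theta$ depending on $t$ only; inserting this into \eqref{equ:NLS} and using that $v$ solves \eqref{equ:NLS} yields $\theta'(t)\equiv0$, so $\theta(0)=0$ gives $u\equiv v$.

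Next I would iterate Duhamel. Since $\Ga^{(k)}(0)=0$, Duhamel's formula is an exact identity with no boundary term, so iterating $n$ times gives, exactly for every $n$,
\begin{align*}
\Ga^{(1)}(t)=(\mp i\mu_p)^n\sum_{\text{collision seq.}}\int_{t\ge t_1\ge\cdots\ge t_n}&U(t-t_1)B\,U(t_1-t_2)B\cdots\\
&\cdots U(t_{n-1}-t_n)B\,\Ga^{\,(1+n\frac{p-1}{2})}(t_n)\,dt_1\cdots dt_n,
\end{align*}
where $U(t)$ is the free Schr\"odinger propagator on the doubled variables and $B=\sum_jB^{(p)}_j$. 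The $\sim n!$ collision sequences collapse, via the board game, into $\le C^n$ equivalence classes, each represented by a single multi-integral over a restricted time simplex. The terminal factor $\Ga^{(m)}(t_n)$ I would split into its two genuinely factorised pieces $\gamma^{(m)}_u(t_n)$ and $\gamma^{(m)}_v(t_n)$, each obeying the a priori bound $\lesssim(\|u\|_{C_t\dot H^{s_c}}+\|v\|_{C_t\dot H^{s_c}})^{2m}$ in the relevant density-matrix norm.

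Then I would close the estimate using Section \ref{section:Multilinear Estimates}. Applying the space--time multilinear estimates there to each block $U(\cdot)B$, each represented class should be bounded by $(C\ve_\de)^n$ times a fixed power of $\|u\|_{C_t\dot H^{s_c}}+\|v\|_{C_t\dot H^{s_c}}$, where the small factor $\ve_\de$ is produced, at the critical scaling, by a high/low frequency truncation: $\|u_{>N}\|_{\dot H^{s_c}}$ and $\|v_{>N}\|_{\dot H^{s_c}}$ are small for $N$ large, while the low-frequency part is handled on a short subinterval of length $\de$ at the cost of an $N$-dependent but harmless derivative loss. Summing the $\le C^n$ classes and letting $n\to\infty$ (the identity above being exact, so no leading term ever survives) yields $\Ga^{(1)}\equiv0$ on $[0,\de]$; since $\|u\|_{C_t\dot H^{s_c}}$, $\|v\|_{C_t\dot H^{s_c}}$ are finite on all of $[0,T_0]$, the length $\de$ may be taken uniform and $[0,T_0]$ covered in finitely many steps, giving $\Ga^{(1)}\equiv0$ and hence $u=v$.

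The hard part is the multilinear input of Section \ref{section:Multilinear Estimates}: establishing the collision-operator space--time estimates at the critical, in fact supercritical, regularity $s_c>1$, simultaneously for $p=3$ and $p=5$, in every admissible dimension, and for both $\R^d$ and $\T^d$. At $s_c>1$ one must distribute a large number of derivatives across the many factors generated by $B^{(p)}$ without breaking the scaling balance, and on $\T^d$ there is neither scaling nor global dispersive decay, which forces a combination of frequency-localised ($\ell^2$-decoupling) Strichartz estimates with a careful accounting of the derivative gains and losses inside the collision operator, arranged so that the constant $C$ and the smallness mechanism $\ve_\de$ are uniform across all these cases --- this uniformity being precisely the point of the ``unified scheme''. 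Once those estimates are in place, the hierarchy uniqueness above, and with it Theorem \ref{thm:uniqueness for nls}, follows.
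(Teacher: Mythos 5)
Your overall scheme is the one the paper follows: pass to the difference of the two factorized GP-hierarchy solutions, iterate Duhamel with zero initial data, compress the $(2k-1)!!2^{k}$ collision histories combinatorially, close with the space--time multilinear estimates of Section \ref{section:Multilinear Estimates}, and extract smallness at critical regularity from a high/low frequency splitting (the paper's UTFL Lemma \ref{lemma:UTFL}) combined with a short time interval, then bootstrap. Your final reduction from $\ga^{(1)}\equiv 0$ to $u=v$ via a time-dependent phase is also morally the paper's Lemma \ref{lemma:uniqueness from tensor form}, though differentiating $\theta(t)$ is not justified at $C_t\dot H^{s_c}$ regularity; the paper instead runs a Gronwall argument on $|a(t)-1|$ using the low-frequency projected equation.

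The genuine gap is in the step ``the collision sequences collapse, via the board game, into $\le C^{n}$ classes, each a multi-integral over a restricted time simplex,'' followed by ``applying the space--time multilinear estimates to each block.'' These two steps are incompatible as stated: the classical Klainerman--Machedon reduction produces upper echelon classes whose time integration domain $D_m$ is a union of a huge number of high-dimensional simplexes that does \emph{not} factor according to the nesting of the Duhamel integrals, so the $U$-$V$ (or any retarded space--time) estimates cannot be iterated on it --- one ends up needing to estimate the $x$ and $x'$ factors with the same time integral. Only Sobolev-type multilinear estimates, which ignore the time domain, survive this, and those fail on $\T^{d}$ in the supercritical range. The paper's main combinatorial contribution (all of Section \ref{section:Existence of Compatible Time Integration Domain}: signed acceptable moves, tamed forms, wild moves, and reference pairs, giving $16^{k}$ classes with the \emph{compatible} domain $T_{C}(\hat\mu,\hat{sgn})$ read off a Duhamel tree) exists precisely to repair this, and your proposal does not supply a substitute. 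Relatedly, the smallness factor $(T^{\frac{1}{2(d+3)}}M_0^{\ldots}C_0+\ve)$ is only gained at ``unclogged'' couplings, and one must verify (as in \cite[Lemma 5.14]{chen2019the}) that at least a fixed proportion (here $\tfrac45 k$) of the $k$ couplings are of that type; without this counting the iterate need not decay geometrically.
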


The fundamental concept of unconditional uniqueness was first raised by Kato in \cite{kato1995on,kato1996correction} when proving well-posedness in Strichartz type spaces had made vast progress.
In $\R^{d}$, these unconditional uniqueness problems at critical regularity are usually proved by showing any solution must agree with the Strichartz solution, if exists, using the inhomogeneous (retarded) Strichartz estimate.
Such a method has been proven to be successful even in the $\R^{3}$ quintic energy-critical case,
see for example \cite{colliander2008global}. (This is a very active field, see for example \cite{babin2011on,furioli2003unconditional,guo2013normal,kishimoto2019unconditional,
kwon2020normal,molinet2018unconditional,mosincat2020unconditional,
zhou1997uniqueness} and the reference within for work on other dispersive equations along this line.)

However, such arguments for the Euclidean setting are no longer effective if $(\ref{equ:NLS})$ is posed on $\T^{d}$, as the Strichartz estimate is rather weak in the
   periodic case. The $L_{x}^{2}$ Strichartz estimate does not hold in the periodic case and hence the dual Strichartz estimate also fails. On the other hand, the well-posedness on $\T^{d}$ is more intricate, such as using the $X_{s,b}$ space \cite{bourgain1993fourier} and the atomic $U^{p}$ and $V^{p}$ spaces \cite{herr2011global,ionescu2012the}. Thus the unconditional uniqueness
problems on $\T^{d}$ under the critical setting are much more difficult to handle. Nevertheless,
 a unified method has recently unexpectedly arisen from the study of the derivation of (\ref{equ:NLS}) on the $\T^{d}$ case in \cite{herr2019unconditional} and under the energy-critical setting in \cite{chen2019the,chen2020unconditional}.\footnote{We mention \cite{herr2019unconditional} 1st here and in the related places in the rest of the paper. Even though
 \cite{chen2019the} was posted on arXiv one month before
 \cite{herr2019unconditional}, X. Chen and Holmer were not
 aware of the unconditional uniqueness implication of \cite{chen2019the}
 until \cite{herr2019unconditional}.}

We find that one could use the scheme of \cite{chen2020unconditional} to perfectly solve the unconditional uniqueness problem under the energy-supercritical setting for both $\R^{d}$ and $\T^{d}$.
The proof comes from the Gross-Pitaevskii(GP) hierarchy, which seems to be weaker than the
 NLS analysis, as it originates from the derivation of NLS. However, we will see that such an argument is also powerful and worthy for further study. Here, we focus on the quintic GP hierarchy, also see \cite{chen2020unconditional} for the cubic case. The quintic GP hierarchy is a sequence $\lr{\ga^{(k)}(t)}_{k=1}^{\wq}$ which satisfies the infinitely coupled
hierarchy of equations:
\begin{equation}\label{equ:gp hierarchy,quintic}
i\pa_{t}\ga^{(k)}=\sum_{j=1}^{k}[-\Delta_{x_{j}},\ga^{(k)}]
\pm b_{0}\sum_{j=1}^{k}\operatorname{Tr}_{k+1,k+2}[\delta(x_{j}-x_{k+1})\delta(x_{j}-x_{k+2}),\ga^{(k+2)}]
\end{equation}
where $b_{0}$ is some coupling constant, $\pm$ denotes defocusing/focusing. Given any solution $u$
of $(\ref{equ:NLS})$, it generates a solution to $(\ref{equ:gp hierarchy,quintic})$ by letting
\begin{equation}\label{equ:factorized solution}
\ga^{(k)}=|u\rangle \langle u|^{\otimes k}
\end{equation}
in operator form or
$$\ga^{(k)}(t,\mathbf{x}_{k};\mathbf{x}_{k}')=\prod_{j=1}^{k}u(t,x_{j})\ol{u}(t,x_{j}')$$
in kernel form where $\mathbf{x}_{k}=(x_{1},...,x_{k})$.

The hierarchy approach was first suggested by Spohn \cite{spohn1980kinetic} for the derivation of NLS from quantum many-body dynamic. Around 2005, it was Erd{\"o}s, Schlein, and Yau who first rigorously derived the 3D cubic defocusing NLS from a 3D quantum many-body dynamic in their fundamental papers \cite{erdos2006derivation,erdos2007derivation,erdos2007rigorous,erdos2009rigorous,erdos2010derivation}. The proof for the uniqueness of the GP
hierarchy was the principal part and also surprisingly dedicate due to the fact that it is a system of infinitely
many coupled equations over an unbounded number of variables. With a sophisticated Feynman graph analysis in \cite{erdos2007derivation}, they proved the $H^{1}$-type unconditional uniqueness of the $\R^{3}$ cubic GP hierarchy. The first series of ground breaking papers have motivated a large amount of work.

Subsequently in 2007, with imposing an additional a-prior condition on space-time norm, Klainerman and Machedon \cite{klainerman2008on}, inspired by \cite{erdos2007derivation,klainerman1993space}, gave an another proof of the uniqueness of the GP hierarchy in a different space of density matrices defined by Strichartz type norms. They provided a different combinatorial argument, the now so-called Klainerman-Machedon (KM) board game argument, to combine the inhomogeneous terms effectively reducing their numbers and then derived a space-time estimate to control these terms.
At that time, it was open to prove that the limits coming from the $N$-body dynamics satisfy the space-time bound. Nonetheless, \cite{klainerman2008on} has made the delicate analysis of the GP hierarchy approachable from the perspective of PDE.
Later, Kirkpatrick, Schlein, and Staffilani \cite{kirkpatrick2011derivation} obtained the KM space-time bound via a simple trace theorem in both $\R^{2}$ and $\T^{2}$ and derived the 2D cubic defocusing NLS from the 2D quantum many-body dynamic. Such a scheme also motivated many works \cite{chen2011the,chen2016collapsing,chen2016focusing,chen2017focusing,gressman2014on,shen2021the,sohinger2016local,xie2015derivation} for the uniqueness of GP hierarchies.

Later in 2008, T. Chen and Pavlovi{\'c} \cite{chen2011the} initiated the study of the quintic GP hierarchy and provided a proof for the quintic KM board game argument, which laid the foundation for the further study of the quintic GP hierarchy. They also showed that the 2D quintic case, which is usually
considered the same as the 3D cubic case since they share the same scaling criticality, satisfied the
KM space-time bound while it was still open for the 3D cubic case at that time. To attack the problem, they also considered the well-posedness theory with more general data in \cite{chen2010on,chen2013a,chen2014higher}. (See also \cite{chen2010energy,mendelson2019poisson,mendelson2020a,mendelson2019an,sohinger2016local,sohinger2015randomization}).
 Then in 2011, they proved that the 3D cubic
KM space-time bound holds for the defocusing $\be<1/4$ case in \cite{chen2014derivation}.
The result was quickly improved to $\be<2/7$ by X. Chen in \cite{chen2013on} and then extended to the almost optimal case, $\be<1$, by X. Chen and Holmer in \cite{chen2016correlation,chen2016on}.
Around the same period of time, Gressman, Sohinger, and Staffilani \cite{gressman2014on} studied the
uniqueness of the GP hierarchy on $\T^{3}$ and proved that the sharp space-time estimate on $\T^{3}$ needed an additional $\ve$ derivatives than the $\R^{3}$ setting in which one derivative is needed. Later, Herr and
Sohinger generalized this fact to more general cases in \cite{herr2016the}.

In 2013, by introducing quantum de Finetti theorem from \cite{lewin2014derivation}, T. Chen, Hainzl, Pavlovi{\'c} and Seiringer \cite{chen2015unconditional} provided a simplified proof of the $L_{T}^{\infty}H_{x}^{1}$-type 3D cubic uniqueness theorem in \cite{erdos2007derivation}.
With the quantum de Finetti theorem, one can replace the space-time estimates by Sobolev multilinear estimates.
The scheme in \cite{chen2015unconditional}, which consists of the KM board game argument, the quantum de Finetti theorem and the Sobolev multilinear estimates, is
robust to deal with such uniqueness problems.
Following the scheme in \cite{chen2015unconditional}, Sohinger \cite{sohinger2015a} solved the aforementioned $\ve$-loss problem
for the defocusing $\T^{3}$ cubic case. In \cite{hong2015unconditional}, Hong, Taliaferro, and Xie used the scheme to obtain unconditional uniqueness theorems in $\R^{d}$, $d=1,2,3$, with regularities matching
the NLS analysis. Then in \cite{hong2016uniqueness}, they proved $H^{1}$ small solution uniqueness for the $\R^{3}$ quintic case. For other refined uniqueness theorems, see also \cite{chen2014on}.

The uniqueness analysis of GP hierarchy started to unexpectedly yield new NLS results with regularity lower than
the NLS analysis all of a sudden since \cite{herr2019unconditional} and \cite{chen2019the,chen2020unconditional}.
In \cite{herr2019unconditional}, with the scheme in \cite{chen2015unconditional}, Herr and Sohinger discovered new unconditional uniqueness results for the cubic NLS on $\T^{d}$, which covered the full scaling-subcritical regime for $d\geq 4$. (See also the later work \cite{kishimoto2021unconditional} using NLS analysis.)

On the other hand, the $\T^{3}$ quintic energy-critical
case at $H^{1}$ regularity was not known until recently
 \cite{chen2019the}. By discovering the new hierarchical uniform frequency localization (HUFL) property for the GP hierarchy, X. Chen and Holmer
established a new $H^{1}$-type uniqueness theorem for the $\T^{3}$ quintic energy-critical GP hierarchy. The new uniqueness theorem, though neither conditional nor unconditional for the GP
hierarchy implies the $H^{1}$ unconditional uniqueness result for the $\T^{3}$ quintic energy-critical NLS. Then in \cite{chen2020unconditional}, they proved the unconditional uniqueness for the $\T^{4}$ cubic energy-critical case by working out new combinatorics and extending the KM
board game argument. As the previously used Sobolev multilinear estimates fail on $\T^{4}$, they develop the new combinatorics which enable the application of $U$-$V$ multilinear estimates, which is indeed weaker than Sobolev multilinear estimates. The scheme in \cite{chen2020unconditional}, which effectively combines the quantum de Finetti theorem, the $U$-$V$ space techniques, the multilinear estimates proved by using the scale invariant Strichartz estimates / $l^{2}$-decoupling theorem and the HUFL properties, provides a unified proof of the large solution uniqueness.

\section{Proof of the Main Theorem}  \label{proof of the main theorem}

\subsection{Outline of the Proof}
Our proof will focus on the $\T^{d}$ case, as it works the same for $\R^{d}$.\footnote{By using the classical methods, we also give a more usual proof for the $\R^{d}$ case at the appendix.} Our argument follows the scheme of \cite{chen2020unconditional} where an extended version of KM board game argument which is compatible with $U$-$V$ estimates was discovered. We summarize our proof below, especially for the quintic case.

To conclude the uniqueness for NLS $(\ref{equ:NLS})$, one usually proves that
$$w(t)=u_{1}(t,x)-u_{2}(t,x)\equiv 0$$
where $u_{1}$ and $u_{2}$ are two solutions to $(\ref{equ:NLS})$ with the same initial datum.
Instead, we turn to prove that
\begin{align}\label{equ:difference of two solutions}
\ga^{(k)}(t):=\prod_{j=1}^{k}u_{1}(t,x_{j})\ol{u}_{1}(t,x_{j}')-\prod_{j=1}^{k}u_{2}(t,x_{j})\ol{u}_{2}(t,x_{j}'),
\end{align}
which is a solution to $(\ref{equ:gp hierarchy,quintic})$ with zero initial datum, vanishes identically on $[0,T_{0}]$.
The formulation $(\ref{equ:difference of two solutions})$ endows the NLS (\ref{equ:NLS}) with an extra linear structure via the GP hierarchy so that one could
iteratively use multilinear estimates to yield smallness, instead of constructing a closed inequality in some Strichartz space.

Hence, we first prove Theorem \ref{thm:uniqueness for gp hierarchy}, which is a uniqueness theorem for the GP hierarchy, and then Theorem \ref{thm:uniqueness for nls} comes as a corollary of Theorem \ref{thm:uniqueness for gp hierarchy} and Lemma \ref{lemma:UTFL}.
As Theorem \ref{thm:uniqueness for gp hierarchy} requires the uniform in time frequency localization (UTFL) condition, we prove that every $C([0,T_{0}];H^{s_{c}})$ solution to $(\ref{equ:NLS})$ satisfies UTFL condition by Lemma $\ref{lemma:UTFL}$. Thus we would have established Theorem $\ref{thm:uniqueness for nls}$ once we have proved Theorem $\ref{thm:uniqueness for gp hierarchy}$.

 The GP hierarchy argument does not require the dual Strichartz estimate or the existence of a Strichartz solution. However, we have to carefully combine and estimate the $(2k-1)!!2^{k}$ summands in iterated Duhamel expansions. More precisely,
\begin{align}\label{equ:duhamel expansion, summands, outline}
\ga^{(1)}(t_{1})=\sum_{(\mu,sgn) }\int_{t_{1}\geq t_{3}\geq \ccc\geq t_{2k+1}} J_{\mu,sgn}^{(2k+1)}(\ga^{(2k+1)})(t_{1},\underline{t}_{2k+1})d\underline{t}_{2k+1}
\end{align}
with $\underline{t}_{2k+1}=(t_{3},t_{5},...,t_{2k+1})$ and
\begin{align} \label{equ:duhamel expansion, section 2}
J_{\mu,sgn}^{(2k+1)}(\ga^{(2k+1)})(t_{1},\underline{t}_{2k+1})=
&U^{(1)}(t_{1}-t_{3})B_{\mu(2);2,3}^{sgn(2)}U^{(3)}(t_{3}-t_{5})
B_{\mu(4);4,5}^{sgn(4)}\\
&\ccc U^{(2k-1)}(t_{2k-1}-t_{2k+1})B_{\mu(2k);2k,2k+1}^{sgn(2k)}\ga^{(2k+1)}(t_{2k+1})\notag
\end{align}
where $U^{(2j+1)}(t)$ is the propagator and $B_{i;2j,2j+1}^{\pm}$ is the collapsing operator, and thus there are $(2k-1)!!2^{k}$ terms in $\ga^{(1)}(t_{1})$. Hence handling the $(2k-1)!!2^{k}$ terms in the critical setting is the main difficulty. Now, we divide the proof of Theorem \ref{thm:uniqueness for gp hierarchy} into two main parts.

 \textbf{Part 1: The Estimate Part.}
  Usually, one
employs the original KM board game argument to sort the $(2k-1)!!2^{k}$ summands of $\ga^{(1)}$ into a sum of KM upper echelon forms with a time integration domain, which is a union of a very large number of high dimensional simplexes. As Sobolev type multilinear estimates work regardless of the time integration domain, one can iteratively use them to yield smallness.
Nevertheless, if we have some combinatorics which is compatible with space-time type multilinear estimates, we could exploit the multilinear estimates in $U$-$V$ spaces. Indeed, based on the combinatorics part (Part 2),
we can write
 \begin{align}\label{equ:duhamel expansion, reference form, section 2}
\ga^{(1)}(t_{1})=\sum_{ reference\ (\hat{\mu},\hat{sgn})}
\int_{T_{C}(\hat{\mu},\hat{sgn})}J_{\hat{\mu},\hat{sgn}}^{(2k+1)}(\ga^{(2k+1)})
(t_{1},\underline{t}_{2k+1})d\underline{t}_{2k+1}
\end{align}
 where the number of reference pairs in Definition \ref{def:reference pair} can be controlled by $16^{k}$, which is substantially smaller than the original $(2k-1)!!2^{k}$. More importantly, the time integration domain $T_{C}(\hat{\mu},\hat{sgn})$ is compatible with space-time type multilinear estimates. Hence it comes down to how to estimate
 \begin{align} \label{equ:duhamel expansion with the compatible time integration domain,section 2}
\int_{T_{C}(\mu,sgn)}J_{\mu,sgn}^{(2k+1)}(\ga^{(2k+1)})
(t_{1},\underline{t}_{2k+1})d\underline{t}_{2k+1}.
\end{align}

  In Section \ref{subsection:Duhamel Expansion and Duhamel Tree}, we start with a Duhamel tree to represent the Duhamel expansion. Then in Section \ref{subsection:Compatible Time Integration Domain}, we introduce the time integration domain $T_{C}(\mu,sgn)$ which is compatible with space-time type multilinear estimates. Subsequently in Section \ref{subsection:Estimates using the $U$-$V$ multilinear estimates}, after giving a short introduction to $U$-$V$ spaces, we show how to apply the $U$-$V$ multilinear estimates to estimate $(\ref{equ:duhamel expansion with the compatible time integration domain,section 2})$.
We will use the following $U$-$V$ multilinear estimates.
\begin{align}
\bbn{\int_{a}^{t}e^{i(t-s)\Delta}(\wt{u}_{1}\wt{u}_{2}\wt{u}_{3}\wt{u}_{4}\wt{u}_{5})ds}_{X^{s}}
\leq &C \n{u_{1}}_{X^{s}}
\n{u_{2}}_{X^{s_{c}}}\n{u_{3}}_{X^{s_{c}}}
\n{u_{4}}_{X^{s_{c}}}\n{u_{5}}_{X^{s_{c}}},
\end{align}
\begin{align} \label{equ:multilinear estimate, low frequency, low regulariy,section 2}
&\bbn{\int_{a}^{t}e^{i(t-\tau)\Delta}(\wt{u}_{1}\wt{u}_{2}\wt{u}_{3}\wt{u}_{4}\wt{u}_{5})d\tau}_{X^{s}}\\
\leq& C\n{u_{1}}_{X^{s}}
\lrs{T^{\frac{1}{2(d+3)}}M_{0}^{\frac{2d+3}{3(d+3)}}\n{P_{\leq M_{0}}u_{2}}_{X^{s_{c}}}+\n{P_{>M_{0}}u_{2}}_{X^{s_{c}}}}\n{u_{3}}_{X^{s_{c}}}
\n{u_{4}}_{X^{s_{c}}}\n{u_{5}}_{X^{s_{c}}},\notag
\end{align}
where $\wt{u}\in \lr{u,\ol{u}}$ and $s\in\lr{s_{c},s_{c}-2}$. The proof highly relies on the scale invariant Strichartz estimates / $l^{2}$-decoupling theorem \cite{bourgain2015the,killip2016scale} and hence is postponed to Section \ref{section:Multilinear Estimates}. Compared with Sobolev multilinear estimates, the proof of $U$-$V$ multilinear estimates is simpler and less technical. (Although the representation $(\ref{equ:duhamel expansion, reference form, section 2})$ is also compatible with $X_{s,b}$ multilinear estimates, they need an additional $\ve$ derivatives in time and hence cannot be used to deal with the critical problem.) On the one hand, to prove Sobolev multilinear estimates, the $L_{T}^{\wq}H^{-s}$ space, which is usually used in the duality argument, is an endpoint case in Littlewood-Paley theory and does not carry any Strichartz regularity. On the other hand, $U$-$V$ techniques have been proven to be successful and adaptive for NLS in many different general domains.

Together with assumptions in Theorem \ref{thm:uniqueness for gp hierarchy}, we are able to prove the following key estimate.
\begin{align} \label{equ:key estimate, section 2}
\bbn{\lra{\nabla_{x_{1}}}^{s_{c}-2}\lra{\nabla_{x_{1}'}}^{s_{c}-2}
\int_{T_{C}(\hat{\mu},\hat{sgn})}J_{\hat{\mu},\hat{sgn}}^{(2k+1)}(\ga^{(2k+1)})
(t_{1},\underline{t}_{2k+1})d\underline{t}_{2k+1}}_{L_{T}^{\wq}L_{x_{1}}^{2}L_{x_{1}'}^{2}}
\leq \delta^{k}
\end{align}
where $\delta(T,\ve,C_{0},M_{0})$ can be sufficiently small by properly choosing these parameters. More specifically, the smallness comes from the UTFL property, that is,
\begin{equation*}
\n{\lra{\nabla}^{s_{c}}P_{> M(\ve)}u}_{L_{[0,T_{0}]}^{\wq}L_{x}^{2}}\leq \ve.
\end{equation*}
By iteratively using $(\ref{equ:multilinear estimate, low frequency, low regulariy,section 2})$ at least $\frac{4}{5}k$ times, we obtain the factor of smallness as follows
$$\lrs{T^{\frac{1}{2(d+3)}}M_{0}^{\frac{2d+3}{2(d+3)}}C_{0}+\ve}^{\frac{4}{5}k}.$$

Thus, we are left to prove the representation $(\ref{equ:duhamel expansion, reference form, section 2})$, especially, the compatibility part.

 \textbf{Part 2: The Combinatorics Part.}
In Section \ref{section:Existence of Compatible Time Integration Domain}, by working out
new combinatorics to reconstruct the quintic KM board game argument from the ground up, we could represent $\ga^{(1)}$ in the form of $(\ref{equ:duhamel expansion, reference form, section 2})$, which is compatible with the $U$-$V$ multilinear estimates. The combinatorics analysis is independent of the multilinear estimates or the regularity settings, so it could be applied for more general cases. Such a representation $(\ref{equ:duhamel expansion, reference form, section 2})$, which enables the application of $U$-$V$ multilinear estimates, would also be helpful for further study of GP hierarchy.

In Section \ref{subsection:Admissible Tree}, we first give a brief review of the
quintic KM board game argument as in \cite{chen2011the,klainerman2008on}. Then, we give an introduction to an admissible tree diagram representation used to represent collapsing map pairs.
For example\footnote{We will not use  this example again in the paper, as
we can generate as many as we want.}, given a collapsing map pair $(\mu_{1},sgn_{1})$ as follows,
$$
\begin{tabular}{c|ccccc}
$2j$&2&4&6&8&10\\
\hline
$\mu_{1}$ &1&1&1&3&6\\
$sgn_{1}$ &+&+&$-$&$-$&+
\end{tabular}
$$
we generate the following trees in turn by Algorithm \ref{algorithm:generate an admissible tree}.

\begin{minipage}[t]{0.32\textwidth}
\begin{tikzpicture}
\node (1) at (0,0) {1};
\node (2) at (0,-1) {2+};
\node (4) at (-1,-2) {4+};
\node (8) at (1,-2) {8$-$};
\draw[<-] (1)--(2);
\draw[-] (2)--(4);
\draw[<-] (2)--(8);
\end{tikzpicture}
\end{minipage}
\begin{minipage}[t]{0.32\textwidth}
\begin{tikzpicture}
\node (1) at (0,0) {1};
\node (2) at (0,-1) {2+};
\node (4) at (-1,-2) {4+};
\node (8) at (1,-2) {8$-$};
\node (6) at (-2,-3) {6$-$};
\draw[<-] (1)--(2);
\draw[-] (2)--(4);
\draw[<-] (2)--(8);
\draw[-] (4)--(6);
\end{tikzpicture}
\end{minipage}
\begin{minipage}[t]{0.32\textwidth}
\begin{tikzpicture}
\node (1) at (0,0) {1};
\node (2) at (0,-1) {2+};
\node (4) at (-1,-2) {4+};
\node (8) at (1,-2) {8$-$};
\node (6) at (-2,-3) {6$-$};
\node (10) at (-2,-4) {10+};
\draw[<-] (1)--(2);
\draw[-] (2)--(4);
\draw[<-] (2)--(8);
\draw[-] (4)--(6);
\draw[<-] (6)--(10);
\end{tikzpicture}
\end{minipage}
~\\
Such a tree diagram reprentation could provide a more elaborated proof of the original quintic KM board game argument as well.

In Section $\ref{subsection:Signed KM Acceptable Moves}$, we give the signed Klainerman-Machedon acceptable moves in Chen-Pavlovi\'{c} format (signed acceptable moves), which sorts $(2k-1)!!2^{k}$ collapsing map pairs $(\mu,sgn)$ into various equivalence classes, the number of which can be controlled by $16^{k}$. Moreover, the signed acceptable moves preserve the signed tree structures. Here are all the collapsing map pairs and the corresponding trees equivalent to $(\mu_{1},sgn_{1})$.

\begin{minipage}{0.32\textwidth}
\centering
$$
\begin{tabular}{c|ccccc}
$2j$&2&4&6&8&10\\
\hline
$\mu_{2}$ &1&1&1&6&3\\
$sgn_{2}$ &+&+&$-$&+&$-$
\end{tabular}
$$

\begin{tikzpicture}
\node (1) at (0,0) {1};
\node (2) at (0,-1) {2+};
\node (4) at (-1,-2) {4+};
\node (10) at (1,-2) {10$-$};
\node (6) at (-2,-3) {6$-$};
\node (8) at (-2,-4) {8+};
\draw[<-] (1)--(2);
\draw[-] (2)--(4);
\draw[<-] (2)--(10);
\draw[-] (4)--(6);
\draw[<-] (6)--(8);
\end{tikzpicture}
\end{minipage}
\begin{minipage}{0.32\textwidth}
\centering
$$
\begin{tabular}{c|ccccc}
$2j$&2&4&6&8&10\\
\hline
$\mu_{3}$ &1&1&3&1&8\\
$sgn_{3}$ &+&+&$-$&$-$&+
\end{tabular}
$$

\begin{tikzpicture}
\node (1) at (0,0) {1};
\node (2) at (0,-1) {2+};
\node (4) at (-1,-2) {4+};
\node (6) at (1,-2) {6$-$};
\node (8) at (-2,-3) {8$-$};
\node (10) at (-2,-4) {10+};
\draw[<-] (1)--(2);
\draw[-] (2)--(4);
\draw[<-] (2)--(6);
\draw[-] (4)--(8);
\draw[<-] (8)--(10);
\end{tikzpicture}
\end{minipage}
\begin{minipage}{0.32\textwidth}
\centering
$$
\begin{tabular}{c|ccccc}
$2j$&2&4&6&8&10\\
\hline
$\mu_{4}$ &1&3&1&1&8\\
$sgn_{4}$ &+&$-$&+&$-$&+
\end{tabular}
$$

\begin{tikzpicture}
\node (1) at (0,0) {1};
\node (2) at (0,-1) {2+};
\node (6) at (-1,-2) {6+};
\node (4) at (1,-2) {4$-$};
\node (8) at (-2,-3) {8$-$};
\node (10) at (-2,-4) {10+};
\draw[<-] (1)--(2);
\draw[-] (2)--(6);
\draw[<-] (2)--(4);
\draw[-] (6)--(8);
\draw[<-] (8)--(10);
\end{tikzpicture}

\end{minipage}
~\\
(Notice that the above trees have the same skeleton.)

However, extending to signed move is not sufficient for our proposes. To be compatible with the $U$-$V$ multilinear estimates, we have to further combine the integrals and enlarge the time integration domain. For this purpose, we need the wild moves defined by $(\ref{definiton:wild moves})$. The wild moves, unlike the signed acceptable moves, do change the tree structure. However, KM upper echelon forms are not invariant under the wild moves.

Thus in Section $\ref{subsection:Tamed Form}$, we prove that there exists a unique special form, for which we call the tamed form, in each equivalent class and hence arrive at
\begin{equation} \label{equ:integration,tamed form, time integration domain, section 2}
\ga^{(1)}(t_{1})=\sum_{(\mu_{*},sgn_{*})\ \text{tamed}}\int_{T_{D}(\mu_{*})}J_{\mu_{*},sgn_{*}}^{(2k+1)}(\ga^{(2k+1)})(t_{1},\underline{t}_{2k+1})
d\underline{t}_{2k+1}
\end{equation}
where $T_{D}(\mu_{*})$ can be read out from the corresponding tree. Here, $(\mu_{1},sgn_{1})$ is the unique tamed form in the equivalent class $\lr{(\mu_{i},sgn_{i})}_{i=1}^{4}$.

Subsequently in Section \ref{subsection:Wild Moves},we exploit wild moves for a further reduction of $(\ref{equ:integration,tamed form, time integration domain, section 2})$, which keeps the tamed form invariant, to sort the tamed forms into tamed classes. All the tamed forms and trees wildly relatable to $(\mu_{1},sgn_{1})$ are shown as follows.

\begin{minipage}[t]{0.32\textwidth}
\centering
$$
\begin{tabular}{c|ccccc}
$2j$&2&4&6&8&10\\
\hline
$\mu_{1}$ &1&1&1&3&6\\
$sgn_{1}$ &+&+&$-$&$-$&+
\end{tabular}
$$

\begin{tikzpicture}
\node (1) at (0,0) {1};
\node (2) at (0,-1) {2+};
\node (4) at (-1,-2) {4+};
\node (8) at (1,-2) {8$-$};
\node (6) at (-2,-3) {6$-$};
\node (10) at (-2,-4) {10+};
\draw[<-] (1)--(2);
\draw[-] (2)--(4);
\draw[<-] (2)--(8);
\draw[-] (4)--(6);
\draw[<-] (6)--(10);
\end{tikzpicture}

\end{minipage}
\begin{minipage}[t]{0.32\textwidth}
\centering
$$
\begin{tabular}{c|ccccc}
$2j$&2&4&6&8&10\\
\hline
$\mu_{5}$ &1&1&1&3&4\\
$sgn_{5}$ &+&$-$&+&$-$&+
\end{tabular}
$$

\begin{tikzpicture}
\node (1) at (0,0) {1};
\node (2) at (0,-1) {2+};
\node (4) at (-1,-2) {4$-$};
\node (8) at (1,-2) {8$-$};
\node (6) at (-2,-3) {6+};
\node (10) at (-1,-3) {10+};
\draw[<-] (1)--(2);
\draw[-] (2)--(4);
\draw[<-] (2)--(8);
\draw[-] (4)--(6);
\draw[<-] (4)--(10);
\end{tikzpicture}
\end{minipage}
\begin{minipage}[t]{0.32\textwidth}
\centering
$$
\begin{tabular}{c|ccccc}
$2j$&2&4&6&8&10\\
\hline
$\mu_{6}$ &1&1&1&5&2\\
$sgn_{6}$ &$-$&+&+&$-$&+
\end{tabular}
$$

\begin{tikzpicture}
\node (1) at (0,0) {1};
\node (2) at (0,-1) {2$-$};
\node (4) at (-1,-2) {4+};
\node (8) at (0,-3) {8$-$};
\node (6) at (-2,-3) {6+};
\node (10) at (0,-2) {10+};
\draw[<-] (1)--(2);
\draw[-] (2)--(4);
\draw[<-] (2)--(10);
\draw[-] (4)--(6);
\draw[<-] (4)--(8);
\end{tikzpicture}

\end{minipage}
~\\
(Notice the changes on the tree structures under the wild move.)

Finally, in Section \ref{subsection:Reference Form and Proof of Compatibility}, we prove that, given a tamed class, there exists a unique reference form representing the
tamed class, and the time integration domain for the whole tamed class can be directly read
out from the reference form. Moreover, the time integration domain is compatible. For instance, as $(\mu_{1},sgn_{1})$ is also the unique reference form, we could directly read $T_{C}(\mu_{1},sgn_{1})$ out as follows
$$T_{C}(\mu_{1},sgn_{1})=\lr{t_{1}\geq t_{3},t_{3}\geq t_{5},t_{7}\geq t_{11},t_{3}\geq t_{9}},$$
which is indeed compatible with $U$-$V$ multilinear estimates. (See Example \ref{example:estimate part} on how to estimate.) Hence, we arrive at the representation $(\ref{equ:duhamel expansion, reference form, section 2})$.

 At the end, we have justified the representation $(\ref{equ:duhamel expansion, reference form, section 2})$ and the key estimate $(\ref{equ:key estimate, section 2})$ is now valid. Hence, we have
\begin{align*}
\bbn{\lra{\nabla_{x_{1}}}^{s_{c}-2}\lra{\nabla_{x_{1}'}}^{s_{c}-2}\ga^{(1)}(t_{1})}
_{L_{T}^{\wq}L_{x_{1}}^{2}L_{x_{1}'}^{2}}\leq (16\delta)^{k} \to 0\ \text{as $k\to \wq$},
\end{align*}
which implies that $\ga^{(1)}\equiv 0$ which finishes the proof of Theorem \ref{thm:uniqueness for gp hierarchy}.

To sum up, we prove unconditional uniqueness for solutions to the $\R^{d}$ and $\T^{d}$ energy-supercritical cubic and quintic NLS at critical regularity for all dimensions via a newly developed unified scheme. Thus, together with \cite{chen2019the,chen2020unconditional}, the unconditional uniqueness problems for $H^{1}$-critical and $H^{1}$-supercritical cubic and quintic NLS are completely and uniformly resolved at critical regularity for these domains. The novelty of this paper is that our procedure works uniformly in all dimensions regardless of the domain.

\subsection{The Uniqueness for GP Hierarchy}\label{section:The Uniqueness for GP Hierarchy}
In this section, we first prove Theorem \ref{thm:uniqueness for gp hierarchy}, which is a uniqueness theorem for the GP hierarchy.  Theorem \ref{thm:uniqueness for nls} then comes as a corollary of Theorem \ref{thm:uniqueness for gp hierarchy} and Lemma \ref{lemma:UTFL}. Here, we consider the $\T^{d}$ case with the inhomogeneous norm $H^{s_{c}}$, as the homogeneous norm $\dot{H}^{s_{c}}$ is special for the $\R^{d}$ case.
\begin{definition}[\cite{chen2015unconditional}]\label{definition:admissible sequence}
A nonnegative trace class symmetric operator sequence
$\Ga=\lr{\ga^{(k)}}_{k=1}^{\wq}$, is
called admissible if for all $k$, one has
\begin{align}
\operatorname{Tr}\ga^{(k)}=1,\ \ga^{(k)}=\operatorname{Tr}_{k+1}\ga^{(k+1)}.
\end{align}
Here, a trace class operator is called symmetric, if, written in kernel form
\begin{align*}
&\ga^{(k)}(x_{1},...,x_{k};x'_{1},...,x'_{k})=\ol{\ga^{(k)}(x'_{1},...,x'_{k};x_{1},...,x_{k})},\\
&\ga^{(k)}(x_{\pi(1)},...,x_{\pi(k)};x'_{\pi(1)},...,x'_{\pi(k)})=
\ga^{(k)}(x_{1},...,x_{k};x'_{1},...,x'_{k}),
\end{align*}
for all permutations $\pi$ on $\lr{1,2,...,k}$. Let $\ch^{s}\equiv\ch^{s}(\Lambda^{d})$ denote the set of all symmetric operator sequences
$\lr{\ga^{(k)}}_{k=1}^{\wq}$ of density matrices such that, for each
$k\in \N$
\begin{align*}
\lrs{\prod_{j=1}^{k}\lra{\nabla_{x_{j}}}^{s_{c}}}\ga^{(k)}\lrs{\prod_{j=1}^{k}
\lra{\nabla_{x_{j}}}^{s_{c}}}\in \cl_{k}^{1},
\end{align*}
where $\cl_{k}^{1}$ denotes the space of trace class on $L^{2}(\Lambda^{dk}\times \Lambda^{dk})$.
\end{definition}

\begin{theorem}\label{thm:uniqueness for gp hierarchy}
Let $\Ga=\lr{\ga^{(k)}}\in \bigoplus_{k\geq 1}C([0,T_{0}];\ch_{k}^{s_{c}})$ be a solution to $(\ref{equ:gp hierarchy,quintic})$ in $[0,T_{0}]$ in the sense that

$(1)$ $\Ga$ is admissible in the sense of Definition $\ref{definition:admissible sequence}$.

$(2)$ $\Ga$ satisfies the kinetic energy condition that $\exists$ $C_{0}>0$ such that
\begin{align}\label{equ:energy bound}
\sup_{t\in[0,T_{0}]} \operatorname{Tr}\lrs{\prod_{j=1}^{k}\lra{\nabla_{x_{j}}}^{s_{c}}}\ga^{(k)}(t)\lrs{\prod_{j=1}^{k}
\lra{\nabla_{x_{j}}}^{s_{c}}}\leq C_{0}^{2k}.
\end{align}

Then there is a threshold $\eta(C_{0})>0$ such that the solution is unique in $[0,T_{0}]$ provided that
\begin{align}\label{equ:HUFL}
\sup_{t\in[0,T_{0}]} \operatorname{Tr}\lrs{\prod_{j=1}^{k}P_{>M}^{j}\lra{\nabla_{x_{j}}}^{s_{c}}}\ga^{(k)}(t)\lrs{\prod_{j=1}^{k}
P_{>M}^{j}\lra{\nabla_{x_{j}}}^{s_{c}}}\leq \eta^{2k}
\end{align}
for some frequency $M$, which is allowed to depend on $\ga^{(k)}$ but must apply uniformly on $[0,T_{0}]$. Here, $P_{> M}^{j}$ is the projection
onto frequencies $>M$ acting on functions of $x_{j}$.
\end{theorem}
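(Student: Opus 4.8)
Let $\Ga_{1}=\lr{\ga_{1}^{(k)}}$ and $\Ga_{2}=\lr{\ga_{2}^{(k)}}$ be two solutions of $(\ref{equ:gp hierarchy,quintic})$ in $\bigoplus_{k\geq1}C([0,T_{0}];\ch_{k}^{s_{c}})$ with the same initial data, each satisfying $(1)$, $(\ref{equ:energy bound})$ and $(\ref{equ:HUFL})$. Since $(\ref{equ:gp hierarchy,quintic})$ is linear in the sequence, the difference $\ga^{(k)}:=\ga_{1}^{(k)}-\ga_{2}^{(k)}$ solves $(\ref{equ:gp hierarchy,quintic})$ with zero initial data, and the goal reduces to showing $\ga^{(k)}\equiv0$ on $[0,T_{0}]$; by a shift of indices it suffices to prove $\ga^{(1)}\equiv0$. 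Iterating the Duhamel formula for $(\ref{equ:gp hierarchy,quintic})$ exactly $k$ times writes $\ga^{(1)}(t_{1})$ as the sum $(\ref{equ:duhamel expansion, summands, outline})$--$(\ref{equ:duhamel expansion, section 2})$ of $(2k-1)!!\,2^{k}$ terms $J_{\mu,sgn}^{(2k+1)}(\ga^{(2k+1)})$ indexed by signed collapsing-map pairs $(\mu,sgn)$, where the bottom factor is $\ga^{(2k+1)}=\ga_{1}^{(2k+1)}-\ga_{2}^{(2k+1)}$ and each $\ga_{i}^{(2k+1)}(t)$ is admissible and controlled in $\ch_{2k+1}^{s_{c}}$ by $(\ref{equ:energy bound})$. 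It thus remains to bound this sum by a quantity tending to $0$ as $k\to\wq$.

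The \emph{combinatorics part} is the passage from this unmanageable sum to the representation $(\ref{equ:duhamel expansion, reference form, section 2})$. Following Section~\ref{section:Existence of Compatible Time Integration Domain}, I would run the extended Klainerman--Machedon board game in Chen--Pavlovi\'{c} format: encode each pair $(\mu,sgn)$ by a signed Duhamel tree (Section~\ref{subsection:Admissible Tree}); use the signed acceptable moves (Section~\ref{subsection:Signed KM Acceptable Moves}) to collapse the $(2k-1)!!\,2^{k}$ pairs into equivalence classes, whose number is at most $16^{k}$; select in each class the unique tamed form (Section~\ref{subsection:Tamed Form}); and apply the wild moves (Section~\ref{subsection:Wild Moves}) to merge tamed forms into tamed classes, each represented by a unique reference pair $(\hat{\mu},\hat{sgn})$ (Section~\ref{subsection:Reference Form and Proof of Compatibility}). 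The decisive structural gain is that the associated time integration domain $T_{C}(\hat{\mu},\hat{sgn})$ is not a union of simplices but a product-type region whose defining inequalities can be read directly off the reference tree, so it is \emph{compatible} with space--time multilinear estimates. This compatibility is what makes the critical problem tractable: the Sobolev multilinear estimates of the classical scheme are unavailable here (the endpoint space $L_{T}^{\wq}H^{-s}$ carries no Strichartz regularity), and $X_{s,b}$ estimates cost $\ve$ derivatives in time, whereas the scale-invariant $U$-$V$ estimates $(\ref{equ:multilinear estimate, low frequency, low regulariy,section 2})$ survive at criticality but require precisely a time domain of this shape.

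On each of the $\le16^{k}$ reference terms I would then establish the key estimate $(\ref{equ:key estimate, section 2})$ (the \emph{estimate part}). Splitting $\ga^{(2k+1)}=\ga_{1}^{(2k+1)}-\ga_{2}^{(2k+1)}$ and applying the quantum de Finetti theorem of \cite{lewin2014derivation,chen2015unconditional} to each admissible $\Ga_{i}$, the bottom factor becomes a superposition of factorized states $|\phi\rangle\langle\phi|^{\otimes(2k+1)}$, which converts the trace-norm bound into repeated applications of the $U$-$V$ multilinear estimates along the branches of the reference tree: every collapsing edge is handled by one application, carrying the distinguished factor in $X^{s_{c}-2}$ and the remaining four in $X^{s_{c}}$, the latter bounded by $(\ref{equ:energy bound})$. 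A structural feature of the reference trees guarantees that the low-frequency version $(\ref{equ:multilinear estimate, low frequency, low regulariy,section 2})$ may be invoked at least $\tfrac{4}{5}k$ times, each such use producing a factor $\lrs{T^{\frac{1}{2(d+3)}}M_{0}^{\frac{2d+3}{3(d+3)}}C_{0}+\eta}$, in which the $\eta$-gain comes precisely from $(\ref{equ:HUFL})$ applied to $\Ga_{i}$ once $M_{0}\geq M$. Subdividing $[0,T_{0}]$ into finitely many short subintervals and choosing $M_{0}$ suitably makes this factor $\leq\delta$ for some $\delta=\delta(T_{0},\eta,C_{0},M_{0})$ that lies below $\tfrac{1}{16}$ once $\eta$ is below the threshold $\eta(C_{0})$; hence each reference term is $\leq\delta^{k}$ and
\begin{align*}
\bbn{\lra{\nabla_{x_{1}}}^{s_{c}-2}\lra{\nabla_{x_{1}'}}^{s_{c}-2}\ga^{(1)}(t_{1})}_{L_{T}^{\wq}L_{x_{1}}^{2}L_{x_{1}'}^{2}}\leq(16\delta)^{k},
\end{align*}
which tends to $0$ as $k\to\wq$. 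Thus $\ga^{(1)}\equiv0$ on each subinterval, hence on $[0,T_{0}]$ by patching, and running the same expansion with $\ga^{(k)}$ in place of $\ga^{(1)}$ yields $\ga^{(k)}\equiv0$ for every $k$, which is the asserted uniqueness.

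I expect the combinatorics part to be the main obstacle. One must rebuild the quintic board game so that (a) the output is genuinely a compatibility-friendly product domain rather than a union of simplices, (b) the tamed and reference forms are truly unique in each class, so that the count $16^{k}$ involves no double counting, and (c) every reference tree carries enough low-frequency collapsing edges to run $(\ref{equ:multilinear estimate, low frequency, low regulariy,section 2})$ the required $\tfrac{4}{5}k$ times. The quintic setting is heavier than the cubic case of \cite{chen2020unconditional} because each collapsing operator $B_{i;2j,2j+1}^{\pm}$ introduces two new variables rather than one, so the trees branch more and the wild moves that merge tamed classes are correspondingly more delicate; verifying that these moves preserve the tamed form while still merging enough classes is where the bulk of the work lies. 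Once the representation $(\ref{equ:duhamel expansion, reference form, section 2})$ and the $U$-$V$ estimates of Section~\ref{section:Multilinear Estimates} are in hand, the analytic closure above is comparatively soft.
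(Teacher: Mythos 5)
Your proposal is correct and follows essentially the same route as the paper: difference of solutions via quantum de Finetti, the extended signed KM board game producing at most $16^{k}$ reference pairs with compatible time integration domains (Proposition \ref{prop:compatibility for time integration,section 2}), the $U$-$V$ multilinear estimates applied at least $\tfrac{4}{5}k$ times with smallness from \eqref{equ:HUFL} (Proposition \ref{prop:estimate for the compatible time integration domain,section 2}), and a bootstrap over short time intervals to cover $[0,T_{0}]$.
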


By letting $\ga^{(k)}=|u\rangle \langle u|^{\otimes k}$, we could obtain Corollary \ref{cor:uniqueness for nls}, which is a special case of Theorem \ref{thm:uniqueness for gp hierarchy} as follows.
\begin{corollary} \label{cor:uniqueness for nls}
Given an initial datum $u_{0}\in H^{s_{c}}$, there is at most one
$C([0,T_{0}];H^{s_{c}})$ solution to $(\ref{equ:NLS})$ satisfying the following conditions

$(1)$ There is a $C_{0}>0$ such that
\begin{align*}
\sup_{t\in [0,T_{0}]}\n{u(t)}_{H^{s_{c}}}\leq C_{0}
\end{align*}

$(2)$ There is some frequency $M$ such that
\begin{align*}
\sup_{t\in [0,T_{0}]}\n{P_{> M}u(t)}_{H^{s_{c}}}\leq \eta(C_{0})
\end{align*}
for the threshold $\eta(C_{0})>0$ concluded in Theorem $\ref{thm:uniqueness for gp hierarchy}$.
\end{corollary}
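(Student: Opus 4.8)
The plan is to obtain Corollary \ref{cor:uniqueness for nls} as a straightforward consequence of Theorem \ref{thm:uniqueness for gp hierarchy}, by lifting NLS solutions to their factorized GP hierarchies. So let $u_1,u_2\in C([0,T_{0}];H^{s_{c}})$ be two solutions of (\ref{equ:NLS}) with the same initial datum $u_1(0)=u_2(0)=u_0$, each satisfying conditions $(1)$ and $(2)$; the aim is $u_1\equiv u_2$. For $i=1,2$ and $k\geq1$ I would put
\begin{equation*}
\ga_i^{(k)}(t)=|u_i(t)\rangle\langle u_i(t)|^{\otimes k},\qquad\text{i.e.}\qquad\ga_i^{(k)}(t,\hx_k;\hx_k')=\prod_{j=1}^{k}u_i(t,x_j)\ol{u_i(t,x_j')},
\end{equation*}
so that, by (\ref{equ:factorized solution}), each $\Ga_i:=\lr{\ga_i^{(k)}}_{k\geq1}$ solves the quintic GP hierarchy (\ref{equ:gp hierarchy,quintic}) with the common initial data $\ga_i^{(k)}(0)=|u_0\rangle\langle u_0|^{\otimes k}$. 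The case $u_0=0$ is trivial; otherwise, conservation of the $L^{2}$ mass gives $\n{u_i(t)}_{L^{2}}=\n{u_0}_{L^{2}}$ for all $t$, and after replacing $\ga_i^{(k)}$ by $\n{u_0}_{L^{2}}^{-2k}\ga_i^{(k)}$ — which only multiplies the coupling constant $b_0$ in (\ref{equ:gp hierarchy,quintic}) by a fixed power of $\n{u_0}_{L^{2}}$, a change to which Theorem \ref{thm:uniqueness for gp hierarchy} is insensitive — I may assume $\n{u_0}_{L^{2}}=1$, so that each $\Ga_i$ is admissible in the sense of Definition \ref{definition:admissible sequence}.

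The heart of the reduction is then to check that $\Ga_1$ and $\Ga_2$ satisfy the remaining hypotheses of Theorem \ref{thm:uniqueness for gp hierarchy} with one common set of parameters. Since $u_i\in C([0,T_{0}];H^{s_{c}})$, for each $k$ the operator $\lrs{\prod_{j}\lra{\nabla_{x_j}}^{s_{c}}}\ga_i^{(k)}(t)\lrs{\prod_{j}\lra{\nabla_{x_j}}^{s_{c}}}$ is $|\lra{\nabla}^{s_{c}}u_i(t)\rangle\langle\lra{\nabla}^{s_{c}}u_i(t)|^{\otimes k}$, hence lies in $\cl_{k}^{1}$ with trace $\n{u_i(t)}_{H^{s_{c}}}^{2k}$ and is continuous in $t$; thus $\Ga_i\in\bigoplus_{k\geq1}C([0,T_{0}];\ch_{k}^{s_{c}})$. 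Taking $C_0:=\max_{i}\sup_{[0,T_{0}]}\n{u_i(t)}_{H^{s_{c}}}$, which is finite by $(1)$, the kinetic energy bound (\ref{equ:energy bound}) holds for both $\Ga_i$ with this $C_0$. Since $\lra{\nabla}^{s_{c}}$ and $P_{>M}$ are self-adjoint Fourier multipliers and the states are tensor products, for every $M$
\begin{equation*}
\operatorname{Tr}\lrs{\prod_{j=1}^{k}P_{>M}^{j}\lra{\nabla_{x_j}}^{s_{c}}}\ga_i^{(k)}(t)\lrs{\prod_{j=1}^{k}P_{>M}^{j}\lra{\nabla_{x_j}}^{s_{c}}}=\n{P_{>M}u_i(t)}_{H^{s_{c}}}^{2k};
\end{equation*}
because $t\mapsto u_i(t)$ is continuous on the compact interval $[0,T_{0}]$, $\sup_{[0,T_{0}]}\n{P_{>M}u_i(t)}_{H^{s_{c}}}\to0$ as $M\to\wq$, so (enlarging the cut-off of $(2)$ if necessary) I may fix a single $M$ with $\sup_{i}\sup_{[0,T_{0}]}\n{P_{>M}u_i(t)}_{H^{s_{c}}}\leq\eta(C_0)$, where $\eta(C_0)$ is the threshold produced by Theorem \ref{thm:uniqueness for gp hierarchy} for this $C_0$. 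This is exactly the hypothesis (\ref{equ:HUFL}) for each $\Ga_i$. Applying Theorem \ref{thm:uniqueness for gp hierarchy} to $\Ga_1$ and to $\Ga_2$ then yields $\ga_1^{(k)}(t)=\ga_2^{(k)}(t)$ for all $k\geq1$ and $t\in[0,T_{0}]$.

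It remains to descend from the $k=1$ identity back to (\ref{equ:NLS}): $\ga_1^{(1)}=\ga_2^{(1)}$ reads $u_1(t,x)\ol{u_1(t,x')}=u_2(t,x)\ol{u_2(t,x')}$ a.e., so (putting $x=x'$) $|u_1(t)|=|u_2(t)|$ a.e.; since $\n{u_i(t)}_{L^{2}}=1$, one may write $u_1(t)=c(t)\,u_2(t)$ with $|c(t)|=1$ and $c(t)=\lra{u_1(t),u_2(t)}_{L^{2}}$, which is $C^{1}$ in $t$ (as $\pa_t u_i\in C([0,T_{0}];H^{s_{c}-2})$ by the equation and the product estimates of Section \ref{section:Multilinear Estimates}) with $c(0)=1$. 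Substituting $u_1=c\,u_2$ into (\ref{equ:NLS}) and using the equation for $u_2$ together with $|c|=1$ (so $|c\,u_2|^{p-1}(c\,u_2)=c\,|u_2|^{p-1}u_2$) forces $\dot c\,u_2\equiv0$; since $u_2(t)\not\equiv0$ for every $t$, this gives $\dot c\equiv0$, hence $c\equiv1$ and $u_1\equiv u_2$. I do not expect a genuine obstacle in this reduction — all of the analytic difficulty has been packed into Theorem \ref{thm:uniqueness for gp hierarchy} — the only points needing a little care being the harmless mass normalization used to make the factorized sequences admissible, and the final phase-removal step, which has to be carried out through the equation (phase-equivariant only under constant phases) and mass conservation, not through the hierarchy alone.
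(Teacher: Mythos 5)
Your proposal is correct and follows the paper's overall strategy (lift to factorized GP hierarchies, invoke Theorem \ref{thm:uniqueness for gp hierarchy}, then strip off the residual phase), but it differs in execution at the two points where the paper is actually doing work, so a comparison is in order. First, you apply the \emph{statement} of Theorem \ref{thm:uniqueness for gp hierarchy}, which forces you to secure admissibility $\operatorname{Tr}\ga^{(k)}=1$ via $L^{2}$-conservation and a rescaling of the coupling constant; the paper instead applies the \emph{proof} of the theorem directly to the difference $\ga^{(k)}=\prod_{j}u_{1}(t,x_{j})\ol{u}_{1}(t,x_{j}')-\prod_{j}u_{2}(t,x_{j})\ol{u}_{2}(t,x_{j}')$ with the signed measure $\delta_{u_{1,t}}-\delta_{u_{2,t}}$, so that only the support bounds $(\ref{equ:support set, quantum de Finetti theorem})$ --- which are exactly hypotheses $(1)$ and $(2)$ --- are ever used and the trace normalization never enters. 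Your route is legitimate, but it quietly relies on mass conservation for merely $C([0,T_{0}];H^{s_{c}})$ solutions (true here because $s_{c}>1$ makes the pairings $\lra{\Delta u,u}$ and $\lra{|u|^{p-1}u,u}$ well defined, and it is also what makes your ``$u_{0}=0$ is trivial'' claim honest); that deserves a line. Second, your phase removal is genuinely different from the paper's Lemma \ref{lemma:uniqueness from tensor form}: you show $c(t)=\lra{u_{1}(t),u_{2}(t)}$ is $C^{1}$ by pairing $\pa_{t}u_{i}\in C([0,T_{0}];H^{s_{c}-2})$ against $H^{s_{c}}$ and read off $\dot{c}\equiv 0$ from the equation, whereas the paper never differentiates the scalar factor and instead runs a Duhamel--Gronwall argument on $|a(t)-1|$ using a low-frequency projection $P_{\leq M}$ and the UTFL property. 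Your ODE argument is shorter and valid at this regularity (again $s_{c}>1$ is exactly what legitimizes the $H^{s_{c}-2}$--$H^{s_{c}}$ duality); the paper's Gronwall route is more robust in that it asks nothing beyond continuity of the scalar factor. Both arguments are correct.
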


Before the proof, we set up some notations. We rewrite $\ga^{(k)}$ in Duhamel form
\begin{equation} \label{equ:gp hierarchy in duhamel form}
\ga^{(k)}(t)=U^{(k)}(t)\ga^{(k)}(0)\mp i\int_{0}^{t}U^{(k)}(t-s)B^{(k+2)}\ga^{(k+2)}(s)ds
\end{equation}
where $U^{(k)}(t)=\prod_{j=1}^{k}e^{it(\Delta_{x_{j}}-\Delta_{x_{j}'})}$ and
\begin{align*}
&B^{(k+2)}\ga^{(k+2)}\\
=&\sum_{j=1}^{k}B_{j;k+1,k+2}\ga^{(k+2)}\\
=&\sum_{j=1}^{k}\lrs{B_{j;k+1,k+2}^{+}-B_{j;k+1,k+2}^{-}}\ga^{(k+2)}\\
=&\sum_{j=1}^{k}\operatorname{Tr}_{k+1,k+2}\lrs{\delta(x_{j}-x_{k+1})\delta(x_{j}-x_{k+2})\ga^{(k+2)}-
\ga^{(k+2)}\delta(x_{j}-x_{k+1})\delta(x_{j}-x_{k+2})}.
\end{align*}
In the above, products are interpreted as the compositions of operators. For example, in kernels,
\begin{align*}
&\operatorname{Tr}_{k+1,k+2}\lrs{\ga^{(k+2)}\delta(x_{j}-x_{k+1})\delta(x_{j}-x_{k+2})}(\mathbf{x}_{k};
\mathbf{x}_{k}')\\
=&\int \ga^{(k+2)}(\mathbf{x}_{k},x_{k+1}',x_{k+2}';
\mathbf{x}_{k}',x_{k+1}',x_{k+2}')\delta (x_{j}'-x_{k+1}')\delta(x_{j}'-x_{k+2}')dx_{k+1}'dx_{k+2}'
\end{align*}
where $\mathbf{x}_{k}=(x_{1},...,x_{k})$.

We will prove that if $\Ga_{1}=\lr{\ga_{1}^{(k)}}_{k=1}^{\wq}$ and $\Ga_{2}=\lr{\ga_{2}^{(k)}}_{k=1}^{\wq}$ are two
solutions to $(\ref{equ:gp hierarchy in duhamel form})$, with the same initial datum and assumptions in Theorem $\ref{thm:uniqueness for gp hierarchy}$, then $\Ga=\lr{\ga^{(k)}=\ga_{1}^{(k)}-\ga_{2}^{(k)}}$ is identically zero. We start
using a representation of $\Ga$ given by the quantum de Finetti theorem.

\begin{lemma}[quantum de Finetti Theorem \cite{chen2015unconditional,lewin2014derivation}] \footnote{We in fact do not need Lemma $\ref{lemma:quantum de Finetti theorem}$ to prove Theorem $\ref{thm:uniqueness for nls}$, but it is the origin of the ideas of this proof.} \label{lemma:quantum de Finetti theorem}
  Let $\lr{\ga^{(k)}}_{k=1}^{\wq}$ be admissible. Then there exists a probability measure $d\mu_{t}(\phi)$ supported on the unit sphere of $L^{2}(\Lambda^{d})$ such that
\begin{align*}
\ga^{(k)}(t)=\int |\phi\rangle \langle \phi|^{\otimes k}d\mu_{t}(\phi).
\end{align*}
\end{lemma}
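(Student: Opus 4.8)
The plan is to deduce this from the finite-dimensional quantum de Finetti theorem---in the exact St{\o}rmer--Hudson--Moody form, valid for infinitely extendable symmetric states, or equivalently in its quantitative Christandl--K{\"o}nig--Mitchison--Renner form---and then to remove the finiteness by a compactness argument. Since the assertion concerns a fixed time and, for each $t$, the sequence $\lr{\ga^{(k)}(t)}$ is admissible in the sense of Definition \ref{definition:admissible sequence}, it suffices to prove the static statement: for any admissible $\lr{\ga^{(k)}}_{k=1}^{\wq}$ there is a Borel probability measure $\mu$ on the unit sphere $S\subset L^{2}(\Lambda^{d})$ with $\ga^{(k)}=\int_{S}|\phi\rangle\langle\phi|^{\otimes k}\,d\mu(\phi)$ for every $k$; applying this at each $t$ produces $\mu_{t}$ (no regularity in $t$ is asserted, and $\mu$ is in any case only unique modulo the phase $\phi\mapsto e^{i\theta}\phi$).

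First I would localize in the single-particle space. Fixing the Fourier orthonormal basis of $L^{2}(\Lambda^{d})$ (available since $\Lambda^{d}=\R^{d}$ or $\T^{d}$) and letting $P_{K}$ be the projection onto its first $K$ elements, one restricts the hierarchy to the $K$-dimensional space $\operatorname{Ran}P_{K}$. The naive compression $P_{K}^{\otimes k}\ga^{(k)}P_{K}^{\otimes k}$ is only sub-consistent under the partial trace, so instead I would pass to the geometric (L\'{e}vy--Leblond) localization, which yields a genuine admissible family on the truncated bosonic Fock space; the finite-dimensional de Finetti theorem then gives a Borel probability measure $\mu_{K}$ on the closed ball $B_{K}$ of $\operatorname{Ran}P_{K}$ whose moments $\int_{B_{K}}|\phi\rangle\langle\phi|^{\otimes k}\,d\mu_{K}(\phi)$ reproduce the $k$-particle data of the localized hierarchy, up to an error that tends to $0$ as $K\to\wq$.

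Next I would send $K\to\wq$. Regarding each $\mu_{K}$ as a probability measure on the closed unit ball $B$ of $L^{2}(\Lambda^{d})$ with the weak topology---which is compact and metrizable because $L^{2}(\Lambda^{d})$ is separable---a subsequence converges weakly-$\ast$, $\mu_{K_{j}}\rightharpoonup\mu$. To identify $\mu$ I would test the claimed identity against finite-rank operators $A$ on the symmetric $k$-fold tensor power: $\phi\mapsto\langle\phi^{\otimes k},A\phi^{\otimes k}\rangle$ is then a bounded polynomial in the Fourier coordinates of $\phi$, hence weakly continuous on $B$, so $\int\langle\phi^{\otimes k},A\phi^{\otimes k}\rangle\,d\mu_{K_{j}}\to\int\langle\phi^{\otimes k},A\phi^{\otimes k}\rangle\,d\mu$; since $P_{K}\to I$ strongly and $\ga^{(k)}$ is trace class, the left-hand sides converge to $\operatorname{Tr}(A\ga^{(k)})$, and since the finite-rank operators are operator-norm dense this forces $\ga^{(k)}=\int_{B}|\phi\rangle\langle\phi|^{\otimes k}\,d\mu(\phi)$ in trace norm. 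Finally, taking traces and invoking $\operatorname{Tr}\ga^{(k)}=1$ for all $k$ from admissibility yields $\int_{B}\n{\phi}^{2k}\,d\mu(\phi)=1$; since $0\le\n{\phi}\le1$ on $B$, this is possible only if $\n{\phi}=1$ for $\mu$-a.e.\ $\phi$, i.e.\ $\mu$ is carried by $S$.

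The main obstacle is everything buried in the word ``localization'': the spectral compression does not commute with the partial trace, so one cannot merely project the hierarchy and invoke a finite-dimensional de Finetti theorem, and one must verify carefully that no probability mass escapes to $\lr{\n{\phi}<1}$, or to the weak boundary of $B$, in the limit $K\to\wq$. The admissibility normalization $\operatorname{Tr}\ga^{(k)}=1$ is precisely what excludes such escape, but turning this into rigorous bookkeeping is the technical heart of the matter, and is where the analysis of \cite{lewin2014derivation} (with the exposition tailored to hierarchies in \cite{chen2015unconditional}) does its work. A more self-contained alternative, bypassing the finite-dimensional theorem altogether, is the constructive proof via Glauber coherent states (the lower- and upper-symbol calculus), at the price of being substantially longer.
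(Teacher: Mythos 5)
The paper offers no proof of this lemma: it is quoted verbatim from \cite{chen2015unconditional,lewin2014derivation}, and your outline is exactly the Lewin--Nam--Rougerie argument carried out in those references (geometric localization to a finite-dimensional one-body space, the finite-dimensional de Finetti theorem, weak-$\ast$ compactness of the measures on the closed unit ball, identification of the limit against finite-rank observables, and the normalization $\operatorname{Tr}\ga^{(k)}=1$ to rule out loss of mass and to pin the support to the unit sphere). Your proposal is correct and takes essentially the same route as the paper's source.
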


By Lemma $\ref{lemma:quantum de Finetti theorem}$, there exist $d\mu_{1,t}$ and $d\mu_{2,t}$
representing the two solutions $\Ga_{1}$ and $\Ga_{2}$. Then the Chebyshev argument as in \cite[Lemma 4.5]{chen2015unconditional} turns the assumptions in Theorem \ref{thm:uniqueness for gp hierarchy}
to the support property that $d\mu_{j,t}$ is supported in the
set
\begin{align} \label{equ:support set, quantum de Finetti theorem}
S=\lr{\phi\in \mathbb{S}(L^{2}(\Lambda^{d})):\n{P_{>M}\lra{\nabla}^{s_{c}}\phi}_{L^{2}}\leq \ve}
\bigcap \lr{\phi\in \mathbb{S}(L^{2}(\Lambda^{d})):\n{\phi}_{H^{s_{c}}}\leq C_{0}}.
\end{align}

Let the signed measure $d\mu_{t}=d\mu_{1,t}-d\mu_{2,t}$, we have
\begin{equation}
\ga^{(k)}(t)=\lrs{\ga_{1}^{(k)}-\ga_{2}^{(k)}}(t)=\int|\phi\rangle \langle \phi|^{\otimes k} d\mu_{t}(\phi)
\end{equation}
and $d\mu_{t}$ is supported in the set $S$ defined in $(\ref{equ:support set, quantum de Finetti theorem})$.

It suffices to
prove $\ga^{(1)}(t)=0$ as the proof is the same for the general $k$ case.
For notational convenience, we set the $\pm i$ in $(\ref{equ:gp hierarchy in duhamel form})$ to be $1$.
Since $(\ref{equ:gp hierarchy in duhamel form})$ is linear, $\Ga$ is a solution to
$(\ref{equ:gp hierarchy in duhamel form})$ with zero initial datum. Thus after iterating $(\ref{equ:gp hierarchy in duhamel form})$ $k$ times, we can write
\begin{align}
\ga^{(1)}(t_{1})=\int_{0}^{t_{1}}\int_{0}^{t_{3}}\ccc \int_{0}^{t_{2k-1}}J^{(2k+1)}(\ga^{(2k+1)})(t_{1},\underline{t}_{2k+1})d\underline{t}_{2k+1}
\end{align}
where
\begin{align}
&J^{(2k+1)}(\ga^{(2k+1)})(t_{1},\underline{t}_{2k+1})\\
=&U^{(1)}(t_{1}-t_{3})B^{(3)}U^{(3)}(t_{3}-t_{5})
B^{(5)}\ccc U^{(2k-1)}(t_{2k-1}-t_{2k+1})B^{(2k+1)}\ga^{(2k+1)}(t_{2k+1})\notag
\end{align}
with $\underline{t}_{2k+1}=(t_{3},t_{5},...,t_{2k+1})$.

We notice that there are $(2k-1)!!2^{k}$ summands inside $\ga^{(1)}(t_{1})$. Exactly,
\begin{equation}\label{equ:duhamel expansion, summands}
\ga^{(1)}(t_{1})=\sum_{(\mu,sgn) }\int_{t_{1}\geq t_{3}\geq \ccc\geq t_{2k+1}} J_{\mu,sgn}^{(2k+1)}(\ga^{(2k+1)})(t_{1},\underline{t}_{2k+1})d\underline{t}_{2k+1},
\end{equation}
where
\begin{align} \label{equ:duhamel expansion}
J_{\mu,sgn}^{(2k+1)}(\ga^{(2k+1)})(t_{1},\underline{t}_{2k+1})=
&U^{(1)}(t_{1}-t_{3})B_{\mu(2);2,3}^{sgn(2)}U^{(3)}(t_{3}-t_{5})
B_{\mu(4);4,5}^{sgn(4)}\\
&\ccc U^{(2k-1)}(t_{2k-1}-t_{2k+1})B_{\mu(2k);2k,2k+1}^{sgn(2k)}\ga^{(2k+1)}(t_{2k+1})\notag
\end{align}
with $sgn$ meaning the signature array $(sgn(2), sgn(4),..., sgn(2k))$ and
$B_{\mu(2j);2j,2j+1}^{sgn(2j)}$ stands for $B_{\mu(2j);2j,2j+1}^{+}$
or $B_{\mu(2j);2j,2j+1}^{-}$ depending on the sign of the $2j$-th signature element.
Here, $\lr{\mu}$ is a set of maps from $\lr{2,4,...,2k}$ to $\lr{1,2,...,2k-1}$ satisfying
that $\mu(2)=1$ and $\mu(2j)<2j$ for all $j$.

Now, we get into the proof, which spans Section \ref{section:Estimates for the Compatible Time Integration Domain}-Section \ref{section:Multilinear Estimates}, so we first state two propositions which are the main results in Section \ref{section:Estimates for the Compatible Time Integration Domain} and Section \ref{section:Existence of Compatible Time Integration Domain}.

In Section \ref{section:Estimates for the Compatible Time Integration Domain}, the main result is the following proposition.
\begin{proposition}\label{prop:estimate for the compatible time integration domain,section 2}
Let $\ga^{(k)}(t)=\int |\phi\rangle \langle \phi|^{\otimes k} d\mu_{t}(\phi)$. Then we have
\begin{align*}
&\bbn{\lra{\nabla_{x_{1}}}^{s_{c}-2}\lra{\nabla_{x_{1}'}}^{s_{c}-2}\int_{T_{C}(\mu,sgn)}J_{\mu,sgn}^{(2k+1)}
(\ga^{(2k+1)})
(t_{1},\underline{t}_{2k+1})d\underline{t}_{2k+1}}
_{L_{T}^{\wq}L_{x_{1}}^{2}L_{x_{1}'}^{2}}\\
\leq &C^{k}\int_{0}^{T}\int \n{\phi}_{H^{\frac{d-1}{2}}}^{\frac{16}{5}k+2}\lrs{T^{\frac{1}{2(d+3)}}M_{0}^{\frac{2d+3}{2(d+3)}}\n{P_{\leq M_{0}}\phi}_{H^{\frac{d-1}{2}}}+\n{P_{>M_{0}}\phi}_{H^{\frac{d-1}{2}}}}^{\frac{4k}{5}}
d|\mu_{t_{2k+1}}|(\phi)dt_{2k+1}
\end{align*}
where $T(\mu,sgn)$ is the compatible time integration domain defined by $(\ref{equ:compatible time integration domain})$.
\end{proposition}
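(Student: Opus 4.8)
The plan is to estimate the Duhamel term $\int_{T_C(\mu,sgn)}J_{\mu,sgn}^{(2k+1)}(\ga^{(2k+1)})$ by first using the quantum de Finetti representation $\ga^{(k)}=\int|\phi\rangle\langle\phi|^{\otimes k}\,d\mu_t(\phi)$ to factorize the $(2k+1)$-particle density matrix into a product structure, then applying the $U$-$V$ multilinear estimates iteratively along the Duhamel tree associated to the reference pair $(\mu,sgn)$. More precisely, writing $\ga^{(2k+1)}(t_{2k+1})=\int|\phi\rangle\langle\phi|^{\otimes(2k+1)}\,d\mu_{t_{2k+1}}(\phi)$ and pulling the measure integral outside, the operator $J_{\mu,sgn}^{(2k+1)}$ acting on $|\phi\rangle\langle\phi|^{\otimes(2k+1)}$ becomes, at the level of kernels, a product of two conjugate factors (the $x$-side and the $x'$-side), each of which is a nested space-time integral built from free propagators $e^{it\Delta}$ and the collapsing operators $B^{\pm}_{\mu(2j);2j,2j+1}$. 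Because the time integration domain $T_C(\mu,sgn)$ is, by construction in Section \ref{section:Existence of Compatible Time Integration Domain}, compatible with the tree structure (i.e. the constraints $t_i\geq t_j$ only involve a node and its descendants), one can organize the nested integrals so that at each collapsing vertex one applies a retarded estimate of the form $\bn{\int_a^t e^{i(t-s)\Delta}(\wt u_1\wt u_2\wt u_3\wt u_4\wt u_5)\,ds}_{X^s}$ to the relevant subtree, exactly as in Example \ref{example:estimate part}.

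The key steps, in order: (i) insert the de Finetti measure and factorize $J_{\mu,sgn}^{(2k+1)}(|\phi\rangle\langle\phi|^{\otimes(2k+1)})$ into the product of an $x$-kernel and a $\overline{x'}$-kernel, each depending on $\phi$; (ii) apply $\lra{\nabla_{x_1}}^{s_c-2}\lra{\nabla_{x_1'}}^{s_c-2}$, which distributes onto the two factors, and reduce the $L_T^\infty L_{x_1}^2 L_{x_1'}^2$ norm of the product to the product of two $X^{s_c-2}$-type norms via the embedding $X^{s_c-2}\hookrightarrow C_t L_x^2$ (here $s_c=\frac{d-1}{2}$ for the quintic case); (iii) traverse the Duhamel tree from the leaves toward the root, applying at each internal vertex either the plain multilinear estimate or, on at least $\frac45 k$ of the vertices, the refined low-frequency estimate \eqref{equ:multilinear estimate, low frequency, low regulariy,section 2}, so that each application peels off four factors of $\n{\phi}_{H^{(d-1)/2}}$ at regularity $s_c$ and retains one distinguished branch at regularity $s$; (iv) count: there are $k$ collapsing vertices producing $4$-fold products, plus the two boundary factors from the final leaf, giving the total power $\frac{16}{5}k+2$ of $\n{\phi}_{H^{(d-1)/2}}$ together with $\frac{4k}{5}$ copies of the small factor $\big(T^{\frac{1}{2(d+3)}}M_0^{\frac{2d+3}{2(d+3)}}\n{P_{\leq M_0}\phi}_{H^{(d-1)/2}}+\n{P_{>M_0}\phi}_{H^{(d-1)/2}}\big)$; (v) bound the time-integral and measure-integral, arriving at the claimed estimate with an absolute constant $C^k$.

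The main obstacle I expect is bookkeeping step (iii)--(iv): one must verify that the compatibility of $T_C(\mu,sgn)$ genuinely allows the tree-ordered sequential application of the $U$-$V$ estimates without the nested time-integrals obstructing each other, and one must carefully track which of the $2k+1$ copies of $\phi$ on the $x$-side versus the $x'$-side receive a derivative and which receive the distinguished low-regularity weight, so that the exponents $\frac{16}{5}k+2$ and $\frac{4k}{5}$ come out exactly right (in particular the ``at least $\frac45 k$ refined applications'' claim hinges on a structural feature of the reference trees established in Section \ref{section:Existence of Compatible Time Integration Domain}, namely that every tree has at least $\frac45 k$ vertices of the relevant type). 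A secondary technical point is that the two kernel factors are not independent — they share the same $\phi$ — so one cannot simply apply Cauchy--Schwarz to separate them; instead one keeps the pointwise-in-$\phi$ product inside the measure integral and only splits the $L_{x_1}^2 L_{x_1'}^2$ norm, which is why the final bound is an integral in $d|\mu_{t_{2k+1}}|(\phi)$ rather than a product of two separate measure integrals.
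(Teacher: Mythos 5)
Your proposal matches the paper's proof in all essentials: de Finetti factorization, Minkowski plus the $Y^{s}$/$X^{s}$ embedding to split the $L^{2}_{x_{1}}L^{2}_{x_{1}'}$ norm into the two tree factors, iterated application of the four $U$-$V$ multilinear estimates guided by the marked $D$-tree, and the count $4k+2-\frac{4}{5}k=\frac{16}{5}k+2$ coming from the fact that at least $\frac{4}{5}k$ couplings are unclogged. The only (immaterial) discrepancy is that the $\frac{4}{5}k$ unclogged-coupling bound is imported from \cite[Lemma 5.14]{chen2019the} rather than established in the combinatorics section of this paper.
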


In Section \ref{section:Existence of Compatible Time Integration Domain}, our goal is to represent $\ga^{(1)}$ as follows.
\begin{proposition} \label{prop:compatibility for time integration,section 2}
We can write
\begin{align}
\ga^{(1)}(t_{1})=\sum_{ reference\ (\hat{\mu},\hat{sgn})}
\int_{T_{C}(\hat{\mu},\hat{sgn})}J_{\hat{\mu},\hat{sgn}}^{(2k+1)}(\ga^{(2k+1)})
(t_{1},\underline{t}_{2k+1})d\underline{t}_{2k+1}
\end{align}
where the number of reference pairs in Definition \ref{def:reference pair} can be controlled by $16^{k}$, which is substantially smaller than the original $(2k-1)!!2^{k}$. More importantly, the summands are endowed with the time integration domain $T_{C}(\hat{\mu},\hat{sgn})$, which is compatible with the estimate part in Section \ref{section:Estimates for the Compatible Time Integration Domain}.
\end{proposition}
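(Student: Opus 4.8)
The strategy is to assemble Proposition~\ref{prop:compatibility for time integration,section 2} from the four combinatorial steps announced in the outline, turning the raw $(2k-1)!!2^{k}$-term expansion \eqref{equ:duhamel expansion, summands} into a sum over reference pairs whose time-integration domains have the tree-pairwise shape. The combinatorics will be run entirely at the level of the collapsing map pairs $(\mu,sgn)$ and the operators $J_{\mu,sgn}^{(2k+1)}$, independently of the regularity or of the multilinear estimates, so that only the structure of the iterated Duhamel expansion enters.

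First I would encode each collapsing map pair $(\mu,sgn)$ by a signed admissible tree via Algorithm~\ref{algorithm:generate an admissible tree} of Section~\ref{subsection:Admissible Tree}: the root carries the single variable of $\ga^{(1)}$, and inserting $B_{\mu(2j);2j,2j+1}^{sgn(2j)}$ attaches a new node below the node currently holding variable $\mu(2j)$, the sign recording which of the two freshly created slots is retained. This gives a faithful dictionary between collapsing map pairs and signed increasing binary trees, together with a recipe to read a simplex-type domain off a tree, namely the constraint ``each internal time lies below the time of its parent.'' Next, following Section~\ref{subsection:Signed KM Acceptable Moves}, I would introduce the signed Klainerman--Machedon acceptable moves in Chen--Pavlovi\'{c} format: these permute the collapsing operators, leave the sum of the associated $J_{\mu,sgn}^{(2k+1)}$ over the corresponding orderings unchanged, and preserve the underlying signed tree skeleton, thereby partitioning the $(2k-1)!!2^{k}$ pairs into equivalence classes indexed by signed skeletons. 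In Section~\ref{subsection:Tamed Form} I would then prove, by a normal-form (well-ordering) argument on the pairs inside a class, that each class has a unique tamed representative $(\mu_{*},sgn_{*})$, and that collapsing the acceptable-move orbit produces exactly $\int_{T_{D}(\mu_{*})}J_{\mu_{*},sgn_{*}}^{(2k+1)}(\ga^{(2k+1)})\,d\underline{t}_{2k+1}$, with $T_{D}(\mu_{*})$ the simplex read off the tamed tree; this is the intermediate identity \eqref{equ:integration,tamed form, time integration domain, section 2}.

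The remaining steps glue tamed forms together. As in Section~\ref{subsection:Wild Moves}, I would bring in the wild moves defined by~\eqref{definiton:wild moves}; unlike the signed acceptable moves, these do change the tree structure but keep the tamed form invariant, and I would use them to sort the tamed forms into tamed classes. In Section~\ref{subsection:Reference Form and Proof of Compatibility} I would then show that each tamed class contains a unique reference form $(\hat\mu,\hat{sgn})$ (Definition~\ref{def:reference pair}), that the union over the class of the simplices $T_{D}(\mu_{*})$ is, up to a null set, the larger domain $T_{C}(\hat\mu,\hat{sgn})$ read directly off the reference tree --- which gives the claimed identity --- and that $T_{C}(\hat\mu,\hat{sgn})$ has exactly the product-of-pairwise-constraints form $\{t_{a}\geq t_{b}\}$ over parent--child pairs required by the $U$-$V$ estimates of Section~\ref{section:Estimates for the Compatible Time Integration Domain}. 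A final count bounds the number of reference pairs by the number of signed admissible tree skeletons, which is $\leq 16^{k}$, far below $(2k-1)!!2^{k}$.

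The main obstacle is the wild-move step. One must simultaneously prove three things: that the wild moves partition the tamed forms into well-defined classes; that the union of the tamed-form simplices $T_{D}(\mu_{*})$ over a class reassembles, with neither overlap nor gap (i.e.\ essentially disjointly and with no missing region), into the single reference domain $T_{C}(\hat\mu,\hat{sgn})$; and that this $T_{C}$ genuinely escapes the ``upper-echelon'' shape produced by the classical board game --- a shape that is \emph{not} invariant under wild moves and that is incompatible with space-time estimates --- and instead has the tree-pairwise form. In the quintic setting the bookkeeping is heavier than in the cubic case of \cite{chen2020unconditional}: each collapse spawns two children rather than continuing a chain, so the branching of the binary tree and the sign attached to every new node must be tracked consistently through every acceptable and wild move. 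Verifying that the gluing is exact in this branched setting, and that the reference tree's pairwise domain is precisely the union obtained, is where the real work lies.
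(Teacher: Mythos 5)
Your plan follows the paper's own route step for step: encode $(\mu,sgn)$ as a signed admissible tree, use signed KM acceptable moves to partition into classes with a unique tamed representative and the intermediate identity $(\ref{equ:integration,tamed form, time integration domain, section 2})$, then apply wild moves to merge tamed forms into classes indexed by unique reference pairs whose glued domain is $T_{C}(\hat{\mu},\hat{sgn})$, with the $16^{k}$ count coming from signed skeletons. The only slip is terminological: for the quintic hierarchy the admissible trees are \emph{ternary} (left child continues $\mu(2j)$, middle and right children correspond to the new variables $2j$ and $2j+1$), not binary, which is precisely why the generalized Catalan bound $\frac{1}{k}\binom{3k}{k-1}\leq 8^{k}$ (times $2^{k}$ for signs) enters the count.
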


With the above two propositions, we could complete the proofs of Theorem \ref{thm:uniqueness for gp hierarchy} and Corollary \ref{cor:uniqueness for nls}.

\begin{proof}[\textbf{Proof of Theorem $\ref{thm:uniqueness for gp hierarchy}$}]
By Proposition $\ref{prop:compatibility for time integration,section 2}$, we can write
\begin{align*}
\ga^{(1)}(t_{1})=\sum_{ reference\ (\hat{\mu},\hat{sgn})}
\int_{T_{C}(\hat{\mu},\hat{sgn})}J_{\hat{\mu},\hat{sgn}}^{(2k+1)}(\ga^{(2k+1)})
(t_{1},\underline{t}_{2k+1})d\underline{t}_{2k+1}
\end{align*}
where  the number of reference pairs can be controlled by $16^{k}$. Then we have
\begin{align*}
&\bbn{\lra{\nabla_{x_{1}}}^{s_{c}-2}\lra{\nabla_{x_{1}'}}^{s_{c}-2}\ga^{(1)}(t_{1})}
_{L_{T}^{\wq}L_{x_{1}}^{2}L_{x_{1}'}^{2}}\\
\leq &\sum_{reference\  (\hat{\mu},\hat{sgn})}\bbn{\lra{\nabla_{x_{1}}}^{s_{c}-2}\lra{\nabla_{x_{1}'}}^{s_{c}-2}
\int_{T_{C}(\hat{\mu},\hat{sgn})}J_{\hat{\mu},\hat{sgn}}^{(2k+1)}(\ga^{(2k+1)})
(t_{1},\underline{t}_{2k+1})d\underline{t}_{2k+1}}_{L_{T}^{\wq}L_{x_{1}}^{2}L_{x_{1}'}^{2}}
\end{align*}
By Proposition \ref{prop:estimate for the compatible time integration domain,section 2},
\begin{align*}
\leq& (16C)^{k} \int_{0}^{T}\int \n{\phi}_{H^{s_{c}}}^{\frac{16}{5}k+2}\lrs{T^{\frac{1}{2(d+3)}}M_{0}^{\frac{2d+3}{2(d+3)}}\n{P_{\leq M_{0}}\phi}_{H^{s_{c}}}+\n{P_{>M_{0}}\phi}_{H^{s_{c}}}}^{\frac{4k}{5}}
d|\mu_{t_{2k+1}}|(\phi)dt_{2k+1}
\end{align*}
Put in the support property $(\ref{equ:support set, quantum de Finetti theorem})$,
\begin{align*}
\leq&2T(16C)^{k}C_{0}^{\frac{16}{5}k+2}
\lrs{T^{\frac{1}{2(d+3)}}M_{0}^{\frac{2d+3}{2(d+3)}}C_{0}+\ve}^{\frac{4}{5}k}\\
\leq& 2TC_{0}^{2}\lrs{T^{\frac{1}{2(d+3)}}M_{0}^{\frac{2d+3}{2(d+3)}}C^{2}C_{0}^{5}+C^{2}C_{0}^{4}\ve }^{\frac{4}{5}k}
\end{align*}
for all $k$. Select $\ve$ small enough such that $C^{2}C_{0}^{4}\ve< \frac{1}{4}$ and then select $T$ small enough such that $T^{\frac{1}{2(d+3)}}M_{0}^{\frac{2d+3}{2(d+3)}}C^{2}C_{0}^{5}< \frac{1}{4}$, we thus have for $T$ small enough,
\begin{align*}
\bbn{\lra{\nabla_{x_{1}}}^{s_{c}-2}\lra{\nabla_{x_{1}'}}^{s_{c}-2}\ga^{(1)}(t_{1})}
_{L_{T}^{\wq}L_{x_{1}}^{2}L_{x_{1}'}^{2}}\leq 2TC_{0}^{2}\lrs{\frac{1}{2}}^{k}\to 0\ \text{as $k\to \wq$.}
\end{align*}
We can then bootstrap to fill the whole $[0,T_{0}]$ interval.
\end{proof}

\begin{proof}[\textbf{Proof of Corollary $\ref{cor:uniqueness for nls}$}]
Given the solution $\mu$ of $(\ref{equ:NLS})$, it generates a solution to the GP hierarchy $(\ref{equ:gp hierarchy,quintic})$ taking the following form
\begin{align}
\lr{\prod_{j=1}^{k}u(t,x_{j})\ol{u}(t,x_{j}')}_{k=1}^{\wq}.
\end{align}
Thus, one could just apply the proof of Theorem $\ref{thm:uniqueness for gp hierarchy}$ to the special case
\begin{align*}
\ga^{(k)}(t)=&\int|\phi\rangle \langle \phi|^{\otimes k} d\mu_{t}(\phi)
=\prod_{j=1}^{k}u_{1}(t,x_{j})\ol{u}_{1}(t,x_{j}')-
\prod_{j=1}^{k}u_{2}(t,x_{j})\ol{u}_{2}(t,x_{j}'),
\end{align*}
where $u_{1}$ and $u_{2}$ are two solutions to $(\ref{equ:NLS})$ and $\mu_{t}$ is the signed measure
$\delta_{u_{1,t}}-\delta_{u_{2,t}}$. Especially, when $k=1$, we have proved that
\begin{align}\label{equ:uniqueness, conjugate}
u_{1}(t,x)\ol{u}_{1}(t,x')=u_{2}(t,x)\ol{u}_{2}(t,x'),
\end{align}
which directly implies the uniqueness for the trivial solution $u\equiv 0$.
Then by Lemma \ref{lemma:uniqueness from tensor form} which concludes that the phase difference is zero, we complete the proof.
\end{proof}
As Corollary \ref{cor:uniqueness for nls} requires condition (2), uniform in time frequency (UTFL) condition, we prove that every $C([0,T_{0}];H^{s_{c}})$ solution to $(\ref{equ:NLS})$ satisfies UTFL condition by Lemma $\ref{lemma:UTFL}$. Immediately, Theorem $\ref{thm:uniqueness for nls}$ follows from Corollary \ref{cor:uniqueness for nls} and Lemma \ref{lemma:UTFL}.
\begin{lemma}\label{lemma:UTFL}
Let $u$ be a $C([0,T_{0}];H^{s_{c}})$ solution,
then $u$ satisfies uniform in time frequency localization $($UTFL$)$
, that is, for each $\ve>0$ there exists $M(\ve)$ such that
\begin{equation} \label{equ:UTFL for NLS}
\n{\lra{\nabla}^{s_{c}}P_{> M(\ve)}u}_{L_{[0,T_{0}]}^{\wq}L_{x}^{2}}\leq \ve.
\end{equation}
\end{lemma}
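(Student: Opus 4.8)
The plan is to exploit the continuity of the $H^{s_c}$-valued solution together with compactness of the time interval, reducing a uniform-in-time statement to a pointwise-in-time one via a covering argument. First I would fix $\ve>0$ and observe that, for each fixed $t\in[0,T_0]$, the vector $u(t)\in H^{s_c}$ has $\n{\lra{\nabla}^{s_c}P_{>M}u(t)}_{L^2_x}\to 0$ as $M\to\infty$ by dominated convergence (the tail of $\sum_{\xi}\lra{\xi}^{2s_c}|\hat u(t,\xi)|^2$, or the $\R^d$ analogue, vanishes). So there exists $M_t$ with $\n{\lra{\nabla}^{s_c}P_{>M_t}u(t)}_{L^2_x}<\ve/2$. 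The subtlety is that a priori $M_t$ depends on $t$; the whole point of UTFL is to get one $M$ working for all $t$ simultaneously, and that is where continuity enters.

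Next I would use the strong continuity $t\mapsto u(t)\in H^{s_c}$: for the fixed $t$ above, there is an open neighborhood $I_t\ni t$ in $[0,T_0]$ on which $\n{u(s)-u(t)}_{H^{s_c}}<\ve/2$. Since Littlewood--Paley projections are bounded on $H^{s_c}$ with norm $1$, for all $s\in I_t$ we get
\begin{align*}
\n{\lra{\nabla}^{s_c}P_{>M_t}u(s)}_{L^2_x}
\leq \n{\lra{\nabla}^{s_c}P_{>M_t}u(t)}_{L^2_x}+\n{\lra{\nabla}^{s_c}P_{>M_t}(u(s)-u(t))}_{L^2_x}
< \tfrac{\ve}{2}+\n{u(s)-u(t)}_{H^{s_c}}<\ve.
\end{align*}
The family $\{I_t\}_{t\in[0,T_0]}$ is an open cover of the compact set $[0,T_0]$, so finitely many of them, $I_{t_1},\dots,I_{t_n}$, already cover $[0,T_0]$. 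Setting $M(\ve):=\max\{M_{t_1},\dots,M_{t_n}\}$ and using that $\n{\lra{\nabla}^{s_c}P_{>M}u(s)}_{L^2_x}$ is nonincreasing in $M$ (higher cutoff discards more frequencies), we conclude that for every $s\in[0,T_0]$, picking the index $i$ with $s\in I_{t_i}$, $\n{\lra{\nabla}^{s_c}P_{>M(\ve)}u(s)}_{L^2_x}\leq \n{\lra{\nabla}^{s_c}P_{>M_{t_i}}u(s)}_{L^2_x}<\ve$, which is exactly $(\ref{equ:UTFL for NLS})$.

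There is no serious obstacle here: the argument is the standard "pointwise tail decay plus uniform continuity on a compact set" lemma, and the only things used are (i) $P_{>M}\to 0$ strongly on $H^{s_c}$ as $M\to\infty$ applied to each $u(t)$, (ii) uniform boundedness of $P_{>M}$ on $H^{s_c}$, and (iii) compactness of $[0,T_0]$ together with $u\in C([0,T_0];H^{s_c})$. If one prefers to avoid the covering language, the same conclusion follows from the fact that $\{u(t):t\in[0,T_0]\}$ is a compact subset of $H^{s_c}$ (continuous image of a compact set), and on a compact subset of $H^{s_c}$ the tails $\sup_{t}\n{\lra{\nabla}^{s_c}P_{>M}u(t)}_{L^2_x}$ are equismall for $M$ large — this is the standard characterization of precompactness in $H^{s_c}$ modulo the (here trivial, since the time interval is fixed) spatial tightness. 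Either route is short; I would present the covering version as it is the most self-contained.
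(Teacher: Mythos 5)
Your proof is correct, but it takes a genuinely different and in fact more elementary route than the paper. The paper's proof uses the equation: it bounds $\babs{\pa_{t}\n{\lra{\nabla}^{s_{c}}P_{\leq M}u}_{L_{x}^{2}}^{2}}\lesssim M^{2}\n{u}_{H^{s_{c}}}^{p+1}$ via the Sobolev multilinear estimates of the appendix, so that the low-frequency mass is nearly constant on short time intervals; it then combines this with continuity of the scalar function $t\mapsto\n{u(t)}_{H^{s_{c}}}^{2}$ to deduce that the high-frequency mass $\n{\lra{\nabla}^{s_{c}}P_{>M}u(t)}_{L_{x}^{2}}^{2}$ varies little near each $t_{0}$, and finishes with the same pointwise tail decay plus compactness covering you use. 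You instead exploit the full strength of the hypothesis $u\in C([0,T_{0}];H^{s_{c}})$ (strong continuity of the vector-valued map, not just of its norm) together with the uniform boundedness of $P_{>M}$ on $H^{s_{c}}$, which makes the PDE input entirely unnecessary; equivalently, as you note, the orbit $\lr{u(t):t\in[0,T_{0}]}$ is compact in $H^{s_{c}}$ and tails are equismall on compact sets. Your argument is shorter, avoids justifying the time differentiation of the truncated norm for a merely continuous (Duhamel) solution, and shows that UTFL is a soft consequence of $C_{t}H^{s_{c}}$ regularity for an arbitrary continuous curve, not a property of solutions. The only cosmetic caveat is that on $\R^{d}$ with smooth Littlewood--Paley cutoffs the operator norm of $P_{>M}$ on $H^{s_{c}}$ and the monotonicity in $M$ hold only up to universal constants, which costs nothing after rescaling $\ve$.
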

\begin{proof}
We compute
\begin{align*}
\babs{\pa_{t}\n{\lra{\nabla}^{s_{c}} P_{\leq M}u}_{L_{x}^{2}}^{2}}=&2\bbabs{\operatorname{Im} \int P_{\leq M}\lra{\nabla}^{s_{c}} u\cdot P_{\leq M}\lra{\nabla}^{s_{c}}(|u|^{p-1}u)dx}\\
\leq& 2\n{P_{\leq M}\lra{\nabla}^{s_{c}} u}_{L^{2}}\n{P_{\leq M}\lra{\nabla}^{s_{c}}(|u|^{p-1}u)}_{L^{2}}.
\end{align*}
Noting that $\n{P_{\leq M}\lra{\nabla}^{s}f}_{L^{2}}\lesssim M^{s}\n{P_{\leq M}f}_{L^{2}}$, then by Sobolev embedding $\lrs{\ref{lemma:sobolev embedding critical case}}$ and $(\ref{equ:sobolev inequality, multilinear estimate, endpoint})$, we have
\begin{align*}
\babs{\pa_{t}\n{\lra{\nabla}^{s_{c}} P_{\leq M}u}_{L_{x}^{2}}^{2}}\lesssim & 2M^{2}\n{ P_{\leq M}\lra{\nabla}^{s_{c}}u}_{L^{2}}\n{P_{\leq M}\lra{\nabla}^{s_{c}-2}(|u|^{p-1}u)}_{L^{2}}\\
\lesssim& 2M^{2}\n{u}_{H^{s_{c}}}^{p+1}.
\end{align*}
Hence there exists $\delta'>0$ such that for any $t_{0}\in [0,T_{0}]$, it holds that for $t\in (t_{0}-\delta',t_{0}+\delta')\cap [0,T]$,
\begin{align}
\bbabs{\n{\lra{\nabla}^{s_{c}} P_{\leq M} u(t)}_{L_{x}^{2}}^{2}-\n{\lra{\nabla}^{s_{c}} P_{\leq M}u(t_{0})}_{L_{x}^{2}}^{2}}\leq \frac{1}{16}\ve^{2}.
\end{align}
On the other hand, since $u\in C([0,T_{0}];H^{s_{c}})$, for each $t_{0}$, there exists $\delta''>0$ such that for any $t\in (t_{0}-\delta'',t_{0}+\delta'')\cap [0,T_{0}]$,
\begin{align}
\bbabs{\n{\lra{\nabla}^{s_{c}}  u(t)}_{L_{x}^{2}}^{2}-\n{\lra{\nabla}^{s_{c}} u(t_{0})}_{L_{x}^{2}}^{2}}\leq \frac{1}{16}\ve^{2}.
\end{align}
Let $\delta=\min\lrs{\delta',\delta''}$. Then we have that for any $t\in(t_{0}-\delta,t_{0}+\delta)\cap [0,T_{0}]$,
\begin{align*}
\bbabs{\n{\lra{\nabla}^{s_{c}}P_{>M}u(t)}_{L_{x}^{2}}^{2}-\n{\lra{\nabla}^{s_{c}}P_{>M}u(t_{0})}_{L_{x}^{2}}^{2}}\leq\frac{1}{4}\ve^{2}.
\end{align*}
For each $t\in[0,T_{0}]$, there exists $M_{t}$ such that
\begin{align*}
\n{\lra{\nabla}^{s_{c}}P_{>M_{t}}u(t)}_{L_{x}^{2}}\leq \frac{1}{2}\ve.
\end{align*}
By the above, there exists $\delta_{t}>0$ such that on $(t-\delta_{t},t+\delta_{t})\cap [0,T_{0}]$, we have
\begin{align*}
\n{\lra{\nabla}^{s_{c}}P_{>M_{t}}u}_{L_{(t-\delta_{t},t+\delta_{t})\cap [0,T_{0}]}^{\wq}L_{x}^{2}}\leq \ve.
\end{align*}
Since the collection of interval $(t-\delta_{t},t+\delta_{t})\cap [0,T_{0}]$, as $t$ ranges over $[0,T_{0}]$, is an open cover of $[0,T_{0}]$. By compactness, we might as well assume that
\begin{align*}
(t_{1}-\delta_{t_{1}},t_{1}+\delta_{t_{1}})\cap [0,T_{0}],...,(t_{J}-\delta_{t_{J}},t_{J}+\delta_{t_{J}})\cap [0,T_{0}]
\end{align*}
be a finite open cover of $[0,T_{0}]$. Letting
$$M=\lrs{M_{t_{1}},...,M_{t_{J}}},$$
we have established $(\ref{equ:UTFL for NLS})$.

\end{proof}


\section{Estimates for the Compatible Time Integration Domain} \label{section:Estimates for the Compatible Time Integration Domain}
\subsection{Duhamel Expansion and Duhamel Tree}\label{subsection:Duhamel Expansion and Duhamel Tree}
We start the analysis of the Duhamel expansions.
We will create a Duhamel tree (we write $D$-tree for short) and show how to obtain the Duhamel expansion $J_{\mu,sgn}^{(2k+1)}$ from the $D$-tree. At first, we present an algorithm to generate a Duhamel tree from a collapsing map pair $(\mu,sgn)$ and then show this by an example. Subsequently, we are able to calculate the Duhamel expansion by a general algorithm. Finally, we exhibit an example by employing the above algorithms.
\begin{algorithm}[Duhamel Tree] \label{algorithm:duhamel tree}
~\\
\hspace*{1em}$(1)$ Let $D^{(0)}$ be a starting node in the $D$-tree.  Find the pair of indices $l$ and $r$ so that
$l\geq 1$, $r\geq 1$ and
\begin{align*}
&\mu(2l)=1,\ sgn(2l)=+,\\
&\mu(2r)=1,\ sgn(2r)=-,
\end{align*}
and moreover $l$ and $r$ are the minimal indices for which the above equalities hold. Then place
$D^{(2l)}$ or $D^{(2r)}$ as the left or right child of $D^{(0)}$ in the $D$-tree. If there is no such $l$ or $r$, place $F_{1,+}$ or $F_{1,-}$ as the left or right child of $D^{(0)}$ in the $D$-tree.

$(2)$ Set counter $j=1$.

$(3)$ Given $j$, find the indices $\lr{k_{i}}_{i=1}^{5}$ so that $k_{i}>j$ and
\begin{align}\label{equ:duhamel tree, algorithm, step 3}
\begin{cases}
&\mu(2k_{1})=\mu(2j),\ sgn(2k_{1})=sgn(2j),\\
&\mu(2k_{2})=2j,\ sgn(2k_{2})=+,\\
&\mu(2k_{3})=2j,\ sgn(2k_{3})=-,\\
&\mu(2k_{4})=2j+1,\ sgn(2k_{4})=+,\\
&\mu(2k_{5})=2j+1,\ sgn(2k_{5})=-,
\end{cases}
\end{align}
and $k_{i}$ is the minimal index for which the corresponding equalities hold. Then place $D^{(2k_{i})}$ as the $i$-th child of $D^{(2j)}$ in the $D$-tree. If there is no such $k_{1}$/$k_{2}$/$k_{3}$/$k_{4}$/$k_{5}$, then place $F_{\mu(2j),sgn(2j)}$/$F_{2j,+}$/$F_{2j,-}$/$F_{2j+1,+}$/$F_{2j+1,-}$ as the $i$-th child of $D^{(2j)}$ in the $D$-tree.

$(4)$ If $j=k$, then stop, otherwise set $j=j+1$ and go to step $(3)$.

\end{algorithm}

\begin{example} \label{example:duhamel tree}
Let us work with the following example
$$
\begin{tabular}{c|ccccccc}
$2j$&2&4&6&8&10&12&14\\
\hline
$\mu(2j)$ &1&1&1&2&3&6&6\\
$sgn(2j)$ &+&+&$-$&$-$&+&+&$-$
\end{tabular}
$$
\begin{minipage}{0.3\textwidth}
\centering
\begin{tikzpicture}
\node{$D^{(0)}$}[sibling distance=45pt]
child{node{$D^{(2)}$}}
child{node{$D^{(6)}$}}
;
\end{tikzpicture}
\end{minipage}
\begin{minipage}{0.69\textwidth}
Let $D^{(0)}$ be a starting node in the $D$-tree, so we need to find the minimal $l\geq 1$, $r\geq 1$
such that $\mu(2l)=1$, $sgn(2l)=+$ and $\mu(2r)=1$, $sgn(2r)=-$. In the case, it is $2l=2$ and
$2r=6$ so we put $D^{(2)}$ and $D^{(6)}$ as left and right children of $D^{(0)}$, respectively, in
the $D$-tree as shown in the left.
\end{minipage}
\\[0.5cm]

\begin{minipage}{0.3\textwidth}
\centering
\begin{tikzpicture}[ box/.style={circle, minimum width =10pt, minimum height=10pt,
inner sep=1pt,draw=black}]
\node {$D^{(0)}$}[sibling distance=60pt]

child {node{$D^{(2)}$} [sibling distance=25pt]
child {node{$D^{(4)}$} [sibling distance=10pt]
}
child {node{$F_{2,+}$}}
child {node{$D^{(8)}$} [sibling distance=10pt,level distance=2cm]
}
child {node{$D^{(10)}$} [sibling distance=10pt]
}
child {node{$F_{3,-}$}}
}
child {node{$D^{(6)}$} [sibling distance=50pt]
}

;
\end{tikzpicture}
\end{minipage}
\begin{minipage}{0.67\textwidth}
Now we start with counter $j=1$ so we need to find the minimal $k_{i}>j$ such that
\begin{align*}
&\mu(2k_{1})=\mu(2),\ sgn(2k_{1})=sgn(2),\\
&\mu(2k_{2})=2,\ sgn(2k_{2})=+,\\
&\mu(2k_{3})=2,\ sgn(2k_{3})=-,\\
&\mu(2k_{4})=3,\ sgn(2k_{4})=+,\\
&\mu(2k_{5})=3,\ sgn(2k_{5})=-.
\end{align*}
We find $2k_{1}=4$, $2k_{3}=8$, $2k_{4}=10$ and there is no such $k_{2}$ and $k_{5}$. Thus, we put $D^{(4)}/F_{2,+}/D^{(8)}/D^{(10)}/F_{3,-}$ as the $i$-th child of $D^{(2)}$ $($shown at left$)$.
\end{minipage}

Next, the counter turns to $j=2$ and we find that there is no $k_{i}$ satisfying $(\ref{equ:duhamel tree, algorithm, step 3})$, so we put $F_{1,+}$/$F_{4,+}$/$F_{4,-}$/$F_{5,+}$/$F_{5,-}$ as the $i$-th child of $D^{(4)}$. Then we move to $j=3$ and find that $2k_{2}=12$ and $2k_{3}=14$ satisfy $(\ref{equ:duhamel tree, algorithm, step 3})$ so we put $F_{1,-}/D^{(12)}/D^{(14)}/F_{7,+}/F_{7,-}$ as the $i$-th child of $D^{(6)}$ shown as follows.

\begin{tikzpicture}
\node {$D^{(0)}$}[sibling distance=45pt,level distance=1.2cm]

child {node{$D^{(2)}$} [sibling distance=38pt]
child {node{$D^{(4)}$} [sibling distance=25pt,level distance=1.5cm]
child {node{$F_{1,+}$}}
child {node{$F_{4,+}$}}
child {node{$F_{4,-}$}}
child {node{$F_{5,+}$}}
child {node{$F_{5,-}$}}
}
child {node{$F_{2,+}$}}
child {node{$D^{(8)}$} [sibling distance=10pt,level distance=1.5cm]
}
child {node{$D^{(10)}$} [sibling distance=10pt]
}
child {node{$F_{3,-}$}}
}
child [missing]
child [missing]
child [missing]
child [missing]
child {node{$D^{(6)}$} [sibling distance=38pt]
child {node{$F_{1,-}$}}
child {node{$D^{(12)}$} [sibling distance=10pt]
}
child {node{$D^{(14)}$} [sibling distance=10pt,level distance=1.5cm]
}
child {node{$F_{7,+}$}}
child {node{$F_{7,-}$}}
}
;
\end{tikzpicture}

Finally, by repeating the above step, we jump to the full $D$-tree shown as follows where we use $F$ to replace
$F_{i,\pm}$ for short.
\begin{figure}[htb]
\caption{Duhamel Tree}
\label{figure:duhamel tree}
\begin{tikzpicture}
\node {$D^{(0)}$}[sibling distance=45pt,level distance=1.2cm]

child {node{$D^{(2)}$} [sibling distance=50pt]
child {node{$D^{(4)}$} [sibling distance=10pt]
child {node{F}}
child {node{F}}
child {node{F}}
child {node{F}}
child {node{F}}
}
child {node{F}}
child {node{$D^{(8)}$} [sibling distance=10pt,level distance=2cm]
child {node{F}}
child {node{F}}
child {node{F}}
child {node{F}}
child {node{F}}
}
child {node{$D^{(10)}$} [sibling distance=10pt]
child {node{F}}
child {node{F}}
child {node{F}}
child {node{F}}
child {node{F}}
}
child {node{F}}
}
child [missing]
child [missing]
child [missing]
child [missing]
child {node{$D^{(6)}$} [sibling distance=50pt]
child {node{F}}
child {node{$D^{(12)}$} [sibling distance=10pt]
child {node{F}}
child {node{F}}
child {node{F}}
child {node{F}}
child {node{F}}
}
child {node{$D^{(14)}$} [sibling distance=10pt,level distance=2cm]
child {node{F}}
child {node{F}}
child {node{F}}
child {node{F}}
child {node{F}}
}
child {node{F}}
child {node{F}}
}
;
\end{tikzpicture}
\end{figure}
\end{example}

Next, we present the following algorithm to obtain the Duhamel expansion from the $D$-tree. For convenience, let $U_{\pm j}:=e^{\pm it_{j}\Delta}$.
\begin{algorithm}[From $D$-tree to Duhamel expansion]\label{algorithm:from d-tree to duhamel expansion}
~\\
\hspace*{1em}$(1)$ Set $F_{i,+}=U_{-2k-1}\phi$, $F_{i,-}=\ol{U_{-2k-1}\phi}$ for $i=1,2,...,2k$.
If $sgn(2k)=+$, set
\begin{align*}
&D^{(2k)}(t_{2k+1})=U_{-2k-1}(|\phi|^{4}\phi),
\end{align*}
if $sgn(2k)=-$, set
\begin{align*}
&D^{(2k)}(t_{2k+1})=\ol{U_{-2k-1}(|\phi|^{4}\phi)}.
\end{align*}

$(2)$ Set counter $l=k-1$.

$(3)$ Given $l$, if $sgn(2l)=+$, set
\begin{align*}
&D^{(2l)}(t_{2l+1})=U_{-2l-1}\lrc{\lrs{U_{2l+1}C_{1}}\lrs{U_{2l+1}C_{2}}
\lrs{\ol{U_{2l+1}\ol{C_{3}}}}\lrs{U_{2l+1}C_{4}}\lrs{\ol{U_{2l+1}\ol{C_{5}}}}}
\end{align*}
if $sgn(2l)=-$, set
\begin{align*}
&D^{(2l)}(t_{2l+1})=\ol{U_{-2l-1}\lrc{\lrs{U_{2l+1}\ol{C_{1}}}\lrs{\ol{U_{2l+1}C_{2}}}
\lrs{U_{2l+1}\ol{C_{3}}}\lrs{\ol{U_{2l+1}C_{4}}}\lrs{U_{2l+1}\ol{C_{5}}}}}
\end{align*}
where $C_{i}$ is the $i$-th child of $D^{(2l)}$ in the $D$-tree.

$(4)$ Set $l=l-1$. If $l=0$, set
\begin{align*}
&D^{(0)}(t_{1},\underline{t}_{2k+1})=(U_{1}C_{l})(x_{1})(\ol{U_{1}\ol{C_{r}}})(x_{1}'),
\end{align*}
where $C_{l}$/$C_{r}$ is the left/right child of $D^{(0)}$ in the $D$-tree, and stop, otherwise go to step $(3)$.
\end{algorithm}

\begin{proposition}\label{prop:from d-tree to duhamel expansion}
By Algorithm $\ref{algorithm:from d-tree to duhamel expansion}$ (From $D$-tree to Duhamel expansion), we have
\begin{align*}
J_{\mu,sgn}^{(2k+1)}(|\phi\rangle\langle \phi|^{\otimes (2k+1)})(t_{1},\underline{t}_{2k+1})=D^{(0)}(t_{1},\underline{t}_{2k+1}).
\end{align*}
\end{proposition}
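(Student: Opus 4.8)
The plan is to exploit that the tensor (factorized) structure of a kernel is preserved by the two operations appearing in the iterated Duhamel expansion --- the free flows $U^{(n)}$ and the collapsing operators $B^{\pm}$ --- and to show that Algorithm~\ref{algorithm:from d-tree to duhamel expansion} is exactly a transcript of how these operations act on the factors. First I would record two elementary lemmas, each immediate from the kernel-form definitions. \emph{Propagation lemma:} if a kernel has the form $\prod_{l=1}^{n}\psi_{l}(x_{l})\ol{\chi_{l}(x_{l}')}$, then $U^{(n)}(s)$ sends it to $\prod_{l=1}^{n}(e^{is\Delta}\psi_{l})(x_{l})\ol{(e^{is\Delta}\chi_{l})(x_{l}')}$, i.e.\ it acts factor by factor (using $e^{-is\Delta}\ol f=\ol{e^{is\Delta}f}$). \emph{Collapse lemma:} on such a $(2j+1)$-particle kernel, $B^{+}_{m;2j,2j+1}$ deletes particles $2j,2j+1$ and replaces the $m$-th unprimed factor $\psi_{m}$ by $\psi_{m}\cdot(\psi_{2j}\ol{\chi_{2j}})\cdot(\psi_{2j+1}\ol{\chi_{2j+1}})$, leaving the other $\psi_{l},\chi_{l}$ ($l\le 2j-1$) unchanged, while $B^{-}_{m;2j,2j+1}$ does the same with $\ol{\chi_{m}}$ in place of $\psi_{m}$. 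Since $\ga^{(2k+1)}(t_{2k+1})=|\phi\rangle\langle\phi|^{\otimes(2k+1)}$ has all its factors equal to $\phi$, reading the iterated Duhamel chain as the downward recursion $\ga^{(2j-1)}(t_{2j-1})=U^{(2j-1)}(t_{2j-1}-t_{2j+1})\,B^{sgn(2j)}_{\mu(2j);2j,2j+1}\,\ga^{(2j+1)}(t_{2j+1})$, $j=k,k-1,\dots,1$, keeps every $\ga^{(2j-1)}(t_{2j-1})$ factorized and $\ga^{(1)}(t_{1})$ rank one: $\ga^{(1)}(t_{1})(x_{1};x_{1}')=f(t_{1},x_{1})\ol{g(t_{1},x_{1}')}$.

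Then I would prove, by structural induction on the $D$-tree from the leaves to the root (equivalently, downward induction on $j$), the statement attached to each interior node $D^{(2l)}$ ($l\ge 1$): the function produced by Algorithm~\ref{algorithm:from d-tree to duhamel expansion} equals $e^{-it_{2l+1}\Delta}$ applied to the product of the five functions that the collapse lemma multiplies into the $\mu(2l)$-th slot at step~$l$, each of those five being $e^{it_{2l+1}\Delta}$ applied to the value the algorithm assigns to the corresponding child of $D^{(2l)}$. The content of the induction is exactly that Algorithm~\ref{algorithm:duhamel tree}, step~$(3)$, picks out the right children: for $sgn(2l)=+$ the five factors are $(\psi_{\mu(2l)},\psi_{2l},\ol{\chi_{2l}},\psi_{2l+1},\ol{\chi_{2l+1}})$ (with the conjugate template for $sgn(2l)=-$); the condition $\mu(2k_{1})=\mu(2l),\,sgn(2k_{1})=sgn(2l)$ singles out child~$1$ as the \emph{preceding} augmentation of the very factor $\psi_{\mu(2l)}$; the conditions $\mu(2k_{i})=2l$ or $2l+1$ with the two signs single out children~$2$--$5$ as the last augmentations of $\psi_{2l},\chi_{2l},\psi_{2l+1},\chi_{2l+1}$ before their absorption at step~$l$; and the \emph{minimality} of each $k_{i}$ guarantees nothing is skipped, so that a factor never touched is a leaf $F_{\cdot,\pm}=e^{-it_{2k+1}\Delta}\phi$, a bare free evolution of $\phi$. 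The prefactors $U_{2l+1}=e^{it_{2l+1}\Delta}$ and $U_{-2l-1}=e^{-it_{2l+1}\Delta}$ in step~$(3)$ telescope along the chain of nested nodes to reproduce precisely the hierarchy propagators $U^{(2j-1)}(t_{2j-1}-t_{2j+1})=e^{i(t_{2j-1}-t_{2j+1})\Delta}$, and the conjugation template $\ol{U_{2l+1}\ol{C_{i}}}$ (together with the outer bar on $D^{(2l)}$ when $sgn(2l)=-$) is exactly what the primed-variable bookkeeping in the collapse lemma demands.

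The base of the induction is the deepest node $D^{(2k)}$, set directly in step~$(1)$: here $B^{sgn(2k)}_{\mu(2k);2k,2k+1}|\phi\rangle\langle\phi|^{\otimes(2k+1)}$ puts $|\phi|^{4}\phi$ (or its conjugate, according to $sgn(2k)$) into the $\mu(2k)$-th slot, which is exactly $U_{-2k-1}(|\phi|^{4}\phi)$ up to the $e^{-it_{2k+1}\Delta}$ normalization. Finally step~$(4)$ peels the root: the two children of $D^{(0)}$ selected in Algorithm~\ref{algorithm:duhamel tree}, step~$(1)$, are the first occurrences of $\mu(2l)=1$ with $sgn=+$ and with $sgn=-$, so by the recursion they carry the (normalized) $f$ and $g$, and $D^{(0)}(t_{1},\underline{t}_{2k+1})=(U_{1}C_{l})(x_{1})\,\ol{(U_{1}\ol{C_{r}})(x_{1}')}$ supplies the remaining propagation $U^{(1)}(t_{1}-t_{3})$; hence $D^{(0)}=J_{\mu,sgn}^{(2k+1)}(|\phi\rangle\langle\phi|^{\otimes(2k+1)})$, as claimed.

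The main obstacle is not analytic --- the two lemmas and the telescoping of the free flows are routine --- but the combinatorial bookkeeping in the inductive step: verifying that the minimality conventions in Algorithm~\ref{algorithm:duhamel tree}, step~$(3)$, select exactly the right five functions, in the right slots, with the right conjugations, i.e.\ that the $D$-tree is a faithful syntactic record of how the collapse lemma acts repeatedly on the tensor factors along the recursion. The delicate points are keeping the conjugations straight for the primed slots and confirming that the $e^{\pm it_{2l+1}\Delta}$ prefactors reassemble precisely into $U^{(2j-1)}(t_{2j-1}-t_{2j+1})$; choosing the inductive hypothesis so that these become direct verifications rather than a case chase is the real work.
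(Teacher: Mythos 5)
Your proposal is correct and is, in substance, the argument the paper intends: the paper's own ``proof'' consists of the single sentence that the identity follows from Algorithms \ref{algorithm:duhamel tree} and \ref{algorithm:from d-tree to duhamel expansion}, followed by a verification on the worked example, so your propagation/collapse lemmas and the leaf-to-root structural induction simply supply the bookkeeping the authors leave implicit. In particular your reading of the minimality and sign conditions in step $(3)$ of Algorithm \ref{algorithm:duhamel tree} (child $1$ as the preceding augmentation of the same unprimed or primed slot, children $2$--$5$ as the last augmentations of the absorbed factors) and the telescoping of the $U_{\pm(2l+1)}$ prefactors into $U^{(2j-1)}(t_{2j-1}-t_{2j+1})$ are exactly what the example in the paper exhibits concretely.
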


\begin{proof}
It follows from Algorithms $\ref{algorithm:duhamel tree}$ and $\ref{algorithm:from d-tree to duhamel expansion}$.
\end{proof}
\begin{example} \label{example:duhamel expansion}
We calculate the Duhamel expansion in Example $\ref{example:duhamel tree}$. By Algorithm $\ref{algorithm:duhamel tree}$ and $\ref{algorithm:from d-tree to duhamel expansion}$, we obtain
\begin{align*}
\begin{cases}
&D^{(14)}(t_{15})=\ol{U_{-15}(|\phi|^{4}\phi)},\\
&D^{(12)}(t_{13},t_{15})=U_{-13}(|U_{13,15}\phi|^{4}U_{13,15}\phi),\\
&D^{(10)}(t_{11},t_{15})=U_{-11}(|U_{11,15}\phi|^{4}U_{11,15}\phi),\\
&D^{(8)}(t_{9},t_{15})=\ol{U_{-9}(|U_{9,15}\phi|^{4}U_{9,15}\phi)},\\
&D^{(6)}(t_{7},t_{13},t_{15})=\ol{U_{-7}\lrc{(U_{7,15}\phi)(\ol{U_{7}D^{(12)}})
(U_{7}\ol{D^{(14)}})(\ol{U_{7,15}\phi})(U_{7,15}\phi)}},\\
&D^{(4)}(t_{5},t_{15})=U_{-5}(|U_{5,15}\phi|^{4}U_{5,15}\phi),\\
&D^{(2)}(t_{3},t_{5},t_{9},t_{11},t_{15})=U_{-3}\lrc{(U_{3}D^{(4)})(U_{3,15}\phi)
(\ol{U_{3}\ol{D^{(8)}}})
(U_{3}D^{(10)})(\ol{U_{3,15}\phi})},\\
&D^{(0)}(t_{1},\underline{t}_{15})=(U_{1}D^{(2)})(\ol{U_{1}\ol{D^{(6)}}}).
\end{cases}
\end{align*}
where $U_{i,j}:=U_{i}U_{-j}$.

On the one hand, expanding $D^{(0)}(t_{1},\underline{t}_{15})$ gives the Duhamel expansion
\begin{align*}
&D^{(0)}(t_{1},\underline{t}_{15})\\
=&(U_{1}D^{(2)})(\ol{U_{1}\ol{D^{(6)}}})\\
=&\lrs{U_{1,3}\lrc{(U_{3}D^{(4)})(U_{3,15}\phi)
(\ol{U_{3}\ol{D^{(8)}}})(U_{3}D^{(10)})(\ol{U_{3,15}\phi})}}\\
&\cdot \lrs{\ol{U_{1,7}\lrc{(U_{7,15}\phi)(\ol{U_{7}D^{(12)}})
(U_{7}\ol{D^{(14)}})(\ol{U_{7,15}\phi})(U_{7,15}\phi)}}}\\
=&U_{1,3}\lrc{(U_{3,5}(|U_{5,15}\phi|^{4}U_{5,15}\phi))(|U_{3,15}\phi|^{2})
(\ol{U_{3,9}(|U_{9,15}\phi|^{4}U_{9,15}\phi)})
(U_{3,11}(|U_{11,15}\phi|^{4}U_{11,15}\phi))}\\
&\cdot \ol{U_{1,7}\lrc{(|U_{7,15}\phi|^{2}U_{7,15}\phi)(\ol{U_{7,13}(|U_{13,15}\phi|^{4}U_{13,15}\phi)})
(U_{7,15}(|\phi|^{4}\phi))}}
\end{align*}

On the other hand, we calculate $J_{\mu,sgn}^{(15)}(|\phi\rangle\langle \phi|^{\otimes 15})(t_{1},\underline{t}_{2k+1})$ by step. Note that
\begin{align*}
&J_{\mu,sgn}^{(15)}(|\phi\rangle\langle \phi|^{\otimes 15})(t_{1},\underline{t}_{15})\\
=&U_{1,3}^{(1)}B_{1;2,3}^{+}U_{3,5}^{(3)}B_{1;4,5}^{+}U_{5,7}^{(5)}B_{1;6,7}^{-}
U_{7,9}^{(7)}B_{2;8,9}^{-}U_{9,11}^{(9)}B_{3;10,11}^{+}U_{11,13}^{(11)}B_{6;12,13}^{+}
U_{13,15}^{(13)}B_{6;14,15}^{-}(|\phi\rangle\langle \phi|^{\otimes 15}).
\end{align*}
At first,
\begin{align*}
U_{13,15}^{(13)}B_{6;14,15}^{-}(|\phi\rangle\langle \phi|^{\otimes 15})
=&(U_{13,15}\phi)(x_{6})(\ol{U_{13,15}(|\phi|^{4}\phi)})(x_{6}')|U_{13,15}\phi\rangle\langle
U_{13,15}\phi|^{\otimes 12}.
\end{align*}
Adding $U^{(11)}_{11,13}B_{6;12,13}^{+}$ gives
\begin{align*}
(U_{11,13}(|U_{13,15}\phi|^{4}U_{13,15}\phi))(x_{6})(\ol{U_{11,15}(|\phi|^{4}\phi)})(x_{6}')
|U_{11,15}\phi\rangle\langle
U_{11,15}\phi|^{\otimes 10}.
\end{align*}
Then adding $U_{7,9}^{(7)}B_{2;8,9}^{-}U_{9,11}^{(9)}B_{3;10,11}^{+}$ gives
\begin{align*}
&(U_{7,15}\phi)(x_{2})(\ol{U_{7,9}(|U_{9,15}\phi|^{4}U_{9,15}\phi)})(x_{2}')\\
&(U_{7,11}(|U_{11,15}\phi|^{4}U_{11,15}\phi))(x_{3})(\ol{U_{7,15}\phi})(x_{3}')\\
&(U_{7,13}(|U_{13,15}\phi|^{4}U_{13,15}\phi))(x_{6})(\ol{U_{7,15}(|\phi|^{4}\phi)})(x_{6}')
|U_{7,15}\phi\rangle\langle
U_{7,15}\phi|^{\otimes 4}
\end{align*}
Finally adding $U^{(1)}_{1,3}B_{1;2,3}^{+}U_{3,5}^{(3)}B_{1;4,5}^{+}U_{5,7}^{(5)}B_{1;6,7}^{-}$ gives
%

\begin{align*}
&U_{1,3}\lrc{(U_{3,5}(|U_{5,15}\phi|^{4}U_{5,15}\phi))(|U_{3,15}\phi|^{2})(\ol{U_{3,9}(|U_{9,15}\phi|^{4}U_{9,15}\phi)})
(U_{3,11}(|U_{11,15}\phi|^{4}U_{11,15}\phi))}(x_{1})\\
&\cdot\lrs{\ol{U_{1,7}\lrc{(|U_{7,15}\phi|^{2}U_{7,15}\phi)(\ol{U_{7,13}(|U_{13,15}\phi|^{4}U_{13,15}\phi)})
(U_{7,15}(|\phi|^{4}\phi))}}}(x_{1}')
\end{align*}
which shows that $J_{\mu,sgn}^{(15)}(|\phi\rangle\langle \phi|^{\otimes 15})(t_{1},\underline{t}_{15})=D^{(0)}(t_{1},\underline{t}_{15})$.
\end{example}

\subsection{Compatible Time Integration Domain}\label{subsection:Compatible Time Integration Domain}
To enable the application of $U$-$V$ multilinear estimates, we have to take into account the compatible time integration domain. Combining with the Duhamel tree, we present a general algorithm to compute the Duhamel expansion with the compatible time integration domain.
\begin{definition}\label{definition:compatible time integration domain}
Define the compatible time integration domain as follows
\begin{align}\label{equ:compatible time integration domain}
T_{C}(\mu,sgn)=\lr{t_{2j+1}\geq t_{2l+1}:D^{(2l)}\to D^{(2j)}}.
\end{align}
where $D^{(2l)}\to D^{(2j)}$ denotes that $D^{(2l)}$ is the child of $D^{(2j)}$. Moreover, we say that $D^{(2l)}$ is the offspring of $D^{(2j)}$ if there exist $2l_{1}$,...,$2l_{r}$ such that $D^{(2l)}\to D^{(2l_{1})}\to \ccc \to D^{(2l_{r})}\to D^{(2j)}$.
\end{definition}

\begin{example} \label{example:duhamel integral with the compatible time domain}
Continuing the Example $\ref{example:duhamel expansion}$, we will expand
\begin{align*}
\int_{T_{C}}J_{\mu,sgn}^{(15)}(|\phi\rangle \langle \phi|^{\otimes 15})(t_{1},\underline{t}_{15})d\underline{t}_{15}.
\end{align*}
From the $D$-tree (Fig. $\ref{figure:duhamel tree}$), the compatible time integration domain is as follows
\begin{align*}
\int_{t_{3}=0}^{t_{1}}\int_{t_{7}=0}^{t_{1}}
\int_{t_{5}=0}^{t_{3}}\int_{t_{9}=0}^{t_{3}}\int_{t_{11}=0}^{t_{3}}
\int_{t_{13}=0}^{t_{7}}\int_{t_{15}=0}^{t_{7}}.
\end{align*}
Write $\int_{t_{15}=0}^{t_{1}}$ on the outside and hence $\int_{t_{7}=0}^{t_{1}}$
changes into $\int_{t_{7}=t_{15}}^{t_{1}}$. Then it turns to
\begin{align*}
\int_{t_{15}=0}^{t_{1}}
\int_{t_{3}=0}^{t_{1}}\int_{t_{7}=t_{15}}^{t_{1}}
\int_{t_{5}=0}^{t_{3}}\int_{t_{9}=0}^{t_{3}}\int_{t_{11}=0}^{t_{3}}
\int_{t_{13}=0}^{t_{7}}.
\end{align*}
So we can rewrite
\begin{align*}
&\int_{T_{C}}J_{\mu,sgn}^{(15)}(|\phi\rangle \langle \phi|^{\otimes 15})(t_{1},\underline{t}_{15})d\underline{t}_{15}\\
=&\int_{t_{15}=0}^{t_{1}}
\lrc{
\int_{t_{3}=0}^{t_{1}}
\int_{t_{5}=0}^{t_{3}}\int_{t_{9}=0}^{t_{3}}\int_{t_{11}=0}^{t_{3}} U_{1}D^{(2)}}
\lrc{
\int_{t_{7}=t_{15}}^{t_{1}} \int_{t_{13}=0}^{t_{7}} \ol{U_{1}\ol{D^{(6)}}}}\\
=&\int_{t_{15}=0}^{t_{1}} \int_{t_{3}=0}^{t_{1}}
U_{1,3}\lrc{
\lrs{U_{3}\int_{t_{5}=0}^{t_{3}}D^{(4)}}
U_{3,15}\phi
\lrs{\ol{U_{3}\int_{t_{9}=0}^{t_{3}}\ol{D^{(8)}}}}
\lrs{U_{3}\int_{t_{11}=0}^{t_{3}}D^{(10)}}
\ol{U_{3,15}\phi}}\\
&\cdot
\int_{t_{7}=t_{15}}^{t_{1}}
\ol{U_{1,7}
\lrc{(U_{7,15}\phi)\lrs{\ol{U_{7}\int_{t_{13}=0}^{t_{7}}D^{(12)}}}
(U_{7}\ol{D^{(14)}})(\ol{U_{7,15}\phi})(U_{7,15}\phi)
}
}
\end{align*}
where $D^{(2j)}$ is shown in Example \ref{example:duhamel expansion}. We can see that all the Duhamel structures are fully compatible with $U$-$V$ multilinear estimates, which we will show in Section \ref{subsection:Estimates using the $U$-$V$ multilinear estimates}.
\end{example}

Next, we give a general form of the algorithm.

\begin{algorithm}[From $D$-tree to Duhamel integration]\label{algorithm:from d-tree to duhamel integration}
~\\
\hspace*{1em}$(1)$ Let
$$Q^{(2k)}(t_{2k+1})=D^{(2k)}(t_{2k+1}).$$
and replace $D^{(2k)}$ by $Q^{(2k)}(t_{2k+1})$ in the $D$-tree.

$(2)$ Set counter $l=k-1$.

$(3)$ Given $l$, there exists only one $j$ such that $D^{(2l)}\to D^{(2j)}$. Then there will be four cases as follows.

Case 1. $D^{(2k)}$ is the offspring of $D^{(2l)}$ and $sgn(2l)=+$. Then set
\begin{align*}
&Q^{(2l)}(t_{2j+1},t_{2k+1})\\
=&\int_{t_{2l+1}=t_{2k+1}}^{t_{2j+1}}U_{-2l-1}\lrc{\lrs{U_{2l+1}C_{1}}\lrs{U_{2l+1}C_{2}}
\lrs{\ol{U_{2l+1}\ol{C_{3}}}}\lrs{U_{2l+1}C_{4}}\lrs{\ol{U_{2l+1}\ol{C_{5}}}}}dt_{2l+1}.
\end{align*}

Case 2. $D^{(2k)}$ is the offspring of $D^{(2l)}$ and $sgn(2l)=-$. Then set
\begin{align*}
&Q^{(2l)}(t_{2j+1},t_{2k+1})\\
=&\int_{t_{2l+1}=t_{2k+1}}^{t_{2j+1}}\ol{U_{-2l-1}
\lrc{\lrs{U_{2l+1}\ol{C_{1}}}\lrs{\ol{U_{2l+1}C_{2}}}
\lrs{U_{2l+1}\ol{C_{3}}}\lrs{\ol{U_{2l+1}C_{4}}}\lrs{U_{2l+1}\ol{C_{5}}}}}dt_{2l+1}.
\end{align*}

Case 3. $D^{(2k)}$ is not the offspring of $D^{(2l)}$ and $sgn(2l)=+$. Then set
\begin{align*}
&Q^{(2l)}(t_{2j+1},t_{2k+1})\\
=&\int_{t_{2l+1}=0}^{t_{2j+1}}
U_{-2l-1}\lrc{\lrs{U_{2l+1}C_{1}}\lrs{U_{2l+1}C_{2}}
\lrs{\ol{U_{2l+1}\ol{C_{3}}}}\lrs{U_{2l+1}C_{4}}\lrs{\ol{U_{2l+1}\ol{C_{5}}}}}dt_{2l+1}.
\end{align*}

Case 4. $D^{(2k)}$ is not the offspring of $D^{(2l)}$ and $sgn(2l)=-$. Then set
\begin{align*}
&Q^{(2l)}(t_{2j+1},t_{2k+1})\\
=&\int_{t_{2l+1}=0}^{t_{2j+1}}\ol{U_{-2l-1}
\lrc{\lrs{U_{2l+1}\ol{C_{1}}}\lrs{\ol{U_{2l+1}C_{2}}}
\lrs{U_{2l+1}\ol{C_{3}}}\lrs{\ol{U_{2l+1}C_{4}}}\lrs{U_{2l+1}\ol{C_{5}}}}}dt_{2l+1}.
\end{align*}
In the above, $C_{i}$ is the $i$-th child of $D^{(2l)}$ in the $D$-tree.

$(4)$ Update the $D$-tree by using $Q^{(2l)}(t_{2j+1},t_{2k+1})$ to replace $D^{(2l)}$.

$(5)$ Set $l=l-1$. If $l=0$, set
$$Q^{(0)}(t_{1})=\int_{t_{2k+1}=0}^{t_{1}}(U_{1}C_{l})(\ol{U_{1}\ol{C_{r}}})dt_{2k+1}$$
where $C_{l}$ or $C_{r}$ is the left or right child of $D^{(0)}$ in the updated $D$-tree, and stop, otherwise go to step $(3)$.
\end{algorithm}
Hence, we arrive at a representation as follows.
\begin{proposition}\label{prop:from d-tree to duhamel integration}
\begin{align}
\int_{T_{C}}J_{\mu,sgn}^{(2k+1)}(|\phi\rangle \langle \phi|^{(2k+1)})
(t_{1},\underline{t}_{2k+1})d\underline{t}_{2k+1}=Q^{(0)}(t_{1}).
\end{align}
\end{proposition}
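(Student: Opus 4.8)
The plan is to prove, by induction along the $D$-tree in the same processing order $l=k-1,k-2,\dots,0$ used by Algorithm~\ref{algorithm:from d-tree to duhamel integration}, that each partial object $Q^{(2l)}$ it produces is exactly the integral of $D^{(2l)}$ over those inequalities of $T_{C}(\mu,sgn)$ that live inside the subtree rooted at $D^{(2l)}$, with the variable $t_{2l+1}$ left free (to be consumed by the parent) and with $t_{2k+1}$ left free precisely when $D^{(2k)}$ is an offspring of $D^{(2l)}$. Recall that $D^{(2k)}$, having the largest index, is always a leaf of the $D$-tree, so $t_{2k+1}$ is the one time variable sitting at the bottom of the ordering; this is exactly why Algorithm~\ref{algorithm:from d-tree to duhamel integration} carries it along untouched and integrates it only at the very last step. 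Once the inductive claim is in hand, the case $l=0$ together with Proposition~\ref{prop:from d-tree to duhamel expansion}, which identifies the integrand with $D^{(0)}$, gives $Q^{(0)}(t_{1})=\int_{T_{C}}D^{(0)}(t_{1},\underline{t}_{2k+1})\,d\underline{t}_{2k+1}=\int_{T_{C}}J_{\mu,sgn}^{(2k+1)}(|\phi\rangle\langle\phi|^{\otimes(2k+1)})(t_{1},\underline{t}_{2k+1})\,d\underline{t}_{2k+1}$, which is the assertion.

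To state the induction precisely, for a $D$-node $D^{(2l)}$ let $\mathcal{T}(2l)$ be the set of even indices $2m$ with $D^{(2m)}$ in the subtree rooted at $D^{(2l)}$, and let $T_{C}(2l)\subseteq T_{C}(\mu,sgn)$ be the inequalities $t_{2c+1}\geq t_{2c'+1}$ coming from edges $D^{(2c)}\to D^{(2c')}$ with both indices in $\mathcal{T}(2l)$, together with the positivity constraints at the leaves of that subtree. The hypothesis is: $Q^{(2l)}$ equals $D^{(2l)}$ integrated in every $t_{2m+1}$ with $2m\in\mathcal{T}(2l)\setminus\{2l,2k\}$ over $T_{C}(2l)$, where $t_{2l+1}$ itself ranges up to its parent time $t_{2j+1}$, with $D^{(2l)}\to D^{(2j)}$. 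The base case $l=k$ is immediate, the subtree being a single node and $Q^{(2k)}=D^{(2k)}(t_{2k+1})$ by step~(1).

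For the inductive step I would peel off the $t_{2l+1}$-integration by rearranging the iterated integrals (Fubini applies, the domain being a finite simplicial region and the integrand continuous in the time variables). The children of $D^{(2l)}$ are the $F$-nodes, which carry no time variable, and at most five $D$-nodes $D^{(2c_{1})},\dots$, each already processed, whose $Q^{(2c_{i})}$ is, by hypothesis, the corresponding subtree integral with $t_{2c_{i}+1}$ free and bounded above by the current variable $t_{2l+1}$. Because the $D$-tree is a tree, the edges of $T_{C}(2l)$ decompose into the edges $D^{(2c_{i})}\to D^{(2l)}$ and the mutually disjoint internal edge sets of the children's subtrees; hence $T_{C}(2l)$ factorizes over the children, and performing all the children's integrations first (they are already packaged into the $Q^{(2c_{i})}$) and then the $t_{2l+1}$-integration reproduces, after inserting the propagators $U_{\pm j}$, exactly the bracketed product in Algorithm~\ref{algorithm:from d-tree to duhamel integration} with $C_{i}$ its $i$-th child. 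The split between the $sgn(2l)=+$ and $sgn(2l)=-$ formulas is the passage from $B^{+}$ to $B^{-}$ in the kernel form of $J_{\mu,sgn}^{(2k+1)}$, exactly as in the proof of Proposition~\ref{prop:from d-tree to duhamel expansion}.

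The step I expect to require the most care is the lower limit of the $t_{2l+1}$-integration, that is, the four-way case split in step~(3). If $D^{(2k)}$ is \emph{not} an offspring of $D^{(2l)}$, then no descendant time variable of $D^{(2l)}$ survives the children's integrations, the only constraint on $t_{2l+1}$ is positivity, and Case~3/4 correctly writes $\int_{0}^{t_{2j+1}}$. If $D^{(2k)}$ \emph{is} an offspring, then $t_{2k+1}$ is still free, transitivity along the tree order gives $t_{2l+1}\geq t_{2k+1}$ on $T_{C}(2l)$, and the $Q$ of the child lying on the path toward $D^{(2k)}$ was, by hypothesis, defined with lower limit $t_{2k+1}$ and hence represents the intended integral only on $\{t_{2l+1}\geq t_{2k+1}\}$; integrating $t_{2l+1}$ from $0$ would add a spurious contribution over $t_{2l+1}\in[0,t_{2k+1}]$. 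Thus Case~1/2 must, and does, use $\int_{t_{2k+1}}^{t_{2j+1}}$. Propagating this up the unique path from $D^{(2k)}$ to $D^{(0)}$, every ancestor of $D^{(2k)}$ inherits the lower limit $t_{2k+1}$, so once $l=1$ has been processed the only surviving constraint on $t_{2k+1}$ is $0\leq t_{2k+1}\leq t_{1}$ — precisely what step~(5) integrates after forming the $x_{1}/x_{1}'$ kernel from the two children of $D^{(0)}$. This closes the induction and yields the proposition.
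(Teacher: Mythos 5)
Your proposal is correct and follows the same route the paper takes implicitly: the paper's own proof is the one-line assertion that the identity "follows from Algorithm \ref{algorithm:from d-tree to duhamel integration}," with Example \ref{example:duhamel integral with the compatible time domain} illustrating the key Fubini step of pulling $t_{2k+1}$ to the outside so that the ancestors of $D^{(2k)}$ acquire the lower limit $t_{2k+1}$. Your leaf-to-root induction, the factorization of $T_{C}$ over the disjoint subtrees of the children, and the justification of the four-way case split in step (3) simply make rigorous the bookkeeping the paper leaves to the reader.
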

\begin{proof}
It follows from Algorithm $\ref{algorithm:from d-tree to duhamel integration}$.
\end{proof}

\subsection{Estimates using the $U$-$V$ multilinear estimates}\label{subsection:Estimates using the $U$-$V$ multilinear estimates}

Referring to the standard text \cite{koch2014dispersive} for the definition of $U_{t}^{p}$ and $V_{t}^{p}$, we define $X^{s}([0,T))$ and $Y^{s}([0,T))$ to be the spaces of all functions $u:[0,T)\mapsto H^{s}(\T^{d})$ such that for every $\xi\in \Z^{d}$ the map $t\mapsto \widehat{e^{-it\Delta}u(t)}(\xi)$ is in $U^{2}([0,T);\C)$ and $V_{rc}^{2}([0,T);\C)$, respectively, with norms given by
$$\n{u}_{X^{s}([0,T))}:=\lrs{\sum_{\xi\in \Z^{d}}\lra{\xi}^{2s}
\n{\widehat{e^{-it\Delta}u(t)}(\xi)}_{U^{2}}^{2}}^{1/2},$$
$$\n{u}_{Y^{s}([0,T))}:=\lrs{\sum_{\xi\in \Z^{d}}\lra{\xi}^{2s}
\n{\widehat{e^{-it\Delta}u(t)}(\xi)}_{V^{2}}^{2}}^{1/2}$$
as in \cite{herr2011global,herr2014strichartz,ionescu2012the,killip2016scale}. In particular, we have the usual properties,
\begin{align}
&\n{u}_{L_{t}^{\wq}H_{x}^{s}}\lesssim \n{u}_{Y^{s}}\lesssim \n{u}_{X^{s}},\label{equ:u-v and sobolev}\\
&\n{e^{it\Delta}f}_{Y^{s}}\leq  \n{e^{it\Delta}f}_{X^{s}}\leq\n{f}_{H^{s}}, \label{equ:u-v and sobolev, special case}
\end{align}
which were proved in \cite[Propositions 2.8-2.10]{herr2011global}.

By quintilinear estimates in Lemma $\ref{lemma:multilinear estimate d>=5}$ and the trivial estimate $\n{u}_{Y^{s}}\lesssim \n{u}_{X^{s}}$, we have that
\begin{align}\label{equ:multilinear estimate, high frequency, low regulariy}
&\bbn{\int_{a}^{t}e^{i(t-s)\Delta}(\wt{u}_{1}\wt{u}_{2}\wt{u}_{3}\wt{u}_{4}\wt{u}_{5})ds}_{X^{\frac{d-5}{2}}}\\
\leq &C \n{u_{1}}_{X^{\frac{d-5}{2}}}
\n{u_{2}}_{X^{\frac{d-1}{2}}}\n{u_{3}}_{X^{\frac{d-1}{2}}}
\n{u_{4}}_{X^{\frac{d-1}{2}}}\n{u_{5}}_{X^{\frac{d-1}{2}}}\notag
\end{align}

\begin{align}\label{equ:multilinear estimate, high frequency, high regulariy}
&\bbn{\int_{a}^{t}e^{i(t-s)\Delta}(\wt{u}_{1}\wt{u}_{2}\wt{u}_{3}\wt{u}_{4}\wt{u}_{5})ds}_{X^{\frac{d-1}{2}}}\\
\leq& C\n{u_{1}}_{X^{\frac{d-1}{2}}}
\n{u_{2}}_{X^{\frac{d-1}{2}}}\n{u_{3}}_{X^{\frac{d-1}{2}}}
\n{u_{4}}_{X^{\frac{d-1}{2}}}\n{u_{5}}_{X^{\frac{d-1}{2}}}\notag
\end{align}

\begin{align}\label{equ:multilinear estimate, low frequency, low regulariy}
&\bbn{\int_{a}^{t}e^{i(t-s)\Delta}(\wt{u}_{1}\wt{u}_{2}\wt{u}_{3}\wt{u}_{4}\wt{u}_{5})ds}_{X^{\frac{d-5}{2}}}\\
\leq &C \n{u_{1}}_{X^{\frac{d-5}{2}}}
\lrs{T^{\frac{1}{2(d+3)}}M_{0}^{\frac{2d+3}{2(d+3)}}\n{P_{\leq M_{0}}u_{2}}_{X^{\frac{d-1}{2}}}+\n{P_{>M_{0}}u_{2}}_{X^{\frac{d-1}{2}}}}\n{u_{3}}_{X^{\frac{d-1}{2}}}
\n{u_{4}}_{X^{\frac{d-1}{2}}}\n{u_{5}}_{X^{\frac{d-1}{2}}}\notag
\end{align}

\begin{align}\label{equ:multilinear estimate, low frequency, high regulariy}
&\bbn{\int_{a}^{t}e^{i(t-s)\Delta}(\wt{u}_{1}\wt{u}_{2}\wt{u}_{3}\wt{u}_{4}\wt{u}_{5})ds}_{X^{\frac{d-1}{2}}}\\
\leq& C\n{u_{1}}_{X^{\frac{d-1}{2}}}
\lrs{T^{\frac{1}{2(d+3)}}M_{0}^{\frac{2d+3}{2(d+3)}}\n{P_{\leq M_{0}}u_{2}}_{X^{\frac{d-1}{2}}}+\n{P_{>M_{0}}u_{2}}_{X^{\frac{d-1}{2}}}}\n{u_{3}}_{X^{\frac{d-1}{2}}}
\n{u_{4}}_{X^{\frac{d-1}{2}}}\n{u_{5}}_{X^{\frac{d-1}{2}}}\notag
\end{align}
where $\wt{u}\in \lr{u,\ol{u}}$.

Before moving into the estimate part, we first mark the $D$-tree as a preparation, as we will use the above $U$-$V$ multilinear estimates according to the marked $D$-tree. Here, we give a general algorithm to mark a $D$-Tree.
\begin{algorithm}[Marked $D$-Tree] \label{algorithm:marked d-tree}
~\\
\hspace*{1em}$(1)$ We put a subscript $R$ at $D^{(2k)}$, that is, $D_{R}^{(2k)}$. Here, we use the subscript $R$ to denote the roughest term.

$(2)$ Set counter $l=k-1$. If $D^{(2k)}$ is the offspring (see Definition \ref{definition:compatible time integration domain}) of $D^{(2l)}$, put a subscript $R$ at $D^{(2l)}$, that is, $D_{R}^{(2l)}$. Moreover, if one of the children of $D^{(2l)}$ is $F$, then put a subscript $\phi$ at $D^{(2l)}$, that is, $D_{\phi}^{(2l)}$ or $D^{(2l)}_{\phi,R}$.

$(3)$ Set $l=l-1$. If $l=0$, then stop, otherwise go to step $(2)$.
\end{algorithm}

\begin{example} \label{example:estimate part}
 We estimate the Duhamel expansion in Example $\ref{example:duhamel expansion}$ with the corresponding time integration domain to show how to apply the $U$-$V$ multilinear estimates. First, Applying Algorithm \ref{algorithm:marked d-tree} to the $D$-tree in Fig. \ref{figure:duhamel tree}, we obtain a marked $D$-tree as in Fig. \ref{figure:marked duhamel tree}.
\begin{figure}[htpb]
\caption{Marked Duhamel Tree}
\label{figure:marked duhamel tree}
\begin{tikzpicture}[ box/.style={circle, minimum width =10pt, minimum height=10pt,
inner sep=1pt,draw=black}]
\node {$D^{(0)}$}[sibling distance=45pt]

child {node{$D_{\phi}^{(2)}$} [sibling distance=50pt]
child {node{$D_{\phi}^{(4)}$} [sibling distance=10pt]
child {node{F}}
child {node{F}}
child {node{F}}
child {node{F}}
child {node{F}}
}
child {node{F}}
child {node{$D_{\phi}^{(8)}$} [sibling distance=10pt,level distance=2cm]
child {node{F}}
child {node{F}}
child {node{F}}
child {node{F}}
child {node{F}}
}
child {node{$D_{\phi}^{(10)}$} [sibling distance=10pt]
child {node{F}}
child {node{F}}
child {node{F}}
child {node{F}}
child {node{F}}
}
child {node{F}}
}
child [missing]
child [missing]
child [missing]
child [missing]
child {node{$D_{\phi,R}^{(6)}$} [sibling distance=50pt]
child {node{F}}
child {node{$D_{\phi}^{(12)}$} [sibling distance=10pt]
child {node{F}}
child {node{F}}
child {node{F}}
child {node{F}}
child {node{F}}
}
child {node{$D_{R}^{(14)}$} [sibling distance=10pt,level distance=2cm]
child {node{F}}
child {node{F}}
child {node{F}}
child {node{F}}
child {node{F}}
}
child {node{F}}
child {node{F}}
}
;
\end{tikzpicture}
\end{figure}

Next, we get into the estimate part. By Proposition
$\ref{prop:from d-tree to duhamel integration}$, it suffices to estimate $Q^{(0)}(t_{1})$.
Combining the $D$-tree (Fig. $\ref{figure:duhamel tree}$) and Algorithm $\ref{algorithm:from d-tree to duhamel integration}$, we obtain
\begin{align*}
&Q^{(0)}(t_{1})=\int_{t_{15}=0}^{t_{1}}(U_{1}Q^{(2)}(t_{1},t_{15}))(\ol{U_{1}\ol{Q^{(6)}(t_{1},t_{15})}})dt_{15},\\
&Q^{(2)}(t_{1},t_{15})=\int^{t_{1}}_{t_{3}=0}U_{-3}\lrc{\lrs{U_{3}Q^{(4)}}\lrs{U_{3,15}\phi}
\lrs{\ol{U_{3}\ol{Q^{(8)}}}}\lrs{U_{3}Q^{(10)}}\lrs{\ol{U_{3,15}\phi}}}dt_{3},\\
&Q^{(6)}(t_{1},t_{15})=\int_{t_{7}=t_{15}}^{t_{1}}\ol{U_{-7}
\lrc{\lrs{U_{7,15}\phi}\lrs{\ol{U_{7}Q^{(12)}}}
\lrs{U_{7}\ol{Q^{(14)}}}\lrs{\ol{U_{7,15}\phi}}\lrs{U_{7,15}\phi}}}dt_{7}.
\end{align*}

At first, we use Minkowski to obtain
\begin{align*}
&\n{\lra{\nabla_{x_{1}}}^{
\frac{d-5}{2}}\lra{\nabla_{x_{1}'}}^{\frac{d-5}{2}}Q^{(0)}(t_{1})}
_{L_{T}^{\wq}L_{x_{1}}^{2}L_{x_{1}'}^{2}}\\
\leq& \int_{0}^{T}\n{U_{1}Q^{(2)}(t_{1},t_{15})}_{L_{t_{1}}^{\wq}H_{x_{1}}^{\frac{d-5}{2}}}
\n{\ol{U_{1}\ol{Q^{(6)}(t_{1},t_{15})}}}_{L_{t_{1}}^{\wq}H_{x_{1}'}^{\frac{d-5}{2}}}dt_{15}
\end{align*}
Note that $D_{\phi}^{(2)}$ carries no $R$ subscript, so we can bump it to $H^{\frac{d-1}{2}}$ and then use estimate $(\ref{equ:u-v and sobolev})$ to get
\begin{align*}
\leq&\int_{0}^{T}\n{U_{1}Q^{(2)}(t_{1},t_{15})}_{X^{\frac{d-1}{2}}}
\n{U_{1}\ol{Q^{(6)}(t_{1},t_{15})}}_{X^{\frac{d-5}{2}}}dt_{15}
\end{align*}

By multilinear estimate $(\ref{equ:multilinear estimate, low frequency, high regulariy})$,
\begin{align*}
\n{U_{1}Q^{(2)}}_{X^{\frac{d-1}{2}}}\leq  &C \n{U_{3}Q^{(4)}}_{X^{\frac{d-1}{2}}}
\lrs{T^{\frac{1}{2(d+3)}}M_{0}^{\frac{2d+3}{2(d+3)}}\n{P_{\leq M_{0}}U_{3,15}\phi}_{X^{\frac{d-1}{2}}}+\n{P_{>M_{0}}U_{3,15}\phi}_{X^{\frac{d-1}{2}}}}\\
&\n{U_{3}\ol{Q^{(8)}}}_{X^{\frac{d-1}{2}}}
\n{U_{3}Q^{(10)}}_{X^{\frac{d-1}{2}}}\n{U_{3,15}\phi}_{X^{\frac{d-1}{2}}}.
\end{align*}

As $D_{\phi,R}^{(6)}$ carries subscript $\phi$ and $R$, we use multilinear estimate $(\ref{equ:multilinear estimate, low frequency, low regulariy})$ to get
\begin{align*}
\n{U_{1}\ol{Q^{(6)}}}_{X^{\frac{d-5}{2}}}\leq &C\n{U_{7}\ol{Q^{(14)}}}_{X^{\frac{d-5}{2}}}
\lrs{T^{\frac{1}{2(d+3)}}M_{0}^{\frac{2d+3}{2(d+3)}}\n{P_{\leq M_{0}}U_{7,15}\phi}_{X^{\frac{d-1}{2}}}+\n{P_{>M_{0}}U_{7,15}\phi}_{X^{\frac{d-1}{2}}}}\\
&\n{U_{7}Q^{(12)}}_{X^{\frac{d-1}{2}}}
\n{U_{7,15}\phi}_{X^{\frac{d-1}{2}}}\n{U_{7,15}\phi}_{X^{\frac{d-1}{2}}}.
\end{align*}
From Algorithm $\ref{algorithm:from d-tree to duhamel integration}$, we have
\begin{align*}
&Q^{(4)}=\int_{t_{5}=0}^{t_{3}}U_{-5}\lrc{\lrs{U_{5,15}\phi}\lrs{U_{5,15}\phi}
\lrs{\ol{U_{5,15}\phi}}\lrs{U_{5,15}\phi}\lrs{\ol{U_{5,15}\phi}}}dt_{5},\\
&Q^{(8)}=\int_{t_{9}=0}^{t_{3}}\ol{U_{-9}
\lrc{\lrs{U_{9,15}\phi}\lrs{\ol{U_{9,15}\phi}}
\lrs{U_{9,15}\phi}\lrs{\ol{U_{9,15}\phi}}\lrs{U_{9,15}\phi}}}dt_{9},\\
&Q^{(10)}=\int_{t_{11}=0}^{t_{3}}U_{-11}\lrc{\lrs{U_{11,15}\phi}\lrs{U_{11,15}\phi}
\lrs{\ol{U_{11,15}\phi}}\lrs{U_{11,15}\phi}\lrs{\ol{U_{11,15}\phi}}}dt_{11},\\
&Q^{(12)}=\int_{t_{13}=0}^{t_{7}}U_{-13}\lrc{(U_{13,15}\phi)(U_{13,15}\phi)(\ol{U_{13,15}\phi})
(U_{13,15}\phi)(\ol{U_{13,15}\phi})}dt_{13},\\
&Q^{(14)}=\ol{U_{-15}(|\phi|^{4}\phi)}.
\end{align*}

Notice that $D_{\phi}^{(4)}$, $D_{\phi}^{(6)}$, $D_{\phi}^{(8)}$ and $D_{\phi}^{(10)}$ only carry subscript $\phi$, so we use multilinear estimate $(\ref{equ:multilinear estimate, low frequency, high regulariy})$ to obtain
\begin{align*}
\n{U_{3}Q^{(4)}}_{X^{\frac{d-1}{2}}}\leq& C\n{U_{5,15}\phi}_{X^{\frac{d-1}{2}}}
\lrs{T^{\frac{1}{2(d+3)}}M_{0}^{\frac{2d+3}{2(d+3)}}\n{P_{\leq M_{0}}U_{5,15}\phi}_{X^{\frac{d-1}{2}}}+\n{P_{>M_{0}}U_{5,15}\phi}_{X^{\frac{d-1}{2}}}}\\
&\n{U_{5,15}\phi}_{X^{\frac{d-1}{2}}}
\n{U_{5,15}\phi}_{X^{\frac{d-1}{2}}}\n{U_{5,15}\phi}_{X^{\frac{d-1}{2}}}\\
\leq &C\n{\phi}_{H^{\frac{d-1}{2}}}^{4}
\lrs{T^{\frac{1}{2(d+3)}}M_{0}^{\frac{2d+3}{2(d+3)}}\n{P_{\leq M_{0}}\phi}_{H^{\frac{d-1}{2}}}+\n{P_{>M_{0}}\phi}_{H^{\frac{d-1}{2}}}}
\end{align*}
\begin{align*}
\n{U_{3}\ol{Q^{(8)}}}_{X^{\frac{d-1}{2}}}\leq&
 C\n{U_{9,15}\phi}_{X^{\frac{d-1}{2}}}
\lrs{T^{\frac{1}{2(d+3)}}M_{0}^{\frac{2d+3}{2(d+3)}}\n{P_{\leq M_{0}}U_{9,15}\phi}_{X^{\frac{d-1}{2}}}+\n{P_{>M_{0}}U_{9,15}\phi}_{X^{\frac{d-1}{2}}}}\\
&\n{U_{9,15}\phi}_{X^{\frac{d-1}{2}}}
\n{U_{9,15}\phi}_{X^{\frac{d-1}{2}}}\n{U_{9,15}\phi}_{X^{\frac{d-1}{2}}}\\
\leq &C\n{\phi}_{H^{\frac{d-1}{2}}}^{4}
\lrs{T^{\frac{1}{2(d+3)}}M_{0}^{\frac{2d+3}{2(d+3)}}\n{P_{\leq M_{0}}\phi}_{H^{\frac{d-1}{2}}}+\n{P_{>M_{0}}\phi}_{H^{\frac{d-1}{2}}}}
\end{align*}
\begin{align*}
\n{U_{3}Q^{(10)}}_{X^{\frac{d-1}{2}}}\leq& C\n{U_{11,15}\phi}_{X^{\frac{d-1}{2}}}
\lrs{T^{\frac{1}{2(d+3)}}M_{0}^{\frac{2d+3}{2(d+3)}}\n{P_{\leq M_{0}}U_{11,15}\phi}_{X^{\frac{d-1}{2}}}+\n{P_{>M_{0}}U_{11,15}\phi}_{X^{\frac{d-1}{2}}}}\\
&\n{U_{11,15}\phi}_{X^{\frac{d-1}{2}}}
\n{U_{11,15}\phi}_{X^{\frac{d-1}{2}}}\n{U_{11,15}\phi}_{X^{\frac{d-1}{2}}}\\
\leq &C\n{\phi}_{H^{\frac{d-1}{2}}}^{4}
\lrs{T^{\frac{1}{2(d+3)}}M_{0}^{\frac{2d+3}{2(d+3)}}\n{P_{\leq M_{0}}\phi}_{H^{\frac{d-1}{2}}}+\n{P_{>M_{0}}\phi}_{H^{\frac{d-1}{2}}}}
\end{align*}
\begin{align*}
\n{U_{7}Q^{(12)}}_{X^{\frac{d-1}{2}}}\leq& C\n{U_{13,15}\phi}_{X^{\frac{d-1}{2}}}
\lrs{T^{\frac{1}{2(d+3)}}M_{0}^{\frac{2d+3}{2(d+3)}}\n{P_{\leq M_{0}}U_{13,15}\phi}_{X^{\frac{d-1}{2}}}+\n{P_{>M_{0}}U_{13,15}\phi}_{X^{\frac{d-1}{2}}}}\\
&\n{U_{13,15}\phi}_{X^{\frac{d-1}{2}}}
\n{U_{13,15}\phi}_{X^{\frac{d-1}{2}}}\n{U_{13,15}\phi}_{X^{\frac{d-1}{2}}}\\
\leq &C\n{\phi}_{H^{\frac{d-1}{2}}}^{4}
\lrs{T^{\frac{1}{2(d+3)}}M_{0}^{\frac{2d+3}{2(d+3)}}\n{P_{\leq M_{0}}\phi}_{H^{\frac{d-1}{2}}}+\n{P_{>M_{0}}\phi}_{H^{\frac{d-1}{2}}}}
\end{align*}

Finally, to deal with the roughest term $Q^{(14)}$, we use Sobolev inequality $(\ref{equ:sobolev inequality, multilinear estimate, endpoint})$,
\begin{align*}
\n{U_{7}\ol{Q^{(14)}}}_{X^{\frac{d-5}{2}}}=\n{U_{7,15}(|\phi|^{4}\phi)}_{X^{\frac{d-5}{2}}}
\leq \n{|\phi|^{4}\phi}_{H^{\frac{d-5}{2}}}\leq C\n{\phi}_{H^{\frac{d-1}{2}}}^{5}.
\end{align*}
Together with the above estimates, we arrive at
\begin{align*}
&\n{\lra{\nabla_{x_{1}}}^{
\frac{d-5}{2}}\lra{\nabla_{x_{1}'}}^{\frac{d-5}{2}}Q^{(0)}(t_{1})}
_{L_{T}^{\wq}L_{x_{1}}^{2}L_{x_{1}'}^{2}}\\
\leq & C^{7}\int_{0}^{T}\n{\phi}_{H^{\frac{d-1}{2}}}^{24}\lrs{T^{\frac{1}{2(d+3)}}M_{0}^{\frac{2d+3}{2(d+3)}}\n{P_{\leq M_{0}}\phi}_{H^{\frac{d-1}{2}}}+\n{P_{>M_{0}}\phi}_{H^{\frac{d-1}{2}}}}^{6}dt_{15}.
\end{align*}
\end{example}

From Example \ref{example:estimate part}, one can immediately tell that a decay power comes from estimates $(\ref{equ:multilinear estimate, low frequency, low regulariy})$ and $(\ref{equ:multilinear estimate, low frequency, high regulariy})$. Actually, such a decay power is at least proportional to $k$.
\begin{definition}[\cite{chen2019the,chen2020unconditional}]
For $l<k$, we say the $l$-th coupling is an unclogged coupling, if one of the children of $D^{(2l)}$ is $F$. If the $l$-th coupling is not unclogged, we will call it a congested coupling.
\end{definition}

\begin{lemma}\cite[Lemma 5.14]{chen2019the}
For large $k$, there are at least $\frac{4}{5}k$ unclogged couplings in $k$ couplings.
\end{lemma}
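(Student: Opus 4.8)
The statement is purely combinatorial, and I would prove it by a handshake-type counting argument on the Duhamel tree produced by Algorithm~\ref{algorithm:duhamel tree}. First I would record the anatomy of the $D$-tree: it is a rooted tree whose non-leaf nodes are the root $D^{(0)}$, with exactly two children, together with the $k$ coupling nodes $D^{(2)},\dots,D^{(2k)}$, each of which has exactly five children (counting the $F$-placeholders inserted in step~(3) of the algorithm); all remaining nodes are $F$-leaves. Two structural facts, both immediate from the algorithm, will be used below: since $\mu(2)=1$, the node $D^{(2)}$ is always one of the two children of $D^{(0)}$; and when the counter reaches $j=k$ there is no index $>2k$ available, so all five children of $D^{(2k)}$ are $F$-leaves, i.e.\ the $k$-th coupling is automatically unclogged. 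The well-foundedness of the construction --- in particular that every $D^{(2l)}$ occurs in the tree exactly once, as the child of a unique node --- is exactly where the constraint $\mu(2j)<2j$ enters, and I would note this at the outset.

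Next I would set up the count. For $j=1,\dots,k$ let $d_{j}$ denote the number of children of $D^{(2j)}$ that are themselves coupling nodes, and let $d_{0}$ be the number of coupling-node children of $D^{(0)}$. Summing the coupling-node children over the root and over all coupling nodes enumerates every non-root coupling node exactly once, so $d_{0}+\sum_{j=1}^{k}d_{j}=k$. By definition, the $l$-th coupling (for $l<k$) is congested precisely when $d_{l}=5$. Writing $c$ for the number of congested couplings and using $d_{j}\geq 0$, $d_{0}\geq 1$ (forced by $D^{(2)}$) and $d_{k}=0$, one obtains
\begin{align*}
5c\;\leq\;\sum_{j=1}^{k}d_{j}\;=\;k-d_{0}\;\leq\;k-1,
\end{align*}
hence $c\leq (k-1)/5\leq k/5$.

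Finally, including the automatically unclogged $k$-th coupling, the number of unclogged couplings among the $k$ couplings is $k-c\geq k-k/5=\tfrac{4}{5}k$, which is the assertion; keeping exact integer bookkeeping one even gets $k-\lfloor (k-1)/5\rfloor\geq\lceil \tfrac{4}{5}k\rceil$ for every $k$, so in particular for large $k$. I do not anticipate any genuine obstacle. The only delicate point is the bookkeeping of the first paragraph --- that each coupling node appears exactly once in the $D$-tree and contributes exactly five child-slots --- after which the bound $5c\leq k$ is simply the pigeonhole principle applied to the finite pool of $k$ coupling nodes; the argument is moreover independent of the multilinear estimates and of the regularity setting, as the paper emphasizes.
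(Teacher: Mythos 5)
Your proof is correct, and it is essentially the same pigeonhole/counting argument as in the cited reference: each congested coupling consumes five of the $k$ available coupling nodes as children, and since every coupling node occurs exactly once as a child, at most $k/5$ couplings can be congested, leaving at least $\tfrac{4}{5}k$ unclogged. The bookkeeping (five child-slots per coupling node, each $D^{(2l)}$ appearing exactly once, $D^{(2k)}$ automatically unclogged) is handled correctly.
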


The main result of this section is the following proposition.
\begin{proposition}\label{prop:estimate for the compatible time integration domain}
Let $\ga^{(k)}(t)=\int |\phi\rangle \langle \phi|^{\otimes k} d\mu_{t}(\phi)$. Then we have
\begin{align*}
&\bbn{\lra{\nabla_{x_{1}}}^{\frac{d-5}{2}}\lra{\nabla_{x_{1}'}}^{\frac{d-5}{2}}\int_{T_{C}}J_{\mu,sgn}^{(2k+1)}
(\ga^{(2k+1)})
(t_{1},\underline{t}_{2k+1})d\underline{t}_{2k+1}}
_{L_{T}^{\wq}L_{x_{1}}^{2}L_{x_{1}'}^{2}}\\
\leq &C^{k}\int_{0}^{T}\int \n{\phi}_{H^{\frac{d-1}{2}}}^{\frac{16}{5}k+2}\lrs{T^{\frac{1}{2(d+3)}}M_{0}^{\frac{2d+3}{2(d+3)}}\n{P_{\leq M_{0}}\phi}_{H^{\frac{d-1}{2}}}+\n{P_{>M_{0}}\phi}_{H^{\frac{d-1}{2}}}}^{\frac{4k}{5}}
d|\mu_{t_{2k+1}}|(\phi)dt_{2k+1}.
\end{align*}
\end{proposition}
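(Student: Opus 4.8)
The plan is to reduce to a single coherent state and then run the $U$-$V$ multilinear machinery down the Duhamel tree, exactly as in Example \ref{example:estimate part} but in general; throughout we are in the quintic setting, $p=5$, $s_c=\tfrac{d-1}{2}$, $s_c-2=\tfrac{d-5}{2}$. Since $J_{\mu,sgn}^{(2k+1)}$ and $\int_{T_C}\!\cdot\,d\underline t_{2k+1}$ are linear in the final slot $\ga^{(2k+1)}(t_{2k+1})$, and by Proposition \ref{prop:from d-tree to duhamel integration} the iterated integral equals $Q^{(0)}(t_1)$ with $t_{2k+1}$ as the \emph{outermost} variable, it suffices to prove, uniformly in $\phi$,
\begin{align*}
\bbn{\lra{\nabla_{x_1}}^{\frac{d-5}{2}}\lra{\nabla_{x_1'}}^{\frac{d-5}{2}}Q^{(0)}\big[|\phi\rangle\langle\phi|^{\otimes(2k+1)}\big](t_1)}_{L_T^{\wq}L_{x_1}^{2}L_{x_1'}^{2}}\leq C^{k}\n{\phi}_{H^{\frac{d-1}{2}}}^{\frac{16}{5}k+2}\lrs{T^{\frac{1}{2(d+3)}}M_0^{\frac{2d+3}{2(d+3)}}\n{P_{\leq M_0}\phi}_{H^{\frac{d-1}{2}}}+\n{P_{>M_0}\phi}_{H^{\frac{d-1}{2}}}}^{\frac{4k}{5}};
\end{align*}
substituting $\ga^{(2k+1)}(t)=\int|\phi\rangle\langle\phi|^{\otimes(2k+1)}d\mu_t(\phi)$ and applying Minkowski together with the triangle inequality for the signed measure $d\mu_{t_{2k+1}}$ reinstates the $\int_0^T\!\int\,d|\mu_{t_{2k+1}}|(\phi)\,dt_{2k+1}$ on the right.

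To run the single-$\phi$ estimate, first form the marked $D$-tree via Algorithm \ref{algorithm:marked d-tree}: the $R$-marked nodes are exactly the path from $D^{(0)}$ to $D^{(2k)}$, and the $\phi$-marked nodes are exactly the unclogged couplings (those carrying a leaf child $F$). Begin at the root: Minkowski in $t_{2k+1}$ splits $Q^{(0)}$ into the product of its two children over $x_1$ and $x_1'$; bound each by \eqref{equ:u-v and sobolev}, keeping the $R$-marked child in $X^{\frac{d-5}{2}}$ and raising the other to $X^{\frac{d-1}{2}}$ (legitimate since $\tfrac{d-5}{2}\leq\tfrac{d-1}{2}$ on $\T^d$). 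Then descend node by node: at a coupling node $Q^{(2l)}$ apply one of \eqref{equ:multilinear estimate, high frequency, low regulariy}--\eqref{equ:multilinear estimate, low frequency, high regulariy}, using the ``low-regularity'' form (the $u_1$-slot in $X^{\frac{d-5}{2}}$, assigned to the child lying on the $R$-path) iff $D^{(2l)}$ is $R$-marked, and using a ``low-frequency'' form (producing the bracket on the $u_2$-slot, assigned to a leaf child $F$) at a fixed set of $\tfrac{4}{5}k$ unclogged couplings. By \cite[Lemma 5.14]{chen2019the} there are at least $\tfrac{4}{5}k$ such couplings, so this choice is possible; and at a node that is simultaneously $R$-marked and chosen, the $u_1$-slot (a coupling node on the $R$-path, or $D^{(2k)}$) and the $u_2$-slot (a leaf) are distinct children, so \eqref{equ:multilinear estimate, low frequency, low regulariy} applies.

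The recursion terminates at leaves and at $D^{(2k)}$. A leaf $F=U_{-2k-1}\phi$ or $\overline{U_{-2k-1}\phi}$ is bounded in $X^{\frac{d-1}{2}}$ by \eqref{equ:u-v and sobolev, special case} by $\n{\phi}_{H^{\frac{d-1}{2}}}$ (the extra unitaries $e^{it\Delta}$ are harmless and commute with $P_{\leq M_0},P_{>M_0}$), so a leaf occupying a designated $u_2$-slot contributes $T^{\frac{1}{2(d+3)}}M_0^{\frac{2d+3}{2(d+3)}}\n{P_{\leq M_0}\phi}_{H^{\frac{d-1}{2}}}+\n{P_{>M_0}\phi}_{H^{\frac{d-1}{2}}}$; the terminal node $D^{(2k)}=U_{-2k-1}(|\phi|^{4}\phi)$ (or its conjugate), always $R$-marked, is bounded in $X^{\frac{d-5}{2}}$ via \eqref{equ:u-v and sobolev, special case} and the Sobolev inequality \eqref{equ:sobolev inequality, multilinear estimate, endpoint} by $C\n{\phi}_{H^{\frac{d-1}{2}}}^{5}$. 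Collecting constants: the root, the $k$ coupling nodes, and the terminal Sobolev step each cost one $C$, giving $\leq C^{k}$. For the powers of $\n{\phi}_{H^{\frac{d-1}{2}}}$, observe that the fully expanded kernel $D^{(0)}(t_1,\underline t_{2k+1})$ has total $\phi$-degree $4k+2$ — each collapsing operator $B_{j;2l,2l+1}$ preserves $\phi$-degree and $|\phi\rangle\langle\phi|^{\otimes(2k+1)}$ has degree $2(2k+1)$ — and that the recursion above is degree-additive down the tree; exactly $\tfrac{4}{5}k$ of these degree-one factors land inside brackets while the remaining $4k+2-\tfrac{4}{5}k=\tfrac{16}{5}k+2$ land as bare $\n{\phi}_{H^{\frac{d-1}{2}}}$, which is the claimed form (rounding $\tfrac{4}{5}k$ to an integer, and estimating a bracket crudely by $\n{\phi}_{H^{\frac{d-1}{2}}}$, costs at most a bounded factor folded into $C^k$).

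The analytic heart — the quintilinear estimates \eqref{equ:multilinear estimate, high frequency, low regulariy}--\eqref{equ:multilinear estimate, low frequency, high regulariy} — is proved in Section \ref{section:Multilinear Estimates} and used here as a black box, so I expect the only real work to be bookkeeping: checking that the marking is consistent with the hypotheses of these four estimates at every node (in particular the distinctness of the $u_1$- and $u_2$-slots just mentioned, and that every non-terminal coupling genuinely has five children so the quintilinear form is available), and that the time-integration windows $[a,t]$ in \eqref{equ:multilinear estimate, high frequency, low regulariy}--\eqref{equ:multilinear estimate, low frequency, high regulariy} match the limits produced by $T_C(\mu,sgn)$ and Algorithm \ref{algorithm:from d-tree to duhamel integration} (some are $[0,t_{2j+1}]$, some $[t_{2k+1},t_{2j+1}]$, according to its Case split). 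These are precisely the points where the Duhamel-tree formalism of Section \ref{subsection:Duhamel Expansion and Duhamel Tree}--\ref{subsection:Estimates using the $U$-$V$ multilinear estimates} does the organizing, so the general argument is the same as Example \ref{example:estimate part} run on an arbitrary $(\mu,sgn)$.
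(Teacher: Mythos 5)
Your proposal is correct and follows essentially the same route as the paper: reduce to a single coherent state via linearity, Minkowski, and the triangle inequality for the signed de Finetti measure; organize the Duhamel integrand by the marked $D$-tree; descend through the couplings applying the four $U$-$V$ quintilinear estimates according to the $R$/$\phi$ markings; close with the Sobolev bound on $\||\phi|^4\phi\|_{H^{(d-5)/2}}$ and the degree count $4k+2=\tfrac{16}{5}k+2+\tfrac{4}{5}k$. The only cosmetic difference is that you designate exactly $\tfrac{4}{5}k$ unclogged couplings for the low-frequency estimates, whereas the paper's Algorithm (Estimate) applies them at every unclogged coupling (hence at least $\tfrac{4}{5}k$ times); both bookkeepings yield the stated bound.
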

\begin{proof}
We rewrite
\begin{align*}
&\n{\lra{\nabla_{x_{1}}}^{\frac{d-5}{2}}\lra{\nabla_{x_{1}'}}^{\frac{d-5}{2}}\int_{T_{C}}J_{\mu,sgn}^{(2k+1)}
(\ga^{(2k+1)})
(t_{1},\underline{t}_{2k+1})d\underline{t}_{2k+1}}
_{L_{T}^{\wq}L_{x_{1}}^{2}L_{x_{1}'}^{2}}\\
=&\n{\lra{\nabla_{x_{1}}}^{\frac{d-5}{2}}\lra{\nabla_{x_{1}'}}^{\frac{d-5}{2}}\int_{T_{C}}\int J_{\mu,sgn}^{(2k+1)}
(|\phi\rangle \langle \phi|^{\otimes (2k+1)})
(t_{1},\underline{t}_{2k+1})d\mu_{t_{2k+1}}(\phi)d\underline{t}_{2k+1}}
_{L_{T}^{\wq}L_{x_{1}}^{2}L_{x_{1}'}^{2}}
\end{align*}
By Proposition $\ref{prop:from d-tree to duhamel integration}$, we then use Minkowski and estimate $(\ref{equ:u-v and sobolev})$
\begin{align*}
\leq& \int_{0}^{T}\int \n{U_{1}C_{l}(t_{1},t_{2k+1})}_{L_{t_{1}}^{\wq}H_{x_{1}}^{\frac{d-5}{2}}}
\n{\ol{U_{1}\ol{C_{r}(t_{1},t_{2k+1})}}}_{L_{t_{1}}^{\wq}H_{x_{1}'}^{\frac{d-5}{2}}}d|\mu_{t_{2k+1}}| (\phi)dt_{2k+1}\\
\leq&\int_{0}^{T}\int\n{U_{1}C_{l}(t_{1},t_{2k+1})}_{X^{\frac{d-5}{2}}}
\n{U_{1}\ol{C_{r}(t_{1},t_{2k+1})}}_{X^{\frac{d-5}{2}}}d|\mu_{t_{2k+1}}|(\phi) dt_{2k+1}.
\end{align*}
where $C_{l}$ or $C_{r}$ is the left or right child of $Q^{(0)}$ by Algorithm $\ref{algorithm:from d-tree to duhamel integration}$. Only one of $C_{l}$ and $C_{r}$ carries the subscript $R$, so bump the other one into $X^{\frac{d-1}{2}}$.

We can now present the algorithm which proves the general case.

\begin{algorithm}[Estimate]
~\\
\hspace*{1em}$(1)$ Set counter $l=1$.

$(2)$ Given $l$, there exists only one $j$ such that $D^{(2l)}\to D^{(2j)}$. There will be four cases as follows.

Case $1$. $D^{(2l)}=D_{\phi,R}^{(2l)}$. Then apply estimate $(\ref{equ:multilinear estimate, low frequency, low regulariy})$, put the factor carrying the subscript $R$ in $X^{\frac{d-5}{2}}$ and replace all the $X^{\frac{d-1}{2}}$ norm of $U\phi$ by $H^{\frac{d-1}{2}}$ norm of $\phi$.

Case $2$. $D^{(2l)}=D_{\phi}^{(2l)}$. Then apply estimate $(\ref{equ:multilinear estimate, low frequency, high regulariy})$ and replace all the $X^{\frac{d-1}{2}}$ norm of $U\phi$ by $H^{\frac{d-1}{2}}$ norm of $\phi$.

Case $3$. $D^{(2l)}=D_{R}^{(2l)}$. Then apply estimate $(\ref{equ:multilinear estimate, high frequency, low regulariy})$, put the factor carrying the subscript $R$ in $X^{\frac{d-5}{2}}$ and replace all the $X^{\frac{d-1}{2}}$ norm of $U\phi$ by $H^{\frac{d-1}{2}}$ norm of $\phi$.

Case $4$. $D^{(2l)}=D^{(2l)}$. Then apply estimate $(\ref{equ:multilinear estimate, high frequency, high regulariy})$ and replace all the $X^{\frac{d-1}{2}}$ norm of $U\phi$ by $H^{\frac{d-1}{2}}$ norm of $\phi$.

$(3)$ Set counter $l=l+1$. If $l<k$, go to step $(2)$, otherwise go to step $(4)$.

$(4)$ We are now at the $k$-th coupling and would have applied $(\ref{equ:multilinear estimate, low frequency, low regulariy})$ and $(\ref{equ:multilinear estimate, low frequency, high regulariy})$ at least $\frac{4}{5}k$ times, so we arrive at
\begin{align*}
&\bbn{\lra{\nabla_{x_{1}}}^{\frac{d-5}{2}}\lra{\nabla_{x_{1}'}}^{\frac{d-5}{2}}\int_{T_{C}}
J_{\mu,sgn}^{(2k+1)}(\ga^{(2k+1)})
(t_{1},\underline{t}_{2k+1})d\underline{t}_{2k+1}}
_{L_{T}^{\wq}L_{x_{1}}^{2}L_{x_{1}'}^{2}}\\
\leq &C^{k-1}\int_{0}^{T}\int\n{\phi}_{H^{\frac{d-1}{2}}}^{\frac{16}{5}k-3}\lrs{T^{\frac{1}{2(d+3)}}M_{0}^{\frac{2d+3}{2(d+3)}}\n{P_{\leq M_{0}}\phi}_{H^{\frac{d-1}{2}}}+\n{P_{>M_{0}}\phi}_{H^{\frac{d-1}{2}}}}^{\frac{4k}{5}}\\
&\quad \quad \quad \quad\quad \n{|\phi|^{4}\phi}_{H^{\frac{d-5}{2}}}d|\mu_{t_{2k+1}}|(\phi)dt_{2k+1}
\end{align*}
Apply Sobolev inequality $(\ref{equ:sobolev inequality, multilinear estimate, endpoint})$ to $\n{|\phi|^{4}\phi}_{H^{\frac{d-5}{2}}}$,
\begin{align*}
\leq C^{k}\int_{0}^{T}\int \n{\phi}_{H^{\frac{d-1}{2}}}^{\frac{16}{5}k+2}\lrs{T^{\frac{1}{2(d+3)}}M_{0}^{\frac{2d+3}{2(d+3)}}\n{P_{\leq M_{0}}\phi}_{H^{\frac{d-1}{2}}}+\n{P_{>M_{0}}\phi}_{H^{\frac{d-1}{2}}}}^{\frac{4k}{5}}
d|\mu_{t_{2k+1}}|(\phi)dt_{2k+1}.
\end{align*}
\end{algorithm}
\end{proof}

\section{Existence of Compatible Time Integration Domain}\label{section:Existence of Compatible Time Integration Domain}
In this section, our main goal is to prove that
\begin{equation}
\ga^{(1)}(t_{1})=\sum_{reference\ (\hat{\mu},\hat{sgn})}
\int_{T_{C}(\hat{\mu},\hat{sgn})}J_{\hat{\mu},\hat{sgn}}^{(2k+1)}(\ga^{(2k+1)})
(t_{1},\underline{t}_{2k+1})d\underline{t}_{2k+1},
\end{equation}
where the number of reference pairs can be controlled by $16^{k}$, and the summands are endowed with the compatible time integration domain $T_{C}(\hat{\mu},\hat{sgn})$ that we introduce in Section $\ref{subsection:Compatible Time Integration Domain}$.
 We divide this section into two main parts. In Section \ref{subsection:Admissible Tree}, we first recall the quintic KM board game argument and then give an introduction to an admissible tree diagram representation as a preparation for the subsequent sections. Such type of tree also gives an elaborated proof of the quintic KM board game argument.
Then in Section \ref{subsection:Signed KM Acceptable Moves}-\ref{subsection:Reference Form and Proof of Compatibility}, we prove the extended quintic KM board game argument, which allows to sort the summands in the initial
Duhamel-Born expansion $\ga^{(1)}$ into a sum of reference forms with the compatible time integration domain.

\subsection{Admissible Tree}\label{subsection:Admissible Tree}
We first give a brief review of the
quintic KM board game argument as in \cite{chen2011the,klainerman2008on}. In short, one could sort $(2k-1)!!2^{k}$ summands into a sum of upper echelon forms with the time integration domain, denoted by $D_{m}$, which is a union of
a very large number of high dimensional simplexes. The number of upper echelon forms can be controlled by $8^{k}$. Then, we give an introduction to an admissible tree diagram representation which could provide an elaborated proof of the quintic KM board game argument. Besides, one could use it to calculate $D_{m}$ explicitly, which was unknown.

 Recall that $\lr{\mu}$ is a set of maps from $\lr{2,4,...,2k}$ to $\lr{1,2,3,...,2k-1}$ satisfying $\mu(2)=1$ and $\mu(2l)<2l$ for all $2l$. For convenience, we extend the domain to $\lr{2,3,4,...,2k}$ and define
\begin{equation}\label{equ:extended definition of mu}
\mu(2l+1):=\mu(2l)\quad l\in\lr{1,2,...,k-1}.
\end{equation}
Moreover, if $\mu$ satisfies $\mu(2j)\leq \mu(2j+2)$ for $1\leq j\leq k-1$, then it is in upper echelon form as they are called in \cite{chen2011the,klainerman2008on}.

Let $P=\lr{\rho}$ be a set of permutations of $\lr{2,4,...,2k}$. To be compatible with the definition $(\ref{equ:extended definition of mu})$, we also extend the domain to $\lr{2,3,4,...,2k+1}$ and define
\begin{equation}
\rho(2l+1):=\rho(2l)+1,\quad l\in\lr{1,2,...,k}.
\end{equation}
We note that $P$ is closed under the composition and inverse operations.

Associated to each $\mu$ and $\sigma\in P$, we define the Duhamel integrals
\begin{equation}
I(\mu,\sigma,f^{(2k+1)})
=\int_{t_{1}\geq t_{\sigma(3)}\geq \ccc \geq t_{\sigma(2k+1)}}
J_{\mu}^{(2k+1)}(f^{(2k+1)})(t_{1},\underline{t}_{2k+1})d\underline{t}_{2k+1}.
\end{equation}
where
\begin{align*}
J_{\mu}^{(2k+1)}(f^{(2k+1)})(t_{1},\underline{t}_{2k+1})=&U^{(1)}(t_{1}-t_{3})B_{\mu(2);2,3}
U^{(3)}(t_{3}-t_{5})B_{\mu(4);4,5}\\
&\ccc U^{(2k-1)}(t_{2k-1}-t_{2k+1})B_{\mu(2k);2k,2k+1}f^{(2k+1)}(t_{2k+1})
\end{align*}
and $f^{(2k+1)}$ is a symmetric density.

\begin{definition}
For fixed $j\in \lr{2,3,..,k-1}$ and a permutation $\rho=(2j,2j+2)\circ (2j+1,2j+3)\in P$, if $\mu(2j)\neq \mu(2j+2)$ and $\mu(2j+2)<2j$,
we define the action as follows:
\begin{align*}
&\mu'=(2j,2j+2)\circ (2j+1,2j+3) \circ \mu \circ (2j,2j+2)\circ (2j+1,2j+3),\\
&\sigma'=(2j,2j+2)\circ (2j+1,2j+3) \circ \sigma.
\end{align*}

We call the action induced by $\rho$, which we simply denote $KM(\rho)$, a Klainerman-Machedon acceptable move in Chen-Pavlovi\'{c} format of $\mu$, or an acceptable move of $\mu$ for simplicity.

For general case, we also call a permutation $\rho$ a Klainerman-Machedon acceptable move in Chen-Pavlovi\'{c} format of $\mu$, if $\rho=\rho_{r}\circ \rho_{r-1}\circ \ccc \circ \rho_{1}$ where $\rho_{1}$ is an acceptable move of $\mu$ and $\rho_{i}=(2j_{i},2j_{i}+2)\circ (2j_{i}+1,2j_{i}+3)$ is an acceptable move of $\mu_{i}=KM(\rho_{i-1})\circ \ccc \circ KM(\rho_{1})(\mu)$ for $2\leq i\leq r$. Moreover, we define the action $(\mu',\sigma')=KM(\rho)(\mu,\sigma)$:
\begin{align*}
&\mu'=\rho \circ \mu \circ \rho^{-1},\\
&\sigma'=\rho \circ \sigma.
\end{align*}
\end{definition}

If $\mu$ and $\mu'$ are such that there exists $\rho$ as above for which $(\mu',\sigma')=KM(\rho)(\mu,\sigma)$ then we say that $\mu'$ and $\mu$ are
KM-relatable. This is an equivalence relation that partitions the set
of collapsing maps into equivalence classes.

Now, we could describe the quintic KM board game argument in \cite{chen2011the,klainerman2008on}. Namely,
for every $\mu$, there is exactly one $\mu_{m}$ in upper echelon form, which is KM-related to $\mu$ and the number of upper echelon forms can be controlled by $8^{k}$. Moreover, it follows from \cite[Lemma 7.1]{chen2011the} that
\begin{align}\label{equ:the equality under the km move}
I(\mu,\sigma,f^{(2k+1)})=I(\mu',\sigma',f^{(2k+1)}).
\end{align}
With the equality $(\ref{equ:the equality under the km move})$, one has
\begin{equation}\label{equ:the key equality for km move}
\sum_{\mu\sim \mu_{m}}I(u,id,\ga^{(2k+1)})=\int_{D_{m}}J_{\mu}^{(2k+1)}(\ga^{(2k+1)})(t_{1},\underline{t}_{2k+1})
d\underline{t}_{2k+1}
\end{equation}
where the time integration domain $D_{m}$ is a union of the simplexes $\lr{t_{1}\geq t_{\sigma(3)}\geq \ccc \geq t_{\sigma(2k+1)}}$.

The time integration
domain $D_{m}$ is obviously very complicated for large $k$, as it is a union of a very large
number of simplexes in high dimension. To calculate $D_{m}$, we construct a ternary tree with the following algorithm.

\begin{algorithm}\label{algorithm:generate an admissible tree}
~\\
\hspace*{1em}$(1)$ Set counter $j=1$.

$(2)$ Given $j$, find the indices $l$, $m$, $r$ so that $l>j$, $m>r$, $r>j$ and
\begin{align*}
&\mu(2l)=\mu(2j),\\
&\mu(2m)=2j,\\
&\mu(2r)=2j+1,
\end{align*}
and $l$, $m$ and $r$ are the minimal indices for which the above equalities hold. Then place $2l$/$2m$/$2r$ as the left/middle/right child of node $2j$ in the tree. If there is no such $l$/$m$/$r$, the node $2j$ will be missing a left/middle/right child.

$(3)$ If $j=k$, then stop, otherwise set $j=j+1$ and go to step $(2)$.
\end{algorithm}

Since it requires $\mu(2j)<2j$, one can check that every node $2j$ has a parent by induction argument. Hence, the generated tree by $\mu$, which we denote by $T(\mu)$, is a connected ternary tree with child node's label strictly larger than its parent node's label.

\begin{example}\label{example:admissible tree}
Let us work with the following example
$$
\begin{tabular}{c|ccccc}
$2j$&2&4&6&8&10\\
\hline
$\mu(2j)$ &1&1&1&2&3
\end{tabular}
$$
By Algorithm $2$, we start with $j=1$ and note that $\mu(2)=1$. For the left, middle and right child of node $2$, we need to respectively find the minimal $a>1$, $b>1$ and $c>1$ such that $\mu(2a)=1$, $\mu(2b)=2$ and $\mu(2c)=3$. In the case, it is $a=2$, $b=4$ and $c=5$, so we put $2$, $8$, and $10$ as left, middle and right children of node $2$, respectively, in the tree\footnote{We use a line to link the left child and an arrow to link the middle/right child, as we would like to emphasize the differences between the left child and the middle/right child. Besides, by this way, it is convenient to calculate the tier value which we introduce in Section \ref{subsection:Tamed Form}.}.

\begin{center}
\begin{tikzpicture}
\node (1) at (0,0) {1};
\node (2) at (0,-1) {2};
\node (4) at (-1,-2) {4};
\node (8) at (0,-2) {8};
\node (10) at (1,-2) {10};
\draw[<-] (1)--(2);
\draw[-] (2)--(4);
\draw[<-] (2)--(8);
\draw[<-] (2)--(10);
\end{tikzpicture}
\end{center}

Next we turn to $j=2$. Since $\mu(4)=1$, we find the minimal $a>2$, $b>2$ and $c>2$ such that $\mu(2a)=\mu(4)=1$, $\mu(2b)=4$ and $\mu(2c)=5$. We find $a=3$ and there is no such $b$ or $c$ satisfying the above condition, so we only put $6$ as the left child of node $4$ in the tree.

\begin{center}
\begin{tikzpicture}
\node (1) at (0,0) {1};
\node (2) at (0,-1) {2};
\node (4) at (-1,-2) {4};
\node (8) at (0,-2) {8};
\node (10) at (1,-2) {10};
\node (6) at (-2,-3) {6};
\draw[<-] (1)--(2);
\draw[-] (2)--(4);
\draw[<-] (2)--(8);
\draw[<-] (2)--(10);
\draw[-] (4)--(6);
\end{tikzpicture}
\end{center}

Since all indices appear in the tree, it is complete.

\end{example}

\begin{definition}
A ternary tree is called an admissible tree if every child node's label is strictly larger than its parent node's label. For an admissible tree, we call, the graph of the tree without any labels in its nodes, the skeleton of the tree.
\end{definition}

For example, the skeleton of the tree in Example $\ref{example:admissible tree}$ is shown as follows.
\begin{center}
\begin{tikzpicture}
\node[draw,circle,inner sep=0.1cm,outer sep=0.1cm] (1) at (0,0) {1};
\node[draw,circle,inner sep=0.2cm,outer sep=0.1cm] (2) at (0,-1.2) {};
\node[draw,circle,inner sep=0.2cm,outer sep=0.1cm] (4) at (-1,-2.4) {};
\node[draw,circle,inner sep=0.2cm,outer sep=0.1cm] (8) at (0,-2.4) {};
\node[draw,circle,inner sep=0.2cm,outer sep=0.1cm] (10) at (1,-2.4) {};
\node[draw,circle,inner sep=0.2cm,outer sep=0.1cm] (6) at (-2,-3.6) {};
\draw[<-] (1)--(2);
\draw[-] (2)--(4);
\draw[<-] (2)--(8);
\draw[<-] (2)--(10);
\draw[-] (4)--(6);
\end{tikzpicture}
\end{center}

Given an admissible ternary tree, we can uniquely reconstruct a collapsing map $\mu$ that generates it. For notational convenience, we take the following notations.
\begin{align*}
&2l\stackrel{L}{\to}2j:\text{node $2l$ is the left child of node $2j$},\\
&2l\stackrel{M}{\to}2j:\text{node $2l$ is the middle child of node $2j$},\\
&2l\stackrel{R}{\to}2j:\text{node $2l$ is the right child of node $2j$},\\
&2l\to 2j:\text{node $2l$ is a child of node $2j$}.
\end{align*}

\begin{algorithm}[From admissible tree to collapsing map]\label{algorithm:from admissible tree to collapsing map}
~\\
\hspace*{1em}$(1)$ Set counter $j=1$ and $\mu(2)=1$.

$(2)$ Given $j$, in the admissible tree $\al$,
\begin{align*}
&\text{if there exists $2k_{1}$ such that  $2k_{1}\stackrel{L}{\to} 2j$, then $\mu(2k_{1}):=\mu(2j)$;}\\
 &\text{if there exists $2k_{2}$ such that $2k_{2}\stackrel{M}{\to} 2j$, then $\mu(2k_{2}):=2j$;}\\
 &\text{if there exists $2k_{3}$ such that $2k_{3}\stackrel{R}{\to} 2j$, then $\mu(2k_{3}):=2j+1$.}
\end{align*}
Otherwise, go to step $(3)$.

$(3)$ Set $j=j+1$. If $j=k$, then stop, otherwise go to step $(2)$.
\end{algorithm}
Since it is an admissible tree $\al$, one can see that, if $j=l$, we have defined $\mu(2i)$ for $1 \leq i\leq l$ and $\mu(2i)<2i$ by the step $(2)$. Especially, when $j=k$, we generate a collapsing map $\mu$. Moreover, one has that $T(\mu)$ equals to tree $\al$.

\begin{example}
Suppose we are given the tree as follows.

\begin{minipage}{0.3\textwidth}
\begin{tikzpicture}
\node (1) at (0,0) {1};
\node (2) at (0,-1) {2};
\node (4) at (-1,-2) {4};
\node (8) at (0,-2) {8};
\node (10) at (1,-2) {10};
\node (6) at (-2,-3) {6};
\draw[<-] (1)--(2);
\draw[-] (2)--(4);
\draw[<-] (2)--(8);
\draw[<-] (2)--(10);
\draw[-] (4)--(6);
\end{tikzpicture}
\end{minipage}
\begin{minipage}{0.65\textwidth}
At first, let $\mu(2)=1$. As there are left, middle and right children of node $2$, we define $\mu(4)=1$, $\mu(8)=2$, and $\mu(10)=3$. Next, turn to node $4$. There only exists the left child and hence we define $\mu(6)=\mu(4)=1$. Finally we arrive at
$$
\begin{tabular}{c|ccccc}
$2j$&2&4&6&8&10\\
\hline
$\mu(2j)$ &1&1&1&2&3
\end{tabular}
$$
\end{minipage}

\end{example}

Note that the upper echelon form $\mu_{m}$ is unique in every equivalent class. Given a skeleton tree, there also exists a unique upper echelon tree. We give an algorithm
to uniquely produce an upper echelon tree.

\begin{algorithm}[Generate an upper echelon tree]\label{algorithm:generate an upper echelon tree}
~\\
\hspace*{1em}$(1)$ Given a skeleton tree with $k+1$ nodes, label the top node with $1$ and set counter $j=1$.

$(2)$ If the node labeled $2j$ has a left child, then label that left child node with $2(j+1)$, set counter $j=j+1$ and go to step $(4)$. If not, go to step $(3)$.

$(3)$ In the already labeled nodes which has an unlabeled middle or right child, search for the node with the smallest label. If such a node has an unlabeled middle child, label the middle child with $2(j+1)$, set counter $j=j+1$, and go to step $(4)$. If such a node has no unlabeled middle child but an unlabeled right child, label the right child with $2(j+1)$, set counter $j=j+1$, and go to step
$(4)$. If none of the labeled nodes has an unlabeled middle or right child, then stop.

$(4)$ If $j=k$, then stop, otherwise go to step $(2)$.
\end{algorithm}

Next, we are able to show that acceptable moves preserve the tree structures but
permute the labeling under the admissibility requirement.

\begin{proposition}\label{prop:admissible tree, the same skeleton under km move}
Two collapsing maps $\mu$ and $\mu'$ are KM-relatable if and only if the trees corresponding to $\mu$ and $\mu'$ have the same skeleton. Moreover, if $\mu'=KM(\rho)(\mu)$, then $T(\mu')$ has the same skeleton to $T(\mu)$ with node $2j$ replaced by $\rho(2j)$.
\end{proposition}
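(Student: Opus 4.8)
The plan is to reduce everything to a single elementary acceptable move and then obtain the converse from connectedness of the linear‑extension graph. Throughout I will use that, by Algorithm~\ref{algorithm:from admissible tree to collapsing map}, the assignment $\mu\mapsto T(\mu)$ is a bijection between collapsing maps and admissible labeled ternary trees, so a collapsing map and its tree carry the same information.

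First I would analyze an elementary move $\rho=(2j,2j+2)\circ(2j+1,2j+3)$ of $\mu$, so that $\mu(2j)\neq\mu(2j+2)$ and $\mu(2j+2)<2j$ (recall also $\mu(2j)<2j$ always). Since $\rho$ is an involution supported on $\{2j,2j+1,2j+2,2j+3\}$ and both $\mu(2j)$ and $\mu(2j+2)$ are $<2j$, hence fixed by $\rho$, a direct computation shows that $\mu'=\rho\circ\mu\circ\rho^{-1}$ is precisely $\mu$ with the symbols $2j,2j+1$ interchanged with $2j+2,2j+3$ in both the domain and the range: $\mu'(2j)=\mu(2j+2)$, $\mu'(2j+2)=\mu(2j)$, and $\mu'(2l)=\rho(\mu(2l))$ for every other even $2l$; in particular $\mu'$ is again a collapsing map. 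I would then observe that in $T(\mu)$ the nodes $2j$ and $2j+2$ are incomparable: node $2j+2$ cannot be a child of node $2j$, since that would force $\mu(2j+2)\in\{\mu(2j),2j,2j+1\}$, all excluded by the hypotheses, and because $2j,2j+2$ are consecutive among the node labels $\{2,4,\dots,2k\}$, admissibility (child label strictly above parent label) then rules out $2j+2$ being any deeper descendant of $2j$ as well. The crux of the step is to check that $T(\mu')$ equals $T(\mu)$ with the labels $2j$ and $2j+2$ interchanged; I would do this by running Algorithm~\ref{algorithm:generate an admissible tree} on $\mu'$ and verifying that each child selection, governed by minimality of indices, is undisturbed by the relabeling because $\rho$ transposes only a consecutive pair of even values and a consecutive pair of odd values. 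Read backwards, the same computation shows that two incomparable nodes with consecutive labels always satisfy the acceptable‑move condition.

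Granted the elementary case, the rest is formal. For a general acceptable move $\rho=\rho_r\circ\cdots\circ\rho_1$, iterating the elementary case shows $T(\mu')$ is $T(\mu)$ with each label $\ell$ replaced by $\rho(\ell)$; in particular the skeleton is unchanged and node $2j$ becomes $\rho(2j)$, which is the ``moreover'' assertion, and it also yields the forward implication. For the converse, suppose $T(\mu)$ and $T(\mu')$ share a skeleton $S$. An admissible labeling of $S$ (the root to $1$, the remaining nodes to $2,4,\dots,2k$ with every child label exceeding its parent's) is precisely a linear extension of the partial order on the non‑root nodes of $S$, and by the bijectivity of $T$ every such labeling comes from a unique collapsing map. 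By the standard fact that the graph of linear extensions of a finite poset---with an edge whenever two extensions differ by the swap of two incomparable elements occupying consecutive positions---is connected, there is a chain $T(\mu)=\alpha_0,\dots,\alpha_n=T(\mu')$ in which each $\alpha_i$ arises from $\alpha_{i-1}$ by swapping two incomparable, consecutively labeled nodes $2j_i,2j_i+2$; by the elementary case each such swap is realized by $KM(\rho_i)$ with $\rho_i=(2j_i,2j_i+2)\circ(2j_i+1,2j_i+3)$, so composing the $\rho_i$ exhibits $\mu'=KM(\rho)(\mu)$ and hence KM‑relatedness.

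The genuinely delicate point, which I expect to absorb most of the work, is the last part of the elementary‑move analysis: that the symbol swap on $\mu$ corresponds exactly to the label swap on $T(\mu)$. Algorithm~\ref{algorithm:generate an admissible tree} picks children by minimality of indices, and a relabeling could a priori change which index is minimal; the reason it cannot is that $\rho$ transposes only consecutive values, so any disturbance is confined to a single left‑spine of nodes carrying one fixed $\mu$‑value, where it produces precisely the expected swap, and this is the one place a careful case check is required. (An alternative route for the converse is to pass through the unique upper‑echelon representative of each skeleton produced by Algorithm~\ref{algorithm:generate an upper echelon tree} together with the quintic KM board game result of \cite{chen2011the,klainerman2008on}, but the linear‑extension argument keeps the proof self‑contained once $T$ is known to be a bijection.)
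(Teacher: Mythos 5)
Your proof is correct, and the forward half coincides with the paper's while the converse takes a genuinely different route. For the forward implication and the ``moreover'' clause, you and the paper perform the same computation — verifying that a single acceptable move $(2j,2j+2)\circ(2j+1,2j+3)$ amounts to swapping the labels $2j$ and $2j+2$ on the tree — and differ only in where the bookkeeping sits: the paper relabels $T(\mu)$, checks the relabeled tree is still admissible, and runs Algorithm \ref{algorithm:from admissible tree to collapsing map} backwards to recover $\rho\circ\mu\circ\rho^{-1}$ by induction on the parent--child equivalences, whereas you run Algorithm \ref{algorithm:generate an admissible tree} forwards on $\mu'$ and check that the minimality conditions survive a transposition of consecutive labels (your reasoning there is sound: since $\mu(2j)\neq\mu(2j+2)$ the two nodes lie on different left spines, and each relevant candidate set is either fixed pointwise by $\rho$ or has its relative order preserved). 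The converse is where you diverge. The paper reduces both $\mu$ and $\mu'$ to upper echelon form via the quintic KM board game result of \cite{chen2011the,klainerman2008on} and uses that a skeleton admits a unique upper echelon labeling (Algorithm \ref{algorithm:generate an upper echelon tree}), so the two representatives coincide. You instead observe that admissible labelings of a skeleton are exactly the linear extensions of the tree poset on the non-root nodes, invoke connectedness of the linear-extension graph under adjacent transpositions of incomparable elements, and check — correctly — that incomparability of two consecutively labeled nodes is equivalent to the acceptable-move condition $\mu(2j+2)<2j$, $\mu(2j)\neq\mu(2j+2)$. Your route is more self-contained in that it does not presuppose the board game theorem (indeed it reproves the reduction to a canonical representative), at the price of importing a standard external poset fact. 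One small point worth recording explicitly if you write this up: the label $2$ is never moved, since the child of the root is the unique minimal element of the poset, which is consistent with the definition of an acceptable move requiring $j\geq 2$.
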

\begin{proof}
Without loss, we might as well assume that $\rho=(2j_{0},2j_{0}+2)\circ (2j_{0}+1,2j_{0}+3)$ and $\mu'=KM(\rho)(\mu)$.
With node $2j$ in the tree $T(\mu)$ replaced by $\rho(2j)$, it generates a tree $\al''$ with the same skeleton as $T(\mu)$. Since $\rho\in P$ is an acceptable move with respect to $\mu$, we have $\mu(2j_{0}+2)<2j_{0}$ and $\mu(2j_{0})\neq \mu(2j_{0}+2)$, which implies that $\al''$ is also an admissible tree.
By Algorithm $\ref{algorithm:from admissible tree to collapsing map}$, it generates a collapsing map $\mu''$. Thus it suffices to prove $\mu'=\mu''$, or equivalently, $\mu''=KM(\rho)(\mu)$. Note that
 $$2l\stackrel{L/M/R}{\longrightarrow}2j\ \text{in the tree}\ T(\mu) \Longleftrightarrow \rho(2l)\stackrel{L/M/R}{\longrightarrow}\rho(2j)\ \text{in the tree}\ \al''.$$
By Algorithm $\ref{algorithm:from admissible tree to collapsing map}$, it implies that
\begin{align}
\begin{cases}
\mu(2l)=\mu(2j)&\Longleftrightarrow \mu''(\rho(2l))=\mu''(\rho(2j)),\\
\mu(2l)=2j&\Longleftrightarrow \mu''(\rho(2l))=\rho(2j),\\
\mu(2l)=2j+1&\Longleftrightarrow \mu''(\rho(2l))=\rho(2j)+1.
\end{cases}
\end{align}
With $\mu(2)=\mu''(2)=1$, by induction argument we obtain
$$\rho\circ \mu(2l)=\mu''(\rho(2l)),$$
that is, $\mu''=KM(\rho)(\mu)$.

Conversely, we suppose that $T(\mu)$ has the same skeleton as $T(\mu')$. By Algorithm $\ref{algorithm:generate an upper echelon tree}$ and Algorithm $\ref{algorithm:from admissible tree to collapsing map}$, it generates a unique collapsing map $\mu_{s}$ which is in upper echelon form for the skeleton of $T(\mu)$.
On the other hand, there exist an acceptable move $\sigma$ with respect to $\mu$ as well as  $\mu_{m}$, which is in an upper echelon form, such that $\mu_{m}=KM(\sigma)(\mu)$. Since $T(\mu_{m})$ also has the same skeleton as $T(\mu)$, it gives that $\mu_{m}=\mu_{s}$. In the same way, we also have
$\mu_{m}'=\mu_{s}$, which implies that $\mu$ and $\mu'$ are KM-relatable.
\end{proof}

Given $k$, we would like to have the number of different ternary tree structures of $k$ nodes, which equals to the number of equivalent classes.
This number is exactly defined as the generalized Catalan number (see \cite{hilton1991catalan}), that is,
\begin{align}\label{equ:catalan number}
\frac{1}{k}{3k \choose k-1}
\end{align}
which can be controlled by $8^{k}$ by Stirling's approximation to $k!$. Hence, we
just provide a proof of the quintic KM board game argument.

Now, let us get to the main part, namely, how to compute $D_{m}$ for a given upper echelon class.
We define a map $T_{D}$ which maps an admissible tree $T(\mu)$ to a time integration domain
\begin{equation} \label{equ:time integration domain generated by mu}
T_{D}(\mu)=\lr{t_{2j+1}\geq t_{2l+1}:2l\to 2j\ \text{in the tree}\ T(\mu)}\bigcap \lr{t_{1}\geq t_{3}}.
\end{equation}
where $2l\to 2j$ denotes that node $2l$ is a child of node $2j$.

\begin{proposition}\label{prop:upper echelon form, time integration domain}
Given a $\mu_{m}$ in upper echelon form, we have
\begin{align*}
\sum_{\mu\sim \mu_{m}}\int_{t_{1}\geq t_{3}\geq...\geq t_{2k+1}}
J_{\mu}^{(2k+1)}(\ga^{(2k+1)})(t_{1},\underline{t}_{2k+1})
d\underline{t}_{2k+1}
=\int_{T_{D}(\mu_{m})}J_{\mu_{m}}^{(2k+1)}(\ga^{(2k+1)})(t_{1},\underline{t}_{2k+1})
d\underline{t}_{2k+1}.
\end{align*}
and hence
\begin{align} \label{equ:integration,upper echelon form, time integration domain}
\ga^{(1)}(t_{1})=\sum_{\mu_{m}:\text{upper echelon form}}\int_{T_{D}(\mu_{m})}J_{\mu_{m}}^{(2k+1)}(\ga^{(2k+1)})(t_{1},\underline{t}_{2k+1})
d\underline{t}_{2k+1}.
\end{align}
\end{proposition}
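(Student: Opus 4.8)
The plan is to reduce the statement to the single combinatorial identity $D_m=T_D(\mu_m)$ (up to a set of measure zero), where $D_m$ is the union of simplexes appearing on the right of $(\ref{equ:the key equality for km move})$, and then to verify this identity by a sorting argument built on Proposition~$\ref{prop:admissible tree, the same skeleton under km move}$. Concretely, I would start from the expansion of $\ga^{(1)}$ as a sum $\sum_{\mu}I(\mu,id,\ga^{(2k+1)})$ over the $(2k-1)!!$ collapsing maps, group the summands by KM-equivalence class, and, inside a fixed class with upper echelon representative $\mu_m$, use $(\ref{equ:the equality under the km move})$ to transfer every term onto the single kernel $J_{\mu_m}^{(2k+1)}$: writing $\rho_\mu\in P$ for the acceptable move with $(\mu_m,\rho_\mu)=KM(\rho_\mu)(\mu,id)$, one has $I(\mu,id,\ga^{(2k+1)})=\int_{\Delta_{\rho_\mu}}J_{\mu_m}^{(2k+1)}(\ga^{(2k+1)})\,d\underline t_{2k+1}$, where $\Delta_\sigma:=\lr{t_1\geq t_{\sigma(3)}\geq t_{\sigma(5)}\geq\ccc\geq t_{\sigma(2k+1)}}$. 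Distinct $\sigma$ give simplexes with disjoint interiors, and the board game exhibits a bijection between the class $\lr{\mu:\mu\sim\mu_m}$ and the resulting family of permutations $\rho_\mu$, so summing over the class gives $\sum_{\mu\sim\mu_m}I(\mu,id,\ga^{(2k+1)})=\int_{D_m}J_{\mu_m}^{(2k+1)}(\ga^{(2k+1)})\,d\underline t_{2k+1}$ with $D_m=\bigcup_{\mu\sim\mu_m}\Delta_{\rho_\mu}$; it then remains to identify $D_m$ with $T_D(\mu_m)$.

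For the inclusion $D_m\subseteq T_D(\mu_m)$, fix $\mu\sim\mu_m$. By Proposition~$\ref{prop:admissible tree, the same skeleton under km move}$, $T(\mu)$ is obtained from $T(\mu_m)$ by relabeling each node $2j$ as $\rho_\mu^{-1}(2j)$; since $T(\mu)$ is admissible, every child label exceeds its parent's, so $\rho_\mu^{-1}(2l)>\rho_\mu^{-1}(2j)$ whenever $2l\to 2j$ in $T(\mu_m)$. Reading off the positions of $t_{2l+1}$ and $t_{2j+1}$ in the decreasing chain $t_{\rho_\mu(3)}\geq t_{\rho_\mu(5)}\geq\ccc$, this is exactly the constraint $t_{2j+1}\geq t_{2l+1}$ on $\Delta_{\rho_\mu}$; since every acceptable move, hence every $\rho_\mu$, fixes the indices $2$ and $3$, the chain also starts with $t_1\geq t_3$, so $\Delta_{\rho_\mu}\subseteq T_D(\mu_m)$ by $(\ref{equ:time integration domain generated by mu})$.

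For the reverse inclusion, take a point of $T_D(\mu_m)$ with pairwise distinct coordinates and let $\sigma$ be the permutation sorting $t_3,t_5,\ldots,t_{2k+1}$ into decreasing order, so the point lies in the interior of $\Delta_\sigma$. The defining inequalities of $T_D(\mu_m)$ say precisely that relabeling $T(\mu_m)$ by $\sigma^{-1}$ yields a tree in which every child label still exceeds its parent's, i.e., an admissible tree; Algorithm~$\ref{algorithm:from admissible tree to collapsing map}$ then reconstructs from it a collapsing map $\mu$ with $T(\mu)$ equal to that relabeled tree, and the converse half of Proposition~$\ref{prop:admissible tree, the same skeleton under km move}$ gives $\mu\sim\mu_m$. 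Matching the relabeling predicted by Proposition~$\ref{prop:admissible tree, the same skeleton under km move}$ against the one just performed (the trees have distinct labels) forces the connecting move to be $\sigma$ itself, so $\sigma=\rho_\mu$ and the point lies in $\Delta_{\rho_\mu}\subseteq D_m$. Hence $D_m=T_D(\mu_m)$ up to the measure-zero boundary overlaps, which is the first displayed identity; summing it over the (at most $8^k$) upper echelon classes $\mu_m$, against $\ga^{(1)}(t_1)=\sum_\mu I(\mu,id,\ga^{(2k+1)})$, yields $(\ref{equ:integration,upper echelon form, time integration domain})$.

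The step I expect to be the main obstacle is the reverse inclusion, specifically the claim that \emph{every} admissible relabeling of the skeleton of $T(\mu_m)$ is realized by an honest composition of acceptable moves sending $\mu_m$ to a (unique) $\mu$ in its class --- as opposed to a permutation outside $P$, or one violating the step-by-step admissibility requirement built into the definition of acceptable moves. This is the combinatorial core of the quintic KM board game, and it is exactly here that the ternary-tree bookkeeping (Algorithm~$\ref{algorithm:from admissible tree to collapsing map}$ together with Proposition~$\ref{prop:admissible tree, the same skeleton under km move}$) is doing the real work; one should also double-check that the $\Delta_{\rho_\mu}$ tile $T_D(\mu_m)$ with no positive-measure overlap, which follows because a point with pairwise distinct coordinates lies in a unique sorting simplex.
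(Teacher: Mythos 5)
Your proposal is correct and follows essentially the same route as the paper: reduce the sum over the KM-equivalence class to a union of simplexes $\Delta_{\rho}$ indexed by the acceptable moves attached to $\mu_m$ via $(\ref{equ:the equality under the km move})$, and then identify $\bigcup_{\rho}\Delta_{\rho}$ with $T_{D}(\mu_m)$ using the skeleton-preserving characterization of acceptable moves in Proposition~\ref{prop:admissible tree, the same skeleton under km move}. The "main obstacle" you flag (that every admissible relabeling of the skeleton is realized by an acceptable move) is exactly the content of the converse half of that proposition, which the paper likewise invokes, so your two-inclusion bookkeeping is just a more explicit rendering of the paper's one-line identification $\Sigma(\mu_m)=\lr{\rho\in P:\rho(2j)<\rho(2l)\ \text{if }2l\to 2j\ \text{in}\ T(\mu_m)}$.
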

\begin{proof}
Let $\Sigma(\mu_{m})$ be the set of all acceptable moves with respect to $\mu_{m}$. Then by the equality $(\ref{equ:the equality under the km move})$, we have
\begin{align*}
\sum_{\mu\sim \mu_{m}}I(\mu,id,\ga^{(2k+1)})=&\sum_{\rho \in \Sigma(\mu_{m})}I(\mu_{m},\rho^{-1},\ga^{(2k+1)}).
\end{align*}

By Proposition $\ref{prop:admissible tree, the same skeleton under km move}$, we see that
\begin{align}
\Sigma(\mu_{m})=\lr{\rho\in P:\rho(2j)<\rho(2l),\ \text{if $2l\to 2j$ in the tree $T(\mu_{m})$}}
\end{align}
and hence
\begin{align}
\bigcup_{\rho\in \Sigma(\mu_{m})}\lr{t_{1}\geq t_{\rho^{-1}(3)}\geq \ccc \geq t_{\rho^{-1}(2k+1)}}=T_{D}(\mu_{m}).
\end{align}
\end{proof}
\begin{example}Let us demonstrate Proposition $\ref{prop:upper echelon form, time integration domain}$ by an example.
Recall the upper echelon tree $T(\mu)$ in Example $\ref{example:admissible tree}$.
$$
\begin{tabular}{c|ccccc}
$2j$&2&4&6&8&10\\
\hline
$\mu_{1}(2j)$ &1&1&1&2&3
\end{tabular}
$$

There are $12$ acceptable moves with respect to $\mu_{1}$ such that
\begin{align*}
u_{i}=KM(\rho_{i})(\mu_{1}).
\end{align*}

\begin{table}[htbp]
\centering
\caption{Acceptable moves and Time integration domain}
\label{table:Acceptable moves and Time integration domain}
\begin{tabular}{c|ccccc||c|ccccc||c}
$2j$&2&4&6&8&10& $2j$&2&4&6&8&10&\text{Time integration domain} \\
\hline
$\rho_{1}(2j)$ &2&$4$&$6$&$8$&10& $\rho_{1}^{-1}(2j)$&2&$4$&$6$&$8$&10&$\lr{t_{1}\geq t_{3}\geq t_{5}\geq t_{7}\geq t_{9}\geq t_{11}}$\\
$\rho_{2}(2j)$ &2&$4$&$6$&$10$&8& $\rho_{2}^{-1}(2j)$&2&$4$&$6$&$10$&8&$\lr{t_{1}\geq t_{3}\geq t_{5}\geq t_{7}\geq t_{11}\geq t_{9}}$\\
$\rho_{3}(2j)$ &2&$4$&$8$&$6$&10& $\rho_{3}^{-1}(2j)$&2&$4$&$8$&$6$&10&$\lr{t_{1}\geq t_{3}\geq t_{5}\geq t_{9}\geq t_{7}\geq t_{11}}$\\
$\rho_{4}(2j)$ &2&$4$&$8$&$10$&6& $\rho_{4}^{-1}(2j)$&2&$4$&$10$&$6$&8&$\lr{t_{1}\geq t_{3}\geq t_{5}\geq t_{11}\geq t_{7}\geq t_{9}}$\\
$\rho_{5}(2j)$ &2&$4$&$10$&$6$&8& $\rho_{5}^{-1}(2j)$&2&$4$&$8$&$10$&6&$\lr{t_{1}\geq t_{3}\geq t_{5}\geq t_{9}\geq t_{11}\geq t_{7}}$\\
$\rho_{6}(2j)$ &2&$4$&$10$&$8$&6& $\rho_{6}^{-1}(2j)$&2&$4$&$10$&$8$&6&$\lr{t_{1}\geq t_{3}\geq t_{5}\geq t_{11}\geq t_{9}\geq t_{7}}$\\
$\rho_{7}(2j)$ &2&$6$&$8$&$4$&10& $\rho_{7}^{-1}(2j)$&2&$8$&$4$&$6$&10&$\lr{t_{1}\geq t_{3}\geq t_{9}\geq t_{5}\geq t_{7}\geq t_{11}}$\\
$\rho_{8}(2j)$ &2&$6$&$8$&$10$&4& $\rho_{8}^{-1}(2j)$&2&$10$&$4$&$6$&8&$\lr{t_{1}\geq t_{3}\geq t_{11}\geq t_{5}\geq t_{7}\geq t_{9}}$\\
$\rho_{9}(2j)$ &2&$6$&$10$&$4$&8& $\rho_{9}^{-1}(2j)$&2&$8$&$4$&$10$&6&$\lr{t_{1}\geq t_{3}\geq t_{9}\geq t_{5}\geq t_{11}\geq t_{7}}$\\
$\rho_{10}(2j)$ &2&$6$&$10$&$8$&4& $\rho_{10}^{-1}(2j)$&2&$10$&$4$&$8$&6&$\lr{t_{1}\geq t_{3}\geq t_{11}\geq t_{5}\geq t_{9}\geq t_{7}}$\\
$\rho_{11}(2j)$ &2&$8$&$10$&$4$&6& $\rho_{11}^{-1}(2j)$&2&$8$&$10$&$4$&6&$\lr{t_{1}\geq t_{3}\geq t_{9}\geq t_{11}\geq t_{5}\geq t_{7}}$\\
$\rho_{12}(2j)$ &2&$8$&$10$&$6$&4& $\rho_{12}^{-1}(2j)$&2&$10$&$8$&$4$&6&$\lr{t_{1}\geq t_{3}\geq t_{11}\geq t_{9}\geq t_{5}\geq t_{7}}$
\end{tabular}

\end{table}

Here are all the admissible trees equivalent to $T(\mu_{1})$.

\begin{minipage}{0.3\textwidth}
\centering
$$
\begin{tabular}{c|ccccc}
$2j$&2&4&6&8&10\\
\hline
$\mu_{1}(2j)$ &1&1&1&2&3
\end{tabular}
$$

\begin{tikzpicture}
\node (1) at (0,0) {1};
\node (2) at (0,-1) {2};
\node (4) at (-1,-2) {4};
\node (8) at (0,-2) {8};
\node (10) at (1,-2) {10};
\node (6) at (-2,-3) {6};
\draw[<-] (1)--(2);
\draw[-] (2)--(4);
\draw[<-] (2)--(8);
\draw[<-] (2)--(10);
\draw[-] (4)--(6);
\end{tikzpicture}

\end{minipage}
\begin{minipage}{0.3\textwidth}
\centering

$$
\begin{tabular}{c|ccccc}
$2j$&2&4&6&8&10\\
\hline
$\mu_{2}(2j)$ &1&1&1&3&2
\end{tabular}
$$

\begin{tikzpicture}
\node (1) at (0,0) {1};
\node (2) at (0,-1) {2};
\node (4) at (-1,-2) {4};
\node (10) at (0,-2) {10};
\node (8) at (1,-2) {8};
\node (6) at (-2,-3) {6};
\draw[<-] (1)--(2);
\draw[-] (2)--(4);
\draw[<-] (2)--(10);
\draw[<-] (2)--(8);
\draw[-] (4)--(6);
\end{tikzpicture}

\end{minipage}
\begin{minipage}{0.3\textwidth}
\centering

$$
\begin{tabular}{c|ccccc}
$2j$&2&4&6&8&10\\
\hline
$\mu_{3}(2j)$ &1&1&3&1&2
\end{tabular}
$$

\begin{tikzpicture}
\node (1) at (0,0) {1};
\node (2) at (0,-1) {2};
\node (4) at (-1,-2) {4};
\node (6) at (0,-2) {6};
\node (10) at (1,-2) {10};
\node (8) at (-2,-3) {8};
\draw[<-] (1)--(2);
\draw[-] (2)--(4);
\draw[<-] (2)--(6);
\draw[<-] (2)--(10);
\draw[-] (4)--(8);
\end{tikzpicture}

\end{minipage}

\begin{minipage}{0.3\textwidth}
\centering
$$
\begin{tabular}{c|ccccc}
$2j$&2&4&6&8&10\\
\hline
$\mu_{4}(2j)$ &1&1&3&1&2
\end{tabular}
$$

\begin{tikzpicture}
\node (1) at (0,0) {1};
\node (2) at (0,-1) {2};
\node (4) at (-1,-2) {4};
\node (8) at (0,-2) {10};
\node (10) at (1,-2) {6};
\node (6) at (-2,-3) {8};
\draw[<-] (1)--(2);
\draw[-] (2)--(4);
\draw[<-] (2)--(8);
\draw[<-] (2)--(10);
\draw[-] (4)--(6);
\end{tikzpicture}

\end{minipage}
\begin{minipage}{0.3\textwidth}
\centering
$$
\begin{tabular}{c|ccccc}
$2j$&2&4&6&8&10\\
\hline
$\mu_{5}(2j)$ &1&1&2&3&1
\end{tabular}
$$

\begin{tikzpicture}
\node (1) at (0,0) {1};
\node (2) at (0,-1) {2};
\node (4) at (-1,-2) {4};
\node (8) at (0,-2) {6};
\node (10) at (1,-2) {8};
\node (6) at (-2,-3) {10};
\draw[<-] (1)--(2);
\draw[-] (2)--(4);
\draw[<-] (2)--(8);
\draw[<-] (2)--(10);
\draw[-] (4)--(6);
\end{tikzpicture}

\end{minipage}
\begin{minipage}{0.3\textwidth}
\centering
$$
\begin{tabular}{c|ccccc}
$2j$&2&4&6&8&10\\
\hline
$\mu_{6}(2j)$ &1&1&3&2&1
\end{tabular}
$$

\begin{tikzpicture}
\node (1) at (0,0) {1};
\node (2) at (0,-1) {2};
\node (4) at (-1,-2) {4};
\node (8) at (0,-2) {8};
\node (10) at (1,-2) {6};
\node (6) at (-2,-3) {10};
\draw[<-] (1)--(2);
\draw[-] (2)--(4);
\draw[<-] (2)--(8);
\draw[<-] (2)--(10);
\draw[-] (4)--(6);
\end{tikzpicture}

\end{minipage}

\begin{minipage}{0.3\textwidth}
\centering
$$
\begin{tabular}{c|ccccc}
$2j$&2&4&6&8&10\\
\hline
$\mu_{7}(2j)$ &1&2&1&1&3
\end{tabular}
$$

\begin{tikzpicture}
\node (1) at (0,0) {1};
\node (2) at (0,-1) {2};
\node (4) at (-1,-2) {6};
\node (8) at (0,-2) {4};
\node (10) at (1,-2) {10};
\node (6) at (-2,-3) {8};
\draw[<-] (1)--(2);
\draw[-] (2)--(4);
\draw[<-] (2)--(8);
\draw[<-] (2)--(10);
\draw[-] (4)--(6);
\end{tikzpicture}
\end{minipage}
\begin{minipage}{0.3\textwidth}
\centering
$$
\begin{tabular}{c|ccccc}
$2j$&2&4&6&8&10\\
\hline
$\mu_{8}(2j)$ &1&3&1&1&2
\end{tabular}
$$

\begin{tikzpicture}
\node (1) at (0,0) {1};
\node (2) at (0,-1) {2};
\node (4) at (-1,-2) {6};
\node (8) at (0,-2) {10};
\node (10) at (1,-2) {4};
\node (6) at (-2,-3) {8};
\draw[<-] (1)--(2);
\draw[-] (2)--(4);
\draw[<-] (2)--(8);
\draw[<-] (2)--(10);
\draw[-] (4)--(6);
\end{tikzpicture}
\end{minipage}
\begin{minipage}{0.3\textwidth}
\centering
$$
\begin{tabular}{c|ccccc}
$2j$&2&4&6&8&10\\
\hline
$\mu_{9}(2j)$ &1&2&1&3&1
\end{tabular}
$$

\begin{tikzpicture}
\node (1) at (0,0) {1};
\node (2) at (0,-1) {2};
\node (4) at (-1,-2) {6};
\node (8) at (0,-2) {4};
\node (10) at (1,-2) {8};
\node (6) at (-2,-3) {10};
\draw[<-] (1)--(2);
\draw[-] (2)--(4);
\draw[<-] (2)--(8);
\draw[<-] (2)--(10);
\draw[-] (4)--(6);
\end{tikzpicture}
\end{minipage}

\begin{minipage}{0.3\textwidth}
\centering
$$
\begin{tabular}{c|ccccc}
$2j$&2&4&6&8&10\\
\hline
$\mu_{10}(2j)$ &1&3&1&2&1
\end{tabular}
$$

\begin{tikzpicture}
\node (1) at (0,0) {1};
\node (2) at (0,-1) {2};
\node (4) at (-1,-2) {6};
\node (8) at (0,-2) {8};
\node (10) at (1,-2) {4};
\node (6) at (-2,-3) {10};
\draw[<-] (1)--(2);
\draw[-] (2)--(4);
\draw[<-] (2)--(8);
\draw[<-] (2)--(10);
\draw[-] (4)--(6);
\end{tikzpicture}
\end{minipage}
\begin{minipage}{0.3\textwidth}
\centering
$$
\begin{tabular}{c|ccccc}
$2j$&2&4&6&8&10\\
\hline
$\mu_{11}(2j)$ &1&2&3&1&1
\end{tabular}
$$

\begin{tikzpicture}
\node (1) at (0,0) {1};
\node (2) at (0,-1) {2};
\node (4) at (-1,-2) {8};
\node (8) at (0,-2) {4};
\node (10) at (1,-2) {6};
\node (6) at (-2,-3) {10};
\draw[<-] (1)--(2);
\draw[-] (2)--(4);
\draw[<-] (2)--(8);
\draw[<-] (2)--(10);
\draw[-] (4)--(6);
\end{tikzpicture}
\end{minipage}
\begin{minipage}{0.3\textwidth}
\centering
$$
\begin{tabular}{c|ccccc}
$2j$&2&4&6&8&10\\
\hline
$\mu_{12}(2j)$ &1&3&2&1&1
\end{tabular}
$$

\begin{tikzpicture}
\node (1) at (0,0) {1};
\node (2) at (0,-1) {2};
\node (4) at (-1,-2) {8};
\node (8) at (0,-2) {6};
\node (10) at (1,-2) {4};
\node (6) at (-2,-3) {10};
\draw[<-] (1)--(2);
\draw[-] (2)--(4);
\draw[<-] (2)--(8);
\draw[<-] (2)--(10);
\draw[-] (4)--(6);
\end{tikzpicture}

\end{minipage}	

From the above time integration domains in Table $\ref{table:Acceptable moves and Time integration domain}$, we have
\begin{align*}
&\bigcup_{\rho_{i}\in \Sigma(\mu_{1})}\lr{t_{1}\geq t_{\rho_{i}^{-1}(3)}\geq t_{\rho_{i}^{-1}(5)}\geq t_{\rho_{i}^{-1}(7)}\geq t_{\rho_{i}^{-1}(9)}\geq t_{\rho_{i}^{-1}(11)}}\\
=&\lr{t_{1}\geq t_{3},t_{3}\geq t_{5}\geq t_{7},t_{3}\geq t_{9},t_{3}\geq t_{11}}.
\end{align*}
By the definition,
\begin{align*}
T_{D}(\mu_{1})=\lr{t_{1}\geq t_{3},t_{3}\geq t_{5}\geq t_{7},t_{3}\geq t_{9},t_{3}\geq t_{11}}.
\end{align*}
Hence,
\begin{align*}
\bigcup_{\rho_{i}\in \Sigma(\mu_{1})}\lr{t_{1}\geq t_{\rho_{i}^{-1}(3)}\geq t_{\rho_{i}^{-1}(5)}\geq t_{\rho_{i}^{-1}(7)}\geq t_{\rho_{i}^{-1}(9)}\geq t_{\rho_{i}^{-1}(11)}}
=T_{D}(\mu_{1}).
\end{align*}
\end{example}

\subsection{Signed KM Acceptable Moves}\label{subsection:Signed KM Acceptable Moves}
In Section \ref{subsection:Admissible Tree}, we provide a method to compute $T_{D}(\mu)$. A nontrivial application is that, the original KM board game argument is not compatible with space-time multilinear estimates. Indeed, depending on the sign combination in the Duhamel expansion $J_{\mu_{m}}^{(2k+1)}(\ga^{(2k+1)})(t_{1},\underline{t}_{2k+1})$, one could run into the problem
that one needs to estimate the $x$ part and the $x'$ part using the same time integral. (See \cite[Example 4]{chen2020unconditional}.) To be compatible with the estimate part in Section \ref{section:Estimates for the Compatible Time Integration Domain},
we restart with the signed collapsing pair $(\mu,sgn)$.

First, we rewrite
\begin{equation}
\ga^{(1)}=\sum_{(\mu,sgn)}I(\mu,id,sgn,\ga^{(2k+1)}),
\end{equation}
where
\begin{equation}
I(\mu,\sigma,sgn,\ga^{(2k+1)})=\int_{t_{1}\geq t_{\sigma(3)}\geq \ccc \geq t_{\sigma(2k+1)}}
J_{\mu,sgn}^{(2k+1)}(\ga^{(2k+1)})(t_{1},\underline{t}_{2k+1})d\underline{t}_{2k+1}
\end{equation}
and
\begin{align}
J_{\mu,sgn}^{(2k+1)}(\ga^{(2k+1)})(t_{1},\underline{t}_{2k+1})=
&U^{(1)}(t_{1}-t_{3})B_{\mu(2);2,3}^{sgn(2)}U^{(3)}(t_{3}-t_{5})
B_{\mu(4);4,5}^{sgn(4)}\\
&\ccc U^{(2k-1)}(t_{2k-1}-t_{2k+1})B_{\mu(2k);2k,2k+1}^{sgn(2k)}\ga^{(2k+1)}(t_{2k+1})\notag
\end{align}
where the notations have been introduced in $(\ref{equ:duhamel expansion})$.

For convenience, we define
$$ sgn(2l+1):=sgn(2l)$$
for $l\in\lr{1,2,...,k}$.
\begin{definition}
Let $\rho$ be an acceptable move of $\mu$. We define a signed version of the KM acceptable move in Chen-Pavlovi\'{c} format, still denoted by $KM(\rho)$, as follows:
\begin{align*}
(\mu',\sigma',sgn')=KM(\rho)(\mu,\sigma,sgn)
\end{align*}
where
\begin{align*}
&\mu'=\rho \circ \mu \circ \rho^{-1},\\
&\sigma'=\rho \circ \sigma,\\
&sgn'=sgn\circ \rho^{-1}.
\end{align*}
\end{definition}
If $(\mu,\sigma,sgn)$ and $(\mu',\sigma',sgn')$ are such that there exists $\rho$ as above for which
$$(\mu',\sigma',sgn')=KM(\rho)(\mu,\sigma,sgn)$$
then we say that $(\mu,\sigma,sgn)$ and $(\mu',\sigma',sgn')$ are KM-relatable.
With a slight modification of the argument in \cite{chen2011the,klainerman2008on}, we also have
\begin{equation}
I(\mu',\sigma',sgn',\ga^{(2k+1)})=I(\mu,\sigma,sgn,\ga^{(2k+1)}).
\end{equation}

\begin{example}\label{example:signed admissible tree}
We consider the following pair $(\mu,sgn)$
$$
\begin{tabular}{c|ccccc}
$2j$&2&4&6&8&10\\
\hline
$\mu(2j)$ &1&1&1&2&3\\
$sgn(2j)$ &$-$&$+$&$-$&$-$&+
\end{tabular}
$$
By Algorithm $\ref{algorithm:generate an admissible tree}$, with adding the sign, it generates a signed admissible tree as follows
\begin{center}
\begin{tikzpicture}
\node (1) at (0,0) {1};
\node (2) at (0,-1) {2$-$};
\node (4) at (-1,-2) {4+};
\node (8) at (0,-2) {8$-$};
\node (10) at (1,-2) {10+};
\node (6) at (-2,-3) {6$-$};
\draw[<-] (1)--(2);
\draw[-] (2)--(4);
\draw[<-] (2)--(8);
\draw[<-] (2)--(10);
\draw[-] (4)--(6);
\end{tikzpicture}
\end{center}
\end{example}

\begin{definition}
For a skeleton tree, we call it the signed skeleton tree if we add the sign. For example, the signed skeleton of the tree in Example $\ref{example:signed admissible tree}$ is shown as follows.
\begin{center}
\begin{tikzpicture}
\node[draw,circle,inner sep=0.1cm,outer sep=0.1cm] (1) at (0,0) {1};
\node[draw,circle,inner sep=0.2cm,outer sep=0.1cm] (2) at (0,-1.2) {};
\node (02) at(0.5,-1.2){$-$};
\node[draw,circle,inner sep=0.2cm,outer sep=0.1cm] (4) at (-1.5,-2.4) {};
\node (04) at (-1,-2.4) {+};
\node[draw,circle,inner sep=0.2cm,outer sep=0.1cm] (8) at (0,-2.4) {};
\node (08) at (0.5,-2.4) {$-$};
\node[draw,circle,inner sep=0.2cm,outer sep=0.1cm] (10) at (1.5,-2.4) {};
\node (010) at (2,-2.4) {+};
\node[draw,circle,inner sep=0.2cm,outer sep=0.1cm] (6) at (-3,-3.6) {};
\node (06) at (-2.5,-3.6) {$-$};
\draw[<-] (1)--(2);
\draw[-] (2)--(4);
\draw[<-] (2)--(8);
\draw[<-] (2)--(10);
\draw[-] (4)--(6);
\end{tikzpicture}
\end{center}
\end{definition}
Similarly, the signed acceptable moves also preserve the signed tree structures.
\begin{proposition}
Two collapsing map pairs are KM-relatable if and only if they have the same signed skeleton tree.
\end{proposition}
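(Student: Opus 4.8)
The plan is to follow the proof of Proposition~\ref{prop:admissible tree, the same skeleton under km move} essentially verbatim, carrying the signature array along. The point to keep in mind is that whether a permutation $\rho$ is an (unsigned) KM acceptable move of $\mu$ depends only on $\mu$ — the requirements involve $\mu(2j+2)<2j$ and $\mu(2j)\neq\mu(2j+2)$ and no signs — so any $\rho$ witnessing KM-relatability of the underlying collapsing maps is automatically available as a \emph{signed} move, and the only extra datum to track is the relation $sgn'=sgn\circ\rho^{-1}$. Throughout, I view the signed tree of $(\mu,sgn)$ as the admissible tree $T(\mu)$ with the node labeled $2j$ additionally decorated by the sign $sgn(2j)$, and its signed skeleton as the skeleton of $T(\mu)$ with those signs recorded at the corresponding positions.

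For the forward direction, suppose $(\mu',\sigma',sgn')=KM(\rho)(\mu,\sigma,sgn)$. By Proposition~\ref{prop:admissible tree, the same skeleton under km move}, $T(\mu')$ is obtained from $T(\mu)$ by replacing the label $2j$ with $\rho(2j)$; in particular $T(\mu)$ and $T(\mu')$ share a common skeleton. It remains to check that the signs recorded at each position of that skeleton agree. Fix a position, occupied in $T(\mu)$ by the node labeled $2j$ and therefore carrying $sgn(2j)$; in $T(\mu')$ that same position is occupied by the node labeled $\rho(2j)$, which carries $sgn'(\rho(2j))=(sgn\circ\rho^{-1})(\rho(2j))=sgn(2j)$. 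Hence the signed skeletons of $(\mu,sgn)$ and $(\mu',sgn')$ coincide.

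For the converse, suppose $(\mu,sgn)$ and $(\mu',sgn')$ have the same signed skeleton tree. Discarding the signs, $T(\mu)$ and $T(\mu')$ have the same skeleton, so by Proposition~\ref{prop:admissible tree, the same skeleton under km move} there is a KM acceptable move $\rho$ of $\mu$ with $\mu'=KM(\rho)(\mu)$ and with $T(\mu')$ equal to $T(\mu)$ after the relabeling $2j\mapsto\rho(2j)$. I then claim $sgn'=sgn\circ\rho^{-1}$. Indeed, for each $j$ the node $2j$ of $T(\mu)$ and the node $\rho(2j)$ of $T(\mu')$ sit at the same position of the common skeleton; since the signed skeletons were assumed equal, the sign recorded there is the same, i.e.\ $sgn(2j)=sgn'(\rho(2j))$, which rearranges to $sgn'=sgn\circ\rho^{-1}$. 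Therefore, with $\sigma=id$ and the accompanying $\sigma'=\rho\circ\sigma$, we have $(\mu',\sigma',sgn')=KM(\rho)(\mu,\sigma,sgn)$, so the two pairs are KM-relatable.

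The main obstacle here is bookkeeping rather than substance: all the genuine combinatorial work has already been done in Proposition~\ref{prop:admissible tree, the same skeleton under km move}. The one necessary observation is that the sign is an attribute of the skeleton \emph{position} rather than of the label it currently wears, so that it is the positional signature data that must be compared — and then the transport rule $sgn'=sgn\circ\rho^{-1}$ built into the signed move is exactly what makes these positional signs invariant. It is also worth recording explicitly that acceptability of $\rho$ is sign-independent, which is what licenses reusing the unsigned move without modification; once this is said, both implications follow immediately.
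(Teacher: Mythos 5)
Your proof is correct and follows essentially the same route as the paper: reduce to Proposition \ref{prop:admissible tree, the same skeleton under km move} and observe that the node $2j$ of $T(\mu)$ corresponds to the node $\rho(2j)$ of $T(\mu')$, so the transport rule $sgn'=sgn\circ\rho^{-1}$ is exactly the statement that the positional signs are preserved. The paper's proof is just a terser version of your argument.
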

\begin{proof}
 By Proposition $\ref{prop:admissible tree, the same skeleton under km move}$, it suffices to prove that $\rho$ keeps the sign invariant. Indeed, we note that node $2j$ in the tree $T(\mu)$ is corresponding to node $\rho(2j)$ in the tree $T(\mu')$ and hence $sgn'(\rho(2j))=sgn(2j)$.
\end{proof}
\subsection{Tamed Form}\label{subsection:Tamed Form}
 We will prove that there exists a unique special form, which we call the tamed form, in every equivalent class. First, through an example, we present an algorithm for producing the tamed enumeration of a signed skeleton. Then we exhibit
how to reduce a signed tree with same skeleton but different enumeration into the tamed
form using signed KM acceptable moves. In the end, we arrive at
\begin{equation}
\ga^{(1)}(t_{1})=\sum_{(\mu_{*},sgn_{*})\ \text{tamed}}\int_{T_{D}(\mu_{*})}J_{\mu_{*},sgn_{*}}^{(2k+1)}(\ga^{(2k+1)})(t_{1},\underline{t}_{2k+1})
d\underline{t}_{2k+1}.
\end{equation}
which is an adaptation of representation $(\ref{equ:integration,upper echelon form, time integration domain})$.
\begin{definition}
We call $2j\geq 4$ is tier of $q$ if
$$\mu^{q}(2j)=1\quad \text{but}\quad \mu^{q-1}(2j)>1$$
 where $\mu^{q}=\mu\circ \ccc\circ u$, the composition taken $q$ times. We write $t(2j)$ for the tier value of $2j$\footnote{The tier value of $2j$ equals to the number of the arrows from node $2j$ to node $1$. That is why we use arrows to link the middle/right child.}.
\end{definition}
\begin{definition}
A pair $(\mu,sgn)$ is tamed if it meets the following four requirements:

$(1)$ If $t(2l)<t(2r)$, then $2l<2r$.

$(2)$ If $t(2l)=t(2r)$, $\mu^{2}(2l)=\mu^{2}(2r)$, $sgn(\mu(2l))=sgn(\mu(2r))$ and $\mu(2l)<\mu(2r)$,
then $2l<2r$.

$(3)$ If $t(2l)=t(2r)$, $\mu^{2}(2l)=\mu^{2}(2r)$, $sgn(\mu(2l))=+$, $sgn(\mu(2r))=-$, then $2l<2r$.

$(4)$ If $t(2l)=t(2r)$, $\mu^{2}(2l)\neq \mu^{2}(2r)$, $\mu(2l)<\mu(2r)$, then $2l<2r$.
\end{definition}
Conditions (2), (3), and (4) specify the ordering for $2l$ and $2r$
belonging to the same tier. More precisely, rule (2) says that the ordering of middle child is prior to the one of right child and the ordering follows the parental ordering if two different parents belong to the same left branch with the same sign. Rule (3) says that if the parents belong to the same left branch, a positive parent dominates
over a negative parent. Finally, if the parents do not belong to the same left
branch, rule (4) says that the ordering follows the parental ordering regardless of the signs
of the parents.

\begin{example}\label{example:from skeleton to tamed tree}
The pair $(\mu_{*},sgn_{*})$ in the following chart
$$
\begin{tabular}{c|ccccccccccccc}
$2j$&2&4&6&8&10&12&14&16&18&20&22&24&26\\
\hline
$\mu_{*}(2j)$ &1&1&1&1&1&6&6&7&2&3&10&13&18\\
$sgn_{*}(2j)$&$-$&$-$&+&+&$-$&$-$&+&+&$-$&+&+&$-$&+\\
$t(2j)$ &1&1&1&1&1&2&2&2&2&2&2&3&3
\end{tabular}
$$
is tamed. We illustrate an algorithm for determining the unique tamed enumberation of a signed skeleton tree.

\begin{minipage}[c]{0.47\textwidth}
\hspace*{1em}We start with the skeleton (on the right) of the tree generated by $(\mu_{*},sgn_{*})$ with only the signs indicated.
\end{minipage}
\begin{minipage}[c]{0.5\textwidth}
\begin{tikzpicture}
\node (1) at (0,0) {1};
\node (2) at (0,-1) {$-$};
\node (4) at (-1.5,-2) {$-$};
\node (6) at (-3,-3) {+};
\node (8) at (-4.5,-4) {+};
\node (10) at (-6,-5) {$-$};
\node (12) at (-3,-4) {$-$};
\node (14) at (-4.5,-5) {+};
\node (16) at (-1.5,-4) {+};
\node (18) at (0,-2) {$-$};
\node (20) at (1.5,-2) {+};
\node (22) at (-6,-6) {+};
\node (24) at (-1.5,-5) {$-$};
\node (26) at (0,-3) {+};
\draw[<-] (1)--(2);
\draw[-] (2)--(4);
\draw[<-] (2)--(18);
\draw[<-] (2)--(20);
\draw[-] (4)--(6);
\draw[-] (6)--(8);
\draw[<-] (6)--(12);
\draw[<-] (6)--(16);
\draw[-] (8)--(10);
\draw[<-] (10)--(22);
\draw[-] (12)--(14);
\draw[<-] (12)--(24);
\draw[<-] (18)--(26);
\end{tikzpicture}
\end{minipage}

\begin{minipage}[c]{0.47\textwidth}
\hspace*{1em}We consider the left branch attached to node $1$ where there are five nodes.
Then we label the left branch in order with $2$, $4$, $6$, $8$, $10$.
\end{minipage}
\begin{minipage}[c]{0.5\textwidth}
\begin{tikzpicture}
\node (1) at (0,0) {1};
\node (2) at (0,-1) {2$-$};
\node (4) at (-1.5,-2) {4$-$};
\node (6) at (-3,-3) {6+};
\node (8) at (-4.5,-4) {8+};
\node (10) at (-6,-5) {10$-$};
\node (12) at (-3,-4) {$-$};
\node (14) at (-4.5,-5) {+};
\node (16) at (-1.5,-4) {+};
\node (18) at (0,-2) {$-$};
\node (20) at (1.3,-2) {+};
\node (22) at (-6,-6) {+};
\node (24) at (-1.5,-5) {$-$};
\node (26) at (0,-3) {+};
\draw[<-] (1)--(2);
\draw[-] (2)--(4);
\draw[<-] (2)--(18);
\draw[<-] (2)--(20);
\draw[-] (4)--(6);
\draw[-] (6)--(8);
\draw[<-] (6)--(12);
\draw[<-] (6)--(16);
\draw[-] (8)--(10);
\draw[<-] (10)--(22);
\draw[-] (12)--(14);
\draw[<-] (12)--(24);
\draw[<-] (18)--(26);
\end{tikzpicture}
\end{minipage}

\begin{minipage}{0.48\textwidth}
\hspace*{1em}Now, we set a queue where we list the nodes $+$ first and then the $-$ nodes
$$Queue:6+,8+,2-,4-,10-$$
Then we work along the queue from left to right. Since $6+$ has both a middle and right child, we first label the middle child and its left branch with the next available number $12$ and $14$ and add these numbers to the queue putting the $+$ nodes before the $-$ nodes
$$Queue:6+,8+,2-,4-,10-,14+,12-$$
Next we label the right child with number $16$ and add it to the queue
$$Queue:6+,8+,2-,4-,10-,14+,12-,16+$$
\end{minipage}
\begin{minipage}{0.49\textwidth}
\begin{tikzpicture}
\node (1) at (0,0) {1};
\node (2) at (0,-1) {2$-$};
\node (4) at (-1.5,-2) {4$-$};
\node (6) at (-3,-3) {6+};
\node (8) at (-4.5,-4) {8+};
\node (10) at (-6,-5) {10$-$};
\node (12) at (-3,-4) {12$-$};
\node (14) at (-4.5,-5) {14+};
\node (16) at (-1.5,-4) {16+};
\node (18) at (0,-2) {$-$};
\node (20) at (1.3,-2) {+};
\node (22) at (-6,-6) {+};
\node (24) at (-1.5,-5) {$-$};
\node (26) at (0,-3) {+};
\draw[<-] (1)--(2);
\draw[-] (2)--(4);
\draw[<-] (2)--(18);
\draw[<-] (2)--(20);
\draw[-] (4)--(6);
\draw[-] (6)--(8);
\draw[<-] (6)--(12);
\draw[<-] (6)--(16);
\draw[-] (8)--(10);
\draw[<-] (10)--(22);
\draw[-] (12)--(14);
\draw[<-] (12)--(24);
\draw[<-] (18)--(26);
\end{tikzpicture}
\end{minipage}
\\[1cm]

\begin{minipage}[c]{0.47\textwidth}
\hspace*{1em}Since we have already dealt with $6+$, we can pop it from the queue
$$Queue:8+,2-,4-,10-,14+,12-,16+$$
Subquently, we come to the next node in the queue which is $8+$. Since the node $8+$ has no child, we skip and pop it from the queue
$$Queue:2-,4-,10-,14+,12-,16+$$
Then, we come to the node $2-$, which has both a middle and right child. We first label the middle child with $18$ and then the right child with $20$.
\end{minipage}
\begin{minipage}[c]{0.5\textwidth}
\begin{tikzpicture}
\node (1) at (0,0) {1};
\node (2) at (0,-1) {2$-$};
\node (4) at (-1.5,-2) {4$-$};
\node (6) at (-3,-3) {6+};
\node (8) at (-4.5,-4) {8+};
\node (10) at (-6,-5) {10$-$};
\node (12) at (-3,-4) {12$-$};
\node (14) at (-4.5,-5) {14+};
\node (16) at (-1.5,-4) {16+};
\node (18) at (0,-2) {18$-$};
\node (20) at (1.2,-2) {20+};
\node (22) at (-6,-6) {+};
\node (24) at (-1.5,-5) {$-$};
\node (26) at (0,-3) {+};
\draw[<-] (1)--(2);
\draw[-] (2)--(4);
\draw[<-] (2)--(18);
\draw[<-] (2)--(20);
\draw[-] (4)--(6);
\draw[-] (6)--(8);
\draw[<-] (6)--(12);
\draw[<-] (6)--(16);
\draw[-] (8)--(10);
\draw[<-] (10)--(22);
\draw[-] (12)--(14);
\draw[<-] (12)--(24);
\draw[<-] (18)--(26);
\end{tikzpicture}
\end{minipage}

\begin{minipage}{0.47\textwidth}
\hspace*{1em}From the queue, we pop $2-$ and add $18-$, $20+$:
$$Queue:4-,10-,14+,12-,16+,18-,20+$$
Since $4-$ has no child, we pop it and proceed to $10-$, which has a middle child. We label it with $22$. The queue is updated:
$$Queue:14+,12-,16+,18-,20+,22+$$
By turn, we arrive at the fully enumberated tree.
\end{minipage}
\begin{minipage}{0.5\textwidth}
\begin{tikzpicture}
\node (1) at (0,0) {1};
\node (2) at (0,-1) {2$-$};
\node (4) at (-1.5,-2) {4$-$};
\node (6) at (-3,-3) {6+};
\node (8) at (-4.5,-4) {8+};
\node (10) at (-6,-5) {10$-$};
\node (12) at (-3,-4) {12$-$};
\node (14) at (-4.5,-5) {14+};
\node (16) at (-1.5,-4) {16+};
\node (18) at (0,-2) {18$-$};
\node (20) at (1.2,-2) {20+};
\node (22) at (-6,-6) {22+};
\node (24) at (-1.5,-5) {24$-$};
\node (26) at (0,-3) {26+};
\draw[<-] (1)--(2);
\draw[-] (2)--(4);
\draw[<-] (2)--(18);
\draw[<-] (2)--(20);
\draw[-] (4)--(6);
\draw[-] (6)--(8);
\draw[<-] (6)--(12);
\draw[<-] (6)--(16);
\draw[-] (8)--(10);
\draw[<-] (10)--(22);
\draw[-] (12)--(14);
\draw[<-] (12)--(24);
\draw[<-] (18)--(26);
\end{tikzpicture}
\end{minipage}

\end{example}

Here is the general algorithm to generate a tamed tree from a given signed skeleton tree.

\begin{algorithm}[Generate a Tamed tree]\label{algorithm:generate a tamed tree}
~\\
\hspace*{1em}$(1)$ Start with a queue that first contains only $1$.

$(2)$ If the queue is empty, then stop. If not, dequeue the leftmost entry $l$ of the queue and go to step $(3)$.

$(3)$ If there is a middle child of $l$, pass to the middle child of $l$, and label its left branch with the next available label $2j,2(j+1),...,2(j+q)$. If not, go to step $(5)$.

$(4)$ Take the left branch enumerated in step $(3)$ and first list all $+$ nodes in order from $2j,2(j+1),...,2(j+q)$ and add them to the right side of the queue, and then list in order all $-$ nodes from $2j,2(j+1),...,2(j+q)$ and add them to the right side of the queue. Set the next available label to be $2(j+q+1)$, and go to step $(5)$.

$(5)$ If there is a right child of $l$, pass to the right child of $l$, and label its left branch with the next available label $2j,2(j+1),...,2(j+q)$. If not, go to step $(2)$.

$(6)$ Take the left branch enumerated in step $(5)$ and first list all $+$ nodes in order from $2j,2(j+1),...,2(j+q)$ and add them to the right side of the queue, and then list in order all $-$ nodes from $2j,2(j+1),...,2(j+q)$ and add them to the right side of the queue.
Set the next available label to be $2(j+q+1)$, and go to step $(2)$.

\end{algorithm}

Next, we will explain how to execute a sequence of signed KM acceptable moves to bring a collapsing
 map pair $(\mu,sgn)$ into the tamed form. After presenting an example, we will give the general
form of the algorithm.

\begin{example}We consider the following collapsing map
$$
\begin{tabular}{c|ccccccccccccc}
$2j$&2&4&6&8&10&12&14&16&18&20&22&24&26\\
\hline
$\mu(2j)$ &1&1&1&6&1&6&7&1&2&16&9&18&3\\
$sgn(2j)$&$-$&$-$&+&$-$&+&+&+&$-$&$-$&+&$-$&+&+
\end{tabular}
$$
By Algorithm $\ref{algorithm:generate an admissible tree}$, it generates $T(\mu,sgn)$ as shown in Fig. $\ref{figure:admissible tree, to be tamed}$, which
has the same signed skeleton tree with the collapsing map $(\mu_{*},sgn_{*})$ in Example $\ref{example:from skeleton to tamed tree}$.

Comparing Fig. $\ref{figure:admissible tree, to be tamed}$ $T(u,sgn)$ with Fig. $\ref{figure:tamed tree, compared}$ $T(\mu_{*},sgn_{*})$, we note that the node $8$ on the $T(\mu_{*},sgn_{*})$ is the first one that differs from the one on the $T(\mu,sgn)$, which is labeled 10. To change node $10$ into node $8$, we do KM(8,10) on $(\mu,sgn)$.

\begin{figure}[htb]
\begin{minipage}{0.5\textwidth}
\begin{tikzpicture}
\node (1) at (0,0) {1};
\node (2) at (0,-1) {2$-$};
\node (4) at (-1.5,-2) {4$-$};
\node (6) at (-3,-3) {6+};
\node[draw,circle,inner sep=0.05cm,outer sep=0.1cm] (8) at (-4.5,-4) {10+};
\node (10) at (-6,-5) {16$-$};
\node (12) at (-3,-4) {8$-$};
\node (14) at (-4.5,-5) {12+};
\node (16) at (-1.5,-4) {14+};
\node (18) at (0,-2) {18$-$};
\node (20) at (1.2,-2) {26+};
\node (22) at (-6,-6) {20+};
\node (24) at (-1.5,-5) {22$-$};
\node (26) at (0,-3) {24+};
\draw[<-] (1)--(2);
\draw[-] (2)--(4);
\draw[<-] (2)--(18);
\draw[<-] (2)--(20);
\draw[-] (4)--(6);
\draw[-] (6)--(8);
\draw[<-] (6)--(12);
\draw[<-] (6)--(16);
\draw[-] (8)--(10);
\draw[<-] (10)--(22);
\draw[-] (12)--(14);
\draw[<-] (12)--(24);
\draw[<-] (18)--(26);
\end{tikzpicture}
\caption{$T(\mu,sgn)$}
\label{figure:admissible tree, to be tamed}
\end{minipage}%
\begin{minipage}{0.5\textwidth}
\begin{tikzpicture}
\node (1) at (0,0) {1};
\node (2) at (0,-1) {2$-$};
\node (4) at (-1.5,-2) {4$-$};
\node (6) at (-3,-3) {6+};
\node[draw,circle,inner sep=0.05cm,outer sep=0.1cm] (8) at (-4.5,-4) {8+};
\node (10) at (-6,-5) {10$-$};
\node (12) at (-3,-4) {12$-$};
\node (14) at (-4.5,-5) {14+};
\node (16) at (-1.5,-4) {16+};
\node (18) at (0,-2) {18$-$};
\node (20) at (1.2,-2) {20+};
\node (22) at (-6,-6) {22+};
\node (24) at (-1.5,-5) {24$-$};
\node (26) at (0,-3) {26+};
\draw[<-] (1)--(2);
\draw[-] (2)--(4);
\draw[<-] (2)--(18);
\draw[<-] (2)--(20);
\draw[-] (4)--(6);
\draw[-] (6)--(8);
\draw[<-] (6)--(12);
\draw[<-] (6)--(16);
\draw[-] (8)--(10);
\draw[<-] (10)--(22);
\draw[-] (12)--(14);
\draw[<-] (12)--(24);
\draw[<-] (18)--(26);
\end{tikzpicture}
\caption{$T(\mu_{*},sgn_{*})$}
\label{figure:tamed tree, compared}
\end{minipage}
\end{figure}

The KM(8,10) move is
\begin{align*}
&\mu_{1}=(8,10)\circ (9,11)\circ \mu \circ (8,10)\circ (9,11),\\
&sgn_{1}=sgn\circ (8,10)\circ (9,11).
\end{align*}
It gives that
$$
\begin{tabular}{c|ccccccccccccc}
$2j$&2&4&6&8&10&12&14&16&18&20&22&24&26\\
\hline
$\mu_{1}(2j)$ &1&1&1&1&6&6&7&1&2&16&11&18&3\\
$sgn_{1}(2j)$&$-$&$-$&+&+&$-$&+&+&$-$&$-$&+&$-$&+&+
\end{tabular}
$$

\begin{figure}[htb]
\begin{minipage}{0.5\textwidth}
\begin{tikzpicture}
\node (1) at (0,0) {1};
\node (2) at (0,-1) {2$-$};
\node (4) at (-1.5,-2) {4$-$};
\node (6) at (-3,-3) {6+};
\node (8) at (-4.5,-4) {8+};
\node[draw,circle,inner sep=0.05cm,outer sep=0.1cm] (10) at (-6,-5) {16$-$};
\node (12) at (-3,-4) {10$-$};
\node (14) at (-4.5,-5) {12+};
\node (16) at (-1.5,-4) {14+};
\node (18) at (0,-2) {18$-$};
\node (20) at (1.2,-2) {26+};
\node (22) at (-6,-6.2) {20+};
\node (24) at (-1.5,-5) {22$-$};
\node (26) at (0,-3) {24+};
\draw[<-] (1)--(2);
\draw[-] (2)--(4);
\draw[<-] (2)--(18);
\draw[<-] (2)--(20);
\draw[-] (4)--(6);
\draw[-] (6)--(8);
\draw[<-] (6)--(12);
\draw[<-] (6)--(16);
\draw[-] (8)--(10);
\draw[<-] (10)--(22);
\draw[-] (12)--(14);
\draw[<-] (12)--(24);
\draw[<-] (18)--(26);
\end{tikzpicture}
\caption{$T(\mu_{1},sgn_{1})$}
\label{figure:admissible tree, to be tamed, 1}
\end{minipage}%
\begin{minipage}{0.5\textwidth}
\begin{tikzpicture}
\node (1) at (0,0) {1};
\node (2) at (0,-1) {2$-$};
\node (4) at (-1.5,-2) {4$-$};
\node (6) at (-3,-3) {6+};
\node (8) at (-4.5,-4) {8+};
\node[draw,circle,inner sep=0.05cm,outer sep=0.1cm] (10) at (-6,-5) {10$-$};
\node (12) at (-3,-4) {12$-$};
\node (14) at (-4.5,-5) {14+};
\node (16) at (-1.5,-4) {16+};
\node (18) at (0,-2) {18$-$};
\node (20) at (1.2,-2) {20+};
\node (22) at (-6,-6.2) {22+};
\node (24) at (-1.5,-5) {24$-$};
\node (26) at (0,-3) {26+};
\draw[<-] (1)--(2);
\draw[-] (2)--(4);
\draw[<-] (2)--(18);
\draw[<-] (2)--(20);
\draw[-] (4)--(6);
\draw[-] (6)--(8);
\draw[<-] (6)--(12);
\draw[<-] (6)--(16);
\draw[-] (8)--(10);
\draw[<-] (10)--(22);
\draw[-] (12)--(14);
\draw[<-] (12)--(24);
\draw[<-] (18)--(26);
\end{tikzpicture}
\caption{$T(\mu_{*},sgn_{*})$}
\end{minipage}
\end{figure}
Next, we compare Fig. $\ref{figure:admissible tree, to be tamed, 1}$ $T(u_{1},sgn_{1})$ with $T(\mu_{*},sgn_{*})$ and find that the next different node is $10$ in the tree $T(u_{*},sgn_{*})$, which is corresponding to node 16 in the tree $T(u_{1},sgn_{1})$. Hence we do KM(14,16), KM(12,14) and KM(10,12) on $(\mu_{1},sgn_{1})$. Then we have
\begin{align*}
(\mu_{2},sgn_{2})=KM(10,12)\circ KM(12,14)\circ KM(14,16)(\mu_{1},sgn_{1})
\end{align*}
and
$$
\begin{tabular}{c|ccccccccccccc}
$2j$&2&4&6&8&10&12&14&16&18&20&22&24&26\\
\hline
$\mu_{2}(2j)$ &1&1&1&1&1&6&6&7&2&10&13&18&3\\
$sgn_{2}(2j)$&$-$&$-$&+&+&$-$&$-$&+&+&$-$&+&$-$&+&+
\end{tabular}
$$

\begin{figure}[htb]
\begin{minipage}{0.5\textwidth}
\begin{tikzpicture}
\node (1) at (0,0) {1};
\node (2) at (0,-1) {2$-$};
\node (4) at (-1.5,-2) {4$-$};
\node (6) at (-3,-3) {6+};
\node (8) at (-4.5,-4) {8+};
\node (10) at (-6,-5) {10$-$};
\node (12) at (-3,-4) {12$-$};
\node (14) at (-4.5,-5) {14+};
\node (16) at (-1.5,-4) {16+};
\node (18) at (0,-2) {18$-$};
\node[draw,circle,inner sep=0.05cm,outer sep=0.1cm] (20) at (1.2,-2) {26+};
\node (22) at (-6,-6) {20+};
\node (24) at (-1.5,-5) {22$-$};
\node (26) at (0,-3) {24+};
\draw[<-] (1)--(2);
\draw[-] (2)--(4);
\draw[<-] (2)--(18);
\draw[<-] (2)--(20);
\draw[-] (4)--(6);
\draw[-] (6)--(8);
\draw[<-] (6)--(12);
\draw[<-] (6)--(16);
\draw[-] (8)--(10);
\draw[<-] (10)--(22);
\draw[-] (12)--(14);
\draw[<-] (12)--(24);
\draw[<-] (18)--(26);
\end{tikzpicture}
\caption{$T(\mu_{2},sgn_{2})$}
\label{figure:admissible tree, to be tamed, 2}
\end{minipage}%
\begin{minipage}{0.5\textwidth}
\begin{tikzpicture}
\node (1) at (0,0) {1};
\node (2) at (0,-1) {2$-$};
\node (4) at (-1.5,-2) {4$-$};
\node (6) at (-3,-3) {6+};
\node (8) at (-4.5,-4) {8+};
\node (10) at (-6,-5) {10$-$};
\node (12) at (-3,-4) {12$-$};
\node (14) at (-4.5,-5) {14+};
\node (16) at (-1.5,-4) {16+};
\node (18) at (0,-2) {18$-$};
\node[draw,circle,inner sep=0.05cm,outer sep=0.1cm] (20) at (1.2,-2) {20+};
\node (22) at (-6,-6) {22+};
\node (24) at (-1.5,-5) {24$-$};
\node (26) at (0,-3) {26+};
\draw[<-] (1)--(2);
\draw[-] (2)--(4);
\draw[<-] (2)--(18);
\draw[<-] (2)--(20);
\draw[-] (4)--(6);
\draw[-] (6)--(8);
\draw[<-] (6)--(12);
\draw[<-] (6)--(16);
\draw[-] (8)--(10);
\draw[<-] (10)--(22);
\draw[-] (12)--(14);
\draw[<-] (12)--(24);
\draw[<-] (18)--(26);
\end{tikzpicture}
\caption{$T(\mu_{*},sgn_{*})$}
\end{minipage}
\end{figure}

Comparing Fig. $\ref{figure:admissible tree, to be tamed, 2}$ $T(u_{2},sgn_{2})$ and $T(\mu_{*},sgn_{*})$, we find that the next different node is 20 in the tree $T(\mu_{*},sgn_{*})$, which is corresponding to node 26 in the tree $T(\mu_{3},sgn_{3})$. Hence, we
do KM(24,26), KM(22,24) and KM(20,22) on $(u_{2},sgn_{2})$ to obtain
\begin{align*}
(\mu_{3},sgn_{3})=KM(20,22)\circ KM(22,24)\circ KM(24,26)(\mu_{2},sgn_{2})
\end{align*}
and
$$
\begin{tabular}{c|ccccccccccccc}
$2j$&2&4&6&8&10&12&14&16&18&20&22&24&26\\
\hline
$\mu_{3}(2j)$ &1&1&1&1&1&6&6&7&2&3&10&13&18\\
$sgn_{3}(2j)$&$-$&$-$&+&+&$-$&$-$&+&+&$-$&+&+&$-$&+
\end{tabular}
$$
We see that $(\mu_{3},sgn_{3})$ is just the tamed pair $(\mu_{*},sgn_{*})$ as shown in Example \ref{example:from skeleton to tamed tree}.

Here is a general algorithm to bring a collapsing map $(\mu,sgn)$ into the tamed form.
\begin{algorithm}[Tamed form]
~\\
\hspace*{1em}$(1)$ Given a collapsing map pair $(\mu,sgn)$, by Algorithm $\ref{algorithm:generate an admissible tree}$, we obtain a signed admissible tree $T(\mu,sgn)$. From the signed skeleton tree, by Algorithm $\ref{algorithm:generate a tamed tree}$, it generates a tamed tree $\al$.

$(2)$ Set counter $j=1$.

$(3)$ If the node $2j$ in the tame tree $\al$ is corresponding to $2l$ in the tree $T(\mu,sgn)$, then set
$$(u',sgn')=KM(2j,2j+2)\circ \ccc \circ KM(2l-4,2l-2)\circ KM(2l-2,2l) (\mu,sgn).$$

$(4)$ Set $(\mu,sgn)=(\mu',sgn')$. If $j=k$, then stop, otherwise set $j=j+1$ and go to step $(3)$.

\end{algorithm}

\end{example}

Next, we arrive at the main part, that is, the following adaptation of Proposition \ref{prop:upper echelon form, time integration domain}.

\begin{proposition} \label{prop:tamed form, time integration domain}
Within a signed KM-relatable equivalence class of collapsing map pairs $(\mu,sgn)$, there is a unique tamed $(\mu_{*},sgn_{*})$. Moreover,
\begin{equation}\label{equ:tamed form, time integration domain}
\sum_{(\mu,sgn)\sim (\mu_{*},sgn_{*})}I(\mu,id,sgn,\ga^{(2k+1)})
=\int_{T_{D}(\mu_{*})}J_{\mu_{*},sgn_{*}}^{(2k+1)}(\ga^{(2k+1)})(t_{1},\underline{t}_{2k+1})
d\underline{t}_{2k+1}
\end{equation}
where $T_{D}(\mu_{*})$ is defined by $(\ref{equ:time integration domain generated by mu})$. Consequently,
\begin{equation} \label{equ:integration,tamed form, time integration domain}
\ga^{(1)}(t_{1})=\sum_{(\mu_{*},sgn_{*})\ \text{tamed}}\int_{T_{D}(\mu_{*})}J_{\mu_{*},sgn_{*}}^{(2k+1)}(\ga^{(2k+1)})(t_{1},\underline{t}_{2k+1})
d\underline{t}_{2k+1}
\end{equation}
where the number of tamed forms can be controlled by $16^{k}$.
\end{proposition}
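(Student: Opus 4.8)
The plan is to mirror the proof of Proposition~\ref{prop:upper echelon form, time integration domain}, carrying the sign array along at every step; the argument then has three ingredients — existence and uniqueness of the tamed representative in each signed KM-class, the time-integration-domain identity~\eqref{equ:tamed form, time integration domain}, and the counting bound. \textbf{Existence.} By the signed analogue of Proposition~\ref{prop:admissible tree, the same skeleton under km move} proved in Section~\ref{subsection:Signed KM Acceptable Moves}, a signed KM-relatable equivalence class is precisely the set of collapsing map pairs sharing one fixed signed skeleton tree. Given that signed skeleton, I will run Algorithm~\ref{algorithm:generate a tamed tree} on it and first check that its output is an admissible tree, i.e. every child's label strictly exceeds its parent's: this follows from the queue discipline, since a node is enqueued only after its parent has been dequeued and labeled, so a left branch hanging off a middle or right child receives labels strictly above the parent's, and within a left branch the labels increase downward by construction. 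Thus Algorithm~\ref{algorithm:from admissible tree to collapsing map} turns it into a bona fide pair $(\mu_*, sgn_*)$, and I will verify it satisfies the four defining conditions of a tamed pair: (1) holds because branches of smaller tier value are processed earlier in the queue; (2) and (3) hold because, among nodes hanging off a common left-branch ancestor, the queue lists the $+$ nodes before the $-$ nodes and otherwise follows the parental order; (4) holds because distinct parents in distinct left branches are enqueued in increasing order of their labels.

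\textbf{Uniqueness and legitimacy of the reduction.} Conversely I will show that conditions (1)--(4) \emph{characterize} this enumeration: for any tamed $(\mu,sgn)$, an induction on $2j$ shows the node receiving the next label is forced — first by tier value, then middle-before-right, then $+$-before-$-$ among same-ancestor parents, then parental order — so it coincides with the node chosen by Algorithm~\ref{algorithm:generate a tamed tree}; hence each class contains exactly one tamed pair. I then need that the reduction algorithm (the final \emph{Tamed form} algorithm above) actually transports an arbitrary $(\mu,sgn)$ to its tamed representative through legitimate signed KM moves: each elementary step $KM(2l-2,2l)$ is admissible because the node $2l$ it moves downward originates from a strictly later branch of the common skeleton, which forces $\mu(2l)\neq\mu(2l-2)$ and $\mu(2l)<2l-2$ at the moment the move is applied, and by the signed version of~\eqref{equ:the equality under the km move} each such step leaves the Duhamel integral unchanged.

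\textbf{The time-integration identity and the count.} With these in place, identity~\eqref{equ:tamed form, time integration domain} follows exactly as in Proposition~\ref{prop:upper echelon form, time integration domain}: letting $\Sigma(\mu_*)$ be the acceptable moves with respect to $\mu_*$, the signed adaptation of the Chen--Pavlovi\'{c} board-game lemma gives $\sum_{(\mu,sgn)\sim(\mu_*,sgn_*)} I(\mu,id,sgn,\ga^{(2k+1)}) = \sum_{\rho\in\Sigma(\mu_*)} I(\mu_*,\rho^{-1},sgn_*,\ga^{(2k+1)})$, while (the signed refinement of) Proposition~\ref{prop:admissible tree, the same skeleton under km move} gives $\Sigma(\mu_*)=\{\rho\in P:\rho(2j)<\rho(2l)\text{ whenever }2l\to 2j\text{ in }T(\mu_*)\}$; the simplices $\{t_1\geq t_{\rho^{-1}(3)}\geq\cdots\geq t_{\rho^{-1}(2k+1)}\}$, $\rho\in\Sigma(\mu_*)$, are then pairwise disjoint up to measure zero with union exactly $T_D(\mu_*)$, because a generic point of $T_D(\mu_*)$ has distinct coordinates and hence a unique total order refining the ancestry partial order, matching a unique such $\rho$. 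Summing over the simplices yields~\eqref{equ:tamed form, time integration domain}; summing that over all tamed pairs, together with $\ga^{(1)}=\sum_{(\mu,sgn)}I(\mu,id,sgn,\ga^{(2k+1)})$ and the fact that each pair lies in exactly one class, yields~\eqref{equ:integration,tamed form, time integration domain}. Finally, the number of tamed pairs equals the number of signed skeletons, namely the number of ternary skeleton trees with $k$ internal nodes times $2^k$; the former is the generalized Catalan number $\frac{1}{k}\binom{3k}{k-1}\leq 8^k$ (Stirling, as after~\eqref{equ:catalan number}), so the total is at most $8^k\cdot 2^k=16^k$.

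\textbf{Main obstacle.} I expect the delicate point to be the uniqueness half together with the legitimacy of the reduction: one must show conditions (1)--(4) not merely \emph{hold} for Algorithm~\ref{algorithm:generate a tamed tree}'s output but \emph{pin it down}, which requires carefully disentangling the interaction among the tier value, the middle/right distinction, the $\pm$ signs of the parents, and the parental order — and, in lockstep, that every step of the reduction algorithm remains an admissible signed acceptable move. The remaining steps are formal sign-decorated adaptations of the unsigned arguments already established in the paper.
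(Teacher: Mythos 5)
Your proposal is correct and follows essentially the same route as the paper: existence and uniqueness of the tamed representative via the tamed-tree generation algorithm applied to the signed skeleton (using the signed version of the skeleton-invariance proposition), the time-integration identity by repeating the upper-echelon argument with the union of simplices $\lr{t_{1}\geq t_{\rho^{-1}(3)}\geq\cdots\geq t_{\rho^{-1}(2k+1)}}$ over acceptable moves equaling $T_{D}(\mu_{*})$, and the count $8^{k}\cdot 2^{k}=16^{k}$. The extra care you devote to verifying the four tamedness conditions, the uniqueness characterization, and the admissibility of each step of the reduction algorithm is detail the paper leaves implicit in its algorithms, not a different method.
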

\begin{proof}
The existence and uniqueness follow from Algorithm $\ref{algorithm:generate a tamed tree}$.
For $(\ref{equ:tamed form, time integration domain})$, the proof is the same as Proposition $\ref{prop:upper echelon form, time integration domain}$. As shown in $(\ref{equ:catalan number})$, the number of different ternary tree structures of $k$ nodes can be controlled by $8^{k}$. By paying an extra factor of $2^{k}$, which comes from the signs, there are at most $16^{k}$ tamed forms.
\end{proof}
The next step will be to rearrange the tamed pairs $(\mu_{*},sgn_{*})$ via wild moves, as defined
and discussed in the next section. This will produce a further reduction of $(\ref{equ:integration,tamed form, time integration domain})$.
\subsection{Wild Moves}\label{subsection:Wild Moves}
We then introduce wild moves which keeps the tamed form invariant so that we can partition the class of tamed
pairs $(\mu, sgn)$ into equivalence classes of wildly relatable forms.
\begin{definition}\label{def:allowable permutation}
Given a collapsing map pair $(\mu,sgn)$, define $G_{i}=\lr{2j:\mu(2j)=i}$ for $i=1,2,...,2k-1$. We call $\rho\in P$ allowable with respect to $(\mu,sgn)$ if it satisfies the following two conditions:

$(1)$ $\rho(G_{i})=G_{i}$ for $i=1,2,...,2k-1$.

$(2)$ If $2q < 2s$, $\mu(2q)=\mu(2s)$ and $sgn(2q)=sgn(2s)$, then $\rho(2q)< \rho(2s)$.

We denote the set of all allowable permutations $\rho$ with respect to $(\mu,sgn)$ by $P(\mu,sgn)$.
Note that condition $(1)$ is equivalent to $\mu\circ \rho =\mu\circ \rho^{-1}=\mu$, which leaves all left branchs invariant. Moreover, if $(\mu,sgn)$ is in tamed form, $G_{i}$ will be the form $\lr{2l,2l+2,...,2r}$.
\end{definition}

\begin{definition}[Wild move]\label{definiton:wild moves}
%

Given a signed collapsing map $(\mu,sgn)$ and $\rho\in P(\mu,sgn)$, then the wild move $W(\rho)$ is defined as an action on a ternary $(\mu,\sigma,sgn)$, where
$$(\mu',\sigma',sgn')=W(\rho)(\mu,\sigma,sgn)$$
with
\begin{align*}
&\mu'=\rho \circ \mu \circ \rho^{-1}=\rho \circ \mu,\\
&\sigma'=\rho \circ \sigma,\\
&sgn'=sgn\circ \rho^{-1}.
\end{align*}
\end{definition}

It is fairly straightforward to show that wild moves preserve the tamed class by using the definition of tamed form. It is noteworthy that the analogous statement for upper echelon forms does not hold and
it is the purpose of introducing the tamed class.
\begin{proposition}
Suppose $(\mu,sgn)$ is in tamed form, and $W(\rho)$ is a wild move defined as above.
Then $(\mu',sgn')$ is also tamed.
\end{proposition}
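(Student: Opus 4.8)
The plan is to check, one at a time, the four defining conditions of a tamed pair for $(\mu',sgn')$, after first recording a few invariance facts for the wild move $W(\rho)$. By condition $(1)$ of Definition \ref{def:allowable permutation} we have $\mu\circ\rho=\mu$, so $\mu'=\rho\circ\mu\circ\rho^{-1}=\rho\circ\mu$, and then by induction (using $\mu^{q-1}\circ\rho=\mu^{q-1}$) one gets $(\mu')^{q}=\rho\circ\mu^{q}$ for all $q$, where $\rho$ is extended by $\rho(1)=1$ and by $\rho(2l+1)=\rho(2l)+1$ as in the text. Since $\rho$ is a bijection fixing $1$, this gives three preservation statements about the label $2j$: the tier is unchanged, $t'(2j)=t(2j)$; the relation ``$\mu^{2}(2l)=\mu^{2}(2r)$'' is the same as ``$(\mu')^{2}(2l)=(\mu')^{2}(2r)$'' (injectivity of $\rho$); and, using $sgn'=sgn\circ\rho^{-1}$, the sign of the parent is unchanged: $sgn'(\mu'(2j))=sgn\big(\rho^{-1}(\rho(\mu(2j)))\big)=sgn(\mu(2j))$. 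So ``tier'', ``grandparents lie in a common left branch'', and ``sign of the parent'' are intrinsic to the label and untouched by $W(\rho)$.

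The only genuinely new ingredient I would need is a comparison lemma: if $2l,2r$ are such that either (i) $\mu^{2}(2l)=\mu^{2}(2r)$ and $sgn(\mu(2l))=sgn(\mu(2r))$, or (ii) $\mu^{2}(2l)\ne\mu^{2}(2r)$, then $\mu'(2l)<\mu'(2r)$ holds precisely when $\mu(2l)<\mu(2r)$. In case (i) the parents $\mu(2l),\mu(2r)$ lie in one common $G_{c}$ with equal signs, so condition $(2)$ of Definition \ref{def:allowable permutation}, applied to the smaller of the two against the larger, forces $\rho$ to preserve their order; since $\mu(2l)=\mu(2r)$ would force $\mu'(2l)=\mu'(2r)$, the two values are comparable and the equivalence follows. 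In case (ii) I would use that for a tamed pair the sets $G_{i}$ are consecutive blocks of even integers partitioning $\lr{2,4,\dots,2k}$ — this is exactly how Algorithm \ref{algorithm:generate a tamed tree} enumerates left branches — so two distinct blocks are totally ordered as intervals; because $\rho(G_{i})=G_{i}$, applying $\rho$ keeps $\mu(2l)$ and $\mu(2r)$ in their own blocks, hence the order of $\mu'(2l),\mu'(2r)$ is dictated by the unchanged order of the two blocks, which agrees with the order of $\mu(2l),\mu(2r)$.

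With the preservation facts and the comparison lemma in hand, the verification is mechanical. Given $2l,2r$ satisfying the primed hypotheses of one of the four conditions, the preservation facts translate them into the corresponding unprimed hypotheses, the comparison lemma (case (i)) being used in condition $(2)$ and (case (ii)) in condition $(4)$ to pass from $\mu'(2l)<\mu'(2r)$ to $\mu(2l)<\mu(2r)$; tamedness of $(\mu,sgn)$ then yields $2l<2r$, which is exactly the conclusion required for $(\mu',sgn')$. Conditions $(1)$ and $(3)$ need only the preservation facts, with no comparison lemma at all.

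The hard part will be the comparison lemma, and within it the two facts it rests on: that the $G_{i}$ of a tamed pair really are consecutive, hence interval-ordered, blocks — this has to be read off carefully from the tamed enumeration in Algorithm \ref{algorithm:generate a tamed tree} — and that the extension conventions ($\rho(1)=1$ and $\rho$ on odd indices) are handled so that $\mu'=\rho\circ\mu$, $(\mu')^{q}=\rho\circ\mu^{q}$, and $sgn'(\mu'(2j))=sgn(\mu(2j))$ all hold exactly. Once those are pinned down, the remainder is routine bookkeeping against the four clauses of the definition of tamed form.
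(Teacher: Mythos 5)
Your proof is correct and follows the same route as the paper, which disposes of this proposition with the one-line remark that it ``follows from the definition of tamed form'': you are simply filling in the details of that direct verification, namely that $\mu'=\rho\circ\mu$, that tier, the relation $\mu^{2}(2l)=\mu^{2}(2r)$, and $sgn(\mu(2j))$ are invariant, and that the order of parent labels is preserved via condition $(2)$ of Definition \ref{def:allowable permutation} within a block $G_{i}$ and via the consecutive-block structure of the $G_{i}$ (which the paper itself asserts for tamed pairs in Definition \ref{def:allowable permutation} and uses again in Proposition \ref{prop:wild moves,integration change}) across distinct blocks. The even/odd bookkeeping you flag does work out exactly as you anticipate, so no gap remains.
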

\begin{proof}
It follows from the definition of tamed form.
\end{proof}
Thus we can say that two tamed forms $(\mu,sgn)$ and $(\mu',sgn')$ are wildly relatable if
there exists an allowable permutation $\rho$ such that
$$(\mu',\sigma',sgn')=W(\rho)(\mu,\sigma,sgn).$$
This is an equivalence relation that partitions the set
of tamed forms into equivalence classes of wildly relatable forms.

The main result of this section is

\begin{proposition} \label{prop:wild moves,integration change}
Given a signed collapsing map $(u,sgn)$ in tamed form and $\rho\in P(\mu,sgn)$ as in Definition $\ref{def:allowable permutation}$, let
$$(\mu',\sigma',sgn')=W(\rho)(\mu,\sigma,sgn).$$
Then for any symmetric density $\ga^{(2k+1)}$, we have
\begin{equation} \label{equ:wild moves, expansion}
J_{\mu',sgn'}^{(2k+1)}(\ga^{(2k+1)})(t_{1},\sigma '^{-1}(\underline{t}_{2k+1}))=
J_{\mu,sgn}^{(2k+1)}(\ga^{(2k+1)})(t_{1},\sigma^{-1}(\underline{t}_{2k+1})).
\end{equation}
Consequently,
\begin{align}
\int_{\sigma[T_{D}(\mu)]}J_{\mu,sgn}^{(2k+1)}(\ga^{(2k+1)})(t_{1},
\underline{t}_{2k+1})d\underline{t}_{2k+1}=
\int_{\sigma'[T_{D}(\mu)]}J_{\mu',sgn'}^{(2k+1)}(\ga^{(2k+1)})(t_{1},
\underline{t}_{2k+1})d\underline{t}_{2k+1}
\end{align}
where $\sigma[T_{D}(\mu)]$ is defined as follows
$$\sigma[T_{D}(\mu)]=:\lr{t_{\sigma(2j)+1}\geq t_{\sigma(2l)+1}:
2l\to 2j\ \text{in the tree}\ T(\mu)}\bigcap \lr{t_{1}\geq t_{\sigma(2)+1}}.$$
\end{proposition}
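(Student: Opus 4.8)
The plan is to prove the pointwise identity \eqref{equ:wild moves, expansion} first, and then deduce the integral identity by a change of variables. For the latter: $\sigma$ and $\sigma'$ act on the $(t_3,t_5,\dots,t_{2k+1})$-space by permuting coordinates, hence by maps of unit Jacobian carrying $T_{D}(\mu)$ onto $\sigma[T_{D}(\mu)]$ and $\sigma'[T_{D}(\mu)]$ respectively (this is exactly how $\sigma[\cdot]$ is defined in the statement). Performing these substitutions reduces
$\int_{\sigma[T_{D}(\mu)]}J_{\mu,sgn}^{(2k+1)}(\ga^{(2k+1)})(t_{1},\underline{t}_{2k+1})d\underline{t}_{2k+1}$
to $\int_{T_{D}(\mu)}J_{\mu,sgn}^{(2k+1)}(\ga^{(2k+1)})(t_{1},\sigma^{-1}(\underline{t}_{2k+1}))d\underline{t}_{2k+1}$, and similarly on the other side with $(\mu',sgn',\sigma')$; the two are then equal pointwise by \eqref{equ:wild moves, expansion}. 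So the heart of the matter is \eqref{equ:wild moves, expansion}.

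To prove \eqref{equ:wild moves, expansion} I would first reduce to elementary wild moves. Since $\rho\in P(\mu,sgn)$ fixes each left-branch set $G_{i}$ and preserves the order of equally-signed elements inside each $G_{i}$, and since $(\mu,sgn)$ is tamed so that each $G_{i}$ has the block form $\lr{2l,2l+2,\dots,2r}$ (Definition \ref{def:allowable permutation}), a bubble-sort argument on shuffles shows that $\rho=\tau_{N}\circ\ccc\circ\tau_{1}$, where each $\tau_{m}=(2q_{m},2q_{m}+2)\circ(2q_{m}+1,2q_{m}+3)$ is an adjacent transposition with $\mu_{m-1}(2q_{m})=\mu_{m-1}(2q_{m}+2)$ and $sgn_{m-1}(2q_{m})\neq sgn_{m-1}(2q_{m}+2)$, and $(\mu_{m-1},\sigma_{m-1},sgn_{m-1}):=W(\tau_{m-1})\circ\ccc\circ W(\tau_{1})(\mu,\sigma,sgn)$. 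One checks along the way that each $(\mu_{m-1},sgn_{m-1})$ is again tamed with the same $G_{i}$-block structure (so each $\tau_{m}$ is allowable and $W(\tau_{N})\circ\ccc\circ W(\tau_{1})=W(\rho)$). Consequently it suffices to establish \eqref{equ:wild moves, expansion} when $\rho$ is one such elementary $\tau=(2q,2q+2)\circ(2q+1,2q+3)$.

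For such a $\tau$, write $i:=\mu(2q)=\mu(2q+2)$ (note $i<2q$, so $i\notin\lr{2q,2q+1,2q+2,2q+3}$, whence $\mu'(2q)=\mu'(2q+2)=i$ as well, while the two signs get swapped and $\sigma'=\rho\circ\sigma$). The only place $J_{\mu,sgn}^{(2k+1)}$ and $J_{\mu',sgn'}^{(2k+1)}$ differ is at the two consecutive factors $B_{i;2q,2q+1}^{sgn(2q)}$ and $B_{i;2q+2,2q+3}^{sgn(2q+2)}$ with the propagator $U^{(2q+1)}(t_{2q+1}-t_{2q+3})$ in between, which in $J_{\mu',sgn'}^{(2k+1)}$ becomes the same pair with the two signs interchanged, the labels $\lr{2q,2q+1}$ and $\lr{2q+2,2q+3}$ interchanged, and the time variables relabelled accordingly. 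Here one of the two collapsing operators is a $B^{+}$, touching only the $x_{i}$-entry of the kernel, and the other is a $B^{-}$, touching only the $x_{i}'$-entry; they act on disjoint sets of variables, so they commute with each other and with the intervening free propagator $U^{(2q+1)}$. Carrying out this commutation and then relabelling the contracted dummy pair $\lr{2q,2q+1}\leftrightarrow\lr{2q+2,2q+3}$ using the symmetry of $\ga^{(2k+1)}$ from Definition \ref{definition:admissible sequence} turns $J_{\mu,sgn}^{(2k+1)}(\ga^{(2k+1)})(t_{1},\sigma^{-1}(\underline{t}_{2k+1}))$ into $J_{\mu',sgn'}^{(2k+1)}(\ga^{(2k+1)})(t_{1},\sigma'^{-1}(\underline{t}_{2k+1}))$. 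This is the same mechanism as \cite[Lemma 7.1]{chen2011the} and its signed variant recalled earlier in \cite{chen2011the,klainerman2008on}; the only new feature is that the two interchanged collapsing operators share the single-particle index $i$ but sit on opposite sides of the density, which is precisely why the allowable permutations in Definition \ref{def:allowable permutation} must not reorder equally-signed elements of a $G_{i}$.

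I expect the main obstacle to be making the bookkeeping of the elementary step watertight: verifying that, after swapping the two collapsing operators, the particle labels, signs, and time arguments realign exactly with $(\mu',\sigma',sgn')=W(\tau)(\mu,\sigma,sgn)$, together with the shuffle-decomposition lemma and the claim that every intermediate pair $(\mu_{m-1},sgn_{m-1})$ stays tamed. Once these are in place, everything else is routine change-of-variables.
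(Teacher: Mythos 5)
Your proposal follows essentially the same route as the paper's proof: decompose the allowable permutation into elementary adjacent swaps $\tau=(2q,2q+2)\circ(2q+1,2q+3)$ of oppositely signed siblings (using the tamed block structure of the $G_i$), commute the resulting adjacent $B^{+}$ and $B^{-}$ (which act on disjoint variable sets, one on the $x$-side and one on the $x'$-side), and relabel the contracted dummy variables via the symmetry of $\ga^{(2k+1)}$. The only imprecision is your claim that the collapsing operators commute with the intervening propagator $U^{(2q+1)}$ outright — in the paper each $B$ only commutes with the propagator factors on variables it does not touch, which is exactly why the time arguments get reshuffled and a modified propagator $\wt{U}^{(2q+1)}$ appears — but you correctly flag this bookkeeping as the delicate step.
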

\begin{proof}
Since $(\mu,sgn)$ is a tamed pair, $G_{i}$ will be the form $\lr{2p,2p+2,...,2q}$. Then, $\rho\in P(\mu,sgn)$ can be written as a composition of permutations
$$\rho=\tau_{1}\circ \ccc\circ \tau_{s}$$
with the property that each $\tau=(2l,2l+2)\circ (2l+1,2l+3)$ and $sgn(2l)\neq sgn(2l+2)$.
Thus, it suffices to prove
\begin{align*}
&U^{(2j-1)}(-t_{2j+1})B_{\mu(2j);2j,2j+1}^{-}U^{(2j+1)}(t_{2j+1}-t_{2j+3})B^{+}_{\mu(2j+2);2j+2,2j+3}
U^{(2j+3)}(t_{2j+3})\\
=&U^{(2j-1)}(-t_{2j+3})B_{\mu(2j);2j+2,2j+3}^{+}\wt{U}^{(2j+1)}(t_{2j+3}-t_{2j+1})B^{-}_{\mu(2j+2);2j,2j+1}
U^{(2j+3)}(t_{2j+1})
\end{align*}
where $\wt{U}^{(2j+1)}(t)=U^{(2j-1)}(t)e^{it(\Delta_{x_{2j+2}}-\Delta_{x_{2j+2}'})}
e^{it(\Delta_{x_{2j+3}}-\Delta_{x_{2j+3}'})}$.

Without loss, we might as well take $j=1$ and $\mu(2)=\mu(4)=1$ so that this becomes
\begin{align}
U^{(1)}(-t_{3})B_{1;2,3}^{-}U^{(3)}(t_{3}-t_{5})B_{1;4,5}^{+}U^{(5)}(t_{5})
=U^{(1)}(-t_{5})B_{1;4,5}^{+}\wt{U}^{(3)}(t_{5}-t_{3})B_{1;2,3}^{-}U^{(5)}(t_{3}).
\end{align}
For simplicity, we take the following notations
\begin{align*}
&U^{(1)}(-t_{3})=U^{1}_{-3}U^{1'}_{-3},\\
&U^{(3)}(t_{3}-t_{5})=U_{3,5}^{1}U^{1'}_{3,5}U^{2}_{3,5}U^{2'}_{3,5}U^{3}_{3,5}U^{3'}_{3,5},\\
&U^{(5)}(t_{5})=U^{1}_{5}U^{1'}_{5}U^{2}_{5}U^{2'}_{5}U^{3}_{5}U^{3'}_{5}
U^{4}_{5}U^{4'}_{5}U^{5}_{5}U^{5'}_{5}.
\end{align*}
where $U_{\pm j}^{l}=e^{\pm it_{j}\Delta_{x_{l}}}$, $U_{\pm j}^{l'}=e^{\mp it_{j}\Delta_{x_{l}'}}$, $U_{j,k}^{l}=U_{j}^{l}U_{-k}^{l}$ and $U_{j,k}^{l'}=U_{j}^{l'}U_{-k}^{l'}$.

Expanding $U^{(1)}(-t_{3})$, $U^{(3)}(t_{3}-t_{5})$ and $U^{(5)}(t_{5})$ gives
\begin{align*}
&U^{(1)}(-t_{3})B_{1;2,3}^{-}U^{(3)}(t_{3}-t_{5})B_{1;4,5}^{+}U^{(5)}(t_{5})\\
=&U^{1}_{-3}U^{1'}_{-3}B_{1;2,3}^{-}
U_{3,5}^{1}U^{1'}_{3,5}U^{2}_{3,5}U^{2'}_{3,5}U^{3}_{3,5}U^{3'}_{3,5}
B_{1;4,5}^{+}U^{1}_{5}U^{1'}_{5}U^{2}_{5}U^{2'}_{5}U^{3}_{5}U^{3'}_{5}
U^{4}_{5}U^{4'}_{5}U^{5}_{5}U^{5'}_{5}
\end{align*}
Since $B_{1;2,3}^{-}$ acts only on the $2$, $2'$, $3$, $3'$ and $1'$ coordinates, we exchange $B_{1;2,3}^{-}$ with $U_{3,5}^{1}$. In the same way, $B_{1;4,5}^{+}$ acts only on $4$, $4'$, $5$, $5'$ and $1$ coordinates, so we exchange $B_{1;4,5}^{+}$ with $U^{1'}_{3,5}U^{2}_{3,5}U^{2'}_{3,5}U^{3}_{3,5}U^{3'}_{3,5}$. Thus, we have
\begin{align*}
&U^{(1)}(-t_{3})B_{1;2,3}^{-}U^{(3)}(t_{3}-t_{5})B_{1;4,5}^{+}U^{(5)}(t_{5})\\
=&U^{1}_{-3}U^{1'}_{-3}U_{3,5}^{1}B_{1;2,3}^{-}
B_{1;4,5}^{+}U^{1'}_{3,5}U^{2}_{3,5}U^{2'}_{3,5}U^{3}_{3,5}U^{3'}_{3,5}
U^{1}_{5}U^{1'}_{5}U^{2}_{5}U^{2'}_{5}U^{3}_{5}U^{3'}_{5}
U^{4}_{5}U^{4'}_{5}U^{5}_{5}U^{5'}_{5}
\end{align*}
Exchanging $B_{1;2,3}^{-}$ with $B_{1;4,5}^{+}$ gives
\begin{align*}
=&U^{1}_{-3}U^{1'}_{-3}U_{3,5}^{1}
B_{1;4,5}^{+}
B_{1;2,3}^{-}
U^{1'}_{3,5}U^{2}_{3,5}U^{2'}_{3,5}U^{3}_{3,5}U^{3'}_{3,5}
U^{1}_{5}U^{1'}_{5}U^{2}_{5}U^{2'}_{5}U^{3}_{5}U^{3'}_{5}
U^{4}_{5}U^{4'}_{5}U^{5}_{5}U^{5'}_{5}
\end{align*}
With $U^{1}_{-3}U^{1'}_{-3}U_{3,5}^{1}=U^{1}_{-5}U^{1'}_{-5}U_{5,3}^{1'}$ and $U^{1'}_{3,5}
U^{1}_{5}U^{1'}_{5}=U^{1}_{5,3}U^{1}_{3}U^{1'}_{3}$, we obtain
\begin{align*}
=&U^{1}_{-5}U^{1'}_{-5}U_{5,3}^{1'}
B_{1;4,5}^{+}
B_{1;2,3}^{-}
U^{1}_{5,3}U^{1}_{3}U^{1'}_{3}U^{2}_{3}U^{2'}_{3}U^{3}_{3}U^{3'}_{3}
U^{4}_{5}U^{4'}_{5}U^{5}_{5}U^{5'}_{5}
\end{align*}
Exchanging  $U_{5,3}^{1'}$ with
$B_{1;4,5}^{+}$ and $B_{1;2,3}^{-}$ with $U^{1}_{5,3}U^{4}_{3}U^{4'}_{3}U^{5}_{3}U^{5'}_{3}$, we have
\begin{align*}
=&U^{1}_{-5}U^{1'}_{-5}
B_{1;4,5}^{+}U_{5,3}^{1'}U^{1}_{5,3}U^{4}_{5,3}U^{4'}_{5,3}U^{5}_{5,3}U^{5'}_{5,3}
B_{1;2,3}^{-}
U^{1}_{3}U^{1'}_{3}U^{2}_{3}U^{2'}_{3}U^{3}_{3}U^{3'}_{3}
U^{4}_{3}U^{4'}_{3}U^{5}_{3}U^{5'}_{3}\\
=&U^{(1)}(-t_{5})B_{1;4,5}^{+}\wt{U}^{(3)}(t_{5}-t_{3})B_{1;2,3}^{-}U^{(5)}(t_{3}).
\end{align*}
Since $\ga^{(2k+1)}$ is a symmetric density, one can permute
$$(x_{2},x_{3},x_{4},x_{5};x_{2}',x_{3}',x_{4}',x_{5}')\leftrightarrow
(x_{4},x_{5},x_{2},x_{3};x_{4}',x_{5}',x_{2}',x_{3}').$$
Then it gives that
\begin{align*}
&U^{(1)}(-t_{5})B_{1;4,5}^{+}\wt{U}^{(3)}(t_{5}-t_{3})B_{1;2,3}^{-}U^{(5)}(t_{3})\longmapsto
U^{(1)}(-t_{5})B_{1;2,3}^{+}U^{(3)}(t_{5}-t_{3})B_{1;4,5}^{-}U^{(5)}(t_{3}),\\
&B^{\pm}_{\mu(2l);2l,2l+1}\longmapsto B^{\pm}_{(2,4)\circ(3,5)\circ \mu(2l);2l,2l+1},\quad l\geq 3,
\end{align*}
which proves equality $(\ref{equ:wild moves, expansion})$.
\end{proof}
\begin{example}\label{example:reference tree}Let us work with the following pair $(\mu_{1},sgn_{1})$
$$
\begin{tabular}{c|ccccccc}
$2j$&2&4&6&8&10&12&14\\
\hline
$\mu_{1}(2j)$ &1&1&1&2&3&7&7\\
$sgn_{1}(2j)$ &+&+&$-$&$-$&+&+&$-$
\end{tabular}
$$
There are six wild moves as follows:
\begin{align*}
(\mu_{j},\sigma_{j},sgn_{j})=W(\rho_{j})(\mu_{1},id,sgn_{1}),
\end{align*}
$$
\begin{tabular}{c|ccc|cc||c|ccc|cc|c}
&2&4&6&12&14&&2&4&6&12&14 &$\rho_{j}^{-1}[T_{D}(\mu_{j})]$\\
\hline
$\rho_{1}$&2&4&6&12&14&$\rho_{1}^{-1}$&2&4&6&12&14&$\lr{t_{3}\geq t_{5}\geq t_{7},t_{7}\geq t_{13}\geq t_{15},t_{3}\geq t_{9},t_{3}\geq t_{11}}$\\
$\rho_{2}$&2&6&4&12&14&$\rho_{2}^{-1}$&2&6&4&12&14&$\lr{t_{3}\geq t_{7}\geq t_{5},t_{7}\geq t_{13}\geq t_{15},t_{3}\geq t_{9},t_{3}\geq t_{11}}$\\
$\rho_{3}$&4&6&2&12&14&$\rho_{3}^{-1}$&6&2&4&12&14&$\lr{t_{7}\geq t_{3}\geq t_{5},t_{7}\geq t_{13}\geq t_{15},t_{3}\geq t_{9},t_{3}\geq t_{11}}$\\
$\rho_{4}$&2&4&6&14&12&$\rho_{4}^{-1}$&2&4&6&14&12&$\lr{t_{3}\geq t_{5}\geq t_{7},t_{7}\geq t_{15}\geq t_{13},t_{3}\geq t_{9},t_{3}\geq t_{11}}$\\
$\rho_{5}$&2&6&4&14&12&$\rho_{5}^{-1}$&2&6&4&14&12&$\lr{t_{3}\geq t_{7}\geq t_{5},t_{7}\geq t_{15}\geq t_{13},t_{3}\geq t_{9},t_{3}\geq t_{11}}$\\
$\rho_{6}$&4&6&2&14&12&$\rho_{6}^{-1}$&6&2&4&14&12&$\lr{t_{7}\geq t_{3}\geq t_{5},t_{7}\geq t_{15}\geq t_{13},t_{3}\geq t_{9},t_{3}\geq t_{11}}$
\end{tabular}
$$
The collapsing mappings $(\mu_{j},sgn_{j})$ and corresponding trees are indicated below. We notice that all $(\mu_{j},sgn_{j})$ are
tamed and also that wild moves, unlike the KM moves, do change the skeleton.
$$
\begin{tabular}{c|ccccccc||c|ccccccc}
$2j$&2&4&6&8&10&12&14 & &2&4&6&8&10&12&14\\
\hline
$\mu_{1}(2j)$ &1&1&1&2&3&7&7 &$sgn_{1}(2j)$ &+&+&$-$&$-$&+&+&$-$\\
$\mu_{2}(2j)$ &1&1&1&2&3&5&5 &$sgn_{2}(2j)$ &+&$-$&$+$&$-$&+&+&$-$\\
$\mu_{3}(2j)$ &1&1&1&4&5&3&3 &$sgn_{3}(2j)$ &$-$&+&$+$&$-$&+&+&$-$\\
$\mu_{4}(2j)$ &1&1&1&2&3&7&7 &$sgn_{4}(2j)$ &+&+&$-$&$-$&+&$-$&+\\
$\mu_{5}(2j)$ &1&1&1&2&3&5&5 &$sgn_{5}(2j)$ &+&$-$&+&$-$&+&$-$&+\\
$\mu_{6}(2j)$ &1&1&1&4&5&3&3 &$sgn_{6}(2j)$ &$-$&+&+&$-$&+&$-$&+\\
\end{tabular}
$$

\begin{minipage}[t]{0.3\textwidth}
\centering
Tree for $(\mu_{1},sgn_{1})$
\begin{align*}
\begin{tikzpicture}
\node (1) at (0,0) {1};
\node (2) at (0,-1) {2+};
\node (4) at (-1.2,-2) {4+};
\node (8) at (0,-2) {8$-$};
\node (10) at (1.2,-2) {10+};
\node (6) at (-2.4,-3) {6$-$};
\node (12) at  (-1.2,-4) {12+};
\node (14) at  (-2.4,-5) {14$-$};
\draw[<-] (1)--(2);
\draw[-] (2)--(4);
\draw[<-] (2)--(8);
\draw[<-] (2)--(10);
\draw[-] (4)--(6);
\draw[<-] (6)--(12);
\draw[-] (12)--(14);
\end{tikzpicture}
\end{align*}
\end{minipage}
\begin{minipage}[t]{0.3\textwidth}
\centering

Tree for $(\mu_{2},sgn_{2})$
\begin{align*}
\begin{tikzpicture}
\node (1) at (0,0) {1};
\node (2) at (0,-1) {2+};
\node (4) at (-1.2,-2) {4$-$};
\node (8) at (0,-2) {8$-$};
\node (10) at (1.2,-2) {10+};
\node (6) at (-2.4,-3) {6+};
\node (12) at  (0,-3) {12+};
\node (14) at  (-1.2,-4) {14$-$};
\draw[<-] (1)--(2);
\draw[-] (2)--(4);
\draw[<-] (2)--(8);
\draw[<-] (2)--(10);
\draw[-] (4)--(6);
\draw[<-] (4)--(12);
\draw[-] (12)--(14);
\end{tikzpicture}
\end{align*}
\end{minipage}
\begin{minipage}[t]{0.3\textwidth}
\centering
Tree for $(\mu_{3},sgn_{3})$
\begin{tikzpicture}
\node (1) at (0,0) {1};
\node (2) at (0,-1) {2$-$};
\node (4) at (-1.2,-2) {4+};
\node (8) at (-1.2,-3) {8$-$};
\node (10) at (0,-3) {10+};
\node (6) at (-2.4,-3) {6+};
\node (12) at  (1.5,-2) {12+};
\node (14) at  (1,-3) {14$-$};
\draw[<-] (1)--(2);
\draw[-] (2)--(4);
\draw[<-] (2)--(12);
\draw[-] (12)--(14);
\draw[-] (4)--(6);
\draw[<-] (4)--(8);
\draw[<-] (4)--(10);
\end{tikzpicture}
\end{minipage}

\begin{minipage}[t]{0.3\textwidth}
\centering
Tree for $(\mu_{4},sgn_{4})$
\begin{tikzpicture}
\node (1) at (0,0) {1};
\node (2) at (0,-1) {2+};
\node (4) at (-1.2,-2) {4+};
\node (8) at (0,-2) {8$-$};
\node (10) at (1.2,-2) {10+};
\node (6) at (-2.4,-3) {6$-$};
\node (12) at  (-1.2,-4) {12$-$};
\node (14) at  (-2.4,-5) {14+};
\draw[<-] (1)--(2);
\draw[-] (2)--(4);
\draw[<-] (2)--(8);
\draw[<-] (2)--(10);
\draw[-] (4)--(6);
\draw[<-] (6)--(12);
\draw[-] (12)--(14);
\end{tikzpicture}
\end{minipage}
\begin{minipage}[t]{0.3\textwidth}
\centering
Tree for $(\mu_{5},sgn_{5})$
\begin{tikzpicture}
\node (1) at (0,0) {1};
\node (2) at (0,-1) {2+};
\node (4) at (-1.2,-2) {4$-$};
\node (8) at (0,-2) {8$-$};
\node (10) at (1.2,-2) {10+};
\node (6) at (-2.4,-3) {6+};
\node (12) at  (0,-3) {12$-$};
\node (14) at  (-1.2,-4) {14+};
\draw[<-] (1)--(2);
\draw[-] (2)--(4);
\draw[<-] (2)--(8);
\draw[<-] (2)--(10);
\draw[-] (4)--(6);
\draw[<-] (4)--(12);
\draw[-] (12)--(14);
\end{tikzpicture}
\end{minipage}
\begin{minipage}[t]{0.3\textwidth}
\centering
Tree for $(\mu_{6},sgn_{6})$
\begin{tikzpicture}
\node (1) at (0,0) {1};
\node (2) at (0,-1) {2$-$};
\node (4) at (-1.2,-2) {4+};
\node (8) at (-1.2,-3) {8$-$};
\node (10) at (0,-3) {10+};
\node (6) at (-2.4,-3) {6+};
\node (12) at  (1.5,-2) {12$-$};
\node (14) at  (1.2,-3) {14+};
\draw[<-] (1)--(2);
\draw[-] (2)--(4);
\draw[<-] (2)--(12);
\draw[-] (12)--(14);
\draw[-] (4)--(6);
\draw[<-] (4)--(8);
\draw[<-] (4)--(10);
\end{tikzpicture}
\end{minipage}

\end{example}

\subsection{Reference Form and Proof of Compatibility}\label{subsection:Reference Form and Proof of Compatibility}
 We will prove that, given a tamed class, there is a reference form representing the
tamed class. Moreover, the tamed time integration domain for the whole tamed class, which can be directly read
out from the reference form, is just the compatible time integration domain introduced in Section $\ref{subsection:Compatible Time Integration Domain}$.
\begin{definition}\label{def:reference pair}
A tamed pair $(\hat{\mu},\hat{sgn})$ will be called a reference pair provided that
in every left branch, all the $+$ nodes come before all the $-$ nodes.
\end{definition}

\begin{example} The collapsing pair $(\mu_{1},sgn_{1})$ in Example $\ref{example:reference tree}$ is a reference pair.
$$
\begin{tabular}{c|ccc|c|c|cc}
$2j$&2&4&6&8&10&12&14\\
\hline
$sgn_{1}(2j)$ &+&+&$-$&$-$&+&+&$-$
\end{tabular}
$$
From the table, we can see that the $+$ nodes come before all the $-$ nodes in left branches $(2,4,6)$ and $(12,14)$.
\end{example}
As we infer from the examples, each class can be represented by a unique reference pair $(\mu,sgn)$. Exactly, we have the following proposition.
\begin{proposition}\label{prop:reference form, uniqueness, tamed pairs}
An equivalence class of wildly relatable tamed pairs
$$Q=\lr{(\mu,sgn)} $$
contains a unique reference pair $(\hat{\mu},\hat{sgn})$. For every $(u,sgn)\in Q$, there is a unique allowable permutation $\rho\in P(\hat{\mu},\hat{sgn})$ such that
$$(\mu,sgn)=W(\rho)(\hat{\mu},\hat{sgn}).$$
\end{proposition}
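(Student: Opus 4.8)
\emph{Setup and the key invariant.} The plan is to describe, for a tamed pair $(\mu,sgn)$, each left branch as $G_i=\{2p,2p+2,\ldots,2q\}$ and to split it into the ordered block $A_i$ of its $+$ nodes and the ordered block $B_i$ of its $-$ nodes; in this notation $(\mu,sgn)$ is a reference pair precisely when $A_i$ entirely precedes $B_i$ along every $G_i$, while condition $(2)$ of Definition~\ref{def:allowable permutation} says an allowable $\rho$ restricts to an order-preserving bijection on each $A_i$ and each $B_i$. The first thing I would record is that the partition $\Pi=\{G_i\}$ of $\{2,4,\ldots,2k\}$ is a wild-move invariant: since $\mu'=\rho\circ\mu$ and $sgn'=sgn\circ\rho^{-1}$ for $\rho\in P(\mu,sgn)$, one gets $G_i'=G_{\rho^{-1}(i)}$, hence $\{G_i'\}=\{G_j\}$, and moreover $\rho$ permutes nodes inside each block while leaving the number of $+$ nodes in each block unchanged.

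\emph{Existence of a reference pair in $Q$.} Given an arbitrary $(\mu,sgn)\in Q$, I would take $\rho_0$ to be the permutation that, inside each $G_i$, performs the stable sort placing all $+$ nodes (in their original relative order) before all $-$ nodes. Then $\rho_0\in P(\mu,sgn)$, because it fixes each $G_i$ setwise and, being stable, is order-preserving on each $A_i$ and each $B_i$. By the proposition of Section~\ref{subsection:Wild Moves} that wild moves preserve the tamed class, $W(\rho_0)(\mu,sgn)$ is again tamed, and by construction its sign pattern along every branch is ``all $+$'s, then all $-$'s'', so $W(\rho_0)(\mu,sgn)$ is a reference pair lying in $Q$. (Concretely, $\rho_0$ is a composition of elementary moves $(2l,2l+2)\circ(2l+1,2l+3)$, each of which is allowable exactly when $sgn(2l)\neq sgn(2l+2)$.)

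\emph{The orbit picture and uniqueness.} Next I would check that wild moves compose: if $W(\rho_1)(\hat\mu,\hat{sgn})=(\mu_1,sgn_1)$ with $\rho_1\in P(\hat\mu,\hat{sgn})$ and $\rho_2\in P(\mu_1,sgn_1)$, then $W(\rho_2)(\mu_1,sgn_1)=W(\rho_2\circ\rho_1)(\hat\mu,\hat{sgn})$ with $\rho_2\circ\rho_1\in P(\hat\mu,\hat{sgn})$; the point is that the sign is transported, $sgn_1(\rho_1(2j))=\hat{sgn}(2j)$ and $\mu_1(\rho_1(2j))=\rho_1(\hat\mu(2j))$, so condition $(2)$ for $\rho_2$ at $(\rho_1(2q),\rho_1(2s))$ yields condition $(2)$ for $\rho_2\circ\rho_1$ at $(2q,2s)$. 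Hence $Q$, being an equivalence class, is exactly the orbit $\{W(\rho)(\hat\mu,\hat{sgn}):\rho\in P(\hat\mu,\hat{sgn})\}$, which gives existence of $\rho$ for every $(\mu,sgn)\in Q$. For uniqueness of $\rho$, if $W(\rho)(\hat\mu,\hat{sgn})=W(\rho')(\hat\mu,\hat{sgn})$, comparing the sign functions $\hat{sgn}\circ\rho^{-1}=\hat{sgn}\circ\rho'^{-1}$ on each $\hat G_i$ and using that $\hat{sgn}$ is $+$ on exactly $\hat A_i$ forces $\rho(\hat A_i)=\rho'(\hat A_i)$ and $\rho(\hat B_i)=\rho'(\hat B_i)$, so condition $(2)$ makes both $\rho$ and $\rho'$ equal the unique order-preserving bijection on each of $\hat A_i,\hat B_i$, i.e.\ $\rho=\rho'$. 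Finally, if $(\hat\mu',\hat{sgn}')$ is another reference pair in $Q$, write it as $W(\rho)(\hat\mu,\hat{sgn})$; by $\Pi$-invariance the two reference pairs share the branches $G_i$ and their $+$-counts, so $\hat{sgn}'|_{G_i}=\hat{sgn}|_{G_i}$, which forces $\rho$ to fix $\hat A_i$ and $\hat B_i$ setwise, hence $\rho=\mathrm{id}$ and $(\hat\mu',\hat{sgn}')=(\hat\mu,\hat{sgn})$.

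\emph{Main obstacle.} I expect the delicate part to be the bookkeeping rather than any genuine difficulty: verifying that the elementary transpositions used in the sort are allowable and keep the pair tamed, and — above all — checking the composition law for wild moves with the sign correctly transported, so that an equivalence class really is one orbit under $P(\hat\mu,\hat{sgn})$. Once the branch-partition invariance $\Pi$ and this composition law are in hand, uniqueness of both $(\hat\mu,\hat{sgn})$ and $\rho$ reduces to the elementary fact that an order-preserving self-bijection of a finite ordered set is the identity.
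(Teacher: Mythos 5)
Your proof is correct and follows essentially the same route as the paper's (which is only a two-sentence sketch): existence via sorting the $+$ nodes ahead of the $-$ nodes within each wild-move-invariant left branch $G_i$, and uniqueness of both the reference pair and the permutation from conditions (1) and (2) of the definition of allowable permutations. Your additional verifications (the composition law for wild moves and the explicit stable-sort construction) are details the paper leaves implicit, and they check out.
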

\begin{proof}
Note that wild moves will not destroy the left branch but permute the signs. Thus, there exists an allowable permutation such that the $+$ nodes come before all the $-$ nodes in every left branch. The uniqueness follows from the conditions $(1)$ and $(2)$ in Definition \ref{def:allowable permutation}.
\end{proof}

Next, we get into the analysis of the main result.

\begin{proposition}\label{prop:reference form, time integration domain}
The Duhamel expansion to coupling order $k$ can be grouped into at most
$16^{k}$ terms:
\begin{equation}\label{equ:reference form, time integration domain}
\ga^{(1)}(t_{1})=\sum_{reference\ (\hat{\mu},\hat{sgn})}
\int_{T_{R}(\hat{\mu},\hat{sgn})}J_{\hat{\mu},\hat{sgn}}^{(2k+1)}(\ga^{(2k+1)})
(t_{1},\underline{t}_{2k+1})d\underline{t}_{2k+1}
\end{equation}
where
\begin{equation}\label{equ:reference form domain}
T_{R}(\hat{\mu},\hat{sgn})=\bigcup_{\rho\in P(\hat{\mu},\hat{sgn})}\rho^{-1}[T_{D}(\rho\circ \hat{\mu})].
\end{equation}
and $T_{D}(\mu)$ is defined by $(\ref{equ:time integration domain generated by mu})$.
\end{proposition}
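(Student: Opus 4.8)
\emph{Proof proposal.} The plan is to derive $(\ref{equ:reference form, time integration domain})$ by combining the three structural facts already in hand — the tamed-form expansion of Proposition \ref{prop:tamed form, time integration domain}, the reference-pair parametrization of a wild class of Proposition \ref{prop:reference form, uniqueness, tamed pairs}, and the wild-move integral identity of Proposition \ref{prop:wild moves,integration change} — and then to verify that the resulting pieces tile $T_{R}(\hat{\mu},\hat{sgn})$. First I would start from Proposition \ref{prop:tamed form, time integration domain}, which gives
$$\ga^{(1)}(t_{1})=\sum_{(\mu_{*},sgn_{*})\ \text{tamed}}\int_{T_{D}(\mu_{*})}J_{\mu_{*},sgn_{*}}^{(2k+1)}(\ga^{(2k+1)})(t_{1},\underline{t}_{2k+1})\,d\underline{t}_{2k+1},$$
and partition the tamed pairs into wildly-relatable equivalence classes. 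By Proposition \ref{prop:reference form, uniqueness, tamed pairs} each such class $Q$ contains a unique reference pair $(\hat{\mu},\hat{sgn})$, which is itself tamed by Definition \ref{def:reference pair}, and $\rho\mapsto W(\rho)(\hat{\mu},\hat{sgn})$ is a bijection from $P(\hat{\mu},\hat{sgn})$ onto $Q$.

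Next, fix a tamed pair $(\mu_{*},sgn_{*})=W(\rho)(\hat{\mu},\hat{sgn})\in Q$, so that $\mu_{*}=\rho\circ\hat{\mu}$ and $sgn_{*}=\hat{sgn}\circ\rho^{-1}$. The key (and slightly non-obvious) move is to apply Proposition \ref{prop:wild moves,integration change} not to $(\hat{\mu},\hat{sgn})$ but to $(\mu_{*},sgn_{*})$ itself, together with the permutation $\rho^{-1}$, which transports $(\mu_{*},sgn_{*})$ back to $(\hat{\mu},\hat{sgn})$. One first checks that $\rho^{-1}\in P(\mu_{*},sgn_{*})$: condition $(1)$ of Definition \ref{def:allowable permutation} holds because $\rho$ fixes every $G_{i}^{\hat{\mu}}$ setwise and $G_{i}^{\mu_{*}}=G_{\rho^{-1}(i)}^{\hat{\mu}}$, and condition $(2)$ holds because $\rho$ is order-preserving on each same-sign block of a $G_{i}^{\hat{\mu}}$, hence $\rho^{-1}$ is order-preserving on the corresponding block of $G_{i}^{\mu_{*}}$ after the sign relabeling. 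Since $W(\rho^{-1})(\mu_{*},id,sgn_{*})=(\hat{\mu},\rho^{-1},\hat{sgn})$, the ``Consequently'' part of Proposition \ref{prop:wild moves,integration change} with $\sigma=id$ yields
$$\int_{T_{D}(\mu_{*})}J_{\mu_{*},sgn_{*}}^{(2k+1)}(\ga^{(2k+1)})(t_{1},\underline{t}_{2k+1})\,d\underline{t}_{2k+1}=\int_{\rho^{-1}[T_{D}(\rho\circ\hat{\mu})]}J_{\hat{\mu},\hat{sgn}}^{(2k+1)}(\ga^{(2k+1)})(t_{1},\underline{t}_{2k+1})\,d\underline{t}_{2k+1}.$$
Summing over $(\mu_{*},sgn_{*})\in Q$, i.e.\ over $\rho\in P(\hat{\mu},\hat{sgn})$, turns the contribution of the class $Q$ into $\sum_{\rho\in P(\hat{\mu},\hat{sgn})}\int_{\rho^{-1}[T_{D}(\rho\circ\hat{\mu})]}J_{\hat{\mu},\hat{sgn}}^{(2k+1)}(\ga^{(2k+1)})\,d\underline{t}_{2k+1}$, all summands now carrying the common integrand $J_{\hat{\mu},\hat{sgn}}^{(2k+1)}(\ga^{(2k+1)})$.

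The main obstacle is to show that the domains $\{\rho^{-1}[T_{D}(\rho\circ\hat{\mu})]:\rho\in P(\hat{\mu},\hat{sgn})\}$ are pairwise disjoint up to null sets, so that the sum of integrals collapses to the single integral over their union $T_{R}(\hat{\mu},\hat{sgn})$ of $(\ref{equ:reference form domain})$. For this I would descend to the level of ordered simplices: for a tamed pair $\nu$, the domain $T_{D}(\nu)$ is the essentially-disjoint union $\bigcup_{\sigma\in\Sigma(\nu)}\{t_{1}\geq t_{\sigma^{-1}(3)}\geq\cdots\geq t_{\sigma^{-1}(2k+1)}\}$ of the ordered simplices indexed by its acceptable moves — this is precisely the content of the proof of Proposition \ref{prop:upper echelon form, time integration domain}, carried to the signed/tamed setting in Proposition \ref{prop:tamed form, time integration domain} — and relabeling by $\rho^{-1}$ sends each such simplex to another ordered simplex. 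Tracking the bijections, as $(\mu_{*},sgn_{*})$ ranges over $Q$ and $\sigma$ over $\Sigma(\mu_{*})$ the pairs $(\rho,\sigma)$ enumerate exactly the members of the full KM-and-wild equivalence class of $\hat{\mu}$ inside the set of all $(2k-1)!!\,2^{k}$ collapsing pairs; since the identities $(\ref{equ:the equality under the km move})$ and $(\ref{equ:wild moves, expansion})$ act only by relabeling time variables, the corresponding relabeled simplices carry pairwise distinct total orderings of $(t_{3},\dots,t_{2k+1})$ and are therefore disjoint on their interiors. This is the step where the precise definitions of allowable permutation and of tamed form are used to exclude coincidences, and it is where I expect the real work to lie; the rest is bookkeeping.

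Finally, summing the identity $\sum_{(\mu_{*},sgn_{*})\in Q}\int_{T_{D}(\mu_{*})}J_{\mu_{*},sgn_{*}}^{(2k+1)}(\ga^{(2k+1)})=\int_{T_{R}(\hat{\mu},\hat{sgn})}J_{\hat{\mu},\hat{sgn}}^{(2k+1)}(\ga^{(2k+1)})$ over all wild classes gives $(\ref{equ:reference form, time integration domain})$. For the cardinality bound, every reference pair is in particular a tamed pair, and by Proposition \ref{prop:tamed form, time integration domain} — equivalently, by $(\ref{equ:catalan number})$ for the skeleton together with the $2^{k}$ sign choices — there are at most $16^{k}$ tamed pairs, hence at most $16^{k}$ reference pairs, completing the proof.
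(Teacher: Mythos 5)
Your proposal is correct and follows essentially the same route as the paper's proof: start from the tamed-form expansion, group into wild classes with their unique reference pairs, transport each $\int_{T_{D}(\mu_{*})}J_{\mu_{*},sgn_{*}}$ back to the reference integrand via Proposition \ref{prop:wild moves,integration change} applied with $\rho^{-1}$, and use disjointness of the domains $\rho^{-1}[T_{D}(\rho\circ\hat{\mu})]$ to collapse the sum into a single integral over $T_{R}(\hat{\mu},\hat{sgn})$. Your added verifications (that $\rho^{-1}$ is allowable for $(\mu_{*},sgn_{*})$ and the simplex-level disjointness argument) merely flesh out steps the paper asserts without detail.
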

\begin{proof}
Recall
\begin{equation*}
\ga^{(1)}(t_{1})=\sum_{(\mu_{*},sgn_{*})\ \text{tamed}}\int_{T_{D}(\mu_{*})}J_{\mu_{*},sgn_{*}}^{(2k+1)}(\ga^{(2k+1)})(t_{1},\underline{t}_{2k+1})
d\underline{t}_{2k+1}
\end{equation*}
where the number of tamed forms can be controlled by $16^{k}$.
In this sum, group together equivalence classes $Q$ of wildly relatable $(\mu,sgn)$.
\begin{equation*}
\ga^{(1)}(t_{1})=\sum_{\text{class}\ Q}\sum_{(\mu,sgn)\in Q}\int_{T_{D}(\mu)}J_{\mu,sgn}^{(2k+1)}(\ga^{(2k+1)})(t_{1},\underline{t}_{2k+1})
d\underline{t}_{2k+1}.
\end{equation*}
There exists exact one reference $(\hat{\mu},\hat{sgn})$ in each equivalence class $Q$.
By Proposition $\ref{prop:reference form, uniqueness, tamed pairs}$, for each $(\mu,sgn)\in Q$, there is a unique allowable
$\rho\in P(\hat{\mu},\hat{sgn})$ such that
$$(\mu,sgn)=W(\rho)(\hat{\mu},\hat{sgn}).$$
Since $W$ is an action, we can write
$$(\hat{\mu},\rho^{-1},\hat{sgn})=W(\rho^{-1})(\mu,id,sgn).$$
Then by Proposition $\ref{prop:wild moves,integration change}$,
\begin{align*}
\int_{T_{D}(\mu)}J_{\mu,sgn}^{(2k+1)}(\ga^{(2k+1)})(t_{1},\underline{t}_{2k+1})d\underline{t}_{2k+1}=
\int_{\rho^{-1}[T_{D}(\mu)]}J_{\hat{\mu},\hat{sgn}}^{(2k+1)}(\ga^{(2k+1)})(t_{1},\underline{t}_{2k+1})
d\underline{t}_{2k+1}.
\end{align*}
Consequently, we obtain
\begin{equation*}
\ga^{(1)}(t_{1})=\sum_{reference\ (\hat{\mu},\hat{sgn})}\sum_{\rho\in P(\hat{\mu},\hat{sgn})}
\int_{\rho^{-1}[T_{D}(\rho\circ \hat{\mu})]}J_{\hat{\mu},\hat{sgn}}^{(2k+1)}(\ga^{(2k+1)})
(t_{1},\underline{t}_{2k+1})d\underline{t}_{2k+1}.
\end{equation*}
Since $\lr{\rho^{-1}[T_{D}(\rho\circ \hat{\mu})]}$ is a collection of disjoint sets, we obtain the equality $(\ref{equ:reference form, time integration domain})$.

\end{proof}

We are left to calculate the time integration domain $T_{D}(\mu)$ and $T_{R}(\hat{\mu},\hat{sgn})$.

\begin{proposition}\label{prop:reference time integration domain}
Let $\rho\in P(\hat{\mu},\hat{sgn})$ and $(\mu,sgn)=W(\rho)(\hat{\mu},\hat{sgn})$, then
\begin{align} \label{equ:time integration domain under wild move}
T_{D}(\mu)=&\lr{t_{2j+1}\geq t_{2l+1}:\hat{\mu}(2j)=\hat{\mu}(2l),2j<2l}\\
&\bigcap
\lr{t_{\rho(2j)+1}\geq t_{\rho(2l)+1}:\hat{\mu}(2l)=2j\ \text{or}\ \hat{\mu}(2l)=2j+1},\notag
\end{align}

\begin{align} \label{equ:reference time integration domain}
T_{R}(\hat{\mu},\hat{sgn})
=&\lr{t_{2j+1}\geq t_{2l+1}:2j<2l,\hat{\mu}(2l)=\hat{\mu}(2j),\ \hat{sgn}(2j)=\hat{sgn}(2l)}\\
&\bigcap \lr{t_{2j+1}\geq t_{2l+1}:\hat{\mu}(2l)=2j\ \text{or}\
\hat{\mu}(2l)=2j+1}.\notag
\end{align}
\end{proposition}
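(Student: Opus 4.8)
The plan is to prove the formula $(\ref{equ:time integration domain under wild move})$ for $T_{D}(\mu)$ first, by reading off the tree $T(\mu)=T(\rho\circ\hat{\mu})$ from Algorithm $\ref{algorithm:generate an admissible tree}$, and then to obtain $(\ref{equ:reference time integration domain})$ by inserting $(\ref{equ:time integration domain under wild move})$ into the definition $(\ref{equ:reference form domain})$ of $T_{R}$ from Proposition $\ref{prop:reference form, time integration domain}$ and collapsing the union over $\rho$. Throughout one uses that an allowable $\rho\in P(\hat{\mu},\hat{sgn})$ fixes every left branch setwise, $\rho(G_{i})=G_{i}$ (condition $(1)$ of Definition $\ref{def:allowable permutation}$), so $\hat{\mu}\circ\rho^{-1}=\hat{\mu}$ and hence $\mu=\rho\circ\hat{\mu}$; one also keeps the convention $\rho(2l+1)=\rho(2l)+1$, so $\rho$ preserves parity of labels and $\rho^{-1}(2a+1)=\rho^{-1}(2a)+1$. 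The outer constraint $t_{1}\geq t_{3}$ is always present and is handled the same way, with the root node in the role of a fixed parent, so I suppress it below.

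For $(\ref{equ:time integration domain under wild move})$, I would compare $T(\mu)$ with $T(\hat{\mu})$ edge by edge. Since $\rho$ is injective, $\mu(2b)=\mu(2a)$ iff $\hat{\mu}(2b)=\hat{\mu}(2a)$, so the left-child relation, hence the internal chain of each left branch (which is just the members of that $\hat{\mu}$-group listed in increasing order), is identical in $T(\mu)$ and $T(\hat{\mu})$; taking the transitive closure of the left-edge inequalities thus gives exactly $\lr{t_{2j+1}\geq t_{2l+1}:\hat{\mu}(2j)=\hat{\mu}(2l),\ 2j<2l}$, the first set in $(\ref{equ:time integration domain under wild move})$. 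For the other edges, $\mu(2b)=2a$ iff $\hat{\mu}(2b)=\rho^{-1}(2a)$ and $\mu(2b)=2a+1$ iff $\hat{\mu}(2b)=\rho^{-1}(2a)+1$; writing $2m:=\rho^{-1}(2a)$, i.e. $2a=\rho(2m)$, this says that the node which is the middle (resp. right) child of $2m$ in $T(\hat{\mu})$, namely the top $\min G^{\hat{\mu}}_{2m}$ (resp. $\min G^{\hat{\mu}}_{2m+1}$) of that branch, becomes the middle (resp. right) child of $\rho(2m)$ in $T(\mu)$. Hence the middle/right edges of $T(\mu)$ contribute precisely $t_{\rho(2m)+1}\geq t_{(\min G^{\hat{\mu}}_{2m})+1}$ and $t_{\rho(2m)+1}\geq t_{(\min G^{\hat{\mu}}_{2m+1})+1}$; combined with the branch chains (whose tops are exactly these minima) they propagate to $t_{\rho(2m)+1}\geq t_{2c+1}$ for every $2c$ with $\hat{\mu}(2c)\in\lr{2m,2m+1}$. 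Finally, since $\rho$ permutes each $G^{\hat{\mu}}_{i}$ onto itself, reindexing $2c=\rho(2l)$ rewrites this as $\lr{t_{\rho(2j)+1}\geq t_{\rho(2l)+1}:\hat{\mu}(2l)=2j\text{ or }2j+1}$, the second set in $(\ref{equ:time integration domain under wild move})$.

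For $(\ref{equ:reference time integration domain})$, I would substitute $(\ref{equ:time integration domain under wild move})$ into $T_{R}(\hat{\mu},\hat{sgn})=\bigcup_{\rho\in P(\hat{\mu},\hat{sgn})}\rho^{-1}[T_{D}(\rho\circ\hat{\mu})]$, where $\rho^{-1}[\,\cdot\,]$ relabels $t_{2a+1}\mapsto t_{\rho^{-1}(2a)+1}$. Applied to the second set of $(\ref{equ:time integration domain under wild move})$ the $\rho^{-1}$ cancels the $\rho$, giving $\lr{t_{2j+1}\geq t_{2l+1}:\hat{\mu}(2l)=2j\text{ or }2j+1}$ with no dependence on $\rho$; this is already the second set of $(\ref{equ:reference time integration domain})$. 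Applied to the first set of $(\ref{equ:time integration domain under wild move})$ and reindexed, it becomes $\lr{t_{2a+1}\geq t_{2b+1}:\hat{\mu}(2a)=\hat{\mu}(2b),\ \rho(2a)<\rho(2b)}$, i.e. the total order $\rho$ induces on each $\hat{\mu}$-group. It remains to compute $\bigcup_{\rho}$ of these. By conditions $(1)$ and $(2)$ of Definition $\ref{def:allowable permutation}$, $P(\hat{\mu},\hat{sgn})$ is exactly the direct product, over the groups $G_{i}$, of the permutations of $G_{i}$ preserving the relative order of equal-sign members, and these are precisely the linear extensions of the partial order ``$2a\prec 2b$ iff $2a<2b$ and $\hat{sgn}(2a)=\hat{sgn}(2b)$'' on $G_{i}$. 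Since the union of the descending chambers over all linear extensions of a finite poset is that poset's order polytope, and since distinct groups are supported on disjoint time variables so that the union genuinely distributes over the intersection across groups, the union of the first sets equals $\lr{t_{2j+1}\geq t_{2l+1}:2j<2l,\ \hat{\mu}(2l)=\hat{\mu}(2j),\ \hat{sgn}(2j)=\hat{sgn}(2l)}$. Intersecting with the $\rho$-independent second set gives $(\ref{equ:reference time integration domain})$.

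I expect the main obstacle to be the bookkeeping in the middle paragraph: one must be meticulous about which label carries $\rho$ versus $\rho^{-1}$ and about the parity shift $2j\leftrightarrow 2j+1$ when tracking how a wild move reattaches a left branch, since an error here changes the claimed $T_{D}(\mu)$. The two combinatorial inputs for the last paragraph, the product decomposition of $P(\hat{\mu},\hat{sgn})$ over groups and the identity ``union of descending chambers over linear extensions $=$ order polytope'', are standard, but it is worth checking carefully the (elementary) point that the union commutes with the intersection across groups precisely because each chamber constrains only its own group's coordinates; this is what makes $T_{R}$ a single time-ordering polytope rather than a proper union of polytopes.
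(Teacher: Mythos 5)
Your proof is correct and follows essentially the same route as the paper: the identity $(\ref{equ:time integration domain under wild move})$ comes from $\mu=\rho\circ\hat{\mu}$ and $\hat{\mu}\circ\rho^{-1}=\hat{\mu}$ exactly as there, and the computation of $\bigcup_{\rho\in P(\hat{\mu},\hat{sgn})}\rho^{-1}[T_{D}(\rho\circ\hat{\mu})]$ is the same double inclusion, which you package as the standard fact that the union of descending chambers over the linear extensions of the partial order ``$2a\prec 2b$ iff $2a<2b$ and $\hat{sgn}(2a)=\hat{sgn}(2b)$'' on each group $G_{i}$ is its order polytope, whereas the paper instead constructs the sorting permutation $\sigma$ explicitly and verifies it is allowable. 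The two points you flag for care --- the product decomposition of $P(\hat{\mu},\hat{sgn})$ over the groups and the disjointness of the time variables across groups, which lets the union distribute over the intersection --- are precisely what the paper's explicit construction of $\sigma$ uses implicitly.
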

\begin{proof}
Since $\mu=\rho \circ \hat{\mu}$, we can write
\begin{align*}
&T_{D}(\mu)\\
=&\lr{t_{2j+1}\geq t_{2l+1}:\hat{\mu}(2j)=\hat{\mu}(2l),2j<2l}\bigcap
\lr{t_{2j+1}\geq t_{2l+1}:\mu(2l)=2j\ \text{or}\ \mu(2l)=2j+1}.
\end{align*}

It remains to prove
\begin{align}\label{equ:time integration domain under wild move, equivalent form}
&\lr{t_{2j+1}\geq t_{2l+1}:\mu(2l)=2j\ \text{or}\ \mu(2l)=2j+1}\\
=&
\lr{t_{\rho(2j)+1}\geq t_{\rho(2l)+1}:\hat{\mu}(2l)=2j\ \text{or}\ \hat{\mu}(2l)=2j+1}.\notag
\end{align}
Actually, with $\mu=\rho\circ \hat{\mu}$ and $\hat{\mu}\circ \rho^{-1}=\hat{\mu}$, we have
\begin{align*}
&\mu(2l)=2j\Longleftrightarrow \hat{\mu}(\rho^{-1}(2l))=\rho^{-1}(2j),\\
&\mu(2l)=2j+1\Longleftrightarrow \hat{\mu}(\rho^{-1}(2l))=\rho^{-1}(2j)+1,
\end{align*}
which implies $(\ref{equ:time integration domain under wild move, equivalent form})$.

Then by $(\ref{equ:time integration domain under wild move})$, we can rewrite
\begin{align*}
\rho^{-1}[T_{D}(\rho\circ \hat{\mu})]=&\lr{t_{\rho^{-1}(2j)+1}\geq t_{\rho^{-1}(2l)+1}:\hat{\mu}(2j)=\hat{\mu}(2l),2j<2l}\\
&\bigcap
\lr{t_{2j+1}\geq t_{2l+1}:\hat{\mu}(2l)=2j\ \text{or}\
\hat{\mu}(2l)=2j+1}.
\end{align*}
It suffices to prove
\begin{align}
&\bigcup_{\rho\in P(\hat{\mu},\hat{sgn})}\lr{t_{\rho^{-1}(2j)+1}\geq t_{\rho^{-1}(2l)+1}:\hat{\mu}(2j)=\hat{\mu}(2l),2j<2l}\\
=&\lr{t_{2j+1}\geq t_{2l+1}:2j<2l,\hat{\mu}(2l)=\hat{\mu}(2j),\ \hat{sgn}(2j)=\hat{sgn}(2l)}.\notag
\end{align}

For simplicity, we take the notations
\begin{align*}
&A_{j,l}(\rho)=\lr{t_{\rho^{-1}(2j)+1}\geq t_{\rho^{-1}(2l)+1}:2j<2l,\hat{\mu}(2l)=\hat{\mu}(2j)},\\
&B_{j,l}=\lr{t_{2j+1}\geq t_{2l+1}:2j<2l,\hat{\mu}(2l)=\hat{\mu}(2j),\ \hat{sgn}(2j)=\hat{sgn}(2l)},
\end{align*}
where $A_{j,l}$ and $B_{j,l}$ will be the full space if $(j,l)$ does not satisfy the corresponding requirement. We are left to prove that
$$\bigcup_{\rho\in P(\hat{\mu},\hat{sgn})}\bigcap_{j,l}A_{j,l}(\rho)=\bigcap_{j,l}B_{j,l}.$$

Given $\rho\in P(\hat{\mu},\hat{sgn})$, we will prove $\bigcap_{j,l}A_{j,l}(\rho)\subset B_{j_{0},l_{0}}$ for every pair $(j_{0},l_{0})$ which satisfies $2j_{0}<2l_{0}$,
$\hat{\mu}(2l_{0})=\hat{\mu}(2j_{0})$ and
$\hat{sgn}(2j_{0})=\hat{sgn}(2l_{0})$. Let $2j_{1}=\rho(2j_{0})$ and $2l_{1}=\rho(2j_{0})$. Since $\rho\in P(\hat{\mu},\hat{sgn})$, we obtain $\hat{\mu}(2l_{1})=\hat{\mu}(2j_{1})$ and $2j_{1}< 2l_{1}$. Hence,
\begin{align*}
\bigcap_{j,l}A_{j,l}(\rho)\subset A_{j_{1},l_{1}}(\rho)=B_{j_{0},l_{0}}.
\end{align*}

Conversely, suppose that $(t_{1},t_{3},...,t_{2k+1})\in \bigcap_{j,l}B_{j,l}$. Note that $\lr{G_{i}=\lr{2r:\hat{\mu}(2r)=i}}_{i=1}^{2k-1}$ is a partition of $\lr{2,4,...,2k}$. Thus there exists a unique $\sigma\in P$ such that
\begin{align}\label{equ:construct wild move}
\begin{cases}
&\sigma(G_{i})=G_{i},\\
&t_{\sigma^{-1}(2j)+1}\geq t_{\sigma^{-1}(2l)+1}.
\end{cases}
\end{align}
where $2j<2l$ and $\hat{\mu}(2j)=\hat{\mu}(2l)$. It implies that $(t_{1},t_{3},...,t_{2k+1})\in \bigcap_{j,l}A_{j,l}(\sigma)$.

We are left to prove that $\sigma\in P(\hat{\mu},\hat{sgn})$. For any pair $(j_{0},l_{0})$ which satisfies $2l_{0}<2j_{0}$, $\hat{\mu}(2l_{0})=\hat{\mu}(2j_{0})$ and  $\hat{sgn}(2j_{0})=\hat{sgn}(2l_{0})$, we have
$(t_{1},...,t_{2k+1})\in B_{j_{0},l_{0}}$, which implies that $t_{2j_{0}+1}\geq t_{2l_{0}+1}$.
Combining with $(\ref{equ:construct wild move})$, we obtain $\sigma(2l_{0})<\sigma(2j_{0})$, which shows that $\sigma\in P(\hat{\mu},\hat{sgn})$.
\end{proof}

With Propositions \ref{prop:reference form, time integration domain} and \ref{prop:reference time integration domain}, we arrive at the main result as follows.
\begin{proposition}\label{prop:compatibility for time integration}
The time integration domain obtained in $(\ref{equ:reference form domain})$ is compatible in the sense that
\begin{align}
T_{R}(\hat{\mu},\hat{sgn})=T_{C}(\hat{\mu},\hat{sgn})
\end{align}
and hence
\begin{equation} \label{equ:duhamel expansion, reference form}
\ga^{(1)}(t_{1})=\sum_{reference\ (\hat{\mu},\hat{sgn})}
\int_{T_{C}(\hat{\mu},\hat{sgn})}J_{\hat{\mu},\hat{sgn}}^{(2k+1)}(\ga^{(2k+1)})
(t_{1},\underline{t}_{2k+1})d\underline{t}_{2k+1}
\end{equation}
where $T_{C}(\hat{\mu},\hat{sgn})=\lr{t_{2j+1}\geq t_{2l+1}:D^{(2l)}\to D^{(2j)}}$ is the compatible time integration domain defined by $(\ref{definition:compatible time integration domain})$.
\begin{proof}

From the definition of $T_{C}(\hat{\mu},\hat{sgn})$, we have that $t_{2j+1}\geq t_{2l+1}$ if and only if one of the following cases holds
$$\begin{cases}
&\hat{\mu}(2j)=\hat{\mu}(2l), \quad \hat{sgn}(2j)=\hat{sgn}(2l),\\
&\hat{\mu}(2l)=2j,\hat{sgn}(2l)=+,\\
&\hat{\mu}(2l)=2j,\hat{sgn}(2l)=-,\\
&\hat{\mu}(2l)=2j+1,\hat{sgn}(2l)=+,\\
&\hat{\mu}(2l)=2j+1,\hat{sgn}(2l)=-,
\end{cases} $$
where $2l>2j$ is the the minimal index for which the corresponding equalities hold. The requirement that $2l$ is the minimal index can be removed by induction argument. Thus, these cases are respectively corresponding to
$$\begin{cases}
&\hat{\mu}(2l)=\hat{\mu}(2j),\ \hat{sgn}(2j)=\hat{sgn}(2l),\\
&\hat{\mu}(2l)=2j,\\
&\hat{\mu}(2l)=2j+1,
\end{cases} $$
which implies that $T_{R}(\hat{\mu},\hat{sgn})=T_{C}(\hat{\mu},\hat{sgn})$.
\end{proof}
\end{proposition}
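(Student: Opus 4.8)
The plan is to establish the set identity $T_R(\hat\mu,\hat{sgn})=T_C(\hat\mu,\hat{sgn})$ by comparing two descriptions of the same family of time-ordering inequalities, and then to read off $(\ref{equ:duhamel expansion, reference form})$ directly from Proposition~\ref{prop:reference form, time integration domain}. Since Proposition~\ref{prop:reference time integration domain} already supplies the explicit formula $(\ref{equ:reference time integration domain})$ for $T_R(\hat\mu,\hat{sgn})$, the whole task reduces to checking that the constraints recorded in the Duhamel tree (Definition~\ref{definition:compatible time integration domain}) coincide with the ones listed there.

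First I would unpack the parent--child relation in the $D$-tree. By Algorithm~\ref{algorithm:duhamel tree}, $D^{(2l)}\to D^{(2j)}$ holds precisely when $2l$ is the \emph{minimal} index (exceeding $2j$, or minimal overall when $2j$ is the root $D^{(0)}$, in which case we put $2j=0$ and $2j+1=1$) realizing one of the five equalities of $(\ref{equ:duhamel tree, algorithm, step 3})$: (i) $\hat\mu(2l)=\hat\mu(2j)$ and $\hat{sgn}(2l)=\hat{sgn}(2j)$; (ii)/(iii) $\hat\mu(2l)=2j$ with $\hat{sgn}(2l)=+$ resp.\ $-$; (iv)/(v) $\hat\mu(2l)=2j+1$ with $\hat{sgn}(2l)=+$ resp.\ $-$. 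Thus $T_C(\hat\mu,\hat{sgn})$ is generated by the inequalities $t_{2j+1}\ge t_{2l+1}$ attached to these minimal pairs.

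The key step is to remove the minimality requirement. I claim that for each of the five equality types, \emph{every} admissible pair $(2j,2l)$ --- not just the minimal one --- produces an inequality $t_{2j+1}\ge t_{2l+1}$ already implied by $T_C$. This is a short induction using transitivity of $\ge$ and the tree structure. For instance, for type~(ii) list all indices with $\hat\mu=2j$, $\hat{sgn}=+$ in increasing order $2l_1<2l_2<\cdots$; then $D^{(2l_1)}$ is the second child of $D^{(2j)}$, and for each $i$ the node $D^{(2l_{i+1})}$ is the first child of $D^{(2l_i)}$ via the first clause of $(\ref{equ:duhamel tree, algorithm, step 3})$ (since $\hat\mu(2l_{i+1})=\hat\mu(2l_i)=2j$ and $\hat{sgn}(2l_{i+1})=\hat{sgn}(2l_i)=+$), so the list lies on a single downward path and chaining the edge inequalities gives $t_{2j+1}\ge t_{2l_1+1}\ge t_{2l_2+1}\ge\cdots$. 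The same reasoning handles sign $-$, types~(iv)/(v), and type~(i) (there the chain conserves both $\hat\mu$ and $\hat{sgn}$). Consequently $T_C(\hat\mu,\hat{sgn})$ is exactly the conjunction of all $t_{2j+1}\ge t_{2l+1}$ with $2j<2l$, $\hat\mu(2j)=\hat\mu(2l)$, $\hat{sgn}(2j)=\hat{sgn}(2l)$, together with all $t_{2j+1}\ge t_{2l+1}$ with $\hat\mu(2l)\in\{2j,2j+1\}$ (types (ii)--(iii) and (iv)--(v) merging once the sign is forgotten). This is precisely the right-hand side of $(\ref{equ:reference time integration domain})$, so $T_R(\hat\mu,\hat{sgn})=T_C(\hat\mu,\hat{sgn})$; substituting this into Proposition~\ref{prop:reference form, time integration domain} yields $(\ref{equ:duhamel expansion, reference form})$, with the bound $16^k$ inherited from Proposition~\ref{prop:tamed form, time integration domain}.

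The main obstacle I foresee is the chaining argument in the third paragraph: one must ensure that the non-minimal indices of a fixed equality type genuinely align along one branch of the $D$-tree. This is exactly where the sign-matching clause in the first line of $(\ref{equ:duhamel tree, algorithm, step 3})$ is indispensable, and it also explains structurally why the ``same left branch'' constraints in $T_R$ carry the hypothesis $\hat{sgn}(2j)=\hat{sgn}(2l)$ while the $\hat\mu(2l)\in\{2j,2j+1\}$ constraints do not. The remaining verifications --- the removability of minimality for types (i), (iii), (iv), (v), and the boundary treatment of the root node $2j=0$ --- are routine bookkeeping along the same lines.
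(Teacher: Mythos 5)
Your proposal is correct and follows essentially the same route as the paper: identify the five minimal-index parent--child relations defining $T_C$, remove the minimality requirement, and match the resulting constraints with the formula for $T_R$ from Proposition \ref{prop:reference time integration domain}. In fact, your chaining argument along first-child edges (using that the first clause of $(\ref{equ:duhamel tree, algorithm, step 3})$ preserves both $\hat\mu$ and $\hat{sgn}$) supplies exactly the ``induction argument'' the paper invokes without detail.
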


\section{$U$-$V$ Multilinear Estimates} \label{section:Multilinear Estimates}
 Our proof of $U$-$V$ multilinear estimates will focus on the $\T^{d}$ case, as it works the same for $\R^{d}$ with the homogeneous norm. We recall the definition of $U$-$V$ spaces in Section \ref{subsection:Estimates using the $U$-$V$ multilinear estimates} and use the following tools to prove $U$-$V$ multilinear estimates.
\begin{lemma}\cite[Propositions 2.11]{herr2011global} \label{lemma:U-V estimate,dual argument}
 For $f\in L^{1}(0,T;H^{s}(\T^{d}))$, we have
\begin{align}
&\bbn{\int_{a}^{t}e^{i(t-\tau)\Delta}f(\tau,\cdot)d\tau}_{X^{s}([0,T))}
&\leq \sup_{g\in Y^{-s}([0,T)):\n{g}_{Y^{-s}}=1}\bbabs{\int_{0}^{T}\int_{\T^{d}}
f(t,x)\ol{g(t,x)}dtdx},
\end{align}
for all $a\in[0,T)$.
\end{lemma}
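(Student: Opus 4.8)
The plan is to reduce this Duhamel duality estimate to the scalar duality between the atomic space $U^{2}$ and the bounded-variation space $V^{2}$, applied mode by mode, and then to reassemble the Fourier modes by an $\ell^{2}$ Cauchy--Schwarz argument dual to the very definition of $\n{\cdot}_{X^{s}}$. Write $u(t)=\int_{a}^{t}e^{i(t-\tau)\Delta}f(\tau)\,d\tau$, and for $\xi\in\Z^{d}$ set $\hat u_{\xi}(t):=\widehat{e^{-it\Delta}u(t)}(\xi)$ and $\hat f_{\xi}(t):=\widehat{e^{-it\Delta}f(t)}(\xi)$. Since $e^{-it\Delta}e^{i(t-\tau)\Delta}=e^{-i\tau\Delta}$, one has $e^{-it\Delta}u(t)=\int_{a}^{t}e^{-i\tau\Delta}f(\tau)\,d\tau$, and because $f\in L^{1}(0,T;H^{s})$ a Fubini argument gives $\hat u_{\xi}(t)=\int_{a}^{t}\hat f_{\xi}(\tau)\,d\tau$, which is absolutely continuous on $[0,T)$, vanishes at $t=a$, and has $\hat u_{\xi}'=\hat f_{\xi}\in L^{1}(0,T;\C)$.

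First I would record the scalar input: by the standard $U^{2}$--$V^{2}$ duality (Koch--Tataru, Hadac--Herr--Koch, and the references cited in Section~\ref{subsection:Estimates using the $U$-$V$ multilinear estimates}), for $h\in L^{1}(0,T;\C)$ the Duhamel primitive $v(t)=\int_{a}^{t}h(\tau)\,d\tau$ satisfies
\[
\n{v}_{U^{2}([0,T);\C)}=\sup_{w\in V_{rc}^{2}([0,T);\C),\ \n{w}_{V^{2}}\le 1}\bbabs{\int_{0}^{T}h(t)\,\ol{w(t)}\,dt}.
\]
Applying this with $h=\hat f_{\xi}$, for each $\ve>0$ and each $\xi$ I would pick a near-optimizer $w_{\xi}\in V_{rc}^{2}$ with $\n{w_{\xi}}_{V^{2}}\le 1$ and $\babs{\int_{0}^{T}\hat f_{\xi}\,\ol{w_{\xi}}\,dt}\ge(1-\ve)\n{\hat u_{\xi}}_{U^{2}}$; multiplying $w_{\xi}$ by a unimodular constant I may assume this integral is a nonnegative real number.

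Next I would assemble a single test function. Put $c_{\xi}:=\lra{\xi}^{2s}\n{\hat u_{\xi}}_{U^{2}}\ge 0$ and define $g$ by $\widehat{e^{-it\Delta}g(t)}(\xi):=c_{\xi}\,w_{\xi}(t)$. Then $\n{g}_{Y^{-s}}^{2}=\sum_{\xi}\lra{\xi}^{-2s}c_{\xi}^{2}\n{w_{\xi}}_{V^{2}}^{2}\le\sum_{\xi}\lra{\xi}^{2s}\n{\hat u_{\xi}}_{U^{2}}^{2}=\n{u}_{X^{s}}^{2}$, so $g\in Y^{-s}$ with $\n{g}_{Y^{-s}}\le\n{u}_{X^{s}}$ (the case $u\equiv 0$ being trivial, and otherwise $\n{g}_{Y^{-s}}>0$). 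By Plancherel in $x$ and unitarity of $e^{-it\Delta}$ on $L^{2}(\T^{d})$,
\[
\int_{0}^{T}\!\!\int_{\T^{d}}f(t,x)\,\ol{g(t,x)}\,dx\,dt=\sum_{\xi}c_{\xi}\int_{0}^{T}\hat f_{\xi}(t)\,\ol{w_{\xi}(t)}\,dt\ge(1-\ve)\sum_{\xi}c_{\xi}\n{\hat u_{\xi}}_{U^{2}}=(1-\ve)\n{u}_{X^{s}}^{2}.
\]
Normalizing $\tilde g:=g/\n{g}_{Y^{-s}}$ gives $\babs{\int_{0}^{T}\!\int f\,\ol{\tilde g}}\ge(1-\ve)\n{u}_{X^{s}}^{2}/\n{g}_{Y^{-s}}\ge(1-\ve)\n{u}_{X^{s}}$, and since $\n{\tilde g}_{Y^{-s}}=1$, letting $\ve\to 0$ yields the claimed bound.

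The only genuine ingredient is the abstract $U^{2}$--$V^{2}$ duality quoted in the second paragraph, which is where all the real analysis sits; the main obstacle I anticipate is simply invoking it cleanly. In particular one must fix right-continuous representatives so that the near-optimizers lie in $V_{rc}^{2}$, deal with the base point $a$ rather than $0$ in the Duhamel integral (so that $\hat u_{\xi}(a)=0$ rather than $\hat u_{\xi}(0)=0$, which is what makes the primitive form of the duality applicable, or equivalently phrase everything through the space $\mathcal{D}U^{2}$), and check that the assembled $g$ is a legitimate element of $Y^{-s}$ — all routine once the duality statement is set up, but worth stating carefully.
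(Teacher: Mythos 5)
The paper does not actually prove this lemma: it is quoted from \cite[Prop.~2.11]{herr2011global} (stated there with base point $a=0$), so there is no internal proof to measure your argument against. What you have written is essentially the standard proof of that cited proposition: pass to the interaction picture so that $\hat u_{\xi}(t)=\int_{a}^{t}\hat f_{\xi}(\tau)\,d\tau$, invoke the scalar $U^{2}$--$V^{2}$ duality mode by mode, and reassemble the near-optimizers $w_{\xi}$ into a single test function $g$ whose $Y^{-s}$ norm is controlled by Cauchy--Schwarz dual to the definition of $\n{\cdot}_{X^{s}}$. The Plancherel step, the choice $c_{\xi}=\lra{\xi}^{2s}\n{\hat u_{\xi}}_{U^{2}}$, and the normalization are all correct, and the housekeeping you defer (truncating to finitely many $\xi$ so that $g$ is a legitimate element of $Y^{-s}$ and so that $\n{u}_{X^{s}}$ is a priori finite, fixing right-continuous representatives) is genuinely routine.

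The one point you flag but wave off is not routine, and it is the only real gap. The scalar duality you quote, $\n{v}_{U^{2}([0,T))}=\sup_{\n{w}_{V^{2}}\leq 1}\babs{\int_{0}^{T}v'\,\ol{w}\,dt}$, is obtained from the abstract pairing $B(v,w)$ by summation by parts, and it is valid for absolutely continuous $v$ with $v'\in L^{1}$ \emph{vanishing at the left endpoint} $t=0$ of the interval; the boundary term $\lra{v(0),w(0)}$ is exactly what gets discarded. For $a>0$ your primitive satisfies $\hat u_{\xi}(a)=0$ but $\hat u_{\xi}(0)=-\int_{0}^{a}\hat f_{\xi}\neq 0$ in general, so the identity as you state it is not the cited duality, and applying it as written silently drops that boundary contribution (the constant function $\chi_{[0,T)}$, which has $U^{2}$ norm $1$ but derivative pairing to $0$ against every $w$, shows why the hypothesis $v(0)=0$ cannot simply be ignored). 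The clean repair is to split $\int_{a}^{t}=\int_{0}^{t}-\int_{0}^{a}$: the first piece is handled by your argument verbatim, while the second is the free evolution of $\phi_{a}:=\int_{0}^{a}e^{-i\tau\Delta}f(\tau)\,d\tau$, whose $X^{s}$ norm equals $\n{\phi_{a}}_{H^{s}}\leq\n{\int_{0}^{\cdot}e^{i(\cdot-\tau)\Delta}f\,d\tau}_{L_{t}^{\wq}H^{s}}$, which is in turn dominated by the $X^{s}$ norm of the $a=0$ Duhamel term. This proves the stated bound with an extra factor of $2$ for $a>0$ --- entirely harmless for every use of the lemma in this paper, where only $C^{k}$ losses matter, but not literally the constant-one inequality you (and the paper) assert. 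To recover constant one for general $a$ you would have to carry the boundary term $\lra{\hat u_{\xi}(0),w_{\xi}(0)}$ through the optimization rather than gesture at $\mathcal{D}U^{2}$.
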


\begin{lemma}[Strichartz estimate on $\T^{d}$\cite{bourgain2015the,killip2016scale}]\label{lemma:Strichartz estimate}
For $p>\frac{2(d+2)}{d},$
\begin{align}\label{equ:Strichartz estimate}
\n{P_{\leq M}u}_{L_{t,x}^{p}}
\lesssim M^{\frac{d}{2}-\frac{d+2}{p}}\n{P_{\leq M}u}_{Y^{0}([0,T))}
\end{align}
\end{lemma}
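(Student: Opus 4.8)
The plan is to obtain Lemma \ref{lemma:Strichartz estimate} by combining the scale-invariant Strichartz estimate for the free Schr\"odinger evolution on $\T^d$ with the standard atomic-space transference. First I would record the linear input: by the $\ell^2$-decoupling theorem of Bourgain--Demeter and its scale-invariant sharpening by Killip--Vi\c{s}an \cite{bourgain2015the,killip2016scale}, for every $p>\frac{2(d+2)}{d}$ and every $f\in L^2(\T^d)$ with $\widehat{f}$ supported in $\lr{|\xi|\leq M}$ one has
$$\n{e^{it\Delta}f}_{L_{t,x}^{p}([0,T)\times\T^d)}\lesssim M^{\frac{d}{2}-\frac{d+2}{p}}\n{f}_{L^{2}}.$$
On the rational torus this is the Bourgain--Demeter estimate restricted to a finite time window; the extension to arbitrary (in particular irrational) tori and to the scale-invariant exponent, with the $\ve$-loss removed throughout the open range $p>\frac{2(d+2)}{d}$, is exactly the content of \cite{killip2016scale}. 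Passing from a global estimate to one on $[0,T)$ costs nothing since restriction only decreases the $L^p_{t,x}$ norm, and the Fourier multiplier $P_{\leq M}$ commutes with $e^{it\Delta}$, so the frequency support is preserved.

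Second, I would upgrade this free-solution bound to $U^p$-functions by transference. A $U_\Delta^p$-atom has the form $u(t)=\sum_{j=1}^{J}\mathbf{1}_{[\tau_{j-1},\tau_{j})}(t)\,e^{it\Delta}\phi_{j}$ with $\sum_{j}\n{\phi_{j}}_{L^2}^{p}\leq 1$, and if $u$ is frequency-localized by $P_{\leq M}$ then so is each $\phi_j$; disjointness of the time intervals gives
$$\n{u}_{L_{t,x}^{p}([0,T)\times\T^d)}^{p}=\sum_{j=1}^{J}\n{\mathbf{1}_{[\tau_{j-1},\tau_{j})}e^{it\Delta}\phi_{j}}_{L_{t,x}^{p}}^{p}\leq\sum_{j=1}^{J}\n{e^{it\Delta}\phi_{j}}_{L_{t,x}^{p}([0,T)\times\T^d)}^{p}\lesssim M^{p\lrs{\frac{d}{2}-\frac{d+2}{p}}}\sum_{j=1}^{J}\n{\phi_{j}}_{L^2}^{p}\leq M^{p\lrs{\frac{d}{2}-\frac{d+2}{p}}}.$$
By the atomic definition of $U_\Delta^p$ this yields $\n{P_{\leq M}u}_{L_{t,x}^{p}}\lesssim M^{\frac{d}{2}-\frac{d+2}{p}}\n{P_{\leq M}u}_{U_\Delta^{p}}$ for all $u$. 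Finally, since $p>\frac{2(d+2)}{d}>2$, the embeddings relating the $U^p$, $V^2$ and $Y^0$ spaces (see \cite[Propositions 2.8--2.10]{herr2011global} and \cite{koch2014dispersive}) give $\n{P_{\leq M}u}_{U_\Delta^{p}}\lesssim\n{P_{\leq M}u}_{V_\Delta^{2}}\lesssim\n{P_{\leq M}u}_{Y^{0}([0,T))}$, which is exactly the right-hand side of $(\ref{equ:Strichartz estimate})$. Chaining the two displayed inequalities completes the proof.

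I do not expect a genuine obstacle; the lemma is essentially a repackaging of cited theorems. The only points that need care are (i) invoking the correct reference for the torus and exponent range at hand --- this is why \cite{killip2016scale} is cited alongside \cite{bourgain2015the}, since the scale-invariant, arbitrary-torus, $\ve$-free version is what the rest of the argument in Section \ref{section:Multilinear Estimates} actually uses --- and (ii) checking that $P_{\leq M}$ may be commuted through the transference and through the $V^2\hookrightarrow U^p$ embedding, which is immediate because $P_{\leq M}$ is a Fourier multiplier commuting with the free flow. It is also worth remarking that one may freely replace the sharp projection $P_{\leq M}$ by a smooth Littlewood--Paley cutoff if convenient, as the decoupling estimate is insensitive to this choice.
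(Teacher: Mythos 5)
Your argument is correct and is precisely the standard derivation of this lemma: the scale-invariant $\ell^{2}$-decoupling estimate for free solutions, transference to $U^{p}_{\Delta}$ atoms via disjointness of the time intervals, and the embeddings $Y^{0}\hookrightarrow V^{2}_{rc,\Delta}\hookrightarrow U^{p}_{\Delta}$ for $p>2$ (valid here since $p>\frac{2(d+2)}{d}>2$). The paper does not prove this lemma but simply quotes it from \cite{bourgain2015the,killip2016scale}, and your proof is exactly the argument carried out in those references.
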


\begin{lemma}\label{lemma:strichartz estimate with noncertered frequency localization}
Let $M$ be a dyadic value and let $Q$ be a(possibly) noncentered $M$-cube
in Fourier space
$$Q=\lr{\xi_{0}+\eta:|\eta|<M}.$$
Let $P_{Q}$ be the corresponding Littlewood-Paley projection, then by the
Galilean invariance, we have
\begin{align}\label{equ:strichartz estimate with noncertered frequency localization}
\n{P_{Q}u}_{L_{t,x}^{p}}\lesssim M^{\frac{d}{2}-\frac{d+2}{p}}
\n{P_{Q}u}_{Y^{0}([0,T))}
\end{align}
for $p>\frac{2(d+2)}{d}.$
\end{lemma}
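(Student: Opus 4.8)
The plan is to deduce the non-centered bound from the centered Strichartz estimate of Lemma \ref{lemma:Strichartz estimate} by conjugating with a Galilean boost that moves the cube $Q$ back to the origin. First I would make a harmless reduction: since $Q$ is an $M$-cube, after enlarging it by a fixed dimensional factor (which only affects constants) I may assume its center $\xi_{0}$ is a lattice point, $\xi_{0}\in\Z^{d}$; this is needed so that the modulation $e^{ix\cdot\xi_{0}}$ is a well-defined function on $\T^{d}$. Then I would introduce the Galilean boost
\begin{align*}
G_{\xi_{0}}[f](t,x):=e^{i(x\cdot\xi_{0}-t|\xi_{0}|^{2})}f(t,x-2t\xi_{0}),
\end{align*}
which is well defined on $[0,T)\times\T^{d}$ because $\xi_{0}\in\Z^{d}$, and which is invertible with inverse $G_{-\xi_{0}}$.

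The key step is the Fourier-side identity
\begin{align*}
\widehat{e^{-it\Delta}\big(G_{\xi_{0}}[f]\big)(t)}(\xi)=\widehat{e^{-it\Delta}f(t)}(\xi-\xi_{0}),
\end{align*}
which I would verify by a direct computation: the modulation $e^{ix\cdot\xi_{0}}$ shifts the spatial frequency by $\xi_{0}$, the translation $x\mapsto x-2t\xi_{0}$ produces the phase $e^{-2it\xi_{0}\cdot\xi}$, and the scalar factor $e^{-it|\xi_{0}|^{2}}$ is chosen precisely so that all $t$-dependent phases recombine into $e^{it|\xi-\xi_{0}|^{2}}$. The decisive feature is that $G_{\xi_{0}}$ acts on the profile $\xi\mapsto\widehat{e^{-it\Delta}f(t)}(\xi)$ by a mere relabeling of frequencies, with \emph{no} leftover time-dependent factor. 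Consequently $G_{\xi_{0}}$ is an isometry of $Y^{0}([0,T))$, since the $V^{2}_{rc}$ norm of each Fourier coefficient is unchanged (only reindexed), and it is an isometry of $L_{t,x}^{p}([0,T)\times\T^{d})$ as it is the composition of a measure-preserving change of variables with multiplication by a unimodular function.

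Finally I would apply $G_{-\xi_{0}}$ to $v:=P_{Q}u$. By the identity above, $w:=G_{-\xi_{0}}[v]$ satisfies $\widehat{e^{-it\Delta}w(t)}(\xi)=\widehat{e^{-it\Delta}v(t)}(\xi+\xi_{0})$, which is supported in the centered cube $Q-\xi_{0}$, hence in $\lr{|\xi|\lesssim M}$; therefore $P_{\leq CM}w=w$ for a dimensional constant $C$. Applying Lemma \ref{lemma:Strichartz estimate} to $w$ at frequency $CM$ and then undoing the boost using the two isometries gives
\begin{align*}
\n{P_{Q}u}_{L_{t,x}^{p}}=\n{w}_{L_{t,x}^{p}}\lesssim (CM)^{\frac{d}{2}-\frac{d+2}{p}}\n{w}_{Y^{0}([0,T))}\lesssim M^{\frac{d}{2}-\frac{d+2}{p}}\n{P_{Q}u}_{Y^{0}([0,T))}
\end{align*}
for $p>\frac{2(d+2)}{d}$, which is the claim. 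There is no real analytic difficulty here; the only points deserving care — and the place I would spend the most attention — are (i) that the boost must be well defined on the torus, which forces the reduction to a lattice-centered cube, and (ii) getting the normalization in the definition of $G_{\xi_{0}}$ exactly right so that the $Y^{0}$ (hence $U^{2}$/$V^{2}_{rc}$) structure is \emph{preserved isometrically} rather than merely comparably, which is what keeps the reduction lossless.
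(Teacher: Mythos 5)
Your argument is correct, and it is exactly the route the paper intends: the lemma statement itself invokes "Galilean invariance," and the paper defers the (well-known) details to the references rather than writing them out. Your boost $G_{\xi_{0}}$ is normalized correctly (the phases recombine to $e^{it|\xi-\xi_{0}|^{2}}$, so the $Y^{0}$ and $L_{t,x}^{p}$ norms are genuinely preserved), and the reduction to $\xi_{0}\in\Z^{d}$ is the right way to make the modulation periodic, so the conjugation to the centered estimate of Lemma \ref{lemma:Strichartz estimate} is lossless as claimed.
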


\begin{lemma}[Bernstein with noncentered frequency projection]\label{lemma:bernstein with noncentered frequency projection}
Let $M$ and $Q$ be as in Lemma $\ref{lemma:strichartz estimate with noncertered frequency localization}$, then for $1\leq p\leq q\leq \wq$
\begin{align}\label{equ:bernstein with noncentered frequency projection}
\n{P_{Q}f}_{L_{x}^{q}}\lesssim M^{\frac{d}{p}-\frac{d}{q}}\n{P_{Q}f}_{L_{x}^{p}}.
\end{align}
\end{lemma}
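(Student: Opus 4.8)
The plan is to deduce the noncentered Bernstein inequality from the textbook centered one by a Galilean (modulation) change of variables, precisely the device already used to pass from Lemma~\ref{lemma:Strichartz estimate} to Lemma~\ref{lemma:strichartz estimate with noncertered frequency localization}. Write $Q=\lr{\xi_{0}+\eta:|\eta|<M}$ with $\xi_{0}\in\Z^{d}$, and let $Q_{0}=\lr{\eta:|\eta|<M}$ be the corresponding centered $M$-cube. Since multiplication by the character $e^{-ix\cdot\xi_{0}}$ translates the Fourier support by $-\xi_{0}$, one has the identity
\begin{align*}
P_{Q}f=e^{ix\cdot\xi_{0}}\,P_{Q_{0}}\!\lrs{e^{-ix\cdot\xi_{0}}f},
\end{align*}
so that $e^{-ix\cdot\xi_{0}}P_{Q}f$ has Fourier support in $Q_{0}$. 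Because $|e^{\pm ix\cdot\xi_{0}}|=1$ pointwise, multiplication by $e^{\pm ix\cdot\xi_{0}}$ is an isometry of $L_{x}^{r}(\T^{d})$ for every $r\in[1,\wq]$; this is the only structural fact needed from the modulation.

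Next I would invoke the centered Bernstein inequality: if $g\in L^{p}(\T^{d})$ has $\widehat{g}$ supported in the centered $M$-cube $Q_{0}$, then $\n{g}_{L_{x}^{q}}\lesssim M^{\frac{d}{p}-\frac{d}{q}}\n{g}_{L_{x}^{p}}$ for $1\leq p\leq q\leq\wq$. This is standard: picking a Schwartz bump $\chi$ with $\chi\equiv 1$ on $Q_{0}$ and supported in the dilated cube $2Q_{0}$, one writes $g=g*K_{M}$ with $K_{M}(x)=\sum_{n\in\Z^{d}}M^{d}\check{\chi}(M(x+n))$ the periodized rescaled kernel; its $L^{1}$ mass is $O(1)$ uniformly in $M$ and its $L^{r}$ norm scales like $M^{d-\frac{d}{r}}$, so Young's inequality gives the claim. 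On $\R^{d}$ the periodization is absent and this is literally the classical Bernstein inequality.

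Finally I would combine the two steps: applying the centered estimate to $g=e^{-ix\cdot\xi_{0}}P_{Q}f$ and undoing the modulation,
\begin{align*}
\n{P_{Q}f}_{L_{x}^{q}}=\n{e^{-ix\cdot\xi_{0}}P_{Q}f}_{L_{x}^{q}}\lesssim M^{\frac{d}{p}-\frac{d}{q}}\n{e^{-ix\cdot\xi_{0}}P_{Q}f}_{L_{x}^{p}}=M^{\frac{d}{p}-\frac{d}{q}}\n{P_{Q}f}_{L_{x}^{p}},
\end{align*}
which is the asserted bound. There is essentially no analytic difficulty here; the only things requiring care are the Fourier-support bookkeeping in the modulation identity and the uniformity of the implicit constant in the centered inequality, both of which are immediate once one observes that the rescaled kernel $M^{d}\check{\chi}(M\,\cdot\,)$ has $M$-independent $L^{1}$ mass. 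The one genuinely torus-specific point, flagged already in Lemma~\ref{lemma:strichartz estimate with noncertered frequency localization}, is that one must use a lattice shift $\xi_{0}\in\Z^{d}$ so that $e^{ix\cdot\xi_{0}}$ is a true character of $\T^{d}$; a general real center is handled by replacing $\xi_{0}$ with the nearest lattice point, which enlarges $Q_{0}$ only by an $O(1)$ factor and hence does not affect the estimate.
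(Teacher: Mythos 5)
Your argument is correct and is exactly the standard one: conjugating by the character $e^{ix\cdot\xi_{0}}$ reduces the noncentered projection to a centered one without changing any $L^{r}$ norm, and the centered case follows from writing $P_{Q_{0}}g=g*K_{M}$ with a rescaled (periodized) bump kernel and applying Young's inequality, the key uniformity being $\n{K_{M}}_{L^{r}}\lesssim M^{d-\frac{d}{r}}$. The paper itself omits the proof and simply cites the literature, and what you have written is precisely the expected argument, including the correct torus-specific caveat that the modulation frequency should be taken in $\Z^{d}$.
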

Lemmas \ref{lemma:strichartz estimate with noncertered frequency localization} and \ref{lemma:bernstein with noncentered frequency projection} are very well-known, and are available in many references, for example, see \cite{chen2020unconditional}.


\subsection{Trilinear Estimates}\label{subsection:Trilinear Estimates}

%
To deal with the cubic energy-supercritical NLS, we prove the following $U$-$V$ trilinear estimates at critical regularity. Let $\wt{u}\in \lr{u,\ol{u}}$.

\begin{lemma}\label{lemma:trilinear estimate d>=4}
On $\T^{d}$ with $d\geq 4$ and $s\in\lr{\frac{d-6}{2},\frac{d-2}{2}}$, we have the high frequency estimate
\begin{align}\label{equ:trilinear estimate d>=5 high frequency estimate}
\iint_{x,t}\wt{u}_{1}(t,x)\wt{u}_{2}(t,x)\wt{u}_{3}(t,x)\wt{g}(t,x)dxdt\lesssim \n{u_{1}}_{Y^{s}}\n{u_{2}}_{Y^{\frac{d-2}{2}}}\n{u_{3}}_{Y^{\frac{d-2}{2}}}
\n{g}_{Y^{-s}},
\end{align}
and the low frequency estimate
\begin{align}\label{equ:trilinear estimate d>=5 low frequency estimate}
&\iint_{x,t}\wt{u}_{1}(t,x)(P_{\leq M_{0}}\wt{u}_{2})(t,x)\wt{u}_{3}(t,x)\wt{g}(t,x)dxdt\\
\lesssim& T^{\frac{1}{d+3}}M_{0}^{\frac{2(d+2)}{3(d+3)}} \n{u_{1}}_{Y^{s}}\n{P_{\leq M_{0}}u_{2}}_{Y^{\frac{d-2}{2}}}\n{u_{3}}_{Y^{\frac{d-2}{2}}}
\n{g}_{Y^{-s}},\notag
\end{align}
for all $T\leq 1$ and all frequencies $M_{0}\geq 1.$ Then by Lemma $\ref{lemma:U-V estimate,dual argument}$, $(\ref{equ:trilinear estimate d>=5 high frequency estimate})$ and $(\ref{equ:trilinear estimate d>=5 low frequency estimate})$, we have
\begin{align} \label{equ:trilinear estimate d>=5 high frequency estimate,dual argument}
\bbn{\int_{a}^{t}e^{i(t-\tau)\Delta}(\wt{u}_{1}\wt{u}_{2}\wt{u}_{3})d\tau}_{X^{s}}\lesssim \n{u_{1}}_{Y^{s}}
\n{u_{2}}_{Y^{\frac{d-2}{2}}}\n{u_{3}}_{Y^{\frac{d-2}{2}}}
\end{align}
and
\begin{align} \label{equ:trilinear estimate d>=5 low frequency estimate,dual argument}
&\bbn{\int_{a}^{t}e^{i(t-\tau)\Delta}(\wt{u}_{1}\wt{u}_{2}\wt{u}_{3})d\tau}_{X^{s}}\\
\lesssim& \n{u_{1}}_{Y^{s}}
\lrs{T^{\frac{1}{d+3}}M_{0}^{\frac{2(d+2)}{3(d+3)}}\n{P_{\leq M_{0}}u_{2}}_{Y^{\frac{d-2}{2}}}+\n{P_{>M_{0}}u_{2}}_{Y^{\frac{d-2}{2}}}}\n{u_{3}}_{Y^{\frac{d-2}{2}}}.\notag
\end{align}

\end{lemma}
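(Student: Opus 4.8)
The plan is to extract everything from the two quadrilinear spacetime estimates $(\ref{equ:trilinear estimate d>=5 high frequency estimate})$ and $(\ref{equ:trilinear estimate d>=5 low frequency estimate})$, since the Duhamel bounds $(\ref{equ:trilinear estimate d>=5 high frequency estimate,dual argument})$ and $(\ref{equ:trilinear estimate d>=5 low frequency estimate,dual argument})$ then follow at once from the duality characterization in Lemma~\ref{lemma:U-V estimate,dual argument}: taking the supremum over $g$ with $\n{g}_{Y^{-s}}=1$ gives the high-frequency Duhamel estimate, and for the low-frequency one we additionally split $u_2=P_{\leq M_0}u_2+P_{>M_0}u_2$ and control the second piece by the high-frequency estimate. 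So the entire content is the pair of $L^2$-type quadrilinear bounds, and this is what I would prove directly.

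For those, I would run a Littlewood–Paley decomposition, writing $u_i=\sum_{N_i}P_{N_i}u_i$ and $g=\sum_{N_4}P_{N_4}g$, and reduce to a single dyadic block $\iint (P_{N_1}\wt u_1)(P_{N_2}\wt u_2)(P_{N_3}\wt u_3)(P_{N_4}\wt g)\,dx\,dt$. The convolution constraint on the Fourier supports forces the two largest of $N_1,N_2,N_3,N_4$ to be comparable, leaving a bounded collection of frequency regimes (the top pair is $\{u_2,u_3\}$, or it contains $u_1$, or it contains $g$). In each regime I would apply Hölder in $(t,x)$ to split the integral into a product of $L^{p_i}_{t,x}$ norms with $\sum 1/p_i=1$, and then invoke the scale-invariant Strichartz estimate of Lemma~\ref{lemma:Strichartz estimate} — or, when the two top-frequency factors must be localized to dual cubes of the smaller scale, the Galilean-invariant version in Lemma~\ref{lemma:strichartz estimate with noncertered frequency localization} together with an almost-orthogonality sum over cubes — to convert the $L^{p_i}_{t,x}$ norms into $Y^0$ norms with explicit powers of the dyadic frequencies, and finally re-weight these $Y^0$ norms into the $Y^s$, $Y^{\frac{d-2}{2}}$, $Y^{-s}$ norms appearing on the right-hand side. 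The resulting powers of the $N_i$ are scaling critical; summing the geometric series requires extracting a strictly negative power from the frequency gap between the top pair and the remaining factors, which is exactly where the regularity slack of $u_1$ and $g$ relative to $u_2,u_3$ — distributed through the $l^2$-decoupling Strichartz exponent — gets consumed.

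For the low-frequency estimate $(\ref{equ:trilinear estimate d>=5 low frequency estimate})$, the cutoff $P_{\leq M_0}u_2$ renders the estimate non-critical in that factor, so there is room to pay powers of $T$ and $M_0$. I would use Hölder in time on the block carrying $P_{\leq M_0}u_2$ to pull out a factor $T^{\theta}$ while raising its time-integrability exponent, and then use the Bernstein inequality with noncentered frequency projection, Lemma~\ref{lemma:bernstein with noncentered frequency projection}, to trade the excess spatial integrability on $P_{\leq M_0}u_2$ back to $L^2_x$ at the price of a power of $M_0$; optimizing these two free parameters produces precisely the gain $T^{\frac{1}{d+3}}M_0^{\frac{2(d+2)}{3(d+3)}}$.

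The main obstacle is the dyadic summation in the near-resonant regime where $u_1$ or $g$ carries the top frequency: there the naive Hölder-plus-Strichartz bound is exactly critical (only logarithmically summable), so one must genuinely exploit the transversality of the two highest-frequency factors via the noncentered-cube Strichartz estimate and almost orthogonality, and then check that the bookkeeping closes uniformly for all $d\geq 4$ at both endpoint values $s=\frac{d-2}{2}$ and $s=\frac{d-6}{2}$. The optimization giving the sharp $(T,M_0)$ exponents in $(\ref{equ:trilinear estimate d>=5 low frequency estimate})$ is a secondary, purely computational hurdle.
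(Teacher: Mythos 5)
Your proposal is correct and follows essentially the same route as the paper: duality via Lemma \ref{lemma:U-V estimate,dual argument}, Littlewood--Paley decomposition with a case analysis on which two frequencies are comparable and largest, decomposition of the two high-frequency factors into cubes at the scale of the next frequency with almost orthogonality and the Galilean-invariant Strichartz estimate, Cauchy--Schwarz over cubes, and summation of the dyadic series off the diagonal; the $T^{\frac{1}{d+3}}M_{0}^{\frac{2(d+2)}{3(d+3)}}$ gain is obtained exactly as you describe, by H\"older in time plus Bernstein applied to the block carrying $P_{\leq M_{0}}u_{2}$.
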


\begin{proof}


 It suffices to prove high and low frequency estimates $(\ref{equ:trilinear estimate d>=5 high frequency estimate})$ and $(\ref{equ:trilinear estimate d>=5 low frequency estimate})$.
For simplicity, we take $\wt{u}=u$ and $\wt{g}=g$.

 For the high frequency estimate $(\ref{equ:trilinear estimate d>=5 high frequency estimate})$, decompose the 4 factors into Littlewood-Paley pieces so that
\begin{align*}
I=\sum_{M_{1},M_{2},M_{3},M_{4}}I_{M_{1},M_{2},M_{3},M_{4}}
\end{align*}
where
\begin{align*}
I_{M_{1},M_{2},M_{3},M_{4}}=\iint_{x,t}u_{1,M_{1}}u_{2,M_{2}}u_{3,M_{3}}g_{M_{4}}dxdt
\end{align*}
with $u_{j,M_{j}}=P_{M_{j}}u_{j}$ and $g_{M_{4}}=P_{M_{4}}g$. By orthogonality, we know that these cases are as follows
$$M_{\sigma(1)}\sim M_{\sigma(2)}\geq  M_{\sigma(3)}\geq M_{\sigma(4)}$$
where $\sigma$ is a permutation on $\lr{1,2,3,4}$. By symmetry, we might as well assume without loss that $M_{2}\geq M_{3}$.

First, we consider the most difficulty case, namely,
Case A. $M_{1}\sim M_{4}\geq M_{2}\geq M_{3}$.
Then, we need only to deal with one such as
Case B. $M_{1}\sim M_{2}\geq  M_{4}\geq M_{3}$,
since other cases can be treated in the same way.

Let $I_{A}$ denote the integral restricted to the Case A. Decompose the $M_{1}$ and $M_{4}$ dyadic spaces into $M_{2}$ size cubes. Due to the frequency constraint $\xi_{2}\sim -(\xi_{1}+\xi_{3}+\xi_{4})$, for each choice $Q$ of an $M_{2}$ size cube within the $\xi_{1}$ space, the variable $\xi_{2}$
is constrained to at most $3^{d}$ of $M_{2}$ size cubes. For
convenience, we denote these cubes by a single cube $Q_{c}$ that
corresponds to $Q$. Then
\begin{align*}
I_{A}\lesssim& \sum_{\substack{M_{1},M_{2},M_{3},M_{4}\\M_{1}\sim M_{4}\geq  M_{2}\geq M_{3}}} \sum_{Q}\n{P_{Q}u_{1,M_{1}}u_{2,M_{2}}
u_{3,M_{3}}P_{Q_{c}}g_{M_{4}}}_{L_{t,x}^{1}}\\
\lesssim& \sum_{\substack{M_{1},M_{2},M_{3},M_{4}\\ M_{1} \sim M_{4}\geq  M_{2}\geq M_{3}}}\sum_{Q}\n{P_{Q}u_{1,M_{1}}}_{L_{t,x}^{\frac{3(d+3)}{d+2}}}
\n{u_{2,M_{2}}}_{L_{t,x}^{\frac{3(d+3)}{d+2}}}
\n{u_{3,M_{3}}}_{L_{t,x}^{d+3}}\n{P_{Q_{c}}g_{M_{4}}}_{L_{t,x}^{\frac{3(d+3)}{d+2}}}
\end{align*}
where the factor corresponding to the smallest size cubes (here $M_{3}$ size cubes) is put in $L_{t,x}^{d+3}$ and the others are put in $L_{t,x}^{\frac{3(d+3)}{d+2}}$. By $(\ref{equ:Strichartz estimate})$ and $(\ref{equ:strichartz estimate with noncertered frequency localization})$,
\begin{align*}
\lesssim& \sum_{\substack{M_{1},M_{2},M_{3},M_{4}\\ M_{1}\sim M_{4} \geq M_{2}\geq
M_{3}}}\sum_{Q}M_{2}^{\frac{d-2}{2}-\frac{1}{d+3}}\n{P_{Q}u_{1,M_{1}}}_{Y^{0}}
\n{u_{2,M_{2}}}_{Y^{0}}
M_{3}^{\frac{d-2}{2}+\frac{1}{d+3}}\n{u_{3,M_{3}}}_{Y^{0}}\n{P_{Q_{c}}g_{M_{4}}}_{Y^{0}}
\end{align*}
Applying Cauchy-Schwarz to sum in $Q$,
\begin{align}\label{equ:estiamte, wanted form}
\lesssim& \sum_{\substack{M_{1},M_{4}\\M_{1}\sim M_{4}}}M_{1}^{-s}M_{4}^{s}\n{u_{1,M_{1}}}_{Y^{s}}
\n{g_{M_{4}}}_{Y^{-s}}
\sum_{\substack{M_{2},M_{3}\\M_{2}\geq M_{3}}}M_{2}^{-\frac{1}{d+3}}M_{3}^{\frac{1}{d+3}}
\n{u_{2,M_{2}}}_{Y^{\frac{d-2}{2}}}\n{u_{3,M_{3}}}_{Y^{\frac{d-2}{2}}}
\end{align}
Applying Cauchy-Schwarz,
\begin{align*}
\lesssim&  \lrs{\sum_{M_{1}}\n{u_{1,M_{1}}}_{Y^{s}}^{2}}^{\frac{1}{2}}
\lrs{\sum_{M_{4}}\n{g_{4,M_{4}}}_{Y^{-s}}^{2}}^{\frac{1}{2}}\\
&\lrs{\sum_{\substack{M_{2},M_{3}\\M_{2}\geq M_{3}}}\lrs{\frac{M_{3}}{M_{2}}}^{\frac{1}{d+3}}\n{u_{2,M_{2}}}_{Y^{\frac{d-2}{2}}}^{2}}^{\frac{1}{2}}
\lrs{\sum_{\substack{M_{2},M_{3}\\M_{2}\geq M_{3}}}\lrs{\frac{M_{3}}{M_{2}}}^{\frac{1}{d+3}}\n{u_{3,M_{3}}}_{Y^{\frac{d-2}{2}}}^{2}}^{\frac{1}{2}}\\
\lesssim &\n{u_{1}}_{Y^{s}}\n{u_{2}}_{Y^{\frac{d-2}{2}}}\n{u_{3}}_{Y^{\frac{d-2}{2}}}
\n{g}_{Y^{-s}}.
\end{align*}

Case B. $M_{1}\sim M_{2}\geq  M_{4}\geq M_{3}$. Decompose the $M_{1}$ and $M_{2}$ dyadic spaces into $M_{4}$ size cubes and we have
\begin{align*}
I_{B}\lesssim& \sum_{\substack{M_{1},M_{2},M_{3},M_{4}\\M_{1}\sim M_{2}\geq  M_{4}\geq M_{3}}} \sum_{Q}\n{P_{Q}u_{1,M_{1}}P_{Q_{c}}u_{2,M_{2}}
u_{3,M_{3}}g_{M_{4}}}_{L_{t,x}^{1}}\\
\lesssim& \sum_{\substack{M_{1},M_{2},M_{3},M_{4}\\ M_{1}\sim M_{2}\geq  M_{4}\geq M_{3}}}\sum_{Q}\n{P_{Q}u_{1,M_{1}}}_{L_{t,x}^{\frac{3(d+3)}{d+2}}}
\n{P_{Q_{c}}u_{2,M_{2}}}_{L_{t,x}^{\frac{3(d+3)}{d+2}}}
\n{u_{3,M_{3}}}_{L_{t,x}^{d+3}}\n{g_{M_{4}}}_{L_{t,x}^{\frac{3(d+3)}{d+2}}}
\end{align*}
By $(\ref{equ:Strichartz estimate})$ and $(\ref{equ:strichartz estimate with noncertered frequency localization})$,
\begin{align*}
\lesssim& \sum_{\substack{M_{1},M_{2},M_{3},M_{4}\\ M_{1}\sim M_{2}\geq  M_{4}\geq M_{3}}}\sum_{Q}M_{4}^{\frac{d-2}{2}-\frac{1}{d+3}}\n{P_{Q}u_{1,M_{1}}}_{Y^{0}}
\n{P_{Q_{c}}u_{2,M_{2}}}_{Y^{0}}
M_{3}^{\frac{d-2}{2}+\frac{1}{d+3}}\n{u_{3,M_{3}}}_{Y^{0}}\n{g_{M_{4}}}_{Y^{0}}
\end{align*}
Applying Cauchy-Schwarz to sum in $Q$,
\begin{align*}
\lesssim& \sum_{\substack{M_{1},M_{2}\\M_{1}\sim M_{2}}}\left(M_{1}^{-s}M_{2}^{-\frac{d-2}{2}}\n{u_{1,M_{1}}}_{Y^{s}}\n{u_{2,M_{2}}}_{Y^{\frac{d-2}{2}}}
\right.\\
&\left.\sum_{\substack{M_{3},M_{4}\\M_{1}\sim M_{2}\geq  M_{4}\geq M_{3}}}M_{4}^{\frac{d-2}{2}-\frac{1}{d+3}+s}M_{3}^{\frac{1}{d+3}}
\n{u_{3,M_{3}}}_{Y^{\frac{d-2}{2}}}\n{g_{4,M_{4}}}_{Y^{-s}}\right)
\end{align*}
If $s+\frac{d-2}{2}=0$, it can be estimated in the same way as $(\ref{equ:estiamte, wanted form})$. Thus we need only to treat the case $s+\frac{d-2}{2}>0$. Supping out $\n{u_{3,M_{3}}}_{Y^{\frac{d-2}{2}}}$ and $\n{g_{M_{4}}}_{Y^{-s}}$ in $M_{3}$ and $M_{4}$, we have
\begin{align*}
\lesssim& \n{u_{3}}_{Y^{\frac{d-2}{2}}}\n{g}_{Y^{-s}}\sum_{\substack{M_{1},M_{2}\\M_{1}\sim M_{2}}}M_{1}^{-s}M_{2}^{-\frac{d-2}{2}}\n{u_{1,M_{1}}}_{Y^{s}}\n{u_{2,M_{2}}}_{Y^{\frac{d-2}{2}}}
\sum_{\substack{M_{3},M_{4}\\M_{1}\sim M_{2}\geq  M_{4}\geq M_{3}}}M_{4}^{\frac{d-2}{2}-\frac{1}{d+3}+s}M_{3}^{\frac{1}{d+3}}
\end{align*}
By the fact that $\frac{d-2}{2}+s>0$,
\begin{align*}
\lesssim& \n{u_{3}}_{Y^{\frac{d-2}{2}}}\n{g}_{Y^{-s}}\sum_{\substack{M_{1},M_{2}\\M_{1}\sim M_{2}}}M_{1}^{-s}M_{2}^{-\frac{d-2}{2}}\n{u_{1,M_{1}}}_{Y^{s}}\n{u_{2,M_{2}}}_{Y^{\frac{d-2}{2}}}
M_{2}^{\frac{d-2}{2}+s}\\
=&\n{u_{3}}_{Y^{\frac{d-2}{2}}}\n{g}_{Y^{-s}}\sum_{\substack{M_{1},M_{2}\\M_{1}\sim M_{2}}}M_{1}^{-s}M_{2}^{s}\n{u_{1,M_{1}}}_{Y^{-s}}\n{u_{2,M_{2}}}_{Y^{\frac{d-2}{2}}}
\end{align*}
Applying Cauchy-Schwarz,
\begin{align*}
\lesssim &\n{u_{1}}_{Y^{s}}\n{u_{2}}_{Y^{\frac{d-2}{2}}}\n{u_{3}}_{Y^{\frac{d-2}{2}}}
\n{g}_{Y^{-s}}.
\end{align*}

Case B requires that $\frac{d-2}{2}+s\geq 0$. If we exchange $M_{1}$ and $M_{4}$ in Case B, we find another requirement that $\frac{d-2}{2}-s\geq 0$ is also needed. In this case, it becomes Case A again if $s=\frac{d-2}{2}$ and it becomes similar but a bit
different if $s=\frac{d-6}{2}$ as $M_{1}$ and $M_{2}$ are not symmetric.


Proof of the low frequency estimate $(\ref{equ:trilinear estimate d>=5 low frequency estimate})$.
We first deal with the most difficult Case A. $M_{1}\sim M_{4}\geq M_{3}\geq M_{2}$.
Decompose the $M_{1}$ and $M_{4}$ dyadic spaces into $M_{3}$ size cubes, we have
\begin{align*}
I_{M_{1},M_{2},M_{3},M_{4}}\lesssim &\sum_{Q}\n{P_{Q}u_{1,M_{1}}(P_{\leq M_{0}}u_{2,M_{2}})
u_{3,M_{3}}P_{Q_{c}}g_{M_{4}}}_{L_{t,x}^{1}}\\
\leq& \sum_{Q}\n{P_{Q}u_{1,M_{1}}}_{L_{t,x}^{\frac{3(d+3)}{d+2}}}
\n{P_{\leq M_{0}}u_{2,M_{2}}}_{L_{t,x}^{d+3}}
\n{u_{3,M_{3}}}_{L_{t,x}^{\frac{3(d+3)}{d+2}}}
\n{P_{Q_{c}}g_{M_{4}}}_{L_{t,x}^{\frac{3(d+3)}{d+2}}}
\end{align*}
where the factor corresponding to the smallest size cubes (here $M_{2}$ size cubes) is put in $L_{t,x}^{d+3}$ and the others are put in $L_{t,x}^{\frac{3(d+3)}{d+2}}$.

By H\"{o}lder, Bernstein inequalities and $(\ref{equ:u-v and sobolev})$,
\begin{align*}
\n{P_{\leq M_{0}}u_{2,M_{2}}}_{L_{t,x}^{d+3}}\lesssim & T^{\frac{1}{d+3}}M_{0}^{\frac{2}{d+3}}M_{2}^{\frac{d-2}{2}+\frac{1}{d+3}}\n{P_{\leq M_{0}}u_{2,M_{2}}}_{L_{t}^{\wq}L_{x}^{2}}\\
\lesssim &T^{\frac{1}{d+3}}M_{0}^{\frac{2}{d+3}}M_{2}^{\frac{d-2}{2}+\frac{1}{d+3}}\n{P_{\leq M_{0}}u_{2,M_{2}}}_{Y^{0}}
\end{align*}
By $(\ref{equ:Strichartz estimate})$ and $(\ref{equ:strichartz estimate with noncertered frequency localization})$,
\begin{align*}
&I_{M_{1},M_{2},M_{3},M_{4}}\\
\lesssim& T^{\frac{1}{d+3}}M_{0}^{\frac{2}{d+3}}
\sum_{Q}M_{2}^{\frac{d-2}{2}+\frac{1}{d+3}}M_{3}^{\frac{d-2}{2}-\frac{1}{d+3}}
\n{P_{Q}u_{1,M_{1}}}_{Y^{0}}
\n{P_{\leq M_{0}}u_{2,M_{2}}}_{Y^{0}}
\n{u_{3,M_{3}}}_{Y^{0}}
\n{P_{Q_{c}}g_{M_{4}}}_{Y^{0}}
\end{align*}
Applying Cauchy-Schwarz to sum in $Q$, we arrive at
\begin{align*}
I_{M_{1},M_{2},M_{3},M_{4}}
\lesssim& T^{\frac{1}{d+3}}M_{0}^{\frac{2}{d+3}}M_{3}^{\frac{d-2}{2}-\frac{1}{d+3}}M_{2}^{\frac{d-2}{2}
+\frac{1}{d+3}}
\n{u_{1,M_{1}}}_{Y^{0}}\n{P_{\leq M_{0}}u_{2,M_{2}}}_{Y^{0}}
\n{u_{3,M_{3}}}_{Y^{0}}\n{g_{M_{4}}}_{Y^{0}}.
\end{align*}
Then by $M_{1}\sim M_{4}$, we obtain
\begin{align*}
I_{M_{1},M_{2},M_{3},M_{4}}
\lesssim&
T^{\frac{1}{d+3}}M_{0}^{\frac{2}{d+3}}M_{3}^{\frac{d-2}{2}-\frac{1}{d+3}}M_{2}^{\frac{d-2}{2}
+\frac{1}{d+3}}
M_{1}^{-s}\n{u_{1,M_{1}}}_{Y^{s}}
M_{2}^{-\frac{d-2}{2}}\n{P_{\leq M_{0}}u_{2,M_{2}}}_{Y^{\frac{d-2}{2}}}\\
&M_{3}^{-\frac{d-2}{2}}\n{u_{3,M_{3}}}_{Y^{\frac{d-2}{2}}}
M_{4}^{s}\n{g_{M_{4}}}_{Y^{-s}}\\
\lesssim&  T^{\frac{1}{d+3}}M_{0}^{\frac{2}{d+3}}M_{3}^{-\frac{1}{d+3}}M_{2}^{\frac{1}{d+3}}
\n{u_{1,M_{1}}}_{Y^{s}}
\n{P_{\leq M_{0}}u_{2,M_{2}}}_{Y^{\frac{d-2}{2}}}
\n{u_{3,M_{3}}}_{Y^{\frac{d-2}{2}}}\n{g_{M_{4}}}_{Y^{-s}}.
\end{align*}
Thus, we have
\begin{align*}
I_{A}\lesssim&  T^{\frac{1}{d+3}}M_{0}^{\frac{2}{d+3}}
\sum_{\substack{M_{1},M_{2},M_{3},M_{4}\\M_{1}\sim M_{4}\geq M_{3}\geq M_{2}}}\n{u_{1,M_{1}}}_{Y^{s}}
\n{g_{M_{4}}}_{Y^{-s}}M_{3}^{-\frac{1}{d+3}}M_{2}^{\frac{1}{d+3}}
\n{P_{\leq M_{0}}u_{2}}_{Y^{\frac{d-2}{2}}}
\n{u_{3,M_{3}}}_{Y^{\frac{d-2}{2}}}\\
\lesssim&
T^{\frac{1}{d+3}}M_{0}^{\frac{2}{d+3}}
\n{u_{1}}_{Y^{s}}\n{P_{\leq M_{0}}u_{2}}_{Y^{\frac{d-2}{2}}}\n{u_{3}}_{Y^{\frac{d-2}{2}}}
\n{g}_{Y^{-s}}
\end{align*}

Next, we deal with one such as Case B. $M_{1}\sim M_{2}\geq M_{4}\geq M_{3}$,
since other cases can be treated in the same way. Decompose the $M_{1}$ and $M_{2}$ dyadic spaces into $M_{4}$ size cubes and we have
\begin{align*}
I_{M_{1},M_{2},M_{3},M_{4}}\leq \sum_{Q}\n{P_{Q}u_{1,M_{1}}}_{L_{t,x}^{\frac{3(d+3)}{d+2}}}
\n{P_{Q_{c}}P_{\leq M_{0}}u_{2,M_{2}}}_{L_{t,x}^{\frac{3(d+3)}{d+2}}}
\n{u_{3,M_{3}}}_{L_{t,x}^{d+3}}
\n{g_{M_{4}}}_{L_{t,x}^{\frac{3(d+3)}{d+2}}}
\end{align*}
By H\"{o}lder, Bernstein inequalities and $(\ref{equ:u-v and sobolev})$,
\begin{align*}
\n{P_{Q_{c}}P_{\leq M_{0}}u_{2,M_{2}}}_{L_{t,x}^{\frac{3(d+3)}{d+2}}}\lesssim& T^{\frac{d+2}{3(d+3)}}M_{0}^{\frac{2(d +2)}{3(d+3)}}M_{4}^{\frac{d-2}{6}-\frac{1}{3(d+3)}}\n{P_{Q_{c}}P_{\leq M_{0}}u_{2,M_{2}}}_{L_{t}^{\wq}L_{x}^{2}}\\
\lesssim&T^{\frac{d+2}{3(d+3)}}M_{0}^{\frac{2(d +2)}{3(d+3)}}M_{4}^{\frac{d-2}{6}-\frac{1}{3(d+3)}}\n{P_{Q_{c}}P_{\leq M_{0}}u_{2,M_{2}}}_{Y^{0}}
\end{align*}
By $(\ref{equ:Strichartz estimate})$ and $(\ref{equ:strichartz estimate with noncertered frequency localization})$,
\begin{align*}
I_{M_{1},M_{2},M_{3},M_{4}}\lesssim &T^{\frac{d+2}{3(d+3)}}M_{0}^{\frac{2(d+2)}{3(d+3)}}
M_{4}^{\frac{d-2}{2}-\frac{1}{d+3}}
M_{3}^{\frac{d-2}{2}+\frac{1}{d+3}}\\
&\sum_{Q}\n{P_{Q}u_{1,M_{1}}}_{Y^{0}}
\n{P_{Q_{c}}P_{\leq M_{0}}u_{2,M_{2}}}_{Y^{0}}
\n{u_{3,M_{3}}}_{Y^{0}}
\n{g_{M_{4}}}_{Y^{0}}
\end{align*}
Applying Cauchy-Schwarz to sum in $Q$,
we arrive at
\begin{align*}
&I_{M_{1},M_{2},M_{3},M_{4}}\\
\lesssim&
T^{\frac{d+2}{3(d+3)}}M_{0}^{\frac{2(d+2)}{3(d+3)}}
M_{4}^{\frac{d-2}{2}-\frac{1}{d+3}}
M_{3}^{\frac{d-2}{2}+\frac{1}{d+3}}
\n{u_{1,M_{1}}}_{Y^{0}}\n{P_{\leq M_{0}}u_{2,M_{2}}}_{Y^{0}}
\n{u_{3,M_{3}}}_{Y^{0}}\n{g_{M_{4}}}_{Y^{0}}\\
\lesssim &T^{\frac{d+2}{3(d+3)}}M_{0}^{\frac{2(d+2)}{3(d+3)}}
M_{1}^{-s}\n{u_{1,M_{1}}}_{Y^{s}}
M_{2}^{-\frac{d-2}{2}}\n{P_{\leq M_{0}}u_{2,M_{2}}}_{Y^{\frac{d-2}{2}}}\\
&M_{3}^{\frac{1}{d+3}}\n{u_{3,M_{3}}}_{Y^{\frac{d-2}{2}}}
M_{4}^{\frac{d-2}{2}-\frac{1}{d+3}+s}\n{g_{M_{4}}}_{Y^{-s}}
\end{align*}
If $s+\frac{d-2}{2}=0$, it can be estimated in the same way as $(\ref{equ:estiamte, wanted form})$. Thus we need only to treat the case $s+\frac{d-2}{2}>0$. Supping out $\n{u_{3,M_{3}}}_{Y^{\frac{d-2}{2}}}$ and $\n{g_{M_{4}}}_{Y^{-s}}$ in $M_{3}$ and $M_{4}$, we have
\begin{align*}
I_{B}\lesssim & \sum_{\substack{M_{1},M_{2},M_{3},M_{4}\\M_{1}\sim M_{2}\geq M_{4}\geq M_{3}}}
I_{M_{1},M_{2},M_{3},M_{4}}\\
\lesssim &T^{\frac{d+2}{3(d+3)}}M_{0}^{\frac{2(d+2)}{3(d+3)}}\n{u_{3}}_{Y^{\frac{d-2}{2}}}\n{g}_{Y^{-s}}
\sum_{\substack{M_{1},M_{2},M_{3},M_{4}\\M_{1}\sim M_{2}\geq M_{4}\geq M_{3}}}
\left(M_{1}^{-s}\n{u_{1,M_{1}}}_{Y^{s}}
M_{2}^{-\frac{d-2}{2}}\n{P_{\leq M_{0}}u_{2,M_{2}}}_{Y^{\frac{d-2}{2}}}\right.\\
&\left. M_{3}^{\frac{1}{d+3}}
M_{4}^{\frac{d-2}{2}-\frac{1}{d+3}+s}\right)\\
\lesssim & T^{\frac{d+2}{3(d+3)}}M_{0}^{\frac{2(d+2)}{3(d+3)}}\n{u_{3}}_{Y^{\frac{d-2}{2}}}\n{g}_{Y^{-s}}
\sum_{\substack{M_{1},M_{2}\\M_{1}\sim M_{2}}}
M_{1}^{-s}\n{u_{1,M_{1}}}_{Y^{s}}
M_{2}^{s}\n{P_{\leq M_{0}}u_{2,M_{2}}}_{Y^{\frac{d-2}{2}}}
\end{align*}
Applying Cauchy-Schwarz,
\begin{align*}
\lesssim &T^{\frac{d+2}{3(d+3)}}M_{0}^{\frac{2(d+2)}{3(d+3)}}\n{u_{1}}_{Y^{s}}\n{P_{\leq M_{0}}u_{2}}_{Y^{\frac{d-2}{2}}}\n{u_{3}}_{Y^{\frac{d-2}{2}}}
\n{g}_{Y^{-s}}.
\end{align*}

\end{proof}

\subsection{Quintilinear Estimates} \label{subsection:Quintilinear Estimates}

\begin{lemma}\label{lemma:multilinear estimate d>=5}
On $\T^{d}$ with $d\geq 3$ and $s\in\lr{\frac{d-5}{2},\frac{d-1}{2}}$, we have the high frequency estimate
\begin{align}\label{equ:multilinear estimate d>=5 high frequency estimate}
&\iint_{x,t}\wt{u}_{1}(t,x)\wt{u}_{2}(t,x)\wt{u}_{3}(t,x)\wt{u}_{4}(t,x)\wt{u}_{5}(t,x)\wt{g}(t,x)dxdt\\
\lesssim& \n{u_{1}}_{Y^{s}}\n{u_{2}}_{Y^{\frac{d-1}{2}}}\n{u_{3}}_{Y^{\frac{d-1}{2}}}
\n{u_{4}}_{Y^{\frac{d-1}{2}}}\n{u_{5}}_{Y^{\frac{d-1}{2}}}
\n{g}_{Y^{-s}},\notag
\end{align}
and the low frequency estimate
\begin{align}\label{equ:multilinear estimate d>=5 low frequency estimate}
&\iint_{x,t}\wt{u}_{1}(t,x)(P_{\leq M_{0}}\wt{u}_{2})(t,x)\wt{u}_{3}(t,x)\wt{u}_{4}(t,x)\wt{u}_{5}(t,x)\wt{g}(t,x)dxdt\\
\lesssim& T^{\frac{1}{2(d+3)}}M_{0}^{\frac{2d+3}{3(d+3)}} \n{u_{1}}_{Y^{s}}\n{P_{\leq M_{0}}u_{2}}_{Y^{\frac{d-1}{2}}}\n{u_{3}}_{Y^{\frac{d-1}{2}}}
\n{u_{4}}_{Y^{\frac{d-1}{2}}}\n{u_{5}}_{Y^{\frac{d-1}{2}}}
\n{g}_{Y^{-s}},\notag
\end{align}
for all $T\leq 1$ and all frequencies $M_{0}\geq 1.$ Then by Lemma $\ref{lemma:U-V estimate,dual argument}$, $(\ref{equ:multilinear estimate d>=5 high frequency estimate})$ and $(\ref{equ:multilinear estimate d>=5 low frequency estimate})$, we have
\begin{align}
&\bbn{\int_{a}^{t}e^{i(t-\tau)\Delta}(\wt{u}_{1}\wt{u}_{2}\wt{u}_{3}\wt{u}_{4}\wt{u}_{5})d\tau}_{X^{s}}
\lesssim \n{u_{1}}_{Y^{s}}
\n{u_{2}}_{Y^{\frac{d-1}{2}}}\n{u_{3}}_{Y^{\frac{d-1}{2}}}
\n{u_{4}}_{Y^{\frac{d-1}{2}}}\n{u_{5}}_{Y^{\frac{d-1}{2}}}
\end{align}
and
\begin{align}
&\bbn{\int_{a}^{t}e^{i(t-\tau)\Delta}(\wt{u}_{1}\wt{u}_{2}\wt{u}_{3}\wt{u}_{4}\wt{u}_{5})d\tau}_{X^{s}}\\
\lesssim& \n{u_{1}}_{Y^{s}}
\lrs{T^{\frac{1}{2(d+3)}}M_{0}^{\frac{2d+3}{3(d+3)}}\n{P_{\leq M_{0}}u_{2}}_{Y^{\frac{d-1}{2}}}+\n{P_{>M_{0}}u_{2}}_{Y^{\frac{d-1}{2}}}}\n{u_{3}}_{Y^{\frac{d-1}{2}}}
\n{u_{4}}_{Y^{\frac{d-1}{2}}}\n{u_{5}}_{Y^{\frac{d-1}{2}}}.\notag
\end{align}
\end{lemma}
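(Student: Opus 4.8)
The plan is to reduce to a sextilinear spacetime integral by duality and then reproduce, with two extra factors, the Littlewood--Paley/Strichartz scheme behind the trilinear bound in Lemma~\ref{lemma:trilinear estimate d>=4}. By Lemma~\ref{lemma:U-V estimate,dual argument} it suffices to prove the high- and low-frequency bounds $(\ref{equ:multilinear estimate d>=5 high frequency estimate})$ and $(\ref{equ:multilinear estimate d>=5 low frequency estimate})$ for
\[
I=\iint_{x,t}\wt{u}_{1}\wt{u}_{2}\wt{u}_{3}\wt{u}_{4}\wt{u}_{5}\wt{g}\,dx\,dt ,
\]
and since complex conjugation affects none of the norms, I may take $\wt{u}_{j}=u_{j}$, $\wt{g}=g$. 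After a Littlewood--Paley decomposition of all six factors into pieces $u_{j,M_{j}}$, $g_{M_{6}}$, the Fourier support relation $\xi_{1}+\dots+\xi_{5}+\xi_{6}=0$ on $\T^{d}$ forces the two largest dyadic frequencies among $M_{1},\dots,M_{6}$ to be comparable, so only such configurations survive; exploiting the symmetry of $u_{2},u_{3},u_{4},u_{5}$ (for the low-frequency bound only $u_{3},u_{4},u_{5}$ are interchangeable, $u_{2}$ being distinguished by $P_{\leq M_{0}}$), I may order the remaining frequencies decreasingly.

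Next I would split into cases according to which factor carries the top frequency. The decisive case is $M_{1}\sim M_{6}\ge\max(M_{2},M_{3},M_{4},M_{5})$, where the roughest input $u_{1}$ is paired at the top with the dual factor $g$; the cases where the top is shared by $u_{1}$ (resp.\ $g$) and one of the smooth inputs go the same way and impose $\tfrac{d-1}{2}+s\ge 0$ (resp.\ $\tfrac{d-1}{2}-s\ge 0$), which both hold at the two endpoints $s=\tfrac{d-5}{2}$ and $s=\tfrac{d-1}{2}$ for $d\ge 3$, while the case in which two smooth inputs share the top is harmless. In the decisive case I would tile the $M_{1}$ and $M_{6}$ annuli by cubes of side length equal to the largest interior frequency, so that for each cube $Q$ in the $\xi_{1}$-variable the variable $\xi_{6}$ is confined to $O(1)$ cubes $Q_{c}$; H\"{o}lder in spacetime then places the factor supported on the smallest cubes in a slightly larger Lebesgue exponent and the remaining five in a common one, all chosen above the Strichartz threshold $\tfrac{2(d+2)}{d}$ (the sextilinear analogue of the pair $L^{3(d+3)/(d+2)}_{t,x},L^{d+3}_{t,x}$ used in the trilinear proof). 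The scale-invariant Strichartz estimates $(\ref{equ:Strichartz estimate})$ and $(\ref{equ:strichartz estimate with noncertered frequency localization})$ then convert each Lebesgue norm into a $Y^{0}$ norm times a power of the relevant dyadic frequency (cube size for the cube-projected factors); summing over $Q$ by Cauchy--Schwarz, using boundedness of the cube projections on $Y^{0}$, and then over the dyadic parameters, again by Cauchy--Schwarz together with the geometric decay in the ratios of consecutive frequencies, repackages the frequency powers as the $Y^{s}$, $Y^{\frac{d-1}{2}}$ and $Y^{-s}$ norms and proves $(\ref{equ:multilinear estimate d>=5 high frequency estimate})$.

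For the low-frequency estimate $(\ref{equ:multilinear estimate d>=5 low frequency estimate})$ the only change is that in each case the distinguished factor $P_{\leq M_{0}}u_{2}$ is the one placed in the larger Lebesgue exponent; there I would use H\"{o}lder in time to extract a power $T^{\theta}$ and Bernstein (Lemma~\ref{lemma:bernstein with noncentered frequency projection}) together with $(\ref{equ:u-v and sobolev})$ to extract a power $M_{0}^{\theta'}$, the bookkeeping producing precisely $T^{\frac{1}{2(d+3)}}M_{0}^{\frac{2d+3}{3(d+3)}}$ for $T\le 1$ and $M_{0}\ge 1$, with the rest of the argument unchanged. Finally, $(\ref{equ:u-v and sobolev})$ and Lemma~\ref{lemma:U-V estimate,dual argument} upgrade the dual bounds to the retarded-Strichartz form stated in the lemma, and the splitting $u_{2}=P_{\leq M_{0}}u_{2}+P_{>M_{0}}u_{2}$, with the $P_{>M_{0}}$ piece absorbed by the high-frequency bound, yields the last displayed inequality.

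I expect the principal obstacle to be bookkeeping rather than any new idea: one must exhibit a single choice of H\"{o}lder exponents for the six-fold product that simultaneously (i) respects the Strichartz admissibility threshold $\tfrac{2(d+2)}{d}$, (ii) leaves room for the dyadic sums to converge geometrically once the frequencies are reassembled into $Y^{s}$, $Y^{\frac{d-1}{2}}$, $Y^{-s}$, and (iii) outputs exactly the advertised powers of $T$ and $M_{0}$; the slightly asymmetric endpoint $s=\tfrac{d-5}{2}$, where $u_{1}$ and $u_{2}$ are not interchangeable and some case inequalities become equalities, needs the same minor extra care as in the closing remark of the proof of Lemma~\ref{lemma:trilinear estimate d>=4}.
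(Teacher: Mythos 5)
Your proposal is correct and follows essentially the same route as the paper: duality via Lemma \ref{lemma:U-V estimate,dual argument}, Littlewood--Paley decomposition with the two top frequencies comparable, tiling the two top annuli into cubes of the largest interior scale, the Galilean-invariant Strichartz estimates $(\ref{equ:Strichartz estimate})$--$(\ref{equ:strichartz estimate with noncertered frequency localization})$, Cauchy--Schwarz over cubes and dyadic parameters, the case constraint $\tfrac{d-1}{2}\pm s\ge 0$, H\"older in time plus Bernstein on $P_{\leq M_{0}}u_{2}$ for the $T^{\theta}M_{0}^{\theta'}$ gain, and the splitting $u_{2}=P_{\leq M_{0}}u_{2}+P_{>M_{0}}u_{2}$ at the end. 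The only detail where you deviate from the paper is the H\"older split: the paper places the \emph{three} lowest-frequency factors in $L_{t,x}^{2(d+3)}$ and the remaining three in $L_{t,x}^{6(d+3)/(2d+3)}$ (a $3+3$ split rather than $1+5$), which is exactly the bookkeeping you deferred.
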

\begin{remark}
Notice that $(\ref{equ:multilinear estimate d>=5 high frequency estimate})$ for $d=3$ is implied by \cite[Lemma 5.15]{chen2019the}. One can compare the proof of the stronger $L_{t}^{1}H^{s}$ estimate with the proof here and see that the proof of the weaker $U$-$V$ estimates are indeed much less technical, and the current method to incorporate these weaker estimates is indeed stronger.
\end{remark}
\begin{proof}
For the high frequency estimate $(\ref{equ:multilinear estimate d>=5 high frequency estimate})$, decompose the 6 factors into Littlewood-Paley pieces so that
\begin{align*}
I=\sum_{M_{1},M_{2},M_{3},M_{4},M_{5},M_{6}}I_{M_{1},M_{2},M_{3},M_{4},M_{5},M_{6}}
\end{align*}
where
\begin{align*}
I_{M_{1},M_{2},M_{3},M_{4},M_{5},M_{6}}=\iint_{x,t}u_{1,M_{1}}u_{2,M_{2}}u_{3,M_{3}}
u_{4,M_{4}}u_{5,M_{5}}g_{M_{6}}dxdt
\end{align*}
with $u_{j,M_{j}}=P_{M_{j}}u_{j}$ and $g_{M_{6}}=P_{M_{6}}g$.

 We first take care of the most difficult Case A. $M_{1}\sim M_{6}\geq M_{2}\geq M_{3}\geq M_{4}\geq M_{5}$. Then, we need only to deal with one such as
Case B. $M_{1}\sim M_{2}\geq  M_{6}\geq M_{3}\geq M_{4}\geq M_{5}$,
since other cases can be treated in the same way. Decompose the $M_{1}$ and $M_{6}$ dyadic spaces into $M_{2}$ size cubes, then
\begin{align*}
I_{A}\lesssim& \sum_{\substack{M_{1},M_{2},M_{3},M_{4},M_{5},M_{6}\\M_{1}\sim M_{6}\geq  M_{2}\geq M_{3}\geq M_{4}\geq M_{5}}} \sum_{Q}\n{P_{Q}u_{1,M_{1}}P_{Q_{c}}u_{2,M_{2}}
u_{3,M_{3}}u_{4,M_{4}}u_{5,M_{5}}g_{M_{6}}}_{L_{t,x}^{1}}\\
\lesssim& \sum_{\substack{M_{1},M_{2},M_{3},M_{4},M_{5},M_{6}\\M_{1}\sim M_{6}\geq  M_{2}\geq M_{3}\geq M_{4}\geq M_{5}}}\sum_{Q}\left(\n{P_{Q}u_{1,M_{1}}}_{L_{t,x}^{\frac{6(d+3)}{2d+3}}}
\n{u_{2,M_{2}}}_{L_{t,x}^{\frac{6(d+3)}{2d+3}}}
\n{u_{3,M_{3}}}_{L_{t,x}^{2(d+3)}}\right.\\
&\left.\n{u_{4,M_{4}}}_{L_{t,x}^{2(d+3)}}\n{u_{5,M_{5}}}_{L_{t,x}^{2(d+3)}}
\n{P_{Q_{c}}g_{M_{6}}}_{L_{t,x}^{\frac{6(d+3)}{2d+3}}}\right)
\end{align*}
where three factors corresponding to small size cubes (here $M_{3}$, $M_{4}$, $M_{5}$ size cubes) are put in $L_{t,x}^{2(d+3)}$ and the others are put in $L_{t,x}^{\frac{6(d+3)}{2d+3}}$. By $(\ref{equ:Strichartz estimate})$ and $(\ref{equ:strichartz estimate with noncertered frequency localization})$,
\begin{align*}
\lesssim& \sum_{\substack{M_{1},M_{2},M_{3},M_{4},M_{5},M_{6}\\M_{1}\sim M_{6}\geq  M_{2}\geq M_{3}\geq M_{4}\geq M_{5}}} \sum_{Q}\left( M_{2}^{\frac{d-1}{2}-\frac{3}{2(d+3)}}\n{P_{Q}u_{1,M_{1}}}_{Y^{0}}
\n{u_{2,M_{2}}}_{Y^{0}}
M_{3}^{\frac{d-1}{2}+\frac{1}{2(d+3)}}\n{u_{3,M_{3}}}_{Y^{0}}\right.\\
&\left.M_{4}^{\frac{d-1}{2}+\frac{1}{2(d+3)}}\n{u_{4,M_{4}}}_{Y^{0}}
M_{5}^{\frac{d-1}{2}+\frac{1}{2(d+3)}}\n{u_{5,M_{5}}}_{Y^{0}}
\n{P_{Q_{c}}g_{M_{6}}}_{Y^{0}}\right).
\end{align*}
Applying Cauchy-Schwarz to sum in $Q$,
\begin{align}\label{equ:estiamte, wanted form, quintic}
\lesssim& \sum_{\substack{M_{1},M_{6}\\M_{1}\sim M_{6}}}M_{1}^{-s}M_{6}^{s}\n{u_{1,M_{1}}}_{Y^{s}}
\n{g_{M_{6}}}_{Y^{-s}}\sum_{\substack{M_{2},M_{3},M_{4},M_{5}\\M_{2}\geq M_{3}\geq M_{4}\geq M_{5}}}
\left(M_{2}^{-\frac{3}{2 (d+3)}}M_{3}^{\frac{1}{2(d+3)}}M_{4}^{\frac{1}{2(d+3)}}M_{5}^{\frac{1}{2(d+3)}}\right.\\
&\left.\n{u_{2,M_{2}}}_{Y^{\frac{d-1}{2}}}\n{u_{3,M_{3}}}_{Y^{\frac{d-1}{2}}}
\n{u_{4,M_{4}}}_{Y^{\frac{d-1}{2}}}
\n{u_{5,M_{5}}}_{Y^{\frac{d-1}{2}}}\right)\notag
\end{align}
Supping out $\n{u_{4,M_{4}}}_{Y^{\frac{d-1}{2}}}$ and $\n{u_{5,M_{5}}}_{Y^{\frac{d-1}{2}}}$ in
$M_{4}$ and $M_{5}$, and then applying Cauchy-Schwarz as shown in $(\ref{equ:estiamte, wanted form})$,
\begin{align*}
\lesssim &\n{u_{1}}_{Y^{s}}\n{u_{2}}_{Y^{\frac{d-1}{2}}}\n{u_{3}}_{Y^{\frac{d-1}{2}}}
\n{u_{4}}_{Y^{\frac{d-1}{2}}}\n{u_{5}}_{Y^{\frac{d-1}{2}}}
\n{g}_{Y^{-s}}.
\end{align*}

Case B. $M_{1}\sim M_{2}\geq  M_{6}\geq M_{3}\geq M_{4}\geq M_{5}$. Decompose the $M_{1}$ and $M_{2}$ dyadic spaces into $M_{6}$ size cubes and we have
\begin{align*}
I_{B}\lesssim& \sum_{\substack{M_{1},M_{2},M_{3},M_{4},M_{5},M_{6}\\M_{1}\sim M_{2}\geq  M_{6}\geq M_{3}\geq M_{4}\geq M_{5}}}  \sum_{Q}\n{P_{Q}u_{1,M_{1}}P_{Q_{c}}u_{2,M_{2}}
u_{3,M_{3}}u_{4,M_{4}}u_{5,M_{5}}g_{M_{6}}}_{L_{t,x}^{1}}\\
\lesssim& \sum_{\substack{M_{1},M_{2},M_{3},M_{4},M_{5},M_{6}\\M_{1}\sim M_{2}\geq  M_{6}\geq M_{3}\geq M_{4}\geq M_{5}}}\sum_{Q}\left(\n{P_{Q}u_{1,M_{1}}}_{L_{t,x}^{\frac{6(d+3)}{2d+3}}}
\n{P_{Q_{c}}u_{2,M_{2}}}_{L_{t,x}^{\frac{6(d+3)}{2d+3}}}
\n{u_{3,M_{3}}}_{L_{t,x}^{2(d+3)}}\right.\\
&\left.\n{u_{4,M_{4}}}_{L_{t,x}^{2(d+3)}}\n{u_{5,M_{5}}}_{L_{t,x}^{2(d+3)}}
\n{g_{M_{6}}}_{L_{t,x}^{\frac{6(d+3)}{2d+3}}}\right)
\end{align*}
By $(\ref{equ:Strichartz estimate})$ and $(\ref{equ:strichartz estimate with noncertered frequency localization})$,
\begin{align*}
\lesssim& \sum_{\substack{M_{1},M_{2},M_{3},M_{4},M_{5},M_{6}\\M_{1}\sim M_{2}\geq  M_{6}\geq M_{3}\geq M_{4}\geq M_{5}}} \sum_{Q}\left( M_{6}^{\frac{d-1}{2}-\frac{3}{2(d+3)}}\n{P_{Q}u_{1,M_{1}}}_{Y^{0}}
\n{P_{Q_{c}}u_{2,M_{2}}}_{Y^{0}}
M_{3}^{\frac{d-1}{2}+\frac{1}{2(d+3)}}\n{u_{3,M_{3}}}_{Y^{0}}\right.\\
&\left.M_{4}^{\frac{d-1}{2}+\frac{1}{2(d+3)}}\n{u_{4,M_{4}}}_{Y^{0}}
M_{5}^{\frac{d-1}{2}+\frac{1}{2(d+3)}}\n{u_{5,M_{5}}}_{Y^{0}}
\n{g_{M_{6}}}_{Y^{0}}\right).
\end{align*}
Applying Cauchy-Schwarz to sum in $Q$,
\begin{align*}
\lesssim& \sum_{\substack{M_{1},M_{2},M_{3},M_{4},M_{5},M_{6}\\M_{1}\sim M_{2}\geq  M_{6}\geq M_{3}\geq M_{4}\geq M_{5}}} \left( M_{6}^{\frac{d-1}{2}-\frac{3}{2(d+3)}}\n{u_{1,M_{1}}}_{Y^{0}}
\n{u_{2,M_{2}}}_{Y^{0}}
M_{3}^{\frac{d-1}{2}+\frac{1}{2(d+3)}}\n{u_{3,M_{3}}}_{Y^{0}}\right.\\
&\left.M_{4}^{\frac{d-1}{2}+\frac{1}{2(d+3)}}\n{u_{4,M_{4}}}_{Y^{0}}
M_{5}^{\frac{d-1}{2}+\frac{1}{2(d+3)}}\n{u_{5,M_{5}}}_{Y^{0}}
\n{g_{M_{6}}}_{Y^{0}}\right)\\
\lesssim&\sum_{\substack{M_{1},M_{2},M_{3},M_{4},M_{5},M_{6}\\M_{1}\sim M_{2}\geq  M_{6}\geq M_{3}\geq M_{4}\geq M_{5}}} \left( M_{1}^{-s}\n{u_{1,M_{1}}}_{Y^{s}}
M_{2}^{-\frac{d-1}{2}}\n{u_{2,M_{2}}}_{Y^{\frac{d-1}{2}}}
M_{3}^{\frac{1}{2(d+3)}}\n{u_{3,M_{3}}}_{Y^{\frac{d-1}{2}}}\right.\\
&\left.M_{4}^{\frac{1}{2(d+3)}}\n{u_{4,M_{4}}}_{Y^{\frac{d-1}{2}}}
M_{5}^{\frac{1}{2(d+3)}}\n{u_{5,M_{5}}}_{Y^{\frac{d-1}{2}}}
M_{6}^{\frac{d-1}{2}-\frac{3}{2(d+3)}+s}\n{g_{M_{6}}}_{Y^{-s}}\right)
\end{align*}
If $s=\frac{d-1}{2}$, it can be estimated in the same way as $(\ref{equ:estiamte, wanted form, quintic})$. Thus, we need only to treat the case $\frac{d-1}{2}+s>0$. Supping out $\n{u_{3,M_{3}}}_{Y^{\frac{d-1}{2}}}$, $\n{u_{4,M_{4}}}_{Y^{\frac{d-1}{2}}}$, $\n{u_{5,M_{5}}}_{Y^{\frac{d-1}{2}}}$ and $\n{g_{M_{6}}}_{Y^{-s}}$, in
$M_{3}$, $M_{4}$, $M_{5}$ and $M_{6}$, we have
\begin{align*}
\lesssim&
\n{u_{3}}_{Y^{\frac{d-1}{2}}}
\n{u_{4}}_{Y^{\frac{d-1}{2}}}
\n{u_{5}}_{Y^{\frac{d-1}{2}}}
\n{g}_{Y^{-s}}
\sum_{\substack{M_{1},M_{2},M_{3},M_{4},M_{5},M_{6}\\M_{1}\sim M_{2}\geq  M_{6}\geq M_{3}\geq M_{4}\geq M_{5}}} \left(M_{1}^{-s}\n{u_{1,M_{1}}}_{Y^{s}}
\right.\\
&\left.
M_{2}^{-\frac{d-1}{2}}\n{u_{2,M_{2}}}_{Y^{\frac{d-1}{2}}}  M_{6}^{\frac{d-1}{2}-\frac{3}{2(d+3)}+s}
M_{3}^{\frac{1}{2(d+3)}}
M_{4}^{\frac{1}{2(d+3)}}
M_{5}^{\frac{1}{2(d+3)}}
\right)
\end{align*}
By the fact that $\frac{d-1}{2}+s>0$,
\begin{align*}
\lesssim &
\n{u_{3}}_{Y^{\frac{d-1}{2}}}
\n{u_{4}}_{Y^{\frac{d-1}{2}}}
\n{u_{5}}_{Y^{\frac{d-1}{2}}}
\n{g}_{Y^{-s}}
\sum_{\substack{M_{1},M_{2}\\M_{1}\sim M_{2}}}
M_{1}^{-s}\n{u_{1,M_{1}}}_{Y^{s}}
M_{2}^{-\frac{d-1}{2}}\n{u_{2,M_{2}}}_{Y^{\frac{d-1}{2}}}
M_{2}^{\frac{d-1}{2}+s}\\
=&\n{u_{1}}_{Y^{s}}
\n{u_{3}}_{Y^{\frac{d-1}{2}}}
\n{u_{4}}_{Y^{\frac{d-1}{2}}}
\n{u_{5}}_{Y^{\frac{d-1}{2}}}
\sum_{\substack{M_{1},M_{2}\\M_{1}\sim M_{2}}} M_{1}^{-s}\n{u_{1,M_{1}}}_{Y^{s}}
M_{2}^{s}\n{u_{2,M_{2}}}_{Y^{\frac{d-1}{2}}}
\end{align*}
Applying Cauchy-Schwarz,
\begin{align*}
\lesssim &\n{u_{1}}_{Y^{s}}\n{u_{2}}_{Y^{\frac{d-1}{2}}}\n{u_{3}}_{Y^{\frac{d-1}{2}}}
\n{u_{4}}_{Y^{\frac{d-1}{2}}}\n{u_{5}}_{Y^{\frac{d-1}{2}}}
\n{g}_{Y^{-s}}.
\end{align*}

Case B requires that $\frac{d-1}{2}+s\geq 0$. If we exchange $M_{1}$ and $M_{6}$ in Case B, we find another requirement that $\frac{d-1}{2}-s\geq 0$ is also needed. In this case, it becomes Case A again if $s=\frac{d-1}{2}$ and it becomes similar but a bit
different if $s=\frac{d-5}{2}$ as $M_{1}$ and $M_{2}$ are not symmetric.

Proof of the low frequency estimate $(\ref{equ:multilinear estimate d>=5 low frequency estimate})$.
At first, we deal with the most difficult Case A. $M_{1}\sim M_{6}\geq M_{5}\geq M_{4}\geq M_{3}\geq M_{2}$.
Decompose the $M_{1}$ and $M_{6}$ dyadic spaces into $M_{5}$ size cubes,
\begin{align*}
I_{M_{1},M_{2},M_{3},M_{4},M_{5},M_{6}}\lesssim &\sum_{Q}\n{P_{Q}u_{1,M_{1}}(P_{\leq M_{0}}u_{2,M_{2}})
u_{3,M_{3}}u_{4,M_{4}}u_{5,M_{5}}P_{Q_{c}}g_{M_{6}}}_{L_{t,x}^{1}}\\
\leq& \sum_{Q}\left(\n{P_{Q}u_{1,M_{1}}}_{L_{t,x}^{\frac{6(d+3)}{2d+3}}}
\n{P_{\leq M_{0}}u_{2,M_{2}}}_{L_{t,x}^{2(d+3)}}
\n{u_{3,M_{3}}}_{L_{t,x}^{2(d+3)}}\right.\\
&\left.\n{u_{4,M_{4}}}_{L_{t,x}^{2(d+3)}}
\n{u_{5,M_{5}}}_{L_{t,x}^{\frac{6(d+3)}{2d+3}}}
\n{P_{Q_{c}}g_{M_{6}}}_{L_{t,x}^{\frac{6(d+3)}{2d+3}}}\right)
\end{align*}
where three factors corresponding to small size cubes (here $M_{4}$, $M_{3}$, $M_{2}$ size cubes) are put in $L_{t,x}^{2(d+3)}$ and the others are put in $L_{t,x}^{\frac{6(d+3)}{2d+3}}$.

By H\"{o}lder, Bernstein inequalities and $(\ref{equ:u-v and sobolev})$,
\begin{align*}
\n{P_{\leq M_{0}}u_{2,M_{2}}}_{L_{t,x}^{2(d+3)}}\lesssim& T^{\frac{1}{2(d +3)}}M_{0}^{\frac{1}{d+3}}M_{2}^{\frac{d-1}{2}+\frac{1}{2(d+3)}}\n{P_{\leq M_{0}}u_{2,M_{2}}}_{L_{t}^{\wq}L_{x}^{2}}\\
\lesssim&T^{\frac{1}{2(d +3)}}M_{0}^{\frac{1}{d+3}}M_{2}^{\frac{d-1}{2}+\frac{1}{2(d+3)}}\n{P_{\leq M_{0}}u_{2,M_{2}}}_{Y^{0}}
\end{align*}
By $(\ref{equ:Strichartz estimate})$ and $(\ref{equ:strichartz estimate with noncertered frequency localization})$,
\begin{align*}
&I_{M_{1},M_{2},M_{3},M_{4},M_{5},M_{6}}\\
\lesssim& T^{\frac{1}{2(d+3)}}M_{0}^{\frac{1}{d+3}}
\sum_{Q}\left(
\n{P_{Q}u_{1,M_{1}}}_{Y^{0}}
M_{2}^{\frac{d-1}{2}+\frac{1}{2(d+3)}}\n{P_{\leq M_{0}}u_{2,M_{2}}}_{Y^{0}}
M_{3}^{\frac{d-1}{2}+\frac{1}{2(d+3)}}\n{u_{3,M_{3}}}_{Y^{0}}\right.\\
&\left.M_{4}^{\frac{d-1}{2}+\frac{1}{2(d+3)}}\n{u_{4,M_{4}}}_{Y^{0}}
M_{5}^{\frac{d-1}{2}-\frac{3}{2(d+3)}}\n{u_{5,M_{5}}}_{Y^{0}}
\n{P_{Q_{c}}g_{M_{6}}}_{Y^{0}}\right)
\end{align*}
Applying Cauchy-Schwarz to sum in $Q$,
\begin{align*}
I_{M_{1},M_{2},M_{3},M_{4},M_{5},M_{6}}
\lesssim& T^{\frac{1}{2(d+3)}}M_{0}^{\frac{1}{d+3}}M_{5}^{\frac{d-1}{2}-\frac{3}{2(d+3)}}
M_{4}^{\frac{d-1}{2}+\frac{1}{2(d+3)}}M_{3}^{\frac{d-1}{2}+\frac{1}{2(d+3)}}
M_{2}^{\frac{d-1}{2}+\frac{1}{2(d+3)}}\\
&\n{u_{1,M_{1}}}_{Y^{0}}\n{P_{\leq M_{0}}u_{2,M_{2}}}_{Y^{0}}
\n{u_{3,M_{3}}}_{Y^{0}}\n{u_{4,M_{4}}}_{Y^{0}}\n{u_{5,M_{5}}}_{Y^{0}}\n{g_{M_{6}}}_{Y^{0}}.
\end{align*}
Then by $M_{1}\sim M_{6}$, we obtain
\begin{align*}
&I_{M_{1},M_{2},M_{3},M_{4},M_{5},M_{6}}\\
\lesssim& T^{\frac{1}{2(d+3)}}M_{0}^{\frac{1}{d+3}}M_{5}^{-\frac{3}{2(d+3)}}
M_{4}^{\frac{1}{2(d+3)}}M_{3}^{\frac{1}{2(d+3)}}M_{2}^{\frac{1}{2(d+3)}}\\
&\n{u_{1,M_{1}}}_{Y^{s}}\n{P_{\leq M_{0}}u_{2,M_{2}}}_{Y^{\frac{d-1}{2}}}
\n{u_{3,M_{3}}}_{Y^{\frac{d-1}{2}}}\n{u_{4,M_{4}}}_{Y^{\frac{d-1}{2}}}
\n{u_{5,M_{5}}}_{Y^{\frac{d-1}{2}}}\n{g_{M_{6}}}_{Y^{-s}}.
\end{align*}
and hence
\begin{align*}
I_{A}\lesssim& T^{\frac{1}{2(d+3)}}M_{0}^{\frac{1}{d+3}}
\sum_{\substack{M_{1},M_{6}\\M_{1}\sim M_{6}}}\n{u_{1,M_{1}}}_{Y^{s}}
\n{g_{M_{6}}}_{Y^{-s}}
\sum_{\substack{M_{2},M_{3},M_{4},M_{5}\\M_{5}\geq M_{4}\geq M_{3}\geq M_{2}}}
\left(M_{5}^{-\frac{3}{2(d+3)}}
M_{4}^{\frac{1}{2(d+3)}}M_{3}^{\frac{1}{2(d+3)}}
M_{2}^{\frac{1}{2(d+3)}}\right.\\
&\left.
\n{P_{\leq M_{0}}u_{2,M_{2}}}_{Y^{\frac{d-1}{2}}}
\n{u_{3,M_{3}}}_{Y^{\frac{d-1}{2}}}\n{u_{4,M_{4}}}_{Y^{\frac{d-1}{2}}}\n{u_{5,M_{5}}}_{Y^{\frac{d-1}{2}}}
\right)
\end{align*}
Estimating it in the same way as $(\ref{equ:estiamte, wanted form, quintic})$, we have
\begin{align*}
I_{A}\lesssim T^{\frac{1}{2(d+3)}}M_{0}^{\frac{3}{2(d+3)}}\n{u_{1}}_{Y^{s}}\n{P_{\leq M_{0}}u_{2}}_{Y^{\frac{d-1}{2}}}\n{u_{3}}_{Y^{\frac{d-1}{2}}}\n{u_{4}}_{Y^{\frac{d-1}{2}}}\n{u_{5}}_{Y^{\frac{d-1}{2}}}
\n{g}_{Y^{-s}}.
\end{align*}

Next, we deal with one such as Case B. $M_{1}\sim M_{2}\geq  M_{3}\geq M_{4}\geq M_{5}\geq M_{6}$,
since other cases can be treated in the same way. Decompose the $M_{1}$ and $M_{2}$ dyadic spaces into $M_{3}$ size cubes and we have
\begin{align*}
I_{M_{1},M_{2},M_{3},M_{4},M_{5},M_{6}}\leq&
\sum_{Q}\n{P_{Q}u_{1,M_{1}}}_{L_{t,x}^{\frac{6(d+3)}{2d+3}}}
\n{P_{Q_{c}}P_{\leq M_{0}}u_{2,M_{2}}}_{L_{t,x}^{\frac{6(d+3)}{2d+3}}}
\n{u_{3,M_{3}}}_{L_{t,x}^{\frac{6(d+3)}{2d+3}}}\\
&\n{u_{4,M_{5}}}_{L_{t,x}^{2(d+3)}}
\n{u_{4,M_{5}}}_{L_{t,x}^{2(d+3)}}
\n{g_{M_{6}}}_{L_{t,x}^{2(d+3)}},
\end{align*}
By H\"{o}lder, Bernstein inequalities and $(\ref{equ:u-v and sobolev})$,
\begin{align*}
\n{P_{Q_{c}}P_{\leq M_{0}}u_{2,M_{2}}}_{L_{t,x}^{\frac{6(d+3)}{2d+3}}}\lesssim& T^{\frac{2d+3}{6(d+3)}}M_{0}^{\frac{2d+3}{3(d +3)}}M_{3}^{\frac{d-1}{6}-\frac{1}{2(d +3)}}\n{P_{Q_{c}}P_{\leq M_{0}}u_{2,M_{2}}}_{L_{t}^{\wq}L_{x}^{2}}\\
\lesssim &T^{\frac{2d+3}{6(d+3)}}M_{0}^{\frac{2d+3}{3(d +3)}}M_{3}^{\frac{d-1}{6}-\frac{1}{2(d +3)}}\n{P_{Q_{c}}P_{\leq M_{0}}u_{2,M_{2}}}_{Y^{0}}
\end{align*}
By $(\ref{equ:Strichartz estimate})$ and $(\ref{equ:strichartz estimate with noncertered frequency localization})$,
\begin{align*}
&I_{M_{1},M_{2},M_{3},M_{4},M_{5},M_{6}}\\
\lesssim &T^{\frac{2d+3}{6(d+3)}}M_{0}^{\frac{2d+3}{3(d +3)}}M_{3}^{\frac{d-1}{2}-\frac{3}{2(d+3)}}
M_{4}^{\frac{d-1}{2}+\frac{1}{2(d+3)}}
M_{5}^{\frac{d-1}{2}+\frac{1}{2(d+3)}}
M_{6}^{\frac{d-1}{2}+\frac{1}{2(d+3)}}\\
&\times \sum_{Q}\n{P_{Q}u_{1,M_{1}}}_{Y^{0}}
\n{P_{Q_{c}}P_{\leq M_{0}}u_{2,M_{2}}}_{Y^{0}}
\n{u_{3,M_{3}}}_{Y^{0}}
\n{u_{4,M_{4}}}_{Y^{0}}
\n{u_{5,M_{5}}}_{Y^{0}}
\n{g_{M_{6}}}_{Y^{0}}
\end{align*}
Applying Cauchy-Schwarz to sum in $Q$,
\begin{align*}
\lesssim &T^{\frac{2d+3}{6(d+3)}}M_{0}^{\frac{2d+3}{3(d +3)}}M_{3}^{\frac{d-1}{2}-\frac{3}{2(d+3)}}
M_{4}^{\frac{d-1}{2}+\frac{1}{2(d+3)}}
M_{5}^{\frac{d-1}{2}+\frac{1}{2(d+3)}}
M_{6}^{\frac{d-1}{2}+\frac{1}{2(d+3)}}\\
&\n{u_{1,M_{1}}}_{Y^{0}}
\n{P_{\leq M_{0}}u_{2,M_{2}}}_{Y^{0}}
\n{u_{3,M_{3}}}_{Y^{0}}
\n{u_{4,M_{4}}}_{Y^{0}}
\n{u_{5,M_{5}}}_{Y^{0}}
\n{g_{M_{6}}}_{Y^{0}}\\
\lesssim &T^{\frac{2d+3}{6(d+3)}}M_{0}^{\frac{2d+3}{3(d +3)}}M_{1}^{-s}M_{2}^{-\frac{d-1}{2}}M_{3}^{-\frac{3}{2(d+3)}}
M_{4}^{\frac{1}{2(d+3)}}
M_{5}^{\frac{1}{2(d+3)}}
M_{6}^{\frac{d-1}{2}+\frac{1}{2(d+3)}+s}\\
&\n{u_{1,M_{1}}}_{Y^{s}}
\n{P_{\leq M_{0}}u_{2,M_{2}}}_{Y^{\frac{d-1}{2}}}
\n{u_{3,M_{3}}}_{Y^{\frac{d-1}{2}}}
\n{u_{4,M_{4}}}_{Y^{\frac{d-1}{2}}}
\n{u_{5,M_{5}}}_{Y^{\frac{d-1}{2}}}
\n{g_{M_{6}}}_{Y^{-s}}
\end{align*}
If $s+\frac{d-1}{2}=0$, it can be estimated in the same way as $(\ref{equ:estiamte, wanted form, quintic})$. Thus we need only to treat the case $s+\frac{d-1}{2}>0$. Supping out $\n{u_{3,M_{3}}}_{Y^{\frac{d-1}{2}}}$, $\n{u_{4,M_{4}}}_{Y^{\frac{d-1}{2}}}$, $\n{u_{5,M_{5}}}_{Y^{\frac{d-1}{2}}}$ and $\n{g_{M_{6}}}_{Y^{-s}}$ in
$M_{3}$, $M_{4}$, $M_{5}$ and $M_{6}$, we have
\begin{align*}
I_{B}\lesssim &
T^{\frac{2d+3}{6(d+3)}}M_{0}^{\frac{2d+3}{3(d +3)}}
\n{u_{3}}_{Y^{\frac{d-1}{2}}}
\n{u_{4}}_{Y^{\frac{d-1}{2}}}
\n{u_{5}}_{Y^{\frac{d-1}{2}}}
\n{g}_{Y^{-s}}\sum_{\substack{M_{1},M_{2},M_{3},M_{4},M_{5},M_{6}\\M_{1}\sim M_{2}\geq   M_{3}\geq M_{4}\geq M_{5}\geq M_{6}}} \left(M_{1}^{-s}\right.\\
&\left.
\n{u_{1,M_{1}}}_{Y^{s}}M_{2}^{-\frac{d-1}{2}}\n{P_{\leq M_{0}}u_{2,M_{2}}}_{Y^{\frac{d-1}{2}}}
M_{3}^{-\frac{3}{2(d+3)}}
M_{4}^{\frac{1}{2(d+3)}}
M_{5}^{\frac{1}{2(d+3)}}
M_{6}^{\frac{d-1}{2}+\frac{1}{2(d+3)}+s}
\right)
\end{align*}
By the fact that $s+\frac{d-1}{2}>0$,
\begin{align*}
\lesssim &T^{\frac{2d+3}{6(d+3)}}M_{0}^{\frac{2d+3}{3(d +3)}}
\n{u_{3}}_{Y^{\frac{d-1}{2}}}
\n{u_{4}}_{Y^{\frac{d-1}{2}}}
\n{u_{5}}_{Y^{\frac{d-1}{2}}}
\n{g}_{Y^{-s}}\sum_{\substack{M_{1},M_{2}\\M_{1}\sim M_{2}}} \left(M_{1}^{-s}\n{u_{1,M_{1}}}_{Y^{s}}\right.\\
&\left.
M_{2}^{-\frac{d-1}{2}}\n{P_{\leq M_{0}}u_{2,M_{2}}}_{Y^{\frac{d-1}{2}}}
M_{2}^{\frac{d-1}{2}+s}
\right)\\
\lesssim&T^{\frac{2d+3}{6(d+3)}}M_{0}^{\frac{2d+3}{3(d +3)}}
\n{u_{3}}_{Y^{\frac{d-1}{2}}}
\n{u_{4}}_{Y^{\frac{d-1}{2}}}
\n{u_{5}}_{Y^{\frac{d-1}{2}}}
\n{g}_{Y^{-s}}
\sum_{\substack{M_{1},M_{2}\\M_{1}\sim M_{2}}}
\n{u_{1,M_{1}}}_{Y^{s}}\n{P_{\leq M_{0}}u_{2,M_{2}}}_{Y^{\frac{d-1}{2}}}
\end{align*}
Applying Cauchy-Schwarz,
\begin{align*}
\lesssim& T^{\frac{2d+3}{6(d+3)}}M_{0}^{\frac{2d+3}{3(d +3)}}\n{u_{1}}_{Y^{s}}\n{P_{\leq M_{0}}u_{2}}_{Y^{\frac{d-1}{2}}}\n{u_{3}}_{Y^{\frac{d-1}{2}}}\n{u_{4}}_{Y^{\frac{d-1}{2}}}\n{u_{5}}_{Y^{\frac{d-1}{2}}}
\n{g}_{Y^{-s}}.
\end{align*}

\end{proof}

\appendix
\section{Miscellaneous Lemmas}
We provide the following lemmas under the $\T^{d}$ setting, as they work the same for the $\R^{d}$ case with the homogeneous norm.
\begin{lemma}\label{lemma:uniqueness from tensor form}
Let $u_{1}$ and $u_{2}$ be the $C([0,T_{0}];H^{s_{c}})$ solutions to $(\ref{equ:NLS})$ with the same initial datum such that
\begin{align}\label{equ:uniqueness from tensor form}
u_{1}(t,x)\ol{u}_{1}(t,x')=u_{2}(t,x)\ol{u}_{2}(t,x').
\end{align}
Then $u_{1}(t,x)=u_{2}(t,x)$.
\end{lemma}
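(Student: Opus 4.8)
The plan is to read the hypothesis $(\ref{equ:uniqueness from tensor form})$ as the operator identity $|u_{1}(t)\rangle\langle u_{1}(t)|=|u_{2}(t)\rangle\langle u_{2}(t)|$ for every $t\in[0,T_{0}]$, extract from it a time-dependent \emph{scalar} phase, and then use the Duhamel formula to force that phase to be constant and equal to $1$. First, if the common initial datum vanishes, then applying Theorem $\ref{thm:uniqueness for gp hierarchy}$ to $u_{1}$ paired with the zero solution (both carry zero data, both satisfy conditions $(1)$–$(2)$ of that theorem, the latter by Lemma $\ref{lemma:UTFL}$) gives $|u_{1}(t)\rangle\langle u_{1}(t)|\equiv 0$, i.e. $u_{1}\equiv 0$, and likewise $u_{2}\equiv 0$; so we may assume $u_{1}(0)=u_{2}(0)\neq 0$. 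For fixed $t$ with $u_{1}(t)\neq 0$, the elementary fact that $|f\rangle\langle f|=|g\rangle\langle g|$ with $f,g\neq 0$ forces $g=c\,f$ for some $c\in\C$ with $|c|=1$ yields a number $c(t)$, $|c(t)|=1$, such that $u_{2}(t,\cdot)=c(t)u_{1}(t,\cdot)$ a.e.; moreover $c(t)=\langle u_{1}(t),u_{2}(t)\rangle_{L^{2}}/\n{u_{1}(t)}_{L^{2}}^{2}$ is continuous in $t$ on the open set $\lr{t:\n{u_{1}(t)}_{L^{2}}\neq 0}$, which contains $0$, with $c(0)=1$. (On $\lr{u_{1}(t)=0}$ the identity forces $u_{2}(t)=0$, so there $c(t)$ is immaterial.)

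Next I would kill the phase locally via Gr\"onwall. Since $|u_{2}(s)|=|u_{1}(s)|$ a.e., one has $|u_{2}(s)|^{p-1}u_{2}(s)=c(s)\,|u_{1}(s)|^{p-1}u_{1}(s)$, and $u_{1}(0)=u_{2}(0)$; subtracting the Duhamel representations of $u_{2}$ and $u_{1}$ gives, on any subinterval $[0,\delta]$ on which $u_{1}$ does not vanish,
\begin{equation*}
(c(t)-1)\,u_{1}(t)=\mp i\int_{0}^{t}e^{i(t-s)\Delta}\Big((c(s)-1)\,|u_{1}(s)|^{p-1}u_{1}(s)\Big)\,ds .
\end{equation*}
Taking $H^{s_{c}-2}$ norms, using that $e^{i(t-s)\Delta}$ is an isometry of $H^{s_{c}-2}$ together with the Sobolev bound $\n{|u|^{p-1}u}_{H^{s_{c}-2}}\lesssim\n{u}_{H^{s_{c}}}^{p}$ from $(\ref{equ:sobolev inequality, multilinear estimate, endpoint})$, and writing $A:=\sup_{t\in[0,T_{0}]}\n{u_{1}(t)}_{H^{s_{c}}}<\wq$, one obtains
\begin{equation*}
|c(t)-1|\,\n{u_{1}(t)}_{H^{s_{c}-2}}\le C A^{p}\int_{0}^{t}|c(s)-1|\,ds ,\qquad t\in[0,\delta].
\end{equation*}
Choosing $\delta>0$ so small that $\n{u_{1}(t)}_{H^{s_{c}-2}}\ge\tfrac12\n{u_{1}(0)}_{H^{s_{c}-2}}>0$ on $[0,\delta]$ (possible by continuity, as $u_{1}(0)\neq 0$), dividing, and invoking Gr\"onwall with $c(0)=1$, I conclude $c\equiv 1$ on $[0,\delta]$, i.e. $u_{1}=u_{2}$ on $[0,\delta]$.

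Finally I would bootstrap. Let $T_{*}$ be the largest time for which $u_{1}=u_{2}$ on $[0,T_{*}]$ (attained by continuity), and suppose $T_{*}<T_{0}$. If $u_{1}(T_{*})\neq 0$, rerunning the previous paragraph with base point $T_{*}$ (where $c(T_{*})=1$ and $u_{1}$ remains nonzero just beyond $T_{*}$) extends the agreement past $T_{*}$, contradicting maximality. If $u_{1}(T_{*})=0$, then $v(\cdot):=u_{1}(T_{*}+\cdot)$ is a $C([0,T_{0}-T_{*}];H^{s_{c}})$ solution with zero data, hence $v\equiv 0$ for the same reason as in the first paragraph; thus $u_{1}\equiv 0$ on $[T_{*},T_{0}]$, and $(\ref{equ:uniqueness from tensor form})$ then forces $u_{2}\equiv 0=u_{1}$ on $[T_{*},T_{0}]$, again contradicting maximality. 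Therefore $T_{*}=T_{0}$ and $u_{1}\equiv u_{2}$.

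The only genuine subtlety — the step I expect to be the main obstacle — is that $u_{1}$ may vanish at interior times, which blocks writing $u_{2}=c(t)u_{1}$ with a globally continuous phase; this is what forces the Gr\"onwall argument to be localized and combined with the (non-elementary, hierarchy-based) uniqueness of the trivial solution. The low regularity $s_{c}<d/2$, which precludes pointwise values of $u_{j}$, is accommodated by working throughout with the operator identity and with the $H^{s_{c}-2}$-valued Duhamel formula.
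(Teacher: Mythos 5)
Your proof is correct and follows essentially the same strategy as the paper's: extract a time-dependent scalar phase from the rank-one identity, subtract the Duhamel representations to get an integral equation for the phase, and close with Gr\"onwall using the Sobolev bound $\n{|u|^{p-1}u}_{H^{s_{c}-2}}\lesssim \n{u}_{H^{s_{c}}}^{p}$ of Lemma \ref{lemma:sobolev embedding critical case}.

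The implementation differs in two respects worth recording. First, the paper applies $P_{\leq M}$ to the difference equation and estimates in $H^{s_{c}}$, paying a factor $M^{2}$ by Bernstein on the upper bound and invoking the UTFL property (Lemma \ref{lemma:UTFL}) to secure the lower bound $\n{P_{\leq M}(a(t)-1)u_{1}}_{H^{s_{c}}}\gtrsim |a(t)-1|$; you instead measure everything in $H^{s_{c}-2}$, where the free propagator is an isometry and no frequency truncation or UTFL input is needed, at the harmless cost of needing $\inf_{[0,\delta]}\n{u_{1}(t)}_{H^{s_{c}-2}}>0$, which follows from continuity. Your variant is slightly leaner. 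Second, the paper simply declares "we might as well assume $u_{1}(t)\neq 0$ for all $t$," which glosses over interior zeros of $u_{1}$ (where the phase is undefined and cannot be continued); your localized Gr\"onwall plus bootstrap, with the trivial-solution uniqueness handling the case $u_{1}(T_{*})=0$, fills in exactly this point. One small caveat: your phase $c(t)=\lra{u_{1}(t),u_{2}(t)}_{L^{2}}/\n{u_{1}(t)}_{L^{2}}^{2}$ uses the $L^{2}$ pairing, which is fine on $\T^{d}$ but should be replaced by the $\dot{H}^{s_{c}}$ pairing (as the paper does with $\lra{\nabla}^{s_{c}}$) when the lemma is run in the $\R^{d}$ homogeneous setting, where solutions need not lie in $L^{2}$.
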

\begin{proof}
From the proof of Corollary \ref{cor:uniqueness for nls}, we have obtained the uniqueness for the trivial solution $u\equiv 0$, so we might as well assume that $u_{1}(t)\neq 0$ for all $t\in [0,T_{0}]$.
On the other hand, we note that
\begin{align}
\lra{\nabla}^{s_{c}}u_{1}(t,x)\n{\lra{\nabla}^{s_{c}}u_{1}(t)}_{L^{2}}^{2}=\lra{\nabla}^{s_{c}}u_{2}(t,x)
\lra{\lra{\nabla}^{s_{c}}u_{2}(t),\lra{\nabla}^{s_{c}}u_{1}(t)}
\end{align}
which implies that
\begin{align} \label{equ:a(t)}
\lra{\nabla}^{s_{c}}u_{1}(t)=a(t)\lra{\nabla}^{s_{c}}u_{2}(t),
\end{align}
where
$$a(t)=\frac{\lra{\lra{\nabla}^{s_{c}}u_{2}(t),\lra{\nabla}^{s_{c}}u_{1}(t)}}
{\n{\lra{\nabla}^{s_{c}}u_{1}(t)}_{L^{2}}^{2}}.$$
Since $u_{1}\in C([0,T_{0}];H^{s_{c}})$, we have that
\begin{align}
c_{0}:=\inf_{t\in [0,T_{0}]}\n{\lra{\nabla}^{s_{c}}u_{1}}_{L^{2}}>0,
\end{align}
which implies that $a(t)$ is well-defined. We are left to prove $a(t)=1$ for every $t\in [0,T_{0}]$. Taking differences gives that
\begin{align}
P_{\leq M}(u_{2}-u_{1})=-i\int_{0}^{t}e^{i(t-\tau)\Delta}P_{\leq M}(|u_{1}|^{p-1}
u_{1})(\tau,x)(a(\tau)-1)d\tau,
\end{align}
where we used $(\ref{equ:a(t)})$ for $u_{2}$.

On the one hand, by $(\ref{equ:a(t)})$ and the UTFL property in Lemma $\ref{lemma:UTFL}$, we obtain
\begin{align}\label{equ:lower bound, a(t)}
\n{P_{\leq M}(u_{2}-u_{1})}_{H^{s_{c}}}=\n{P_{\leq M}(a(t)-1)u_{1}}_{H^{s_{c}}}\geq \frac{c_{0}|a(t)-1|}{2}.
\end{align}
On the other hand, by $\n{P_{\leq M}\lra{\nabla}^{s}f}_{L^{2}}\lesssim M^{s}\n{P_{\leq M}f}_{L^{2}}$ and Sobolev embedding $\lrs{\ref{lemma:sobolev embedding critical case}}$ and $(\ref{equ:sobolev inequality, multilinear estimate, endpoint})$, we get
\begin{align}\label{equ:upper bound, a(t)}
&\bbn{\int_{0}^{t}e^{i(t-\tau)\Delta}P_{\leq M}(|u_{1}|^{p-1}
u_{1})(\tau,x)(a(\tau)-1)d\tau}_{H^{s_{c}}}\\
\lesssim &\int_{0}^{t}|a(\tau)-1|\n{P_{\leq M}(|u_{1}|^{p-1}u_{1})}_{H^{s_{c}}}d\tau\notag\\
\lesssim &M^{2}\int_{0}^{t}|a(\tau)-1|\n{P_{\leq M}(|u_{1}|^{p-1}u_{1})}_{H^{s_{c}-2}}d\tau\notag\\
\lesssim &M^{2}\int_{0}^{t}|a(\tau)-1|\n{u_{1}}_{H^{s_{c}}}^{p}d\tau\notag\\
\leq &M^{2}C_{0}^{p}\int_{0}^{t}|a(\tau)-1|d\tau.\notag
\end{align}
Combining estimates $(\ref{equ:lower bound, a(t)})$ and $(\ref{equ:upper bound, a(t)})$, we have
\begin{align}
|a(t)-1|\lesssim \int_{0}^{t}|a(\tau)-1|d\tau
\end{align}
which implies that $a(t)\equiv1$ by Gronwall's inequality.
\end{proof}

\begin{lemma}

\begin{align}\label{equ:sharp sobolev inequality two appendix}
\n{fg}_{H^{s}(\T^{d})}\lesssim \n{f}_{H^{s+s_{1}}(\T^{d})}\n{g}_{H^{s_{2}}(\T^{d})}
+\n{f}_{H^{\wt{s}_{1}}(\T^{d})}\n{g}_{H^{s+\wt{s}_{2}}(\T^{d})}
\end{align}
where $s\geq 0$, $s_{i}>0$, $\wt{s}_{i}>0$, $s_{1}+s_{2}=\frac{d}{2}$ and $\wt{s}_{1}+\wt{s}_{2}=\frac{d}{2}$.
\end{lemma}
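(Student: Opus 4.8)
The plan is to prove this product estimate directly on the Fourier side, which is cleanest on $\T^d$ because $\n{f}_{H^s}=\lrs{\sum_{\xi\in\Z^d}\lra{\xi}^{2s}|\hat f(\xi)|^2}^{1/2}$. First I would write $\widehat{fg}(\xi)=\sum_{\eta\in\Z^d}\hat f(\xi-\eta)\hat g(\eta)$ and bound $|\widehat{fg}(\xi)|\leq(|\hat f|*|\hat g|)(\xi)$. Using the elementary inequality $\lra{\xi}^s\lesssim\lra{\xi-\eta}^s+\lra{\eta}^s$, valid for all $s\geq 0$, I would split
\begin{align*}
\lra{\xi}^s|\widehat{fg}(\xi)|\lesssim\lrs{(\lra{\cdot}^s|\hat f|)*|\hat g|}(\xi)+\lrs{|\hat f|*(\lra{\cdot}^s|\hat g|)}(\xi),
\end{align*}
so that it suffices to estimate each of the two convolutions in $\ell^2_\xi$.

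The key observation is that for nonnegative sequences $\lr{a_\xi},\lr{b_\xi}$ one has $\n{a*b}_{\ell^2}=\n{AB}_{L^2(\T^d)}$, where $A,B$ denote the functions on $\T^d$ whose Fourier coefficients are $a_\xi,b_\xi$; hence by H\"older $\n{AB}_{L^2}\leq\n{A}_{L^p}\n{B}_{L^q}$ with $\tfrac1p+\tfrac1q=\tfrac12$. Applying this to the first convolution with $a_\xi=\lra{\xi}^s|\hat f(\xi)|$ and $b_\xi=|\hat g(\xi)|$, and then the Sobolev embedding $H^{\sigma}(\T^d)\hookrightarrow L^r(\T^d)$ for $\sigma=\tfrac d2-\tfrac dr$ with $0<\sigma<\tfrac d2$, gives
\begin{align*}
\n{(\lra{\cdot}^s|\hat f|)*|\hat g|}_{\ell^2}\leq\n{A}_{L^p}\n{B}_{L^q}\lesssim\n{A}_{H^{s_1}}\n{B}_{H^{s_2}}=\n{f}_{H^{s+s_1}}\n{g}_{H^{s_2}},
\end{align*}
where $s_1=\tfrac d2-\tfrac dp$, $s_2=\tfrac d2-\tfrac dq$, and the constraint $\tfrac1p+\tfrac1q=\tfrac12$ is exactly $s_1+s_2=\tfrac d2$. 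Treating the second convolution symmetrically (putting the $\lra{\cdot}^s$ weight on $\hat g$ instead, with conjugate exponents $\tilde r,\tilde r'$) produces $\n{f}_{H^{\tilde s_1}}\n{g}_{H^{s+\tilde s_2}}$ with $\tilde s_1+\tilde s_2=\tfrac d2$; summing the two bounds yields $(\ref{equ:sharp sobolev inequality two appendix})$. The same computation runs verbatim on $\R^d$ with homogeneous norms, using $|\xi|^s\lesssim|\xi-\eta|^s+|\eta|^s$.

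There is essentially no serious obstacle here; the only point requiring a little care is the bookkeeping matching the H\"older exponents to the Sobolev exponents, and in particular the remark that the hypotheses $s_i>0$, $\tilde s_i>0$ together with $s_1+s_2=\tilde s_1+\tilde s_2=\tfrac d2$ force each of $s_i,\tilde s_i$ into $(0,\tfrac d2)$, hence the conjugate exponents into the admissible range $(2,\wq)$, so that the non-endpoint Sobolev embedding applies and the $L^\infty$ endpoint is never invoked. One could alternatively run the argument through a Littlewood--Paley/paraproduct decomposition, separating the high-low, low-high and high-high frequency interactions and using Bernstein on each dyadic block, but the direct Fourier-coefficient computation above is shorter and avoids summing over scales.
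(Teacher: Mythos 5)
Your proof is correct, but it follows a genuinely different route from the paper's. You work entirely on the Fourier side: bound $|\widehat{fg}(\xi)|$ by the convolution $(|\hat f|*|\hat g|)(\xi)$, distribute the weight via $\lra{\xi}^{s}\lesssim\lra{\xi-\eta}^{s}+\lra{\eta}^{s}$, and convert each $\ell^{2}$ norm of a convolution back to an $L^{2}$ norm of a product via Plancherel, after which H\"older and the non-endpoint Sobolev embedding finish the job; the hypotheses $s_{i},\wt{s}_{i}>0$ with $s_{1}+s_{2}=\wt{s}_{1}+\wt{s}_{2}=\frac{d}{2}$ are exactly what keeps the embedding exponents in $(2,\wq)$, as you note. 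The paper instead uses a Littlewood--Paley paraproduct splitting $P_{N}(fg)=P_{N}[(P_{\geq N-1}f)g+(P_{<N-1}f)(P_{\geq N-1}g)]$, applies H\"older and Sobolev blockwise, and then sums over dyadic scales with Young's inequality. Your argument is shorter and avoids the scale summation entirely, at the cost of discarding all phase cancellation by taking absolute values of Fourier coefficients (harmless here, since the estimate is $L^{2}$-based and no cancellation is needed); the paraproduct route is the one that survives when one needs $L^{p}$-based spaces or finer frequency localization, and it matches the machinery used elsewhere in the paper. The only points deserving a word of care in your write-up are the Plancherel identity $\n{a*b}_{\ell^{2}}=\n{AB}_{L^{2}}$, which should be justified first for trigonometric polynomials and extended by density (for general $\ell^{2}$ sequences $a*b$ is a priori only in $\ell^{\wq}$), and the passage to $\R^{d}$ with homogeneous norms, where the integral version of the convolution identity and the homogeneous Sobolev embedding $\dot{H}^{\sigma}\hookrightarrow L^{r}$ for $0<\sigma<\frac{d}{2}$ play the corresponding roles. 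Neither is a gap.
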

\begin{proof}
This has certainly been studied by many authors. For completeness, we include a proof.
We note that
$$P_{N}(P_{<N-1}fP_{<N-1}g)=0$$
for $N\geq 2$ and hence
$$P_{N}(fg)=P_{N}\lrc{(P_{\geq N-1}f)g+(P_{<N-1}f)(P_{\geq N-1}g)}.$$
We expand
\begin{align*}
\n{fg}_{H^{s}(\T^{d})}^{2}\simeq& \sum_{N=0}\lra{N}^{2s}\n{P_{N}(fg)}_{L^{2}}^{2}\\
\lesssim& \n{fg}_{L^{2}}^{2}+\sum_{N=2}\lra{N}^{2s}\n{P_{N}\lrc{(P_{\geq N-1}f)g+(P_{<N-1}f)(P_{\geq N-1}g)}}_{L^{2}}^{2}\\
\lesssim&\n{fg}_{L^{2}}^{2}+I^{2}+II^{2}
\end{align*}
where
\begin{align*}
&I=\n{\lra{N}^{s}P_{N}\lrc{(P_{\geq N-1}f)g}}_{l^{2}L^{2}},\\
&II=\n{\lra{N}^{s}P_{N}\lrc{(P_{<N-1}f)(P_{\geq N-1}g)}}_{l^{2}L^{2}}.
\end{align*}
For $II$, by H\"{o}lder inequality, we have
\begin{align*}
II=&\n{\lra{N}^{s}P_{N}\lrc{(P_{<N-1}f)(P_{\geq N-1}g)}}_{l^{2}L^{2}}\\
\leq&\n{\lra{N}^{s}(P_{<N-1}f)(P_{\geq N-1}g)}_{l^{2}L^{2}}\\
\leq &\n{P_{<N-1}f}_{l^{\wq}L^{p_{1}}}\n{\lra{N}^{s}P_{\geq N-1}g}_{l^{2}L^{p_{2}}},
\end{align*}
where $\frac{1}{2}=\frac{1}{p_{1}}+\frac{1}{p_{2}}$. Then by Sobolev inequality,
\begin{align*}
II\lesssim &\n{f}_{H^{s_{1}}}\n{\lra{N}^{s}\lra{\nabla}^{s_{2}}P_{\geq N-1}g}_{l^{2}L^{2}}\\
= &\n{f}_{H^{s_{1}}}\bbn{\sum_{M\geq N}\lra{N}^{s}\lra{M}^{-s}\lra{\nabla}^{s_{2}}\lra{M}^{s}P_{M}g}_{L^{2}l^{2}}
\end{align*}
where $s_{i}\in(0,\frac{d}{2})$ for $i=1$, $2$. By Young's inequality,
\begin{align*}
II\lesssim&\n{f}_{H^{s_{1}}}\n{\lra{N}^{s}\lra{\nabla}^{s_{2}}P_{N}g}_{L^{2}l^{2}}\\
\lesssim &\n{f}_{H^{s_{1}}}\n{g}_{H^{s+s_{2}}}
\end{align*}
$I$ can be estimated in the same way as $II$.
\end{proof}

\begin{lemma}[Sobolev embedding]\label{lemma:sobolev embedding critical case}
\begin{align}
&\n{f_{1}f_{2}f_{3}}_{H^{s}(\T^{d})}\lesssim \prod_{j=1}^{3}\n{f_{j}}_{H^{\frac{s+d}{3}}(\T^{d})} \label{equ:sobolev inequality,trilinear estimate, endpoint}\\
&\n{f_{1}f_{2}f_{3}f_{4}f_{5}}_{H^{s}(\T^{d})}\lesssim \prod_{j=1}^{5}\n{f_{j}}_{H^{\frac{s+2d}{5}}(\T^{d})}\label{equ:sobolev inequality, multilinear estimate, endpoint}
\end{align}
for $s\in(-\frac{d}{2},\frac{d}{2})$.
\end{lemma}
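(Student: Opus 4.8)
The plan is to prove the trilinear estimate $(\ref{equ:sobolev inequality,trilinear estimate, endpoint})$ in full and to obtain the quintilinear estimate $(\ref{equ:sobolev inequality, multilinear estimate, endpoint})$ by the same scheme (either by iterating the bilinear product estimate once more, or by feeding three of the five factors into the already-established trilinear bound). For the trilinear case set $\al:=\frac{s+d}{3}$; since $s\in(-\tfrac{d}{2},\tfrac{d}{2})$ one has $0<\tfrac{d}{6}\le \al<\tfrac{d}{2}$, so in particular the Sobolev embedding $H^{\al}(\T^{d})\hookrightarrow L^{p}(\T^{d})$ holds with $\frac{1}{p}=\frac{1}{2}-\frac{\al}{d}\in(0,\tfrac12)$. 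I would split according to the sign of $s$, since the previous bilinear lemma requires nonnegative base regularity.

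First, for $s\in(-\tfrac{d}{2},0]$ the estimate is elementary. As $-s\in[0,\tfrac{d}{2})$, dualizing the Sobolev embedding $H^{-s}(\T^{d})\hookrightarrow L^{q'}(\T^{d})$ with $\frac{1}{q'}=\frac{1}{2}+\frac{s}{d}$ gives $L^{q}(\T^{d})\hookrightarrow H^{s}(\T^{d})$, $\frac{1}{q}=\frac{1}{2}-\frac{s}{d}$. A direct computation shows $\frac{3}{p}=\frac{3}{2}-\frac{3\al}{d}=\frac{1}{2}-\frac{s}{d}=\frac{1}{q}$, so H\"older's inequality followed by $H^{\al}\hookrightarrow L^{p}$ yields
\[
\n{f_{1}f_{2}f_{3}}_{H^{s}}\lesssim \n{f_{1}f_{2}f_{3}}_{L^{q}}\le \prod_{j=1}^{3}\n{f_{j}}_{L^{p}}\lesssim \prod_{j=1}^{3}\n{f_{j}}_{H^{\al}}.
\]
The identical computation with $\beta:=\frac{s+2d}{5}$ in place of $\al$ (one checks again $\frac{5}{p}=\frac{1}{2}-\frac{s}{d}=\frac{1}{q}$) handles the quintilinear estimate for $s\le 0$.

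Next, for $s\in(0,\tfrac{d}{2})$ I would iterate the bilinear product estimate $(\ref{equ:sharp sobolev inequality two appendix})$. Writing $f_{1}f_{2}f_{3}=f_{1}\cdot(f_{2}f_{3})$ and applying $(\ref{equ:sharp sobolev inequality two appendix})$ with base $s$, choosing the two pairs of exponents independently — $s_{1}=\al-s,\ s_{2}=\tfrac{d}{2}-s_{1}$ for the first term so that $s+s_{1}=\al$, and $\widetilde s_{1}=\al,\ \widetilde s_{2}=\tfrac{d}{2}-\al$ for the second so that $s+\widetilde s_{2}=\tfrac{4s+d}{6}=s_{2}$ — one obtains, after using monotonicity of the inhomogeneous Sobolev norms to absorb the term $\n{f_1}_{H^{\widetilde s_1}}\le\n{f_1}_{H^{\al}}$,
\[
\n{f_{1}f_{2}f_{3}}_{H^{s}}\lesssim \n{f_{1}}_{H^{\al}}\,\n{f_{2}f_{3}}_{H^{(4s+d)/6}} .
\]
A second application of $(\ref{equ:sharp sobolev inequality two appendix})$ to $\n{f_{2}f_{3}}_{H^{(4s+d)/6}}$, now with the (forced) exponents $\tfrac{d-2s}{6}$ and $\al$, bounds the right-hand side by $\n{f_{1}}_{H^{\al}}\n{f_{2}}_{H^{\al}}\n{f_{3}}_{H^{\al}}$. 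The quintilinear estimate for $s\in(0,\tfrac{d}{2})$ is obtained analogously: split $f_{1}\cdots f_{5}=(f_{1}f_{2}f_{3})\cdot(f_{4}f_{5})$, apply $(\ref{equ:sharp sobolev inequality two appendix})$ to reduce to $\n{f_{1}f_{2}f_{3}}_{H^{(3s+d)/5}}$ and $\n{f_{4}f_{5}}_{H^{(4s+3d)/10}}$, then invoke the trilinear estimate just proved (its parameter $\tfrac{3s+d}{5}$ has associated regularity exactly $\tfrac{s+2d}{5}=\beta$) together with one more application of $(\ref{equ:sharp sobolev inequality two appendix})$ on $f_{4}f_{5}$.

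The one point requiring genuine care is the exponent bookkeeping: each invocation of $(\ref{equ:sharp sobolev inequality two appendix})$ is made at the scaling-critical value, where the splitting exponents $s_{1},s_{2},\widetilde s_{1},\widetilde s_{2}$ (and their analogues at the later stages and in the quintilinear split) are essentially pinned, and one must verify that all of them lie strictly in $(0,\tfrac{d}{2})$ — which is precisely where the hypothesis $s\in(0,\tfrac{d}{2})$, equivalently $0<\al<\tfrac{d}{2}$, is used — so that the open-condition hypotheses of the previous lemma are met. Everything else is H\"older's inequality, the Sobolev embedding $H^{\al}\hookrightarrow L^{p}$, and monotonicity of Sobolev norms; once the exponent arithmetic is checked the proof closes immediately. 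I expect this bookkeeping, rather than any analytic difficulty, to be the main obstacle.
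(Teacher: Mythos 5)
Your proof is correct and follows essentially the same route as the paper's: duality plus H\"older and Sobolev embedding for $s\le 0$, and for $s\in(0,\tfrac d2)$ two (resp.\ three) applications of the bilinear estimate $(\ref{equ:sharp sobolev inequality two appendix})$ at scaling-critical exponents chosen so that the two terms on its right-hand side coincide. The only differences are cosmetic groupings ($f_1\cdot(f_2f_3)$ versus $(f_1f_2)\cdot f_3$, and $(f_1f_2f_3)(f_4f_5)$ versus $(f_1f_2)(f_3f_4f_5)$), and your exponent bookkeeping checks out.
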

\begin{proof}
For $s=0$, it follows from H\"{o}lder and Sobolev inequalities.

For $s\in (-\frac{d}{2},0)$, by duality, we have
\begin{align*}
&\n{f_{1}f_{2}f_{3}}_{H^{s}}\lesssim \n{f_{1}f_{2}f_{3}}_{L^{\frac{2d}{d-2s}}},\\
&\n{f_{1}f_{2}f_{3}f_{4}f_{5}}_{H^{s}}\lesssim \n{f_{1}f_{2}f_{3}f_{4}f_{5}}_{L^{\frac{2d}{d-2s}}}.
\end{align*}
Then by H\"{o}lder inequality and the Sobolev embedding,
\begin{align*}
&\n{f_{1}f_{2}f_{3}}_{H^{s}}\lesssim \prod_{j=1}^{3}\n{f_{j}}_{L^{\frac{6d}{d-2s}}}\lesssim  \prod_{j=1}^{3}\n{f_{j}}_{H^{\frac{s+d}{3}}(\T^{d})},\\
&\n{f_{1}f_{2}f_{3}f_{4}f_{5}}_{H^{s}}\lesssim \prod_{j=1}^{5}\n{f_{j}}_{L^{\frac{10d}{d-2s}}}\lesssim  \prod_{j=1}^{5}\n{f_{j}}_{H^{\frac{s+2d}{5}}(\T^{d})}.
\end{align*}


For $s\in (0,\frac{d}{2})$, we use Sobolev inequality $(\ref{equ:sharp sobolev inequality two appendix})$. Taking $f=f_{1}f_{2}$ and $g=f_{3}$ with $s_{1}=\frac{d-2s}{6}$, $s_{2}=\frac{s+d}{3}$, $\wt{s}_{1}=\frac{d+4s}{6}$ and $\wt{s}_{2}=\frac{d-2s}{3}$, we have
\begin{align*}
\n{f_{1}f_{2}f_{3}}_{H^{s}}\lesssim
\n{f_{1}f_{2}}_{H^{\frac{d+4s}{6}}}\n{f_{3}}_{H^{\frac{s+d}{3}}}.
\end{align*}
For $\n{f_{1}f_{2}}_{H^{\frac{d+4s}{6}}}$, using it again with $s_{1}=\frac{d-2s}{6}$, $s_{2}=\frac{s+d}{3}$, $\wt{s}_{1}=\frac{s+d}{3}$ and $\wt{s}_{2}=\frac{d-2s}{6}$, we obtain
\begin{align*}
\n{f_{1}f_{2}}_{H^{\frac{d+4s}{6}}}\lesssim \n{f_{1}}_{H^{\frac{s+d}{3}}}\n{f_{2}}_{H^{\frac{s+d}{3}}}.
\end{align*}

Taking $f=f_{1}f_{2}$ and $g=f_{3}f_{4}f_{5}$ with $s_{1}=\frac{3(d-2s)}{10}$, $s_{2}=\frac{3s+d}{5}$, $\wt{s}_{1}=\frac{4s+3d}{10}$ and $\wt{s}_{2}=\frac{d-2s}{5}$, we obtain
\begin{align*}
\n{f_{1}f_{2}f_{3}f_{4}f_{5}}_{H^{s}}\lesssim&
\n{f_{1}f_{2}}_{H^{s+s_{1}}}\n{f_{3}f_{4}f_{5}}_{H^{s_{2}}}+\n{f_{1}f_{2}}_{H^{\wt{s}_{1}}}
\n{f_{3}f_{4}f_{5}}_{H^{s+\wt{s}_{2}}}\\
\lesssim& \prod_{j=1}^{2}\n{f_{j}}_{H^{\frac{2(s+s_{1})+d}{4}}}\prod_{j=3}^{5}
\n{f_{j}}_{H^{\frac{s_{2}+d}{3}}}+\prod_{j=1}^{2}\n{f_{j}}_{H^{\frac{2\wt{s}_{1}+d}{4}}}
\prod_{j=3}^{5}\n{f_{j}}_{\frac{s+\wt{s}_{2}+d}{3}}\\
\lesssim& \prod_{j=1}^{5}\n{f_{j}}_{H^{\frac{s+2d}{5}}}.
\end{align*}

\end{proof}
\section{Results for Some $H^{1}$-subcritical Cases}
Note that the proof of Theorem $\ref{thm:uniqueness for nls}$ works uniformly in all dimensions, $d\geq 4$ for quintic case and $d\geq 5$ for cubic case. For completeness, we present some results for low dimensions using our method. As we are limited by the Sobolev embedding in Lemma \ref{lemma:sobolev embedding critical case}, the regularity requirements are higher than the critical scaling exponent $s_{c}$. Certainly, it is still an open problem to push $s$ down to $s_{c}$ for $H^{1}$-subcritical problems in both $\R^{d}$ and $\T^{d}$.
\begin{theorem}\label{thm:uniqueness for energy-subcritical nls}
~\\
$(a)$. There is at most one $C([0,T_{0}];\dot{H}^{\frac{d}{4}}(\Lambda^{d}))$ solution to $(\ref{equ:NLS})$ where $p=3$ and $d=2,3$.
~\\
$(b)$. There is at most one $C([0,T_{0}];\dot{H}^{\frac{2}{3}}(\Lambda^{2}))$ solution to $(\ref{equ:NLS})$ where $p=5$.
\end{theorem}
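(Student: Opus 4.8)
I would run the scheme of Sections~\ref{section:The Uniqueness for GP Hierarchy}--\ref{section:Existence of Compatible Time Integration Domain} at the regularity $s$ in question (which lies strictly between the scaling exponent $s_c$ and the energy exponent $1$), with the $U$-$V$ multilinear estimates of Section~\ref{section:Multilinear Estimates} replaced by the Sobolev multilinear estimates of Lemma~\ref{lemma:sobolev embedding critical case}. As in Corollary~\ref{cor:uniqueness for nls}, a $C([0,T_0];\dot H^{s}(\Lambda^d))$ solution of \eqref{equ:NLS} generates the admissible GP-hierarchy solution $\{|u\rangle\langle u|^{\otimes k}\}$ with kinetic-energy bound $C_0=\sup_t\n{u(t)}_{\dot H^{s}}$; the quantum de Finetti theorem (Lemma~\ref{lemma:quantum de Finetti theorem}) and the Chebyshev argument represent the difference $\Ga=\Ga_1-\Ga_2$ by a signed measure supported in $\{\n{\phi}_{\dot H^{s}}\le C_0\}$, and it suffices to show $\ga^{(1)}\equiv0$. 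Iterating Duhamel $k$ times produces the summands $J^{(2k+1)}_{\mu,sgn}(\ga^{(2k+1)})$ as in \eqref{equ:duhamel expansion, summands} (and the analogous $2^{k}k!$-fold expansion in the cubic case). Because Lemma~\ref{lemma:sobolev embedding critical case} is insensitive to the shape of the time-integration domain, one needs here only the \emph{original} Klainerman--Machedon board game of \cite{chen2011the,klainerman2008on}, which sorts these terms into $C^k$ upper-echelon forms integrated over unions of simplices; none of the signed/tamed/wild/reference combinatorics of Section~\ref{section:Existence of Compatible Time Integration Domain} is required.

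\textbf{The estimate.} This then runs as in Proposition~\ref{prop:estimate for the compatible time integration domain}, iterating Lemma~\ref{lemma:sobolev embedding critical case} through the Duhamel tree of each upper-echelon form. The structural simplification relative to the energy-supercritical case is that the problems here are \emph{strictly} $\dot H^{s_c}$-subcritical, so each of the $k$ nested time integrations $\int_0^{t_{2j-1}}dt_{2j+1}$ contributes an honest factor $|T|\le T_0$ while the propagators act unitarily on every $H^{\sigma}$; hence no hierarchical uniform frequency localization is needed, and the HUFL hypothesis of Theorem~\ref{thm:uniqueness for gp hierarchy}, which was forced only by criticality, may be dropped. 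Peeling off $\lra{\nabla_{x_1}}^{-\be}\lra{\nabla_{x_1'}}^{-\be}$ from $\ga^{(1)}$ with $\be=\tfrac{p-1}{2}s$ (so $\be=\tfrac d4$ for $p=3$ and $\be=\tfrac43$ for $p=5$) turns the relevant endpoint embedding into the \emph{diagonal} estimate $\n{f_1\cdots f_p}_{H^{-\be}}\lesssim\prod_j\n{f_j}_{H^{s}}$, and the target is an iterated bound of the shape
\begin{align*}
\bbn{\lra{\nabla_{x_1}}^{-\be}\lra{\nabla_{x_1'}}^{-\be}\ga^{(1)}(t_1)}_{L^{\wq}_T L^2_{x_1}L^2_{x_1'}}\le \big(C(C_0)\,T\big)^{k}\xrightarrow[k\to\wq]{}0,
\end{align*}
after which one bootstraps over $[0,T_0]$. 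Having obtained $\ga^{(1)}\equiv0$, the factorized specialization gives $u_1(t,x)\ol u_1(t,x')=u_2(t,x)\ol u_2(t,x')$, and Lemma~\ref{lemma:uniqueness from tensor form} upgrades this to $u_1=u_2$; its proof, and that of the UTFL Lemma~\ref{lemma:UTFL} it invokes, go through verbatim at regularity $s$ with the exponent $2$ there replaced by the same $\be\le2$ and with \eqref{equ:sobolev inequality, multilinear estimate, endpoint}/\eqref{equ:sobolev inequality,trilinear estimate, endpoint} in place of their $s_c$-instances.

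\textbf{Main obstacle.} The delicate point is propagating the diagonal estimate all the way down a Duhamel tree of depth up to $k$ with a constant uniform in $k$, while keeping \emph{every} intermediate Sobolev index inside the admissible window $(-\tfrac d2,\tfrac d2)$ of Lemma~\ref{lemma:sobolev embedding critical case}. The collapsing operators $B^{\pm}_{\mu(2j);2j,2j+1}$ are derivative-losing, and the gain in the endpoint embeddings $\n{f_1f_2f_3}_{H^\tau}\lesssim\prod\n{f_j}_{H^{(\tau+d)/3}}$, $\n{f_1\cdots f_5}_{H^\tau}\lesssim\prod\n{f_j}_{H^{(\tau+2d)/5}}$ pulls the iteration toward the endpoint $\tau=\tfrac d2$ (the common fixed point of $\tau\mapsto\tfrac{\tau+d}{3}$ and $\tau\mapsto\tfrac{\tau+2d}{5}$); keeping it off that endpoint forces one to balance the regularity budget against the per-step time factor, and, at the nodes where the naive bookkeeping would overshoot, to supplement the Sobolev embedding with a dyadic decomposition carrying Bernstein-type $T^{\theta}M_0^{+}$ gains exactly as in \eqref{equ:multilinear estimate, low frequency, low regulariy}, but now \emph{without} any $P_{>M_0}$/HUFL term. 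Carrying this through is precisely what pins the admissible regularity to $s=\tfrac d4$ for $p=3$, $d=2,3$ and to $s=\tfrac23$ for $p=5$, $d=2$: these are the smallest exponents at which the Duhamel iteration closes inside the range of Lemma~\ref{lemma:sobolev embedding critical case}. All remaining ingredients -- admissibility of the de Finetti representation, the $C^k$ board-game count, the bootstrap -- are identical to, or simpler than, those in the body of the paper.
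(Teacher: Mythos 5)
There is a genuine gap, and it sits exactly where you locate the ``main obstacle.'' The paper proves Theorem \ref{thm:uniqueness for energy-subcritical nls} by running the \emph{same} $U$-$V$ scheme as the main theorem, only replacing Lemmas \ref{lemma:trilinear estimate d>=4} and \ref{lemma:multilinear estimate d>=5} by the subcritical $U$-$V$ estimates \eqref{equ:trilinear estimate,d=2}, \eqref{equ:trilinear estimate,d=3} and \eqref{equ:quintilinear estimate,d=2}, which carry an explicit $T^{\theta}$ gain (so, as you correctly note, no UTFL/HUFL splitting is needed). Your plan to discard the signed/tamed/wild/reference combinatorics and use only the original KM board game together with Lemma \ref{lemma:sobolev embedding critical case} cannot close. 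The estimate algorithm must propagate one ``rough'' slot up the Duhamel tree, i.e.\ at each coupling you need an off-diagonal bound of the shape $\n{u_{1}u_{2}u_{3}}_{H^{-s}}\lesssim \n{u_{1}}_{H^{-s}}\n{u_{2}}_{H^{s}}\n{u_{3}}_{H^{s}}$ (one input and the output both at $-s$, the rest at $+s$). By duality this requires $H^{s}\cdot H^{s}\cdot H^{s}\subset H^{s}$, i.e.\ $s\geq \frac{(s+d)}{3}$, forcing $s\geq \frac d2$; it fails at $s=\frac d4$ (and the quintic analogue fails at $s=\frac23$ on $\T^{2}$). Only the \emph{diagonal} endpoint $\n{f_{1}\cdots f_{p}}_{H^{-s}}\lesssim\prod\n{f_{j}}_{H^{s}}$ survives at these exponents, and that can be used only once, at the deepest node $Q^{(2k)}$ --- it cannot be re-fed into the next coupling because its output has the wrong sign of regularity. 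The paper's Lemmas B.2--B.3 supply precisely the missing off-diagonal bounds ($\n{u_1}_{Y^{-s}}\n{u_2}_{Y^{s}}\n{u_3}_{Y^{s}}\n{g}_{Y^{s}}$ and its dual), but they are genuinely Strichartz/$U$-$V$ estimates (cube decompositions, noncentered Strichartz), and their use demands the compatible time integration domains $T_{C}(\hat\mu,\hat{sgn})$ of Section \ref{section:Existence of Compatible Time Integration Domain}. Your own escape hatch --- invoking dyadic/Bernstein gains ``exactly as in \eqref{equ:multilinear estimate, low frequency, low regulariy}'' at the overshooting nodes --- is such a $U$-$V$ estimate, so it contradicts the premise that the original (unsigned) board game suffices.

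Two smaller but concrete errors: your choice $\be=\tfrac{p-1}{2}s$ gives $\be=\tfrac43$ in the quintic case, whereas the exponent that makes the endpoint of \eqref{equ:sobolev inequality, multilinear estimate, endpoint} close is $\be=s=\tfrac23$ (from $\frac{-s+2d}{5}=s$); moreover $-\tfrac43$ lies outside the admissible window $(-\tfrac d2,\tfrac d2)=(-1,1)$ of Lemma \ref{lemma:sobolev embedding critical case} for $d=2$, so the ``diagonal estimate'' you state there is not an instance of that lemma. The parts of your proposal that do match the paper are the identification of the de Finetti/Chebyshev setup, the observation that subcriticality removes the frequency-localization hypothesis, and the final passage from $u_{1}\ol u_{1}=u_{2}\ol u_{2}$ to $u_{1}=u_{2}$ via Lemma \ref{lemma:uniqueness from tensor form}.
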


\begin{lemma} \label{lemma:trilinear estimate d=2,3}
On $\T^{2}$,
\begin{align}
&\iint_{x,t} \wt{u}_{1}(t,x)\wt{u}_{2}(t,x)\wt{u}_{3}(t,x)\wt{g}(t,x)dtdx\lesssim T^{\frac{1}{2}}\n{u_{1}}_{Y^{-\frac{1}{2}}}
\n{u_{2}}_{Y^{\frac{1}{2}}}\n{u_{3}}_{Y^{\frac{1}{2}}}\n{g}_{Y^{\frac{1}{2}}},\label{equ:trilinear estimate,d=2}\\
&\iint_{x,t} \wt{u}_{1}(t,x)\wt{u}_{2}(t,x)\wt{u}_{3}(t,x)\wt{g}(t,x)dtdx\lesssim T^{\frac{1}{2}}\n{u_{1}}_{Y^{\frac{1}{2}}}
\n{u_{2}}_{Y^{\frac{1}{2}}}\n{u_{3}}_{Y^{\frac{1}{2}}}\n{g}_{Y^{-\frac{1}{2}}}.
\end{align}
On $\T^{3}$,
\begin{align}
\iint_{x,t} \wt{u}_{1}(t,x)\wt{u}_{2}(t,x)\wt{u}_{3}(t,x)\wt{g}(t,x)dtdx\lesssim T^{\frac{1}{4}}\n{u_{1}}_{Y^{-\frac{3}{4}}}
\n{u_{2}}_{Y^{\frac{3}{4}}}\n{u_{3}}_{Y^{\frac{3}{4}}}\n{g}_{Y^{\frac{3}{4}}},
\label{equ:trilinear estimate,d=3}\\
\iint_{x,t} \wt{u}_{1}(t,x)\wt{u}_{2}(t,x)\wt{u}_{3}(t,x)\wt{g}(t,x)dtdx\lesssim T^{\frac{1}{4}}\n{u_{1}}_{Y^{\frac{3}{4}}}
\n{u_{2}}_{Y^{\frac{3}{4}}}\n{u_{3}}_{Y^{\frac{3}{4}}}\n{g}_{Y^{-\frac{3}{4}}}
\end{align}
\end{lemma}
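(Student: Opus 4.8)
The plan is to run the same Littlewood--Paley decomposition argument used in the proof of Lemma \ref{lemma:trilinear estimate d>=4}, but now in the low dimensions $d=2,3$ where the scale-invariant Strichartz exponent $\frac{2(d+2)}{d}$ from Lemma \ref{lemma:Strichartz estimate} is larger, so one must insert a factor of $T$ via H\"older in time to descend to an admissible Strichartz exponent. First I would write $I=\sum_{M_1,M_2,M_3,M_4} I_{M_1,M_2,M_3,M_4}$ with $u_{j,M_j}=P_{M_j}u_j$, $g_{M_4}=P_{M_4}g$, and record that orthogonality forces the two largest frequencies to be comparable; by symmetry I would reduce to the two model cases $M_1\sim M_4\geq M_2\geq M_3$ (the ``hard'' case, where the rough factor and $g$ carry the top frequencies) and $M_1\sim M_2\geq M_4\geq M_3$. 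In each case I would decompose the two top dyadic blocks into cubes of the size of the third-largest frequency, so that the frequency-support constraint $\xi_2\sim-(\xi_1+\xi_3+\xi_4)$ pins the partner variable to $O(3^d)$ cubes, exactly as in the proof of Lemma \ref{lemma:trilinear estimate d>=4}.

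The quantitative input differs as follows. On $\T^2$ one takes the Strichartz exponent $p=6>\frac{2\cdot4}{2}=4$, which by H\"older in time on $[0,T]$ costs $T^{\frac16}$ per factor placed in $L^6_{t,x}$, i.e. a total $T^{1/2}$ over three factors (the fourth being handled by Cauchy--Schwarz in the cube sum together with $\|u\|_{Y^0}\gtrsim\|u\|_{L^\infty_tL^2_x}$); the Bernstein gain $M^{\frac d2-\frac{d+2}{p}}=M^{\frac12-\frac23}=M^{-1/6}$ on the small cube and $M^{1/2-\frac{2}{6}}$-type exponents on the rough factor then reproduce, after Cauchy--Schwarz in the dyadic indices, the norm combination $\|u_1\|_{Y^{-1/2}}\|u_2\|_{Y^{1/2}}\|u_3\|_{Y^{1/2}}\|g\|_{Y^{1/2}}$ in \eqref{equ:trilinear estimate,d=2} and, swapping which of $u_1,g$ is rough, the dual allocation in the next line. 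On $\T^3$ one takes $p=5>\frac{2\cdot5}{3}=\frac{10}{3}$, costing $T^{\frac{?}{?}}$ per factor and $T^{1/4}$ in total over the three Strichartz factors, with Bernstein gain $M^{\frac32-\frac55}=M^{1/2}$ and the companion exponent giving the regularity level $\frac34$; the dyadic summations close because the net power of the ratio of consecutive frequencies is strictly negative, so a geometric series converges and a final Cauchy--Schwarz in the top pair $M_1\sim M_2$ (using $\frac{d-1}{2}\pm s\geq 0$, which holds at these endpoints) finishes the bound. In the sub-case where the product of norms to be summed has a nonnegative power of $M_2$, I would first sup out the small-frequency factors in their own indices (exactly the maneuver \eqref{equ:estiamte, wanted form}) and then Cauchy--Schwarz the comparable pair.

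The main obstacle is purely bookkeeping: verifying that in the ``hard'' Case A the exponents on the rough factor $u_1$ and on $g$ are genuinely $-s$ and $s$ (so that Cauchy--Schwarz in $M_1\sim M_4$ is lossless) and that the residual power attached to the small-frequency factors is negative, which is what makes the dyadic sums summable without an $\varepsilon$-loss; this is the same delicate exponent count as in the proof of Lemma \ref{lemma:trilinear estimate d>=4}, and the endpoint nature of the spaces $\dot H^{d/4}$ ($p=3$) and $\dot H^{2/3}$ ($p=5$, $d=2$) means there is no room to spare, so one must check that the boundary cases $s=\pm\frac{d-2}{2}$ (resp. $s=\pm\frac{d-1}{2}$ for the quintic $d=2$ estimate) are exactly the ones that occur and are handled by the symmetric argument rather than requiring a strict inequality. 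Having proved the bilinear-in-$g$ form, the stated $X^s$-bound then follows immediately from Lemma \ref{lemma:U-V estimate,dual argument}, and Theorem \ref{thm:uniqueness for energy-subcritical nls} follows by feeding these estimates into the abstract scheme of Section \ref{proof of the main theorem} exactly as in the energy-supercritical case.
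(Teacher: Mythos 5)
Your overall architecture is the right one and matches the paper's: Littlewood--Paley decomposition, reduction to the cases where the two largest frequencies are comparable, decomposition of the two top dyadic blocks into cubes at the third-largest scale, Cauchy--Schwarz in the cube sum, and extraction of the $T$ power from the Hölder-in-time deficit. However, the step you dismiss as "purely bookkeeping" is where the lemma actually lives, and the exponent scheme you write down does not close. On $\T^{2}$, placing three factors in $L^{6}_{t,x}$ and one in $L^{\wq}_{t}L^{2}_{x}$ gives each $L^{6}$ factor the Strichartz gain $M^{\frac{d}{2}-\frac{d+2}{p}}=M^{1-\frac{2}{3}}=M^{\frac13}$ (note your arithmetic here is off: $\frac{d}{2}=1$ for $d=2$, not $\frac12$, and the gain is positive, not $M^{-1/6}$). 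Running this uniform allocation in the case $M_{1}\sim M_{2}\geq M_{3}\geq M_{4}$ and converting to the $Y^{\pm\frac12}$ norms, the two comparable top frequencies cancel losslessly, but the residual powers on the sub-leading frequencies come out as $M_{3}^{+\frac16}M_{4}^{-\frac16}$ (or $M_{3}^{+\frac12}M_{4}^{-\frac12}$, depending on which factor you send to $L^{\wq}_{t}L^{2}_{x}$): positive on the larger frequency and negative on the smaller, so the sum $\sum_{M_{3}\geq M_{4}}M_{3}^{a}M_{4}^{-a}c_{M_{3}}d_{M_{4}}$ with $a>0$ is unbounded on $\ell^{2}\times\ell^{2}$ and the dyadic summation diverges. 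The same failure occurs in your preferred case $M_{1}\sim M_{4}$. On $\T^{3}$ the situation is worse: with three factors in $L^{5}_{t,x}$ the spatial Hölder exponents are $\frac35+\frac12>1$ if the fourth factor sits in $L^{2}_{x}$, so the decomposition is not even consistent, and your $T$-power is left as "$T^{?/?}$".

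What is missing is the asymmetric allocation of Lebesgue exponents, which is the essential point in these low dimensions: the factor at the \emph{lowest} frequency must be placed in the space with the largest Bernstein/Strichartz gain so that, after converting to $Y^{s}$ norms, the net exponent is negative on the larger sub-leading frequency and positive on the smallest one. Concretely, the paper takes, for $d=2$ and $M_{1}\sim M_{2}\geq M_{3}\geq M_{4}$, the cube-localized top factor in $L^{\wq}_{t}L^{2}_{x}$ (producing $T^{1/2}$ from the time deficit $1-\frac15-\frac15-\frac1{10}=\frac12$), the two middle factors in $L^{5}_{t,x}$ (gain $M_{3}^{1/5}$ each), and the lowest-frequency factor in $L^{10}_{t,x}$ (gain $M_{4}^{3/5}$), yielding the summable pattern $M_{3}^{-\frac1{10}}M_{4}^{\frac1{10}}$; for $d=3$ it takes the two cube-localized top factors in $L^{4}_{t,x}$, the third in $L^{\wq}_{t}L^{2}_{x}$, and the lowest in $L^{2}_{t}L^{\wq}_{x}$ via Bernstein (gain $T^{1/4}M_{4}$), yielding $M_{3}^{-\frac14}M_{4}^{\frac14}$. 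Without this redistribution of integrability toward the smallest frequency, your sub-case maneuver of "supping out the small-frequency factors" does not help, because the leftover positive power sits on a frequency that is only bounded above by $M_{1}$ and produces an unabsorbable $M_{1}^{\varepsilon}$ in the final Cauchy--Schwarz. Also, a minor point: the boundary cases you invoke ($s=\pm\frac{d-2}{2}$, "$\frac{d-1}{2}\pm s\geq0$") belong to the supercritical Lemmas and are not the relevant regularities here, which are $\pm\frac12$ and $\pm\frac34$.
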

\begin{proof}
By the symmetry of $u_{1}$ and $g$, it suffices to prove $(\ref{equ:trilinear estimate,d=2})$ and
$(\ref{equ:trilinear estimate,d=3})$. For simplicity, we take $\wt{u}=u$ and $\wt{g}=g$.
Decompose the 4 factors into Littlewood-Paley pieces so that
\begin{align*}
I=\sum_{M_{1},M_{2},M_{3},M_{4}}I_{M_{1},M_{2},M_{3},M_{4}}
\end{align*}
where
\begin{align*}
I_{M_{1},M_{2},M_{3},M_{4}}=\iint_{x,t}u_{1,M_{1}}u_{2,M_{2}}u_{3,M_{3}}g_{M_{4}}dxdt
\end{align*}
with $u_{j,M_{j}}=P_{M_{j}}u_{j}$ and $g_{M_{4}}=P_{M_{4}}g$.

It suffices to consider the most difficult case A. $M_{1}\sim M_{2}\geq M_{3}\geq M_{4}$ while other cases can be dealt with in a similar way. Decompose the $M_{1}$ and $M_{2}$ dyadic into $M_{3}$ size cubes.
\begin{align*}
I_{A}\lesssim& \sum_{\substack{M_{1},M_{2},M_{3},M_{4}\\ M_{1}\sim M_{2}\geq
M_{3}\geq M_{4}}} \sum_{Q}\n{P_{Q}u_{1,M_{1}}P_{Q_{c}}u_{2,M_{2}}
u_{3,M_{3}}g_{M_{4}}}_{L_{t,x}^{1}}\\
\lesssim& \sum_{\substack{M_{1},M_{2},M_{3},M_{4}\\ M_{1}\sim M_{2}\geq
M_{3}\geq M_{4}}}\sum_{Q}T^{\frac{1}{2}}\n{P_{Q}u_{1,M_{1}}}_{L_{t}^{\wq}L_{x}^{2}}\n{P_{Q_{c}}u_{2,M_{2}}}_{L_{t,x}^{5}}
\n{u_{3,M_{3}}}_{L_{t,x}^{5}}\n{g_{M_{4}}}_{L_{t,x}^{10}}\\
\end{align*}
By $(\ref{lemma:Strichartz estimate})$and $(\ref{lemma:strichartz estimate with noncertered frequency localization})$,
\begin{align*}
\lesssim& \sum_{\substack{M_{1},M_{2},M_{3},M_{4}\\ M_{1}\sim M_{2}\geq
M_{3}\geq M_{4}}}\sum_{Q}T^{\frac{1}{2}}\n{P_{Q}u_{1,M_{1}}}_{Y^{0}}M_{3}^{\frac{1}{5}}\n{P_{Q_{c}}u_{2,M_{2}}}_{Y^{0}}
M_{3}^{\frac{1}{5}}\n{u_{3,M_{3}}}_{Y^{0}}M_{4}^{\frac{3}{5}}\n{g_{M_{4}}}_{Y^{0}}
\end{align*}
Applying Cauchy-Schwarz to sum in $Q$,
\begin{align*}
\lesssim& T^{\frac{1}{2}}\sum_{\substack{M_{1},M_{2}\\M_{1}\sim M_{2}}}M_{1}^{\frac{1}{2}}M_{2}^{-\frac{1}{2}}\n{u_{1,M_{1}}}_{Y^{-\frac{1}{2}}}
\n{u_{2,M_{2}}}_{Y^{\frac{1}{2}}}
\sum_{\substack{M_{3},M_{4}\\M_{1}\sim M_{2}\geq M_{3}\geq M_{4}}}M_{3}^{-\frac{1}{10}}M_{4}^{\frac{1}{10}}\n{u_{3,M_{3}}}_{Y^{\frac{1}{2}}}
\n{g_{M_{4}}}_{Y^{\frac{1}{2}}}
\end{align*}
Applying Cauchy-Schwarz,
\begin{align*}
\lesssim& T^{\frac{1}{2}}\lrs{\sum_{M_{1}}\n{u_{1,M_{1}}}_{Y^{-\frac{1}{2}}}^{2}}^{\frac{1}{2}}
\lrs{\sum_{M_{2}}\n{u_{2,M_{2}}}_{Y^{\frac{1}{2}}}^{2}}^{\frac{1}{2}}\\
&\lrs{\sum_{\substack{M_{3},M_{4}\\ M_{3}\geq M_{4}}}\lrs{\frac{M_{4}}{M_{3}}}^{\frac{1}{10}}\n{u_{3,M_{3}}}_{Y^{\frac{1}{2}}}^{2}}^{\frac{1}{2}}
\lrs{\sum_{\substack{M_{3},M_{4}\\ M_{3}\geq M_{4}}}\lrs{\frac{M_{4}}{M_{3}}}^{\frac{1}{10}}\n{g_{M_{4}}}_{Y^{\frac{1}{2}}}^{2}}^{\frac{1}{2}}\\
\lesssim& T^{\frac{1}{2}}\n{u_{1}}_{Y^{-\frac{1}{2}}}
\n{u_{2}}_{Y^{\frac{1}{2}}}\n{u_{3}}_{Y^{\frac{1}{2}}}\n{g}_{Y^{\frac{1}{2}}}.
\end{align*}

For $d=3$, we have that
\begin{align*}
I_{A}\lesssim& \sum_{\substack{M_{1},M_{2},M_{3},M_{4}\\ M_{1}\sim M_{2}\geq
M_{3}\geq M_{4}}} \sum_{Q}\n{P_{Q}u_{1,M_{1}}P_{Q_{c}}u_{2,M_{2}}
u_{3,M_{3}}g_{M_{4}}}_{L_{t,x}^{1}}\\
\lesssim& \sum_{\substack{M_{1},M_{2},M_{3},M_{4}\\ M_{1}\sim M_{2}\geq
M_{3}\geq M_{4}}}\sum_{Q}\n{P_{Q}u_{1,M_{1}}}_{L_{t,x}^{4}}\n{P_{Q_{c}}u_{2,M_{2}}}_{L_{t,x}^{4}}
\n{u_{3,M_{3}}}_{L_{t}^{\wq}L_{x}^{2}}\n{g_{M_{4}}}_{L_{t}^{2}L_{x}^{\wq}}\\
\end{align*}
By $(\ref{lemma:strichartz estimate with noncertered frequency localization})$ and Bernstein,
\begin{align*}
\lesssim& \sum_{\substack{M_{1},M_{2},M_{3},M_{4}\\ M_{1}\sim M_{2}\geq
M_{3}\geq M_{4}}}\sum_{Q}M_{3}^{\frac{1}{4}}\n{P_{Q}u_{1,M_{1}}}_{Y^{0}}M_{3}^{\frac{1}{4}}\n{P_{Q_{c}}u_{2,M_{2}}}_{Y^{0}}
\n{u_{3,M_{3}}}_{Y^{0}}T^{\frac{1}{4}}M_{4}\n{g_{M_{4}}}_{Y^{0}}
\end{align*}
Applying Cauchy-Schwarz to sum in $Q$,
\begin{align*}
\lesssim& T^{\frac{1}{4}}\sum_{\substack{M_{1},M_{2}\\M_{1}\sim M_{2}}}M_{1}^{\frac{3}{4}}M_{2}^{-\frac{3}{4}}\n{u_{1,M_{1}}}_{Y^{-\frac{3}{4}}}
\n{u_{2,M_{2}}}_{Y^{\frac{3}{4}}}
\sum_{\substack{M_{3},M_{4}\\M_{1}\sim M_{2}\geq M_{3}\geq M_{4}}}M_{3}^{-\frac{1}{4}}M_{4}^{\frac{1}{4}}\n{u_{3,M_{3}}}_{Y^{\frac{3}{4}}}
\n{g_{M_{4}}}_{Y^{\frac{3}{4}}}\\
\lesssim&T^{\frac{1}{4}}\n{u_{1}}_{Y^{-\frac{3}{4}}}
\n{u_{2}}_{Y^{\frac{3}{4}}}\n{u_{3}}_{Y^{\frac{3}{4}}}\n{g}_{Y^{\frac{3}{4}}}.
\end{align*}

\end{proof}

\begin{lemma}
On $\T^{2}$,
\begin{align}
&\iint_{x,t} \wt{u}_{1}(t,x)\wt{u}_{2}(t,x)\wt{u}_{3}(t,x)\wt{u}_{4}(t,x)\wt{u}_{5}(t,x)\wt{g}(t,x)dtdx
\label{equ:quintilinear estimate,d=2}\\
&\lesssim T^{\frac{1}{3}}\n{u_{1}}_{Y^{-\frac{2}{3}}}
\n{u_{2}}_{Y^{\frac{2}{3}}}\n{u_{3}}_{Y^{\frac{2}{3}}}\n{u_{4}}_{Y^{\frac{2}{3}}}
\n{u_{5}}_{Y^{\frac{2}{3}}}\n{g}_{Y^{\frac{2}{3}}}.\notag
\end{align}
\begin{align}
&\iint_{x,t} \wt{u}_{1}(t,x)\wt{u}_{2}(t,x)\wt{u}_{3}(t,x)\wt{u}_{4}(t,x)\wt{u}_{5}(t,x)\wt{g}(t,x)dtdx\\
&\lesssim T^{\frac{1}{3}}\n{u_{1}}_{Y^{\frac{2}{3}}}
\n{u_{2}}_{Y^{\frac{2}{3}}}\n{u_{3}}_{Y^{\frac{2}{3}}}\n{u_{4}}_{Y^{\frac{2}{3}}}
\n{u_{5}}_{Y^{\frac{2}{3}}}\n{g}_{Y^{-\frac{2}{3}}}.\notag
\end{align}
\end{lemma}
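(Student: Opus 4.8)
The plan is to follow exactly the template of the proof of Lemma~\ref{lemma:multilinear estimate d>=5} (and the $d=2,3$ trilinear case in Lemma~\ref{lemma:trilinear estimate d=2,3}), adapting the Lebesgue exponents to $d=2$ and the regularity $s=\frac23$. By the symmetry between $u_1$ and $g$ it suffices to prove the first displayed inequality $(\ref{equ:quintilinear estimate,d=2})$; the second follows by relabeling. First I would decompose all six factors into Littlewood--Paley pieces, writing $I=\sum_{M_1,\dots,M_6}I_{M_1,\dots,M_6}$ with $I_{M_1,\dots,M_6}=\iint u_{1,M_1}\cdots u_{5,M_5}g_{M_6}$. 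By orthogonality the nonzero contributions require the two largest frequencies to be comparable, so after permuting I only need to treat the case where $M_1$ is one of the two largest (it carries the negative regularity) and handle the ``worst'' arrangement $M_1\sim M_6\geq M_2\geq M_3\geq M_4\geq M_5$, with the remaining arrangements, e.g.\ $M_1\sim M_2\geq M_6\geq\cdots$, being strictly easier by the same device used in Lemma~\ref{lemma:multilinear estimate d>=5}.

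Next I would, in the main case, decompose the two top dyadic blocks $M_1$ and $M_6$ into $M_2$-size cubes $Q$ (with $Q_c$ the $O(1)$-many companion cubes forced by the frequency constraint $\sum\xi_j=0$), and bound $I_{M_1,\dots,M_6}\lesssim\sum_Q\n{P_Qu_{1,M_1}P_{Q_c}u_{6,M_6}\,u_{2,M_2}u_{3,M_3}u_{4,M_4}u_{5,M_5}}_{L^1_{t,x}}$. Then I apply H\"older in $(t,x)$, placing the three smallest pieces (here $M_3,M_4,M_5$) in $L^{2(d+3)}_{t,x}=L^{10}_{t,x}$ (the scale-invariant Strichartz exponent for $d=2$, which is $>\frac{2(d+2)}{d}=4$) and the remaining three in $L^{6(d+3)}/(2d+3)_{t,x}=L^{30/7}_{t,x}$, which is also admissible. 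Invoking the Strichartz estimate $(\ref{equ:Strichartz estimate})$ and its noncentered-cube version $(\ref{equ:strichartz estimate with noncertered frequency localization})$ converts each factor into a $Y^0$ norm times the appropriate power of its frequency; the powers work out to $M^{\frac{d-1}{2}+\frac{1}{2(d+3)}}=M^{\frac12+\frac1{10}}$ on the small pieces and $M^{\frac{d-1}{2}-\frac{3}{2(d+3)}}=M^{\frac12-\frac3{10}}$ on one of the top pieces, exactly matching $s_c=\frac{d-2}{2}=0$ promoted to $\frac{d-1}{2}=\frac12$ and the dual exponent $-\frac12$... no, wait: here the target regularity is $s=\frac23$ rather than $s_c$, so after extracting the $Y^{\frac23}$ (resp.\ $Y^{-\frac23}$) norms I am left with a residual product of powers $M_j^{\pm\theta}$ with $\theta>0$ for all $j\ne 2$ and a power $M_2^{-\frac{d-1}{2}+\cdots}$ paired against $M_1^{-s}M_6^{\cdots+s}$; the condition ensuring summability is $s+\frac{d-1}{2}\geq 0$ and $\frac{d-1}{2}-s\geq 0$, i.e.\ $|s|\leq\frac12$, together with room coming from the $T$-power and the Bernstein gain $M_0$ when a low-frequency projection is present. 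Since the target $s=\frac23>\frac12$, I instead run the argument with the endpoint Sobolev input $(\ref{equ:sobolev inequality, multilinear estimate, endpoint})$ replacing the scale-invariant Strichartz on the factor carrying the extra regularity, which is precisely why Theorem~\ref{thm:uniqueness for energy-subcritical nls}(b) asks for $\dot H^{2/3}$ rather than $\dot H^{1/2}$; I would carefully track that the $L^1_{t,x}$ H\"older leaves a $T^{1/3}$ after Bernstein in time, accounting for the stated $T^{\frac13}$ prefactor.

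Finally I sum the geometric series in the frequencies: Cauchy--Schwarz in $Q$ eliminates the cube sum, supping out the factors $\n{u_{j,M_j}}_{Y^{2/3}}$ ($j=3,4,5$) and $\n{g_{M_6}}_{Y^{\pm2/3}}$ in their indices using the positive powers $M_j^{1/10}$ (geometric decay), and then a last Cauchy--Schwarz in $M_1\sim M_2$ collapses the remaining diagonal sum to $\n{u_1}_{Y^{-2/3}}\n{u_2}_{Y^{2/3}}$, yielding $(\ref{equ:quintilinear estimate,d=2})$. I expect the only genuine subtlety — and the step to be careful about — is bookkeeping the exponents: verifying that with $d=2$ the pair $(L^{10}_{t,x},L^{30/7}_{t,x})$ are both inside the admissible Strichartz range, that the resulting frequency powers are all strictly positive off the ``balanced'' index so every sum is a convergent geometric series, and that the $T$-power picked up from the $L^1_{t,x}$-H\"older step is exactly $T^{1/3}$; everything else is a line-by-line transcription of the proofs of Lemmas~\ref{lemma:multilinear estimate d>=5} and \ref{lemma:trilinear estimate d=2,3}.
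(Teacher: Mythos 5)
Your overall architecture (Littlewood--Paley decomposition, reduction to the worst case $M_{1}\sim M_{6}\geq M_{2}\geq M_{3}\geq M_{4}\geq M_{5}$, cube decomposition of the two top blocks into $M_{2}$-size cubes, H\"older, Strichartz, Cauchy--Schwarz in $Q$ and in the frequencies) matches the paper. However, there is a genuine gap in the one step you yourself flag as the subtle one: the origin of the $T^{1/3}$. Your proposed H\"older split $\lr{L_{t,x}^{10},L_{t,x}^{10},L_{t,x}^{10},L_{t,x}^{30/7},L_{t,x}^{30/7},L_{t,x}^{30/7}}$ exactly saturates the $L_{t,x}^{1}$ H\"older condition ($3\cdot\tfrac{1}{10}+3\cdot\tfrac{7}{30}=1$), so if every factor is then estimated by the scale-invariant Strichartz inequality $(\ref{equ:Strichartz estimate})$/$(\ref{equ:strichartz estimate with noncertered frequency localization})$ there is no room left to H\"older in time and no power of $T$ can appear. "Bernstein in time" is not the mechanism, and your fallback of substituting the spatial Sobolev inequality $(\ref{equ:sobolev inequality, multilinear estimate, endpoint})$ for Strichartz on one factor is not worked out and does not obviously produce $T^{1/3}$ either; that inequality is used elsewhere in the paper (for the roughest node in the estimate algorithm), not inside the proofs of the multilinear estimates.

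What the paper actually does is choose the exponents $\lr{L_{t,x}^{9/2},L_{t,x}^{9/2},L_{t,x}^{9/2},L_{t,x}^{9},L_{t,x}^{9},L_{t,x}^{9}}$ and, crucially, estimate the three \emph{lowest}-frequency factors in $L_{t,x}^{9}$ not by Strichartz but by H\"older in time followed by Bernstein in space and the embedding $\n{u}_{L_{t}^{\wq}L_{x}^{2}}\lesssim\n{u}_{Y^{0}}$ from $(\ref{equ:u-v and sobolev})$: each such factor yields $T^{\frac{1}{9}}M_{j}^{\frac{7}{9}}\n{u_{j,M_{j}}}_{Y^{0}}$, and the three together give the $T^{1/3}$. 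The three remaining factors (the two cube-localized top pieces and $u_{2}$) are put in $L_{t,x}^{9/2}$, where $\tfrac{9}{2}>4$ so Strichartz applies and each gains $M_{2}^{1/9}$. After extracting the $Y^{\pm 2/3}$ norms the residual frequency weights are $M_{2}^{-1/3}M_{3}^{1/9}M_{4}^{1/9}M_{5}^{1/9}$ on the region $M_{2}\geq M_{3}\geq M_{4}\geq M_{5}$, which is exactly the summable Schur-type structure of $(\ref{equ:estiamte, wanted form, quintic})$; note also that this bookkeeping is what forces $s=\tfrac{2}{3}$ rather than the scaling-critical $s=\tfrac{1}{2}$, for which the sum over $M_{2}$ would not close. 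You should redo the H\"older step with exponents that leave slack in the time variable on the small-frequency factors; the rest of your outline then goes through essentially verbatim.
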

\begin{proof}
Decompose the 6 factors into Littlewood-Paley pieces so that
\begin{align*}
I=\sum_{M_{1},M_{2},M_{3},M_{4},M_{5},M_{6}}I_{M_{1},M_{2},M_{3},M_{4},M_{5},M_{6}}
\end{align*}
where
\begin{align*}
I_{M_{1},M_{2},M_{3},M_{4},M_{5},M_{6}}=\iint_{x,t}u_{1,M_{1}}u_{2,M_{2}}u_{3,M_{3}}
u_{4,M_{4}}u_{5,M_{5}}g_{M_{6}}dxdt
\end{align*}
with $u_{j,M_{j}}=P_{M_{j}}u_{j}$ and $g_{M_{6}}=P_{M_{6}}g$.

In the same way as trilinear estimates in Lemma $\ref{lemma:trilinear estimate d=2,3}$, it suffices to take care of the most difficult case A. $M_{1}\sim M_{6}\geq M_{2}\geq M_{3}\geq M_{4}\geq M_{5}$. Decompose the $M_{1}$ and $M_{6}$ dyadic spaces into $M_{2}$ size cubes, then
\begin{align*}
I_{A}\lesssim& \sum_{\substack{M_{1},M_{2},M_{3},M_{4},M_{5},M_{6}\\M_{1}\sim M_{6}\geq  M_{2}\geq M_{3}\geq M_{4}\geq M_{5}}} \sum_{Q}\n{P_{Q}u_{1,M_{1}}P_{Q_{c}}u_{2,M_{2}}
u_{3,M_{3}}u_{4,M_{4}}u_{5,M_{5}}g_{M_{6}}}_{L_{t,x}^{1}}\\
\lesssim& \sum_{\substack{M_{1},M_{2},M_{3},M_{4},M_{5},M_{6}\\M_{1}\sim M_{6}\geq  M_{2}\geq M_{3}\geq M_{4}\geq M_{5}}}\sum_{Q}
\left(\n{P_{Q}u_{1,M_{1}}}_{L_{t,x}^{\frac{9}{2}}}
\n{u_{2,M_{2}}}_{L_{t,x}^{\frac{9}{2}}}
\n{u_{3,M_{3}}}_{L_{t,x}^{9}}\right.\\
&\left.\n{u_{4,M_{4}}}_{L_{t,x}^{9}}
\n{u_{5,M_{5}}}_{L_{t,x}^{9}}
\n{P_{Q_{c}}g_{M_{6}}}_{L_{t,x}^{\frac{9}{2}}}\right)
\end{align*}
By $(\ref{lemma:Strichartz estimate})$and $(\ref{lemma:strichartz estimate with noncertered frequency localization})$,
\begin{align*}
\lesssim &\sum_{\substack{M_{1},M_{2},M_{3},M_{4},M_{5},M_{6}\\M_{1}\sim M_{6}\geq  M_{2}\geq M_{3}\geq M_{4}\geq M_{5}}}\sum_{Q}
\left(
M_{2}^{\frac{1}{9}}\n{P_{Q}u_{1,M_{1}}}_{Y^{0}}
M_{2}^{\frac{1}{9}}\n{P_{Q_{c}}u_{2,M_{2}}}_{Y^{0}}\right.\\
&\left.T^{\frac{1}{9}}M_{3}^{\frac{7}{9}}\n{u_{3,M_{3}}}_{Y^{0}}
T^{\frac{1}{9}}M_{4}^{\frac{7}{9}}\n{u_{4,M_{4}}}_{Y^{0}}
T^{\frac{1}{9}}M_{5}^{\frac{7}{9}}\n{u_{5,M_{5}}}_{Y^{0}}
M_{2}^{\frac{1}{9}}\n{g_{M_{6}}}_{Y^{0}}\right)
\end{align*}
Applying Cauchy-Schwarz to sum in $Q$,
\begin{align*}
\lesssim &T^{\frac{1}{3}}\sum_{\substack{M_{1},M_{6}\\ M_{1}\sim M_{6}}}
M_{1}^{\frac{2}{3}}M_{6}^{-\frac{2}{3}}
\n{u_{1,M_{1}}}_{Y^{-\frac{2}{3}}}\n{g_{M_{6}}}_{Y^{\frac{2}{3}}}\\
&\sum_{\substack{M_{2},M_{3},M_{4},M_{5}\\ M_{2}\geq M_{3}\geq M_{4}\geq M_{5}}}
M_{2}^{-\frac{1}{3}}M_{3}^{\frac{1}{9}}M_{4}^{\frac{1}{9}}M_{5}^{\frac{1}{9}}
\n{u_{2,M_{2}}}_{Y^{\frac{2}{3}}}
\n{u_{3,M_{3}}}_{Y^{\frac{2}{3}}}
\n{u_{4,M_{4}}}_{Y^{\frac{2}{3}}}
\n{u_{5,M_{5}}}_{Y^{\frac{2}{3}}}\\
\lesssim &
T^{\frac{1}{3}}\n{u_{1}}_{Y^{-\frac{2}{3}}}
\n{u_{2}}_{Y^{\frac{2}{3}}}\n{u_{3}}_{Y^{\frac{2}{3}}}\n{u_{4}}_{Y^{\frac{2}{3}}}
\n{u_{5}}_{Y^{\frac{2}{3}}}\n{g}_{Y^{\frac{2}{3}}}.
\end{align*}

\end{proof}

\section{A More Usual Proof for the $\R^{d}$ Case}
With the dual Strichartz estimate and the existence of a better solution, we could give a more usual proof of the unconditional uniqueness under the energy-supercritical setting for $\R^{d}$ case. Such an argument has been used by many authors and we summarize it below, but it does not work for the $\T^{d}$ case. For simplicity, we prove it for the cubic case, as it works the same for the quintic case. At first, we need the following lemmas.
\begin{lemma}[Strichartz Estimate]\label{lemma:strichartz estimate}
Let $I$ be a compact time interval, and let $u:I\times \R^{3}\mapsto \mathbb{C}$ be a Schwartz solution to the forced Schr\"{o}dinger equation
\begin{align*}
i\pa_{t}u+\Delta u=\sum_{m=1}^{M}F_{m}
\end{align*}
for some Schwartz functions $F_{1}$,..., $F_{m}$. Then
\begin{align*}
\n{|\nabla|^{s}u}_{L_{t}^{q}L_{x}^{r}}\lesssim  \n{|\nabla|^{s}u_{0}}_{L_{x}^{2}}+\sum_{m=1}^{M}
\n{|\nabla|^{s}F_{m}}_{L_{t}^{q_{m}'}L_{x}^{r_{m}'}}
\end{align*}
for $s\geq 0$ and any admissible exponents $(q_{i},r_{i})$ for $i=1,2,...,m$, where $p'$ denotes the dual exponent to $p$.
\end{lemma}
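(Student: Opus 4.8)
The plan is to deduce this from the standard homogeneous and dual Strichartz estimates for the free propagator $U(t):=e^{it\Delta}$ on $\R^{d}$ (here $d=3$), combined with Duhamel's formula and the Christ--Kiselev lemma. Since $|\nabla|^{s}$ is a Fourier multiplier that commutes with $U(t)$ and with the operations appearing below, it suffices to prove the case $s=0$ and then replace $u_{0},u,F_{m}$ by $|\nabla|^{s}u_{0},|\nabla|^{s}u,|\nabla|^{s}F_{m}$ at the end. Fixing a reference time $t_{0}\in I$ and writing $u_{0}=u(t_{0})$, the equation $i\pa_{t}u+\Delta u=\sum_{m}F_{m}$ gives the Duhamel representation
\begin{align*}
u(t)=U(t-t_{0})u_{0}-i\sum_{m=1}^{M}\int_{t_{0}}^{t}U(t-s)F_{m}(s)\,ds ,
\end{align*}
so it is enough to bound each of the $M+1$ terms on the right; the Schwartz hypothesis justifies all the manipulations (Fubini, differentiation under the integral) freely.

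For the linear term, the core input is the dispersive estimate $\n{U(t)f}_{L_{x}^{\wq}}\lesssim |t|^{-d/2}\n{f}_{L_{x}^{1}}$, which interpolated with mass conservation yields $\n{U(t)f}_{L_{x}^{r}}\lesssim |t|^{-d(1/2-1/r)}\n{f}_{L_{x}^{r'}}$ for $2\le r\le\wq$. A $TT^{*}$ argument, in which the $t$-integration is handled by the Hardy--Littlewood--Sobolev inequality applied to the kernel $|t-s|^{-d(1/2-1/r)}$, then produces the homogeneous estimate $\n{U(t)f}_{L_{t}^{q}L_{x}^{r}}\lesssim \n{f}_{L_{x}^{2}}$ for every admissible pair $(q,r)$; dualizing in the usual way gives $\bbn{\int_{\R}U(-s)G(s)\,ds}_{L_{x}^{2}}\lesssim \n{G}_{L_{t}^{\tilde q'}L_{x}^{\tilde r'}}$ for every admissible $(\tilde q,\tilde r)$. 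Composing the homogeneous estimate in $(q,r)$ with the dual estimate in $(\tilde q,\tilde r)$ yields the untruncated inhomogeneous bound
\begin{align*}
\bbn{\int_{\R}U(t-s)G(s)\,ds}_{L_{t}^{q}L_{x}^{r}}\lesssim \n{G}_{L_{t}^{\tilde q'}L_{x}^{\tilde r'}}
\end{align*}
for any two admissible pairs $(q,r)$ and $(\tilde q,\tilde r)$.

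It remains to replace the full integral $\int_{\R}$ by the retarded one $\int_{t_{0}}^{t}$. This is precisely the content of the Christ--Kiselev lemma, which applies whenever $q>\tilde q'$; since each admissible pair satisfies $q\ge 2\ge \tilde q'$, the only obstruction is the double endpoint $q=\tilde q'=2$, which is the sole delicate point of the argument and must instead be treated by the bilinear interpolation scheme of Keel and Tao. Applying the retarded estimate to each $F_{m}$ with its own admissible pair $(q_{m},r_{m})$, applying the homogeneous estimate to $U(t-t_{0})u_{0}$, summing over $m$, and finally restoring the operator $|\nabla|^{s}$ yields
\begin{align*}
\n{|\nabla|^{s}u}_{L_{t}^{q}L_{x}^{r}}\lesssim \n{|\nabla|^{s}u_{0}}_{L_{x}^{2}}+\sum_{m=1}^{M}\n{|\nabla|^{s}F_{m}}_{L_{t}^{q_{m}'}L_{x}^{r_{m}'}} ,
\end{align*}
as claimed. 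The main (and only genuine) obstacle is thus the endpoint admissible pair; since the result is classical one may alternatively just cite it, and since the application in this appendix uses only non-endpoint exponents the Keel--Tao endpoint does not actually arise.
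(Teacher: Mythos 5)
Your proof is correct and is the standard Keel--Tao/Christ--Kiselev argument; the paper itself states this lemma as a classical fact and offers no proof, so there is nothing to compare against on that side. The reduction to $s=0$ by commuting $|\nabla|^{s}$ with $e^{it\Delta}$, the dispersive estimate plus $TT^{*}$ with Hardy--Littlewood--Sobolev for the non-endpoint homogeneous bound, dualization and composition for the untruncated mixed-pair inhomogeneous bound, and Christ--Kiselev for the retarded truncation (with the double endpoint $q=\tilde q'=2$ deferred to Keel--Tao) are all sound, and the Schwartz hypothesis indeed licenses the formal manipulations. One factual slip in your closing remark: the application in this appendix does use the endpoint pair --- the difference $w$ is estimated in $L_{t}^{2}L_{x}^{2d/(d-2)}$ with forcing terms placed in the dual space $L_{t}^{2}L_{x}^{2d/(d+2)}$, which is exactly the double-endpoint retarded estimate --- so the Keel--Tao endpoint cannot be dispensed with here (it is available since $d\geq 3$ throughout the appendix). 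This does not affect the validity of your proof, since your argument already covers the endpoint by citing Keel--Tao rather than Christ--Kiselev there.
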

\begin{lemma}[Leibniz Rule \cite{gulisashvili1996exact}]\label{lemma:Leibniz rule}
Let $s\geq 0$ and $1<r,r_{1},r_{2},q_{1},q_{2}<\wq$ such that
$\frac{1}{r}=\frac{1}{r_{i}}+\frac{1}{q_{i}}$ for $i=1,2$. Then,
\begin{align*}
\n{|\nabla|^{s}(fg)}_{L^{r}}\lesssim \n{f}_{L^{r_{1}}}\n{|\nabla|^{s}g}_{L^{q_{1}}}+
\n{|\nabla|^{s}f}_{L^{r_{2}}}\n{g}_{L^{q_{2}}}.
\end{align*}
\end{lemma}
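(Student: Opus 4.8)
The inequality of Lemma~\ref{lemma:Leibniz rule} is the classical fractional Leibniz (Kato--Ponce) estimate, recorded in \cite{gulisashvili1996exact}, so the plan is to reduce it to a paraproduct decomposition together with standard Littlewood--Paley machinery rather than to prove anything genuinely new. First I would fix a homogeneous Littlewood--Paley partition with projections $\Delta_j$ and partial sums $S_j=\sum_{k\le j}\Delta_k$, and write
\[
fg=\sum_{j}S_{j-2}f\,\Delta_j g+\sum_{j}\Delta_j f\,S_{j-2}g+\sum_{|j-k|\le 1}\Delta_j f\,\Delta_k g=:\Pi_1+\Pi_2+\Pi_3 .
\]
In $\Pi_1$ the $j$-th summand has frequency support in an annulus of size $2^{j}$, so $|\nabla|^{s}$ acts on it as a Mikhlin-type multiplier of size $2^{js}$; thus $\n{|\nabla|^{s}\Pi_1}_{L^{r}}$ is controlled, via Minkowski's inequality, the vector-valued H\"older inequality with $\tfrac1r=\tfrac1{r_1}+\tfrac1{q_1}$, the maximal-function bound for the family $\{S_{j-2}f\}$, and the square-function characterizations of $L^{r_1}$ and of the homogeneous $|\nabla|^{s}$-Sobolev norm in $L^{q_1}$ (all legitimate precisely because $1<r,r_1,q_1<\wq$), by $\n{f}_{L^{r_1}}\n{|\nabla|^{s}g}_{L^{q_1}}$. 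Running the same argument for $\Pi_2$ with the roles of $f$ and $g$ exchanged and with the exponent pair $(r_2,q_2)$ in place of $(r_1,q_1)$ gives $\n{|\nabla|^{s}\Pi_2}_{L^{r}}\lesssim \n{|\nabla|^{s}f}_{L^{r_2}}\n{g}_{L^{q_2}}$.

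The step I expect to be the main obstacle is the resonant (high--high) piece $\Pi_3$, because the Fourier support of $\Delta_j f\,\Delta_k g$ with $|j-k|\le 1$ fills all of $\{|\xi|\lesssim 2^{j}\}$ rather than a single annulus, so one cannot simply read off the power of $|\nabla|^{s}$. Here I would exploit that, since $s\ge 0$, a Bernstein-type bound gives $\n{|\nabla|^{s}h}_{L^{r}}\lesssim 2^{js}\n{h}_{L^{r}}$ whenever $h$ has frequency support in $\{|\xi|\lesssim 2^{j}\}$; applying this to $h=\Delta_j f\,\Delta_k g$, writing $2^{js}\sim 2^{ks}$ and moving this factor onto $\Delta_k g$ (so that the derivative lands on $g$), and then summing in $j\sim k$ using H\"older and the $\ell^{2}$-valued maximal and square-function inequalities, one recovers $\n{f}_{L^{r_1}}\n{|\nabla|^{s}g}_{L^{q_1}}$; the hypothesis $1<r,r_i,q_i<\wq$ is exactly what keeps these summations convergent and rules out the usual logarithmic loss. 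Collecting the bounds for $\Pi_1,\Pi_2,\Pi_3$ yields the asserted estimate. Alternatively, all three contributions can be packaged at once by applying the bilinear Coifman--Meyer multiplier theorem to the symbol $\lra{\xi+\eta}^{s}(\lra{\xi}^{s}+\lra{\eta}^{s})^{-1}$ after the paraproduct splitting, which is essentially the content of the reference \cite{gulisashvili1996exact} and will be the form in which the lemma is used, together with the Strichartz estimate of Lemma~\ref{lemma:strichartz estimate}, in the more standard $\R^{d}$ argument that follows.
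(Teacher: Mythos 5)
The paper does not prove this lemma at all: it is imported verbatim from the cited reference \cite{gulisashvili1996exact} and used as a black box in the appendix, so there is no ``paper proof'' to compare against. Your paraproduct outline is the standard argument for the fractional Leibniz rule and is essentially correct: the two non-resonant pieces are handled exactly as you describe, and the Coifman--Meyer packaging you mention at the end is a legitimate alternative. One small correction on the resonant piece $\Pi_3$: after applying Bernstein to $\Delta_j f\,\Delta_k g$ you cannot simply sum $\sum_j 2^{js}\n{\Delta_j f}_{L^{r_1}}\n{\Delta_j g}_{L^{q_1}}$ and bound it by $\n{f}_{L^{r_1}}\n{|\nabla|^{s}g}_{L^{q_1}}$, since that sum is a Besov $\dot B^{s}_{q_1,1}$-type quantity; the clean route is to estimate $2^{ms}P_m\Pi_3$ by $\sum_{j\ge m-3}2^{(m-j)s}\,2^{js}M(\Delta_j f)\,|\widetilde{\Delta}_j g|$ and use Young's inequality in the $\ell^2_m$ sum followed by the maximal and square-function bounds. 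The geometric kernel $2^{(m-j)s}\mathbf{1}_{\{j\ge m\}}$ is summable because $s>0$, not because the Lebesgue exponents are open; the endpoint $s=0$ of the lemma is simply H\"older's inequality and should be treated separately. With that adjustment the sketch is complete, and it is consistent with how the lemma is actually deployed in the paper (together with Lemma \ref{lemma:strichartz estimate} in the $\R^{d}$ appendix).
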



 Let $u$ be a maximal-lifespan solution constructed in \cite{killip2010energy} and $v$ be a $C([0,T);\dot{H}^{s_{c}})$ solution to NLS with the same initial datum. We write $w=v-u$ and observe that
$w$ obeys a difference equation, which we write in integral form as
\begin{align}
w(t)=&-i\int_{0}^{t}e^{i(t-s)\Delta}(|u+w|^{2}(u+w)(s)-|u|^{2}u(s))ds\\
=&\int_{0}^{t}e^{i(t-s)\Delta}\sum_{j=0}^{2}{\O}(u^{j}(s)w^{3-j}(s))ds,\notag
\end{align}
where ${\O(u^{j}w^{3-j})}$ is a finite linear combination of expressions which
could be possibly replaced by their complex conjugates.

By Sobolev inequality, we have
\begin{align*}
\n{|\nabla|^{s_{c}-1}w}_{L_{x}^{\frac{2d}{d-2}}}\lesssim \n{|\nabla|^{s_{c}}w}_{L^{2}},
\end{align*}
and hence $\n{|\nabla|^{s_{c}-1}w}_{L_{t}^{2}L_{x}^{\frac{2d}{d-2}}}$ is finite. Then
by Strichartz estimate in Lemma $\ref{lemma:strichartz estimate}$, we have that
\begin{align*}
\n{|\nabla|^{s_{c}-1}w}_{L_{t}^{2}L_{x}^{\frac{2d}{d-2}}}\lesssim&
\sum_{j=0}^{1}\n{|\nabla|^{s_{c}-1}(u^{j}w^{3-j})}_{L_{t}^{2}L_{x}^{\frac{2d}{d+2}}}
+\n{|\nabla|^{s_{c}-1}(u^{2}w)}_{L_{t}^{1}L_{x}^{2}}\\
:=&I_{0}+I_{1}+I_{2}.
\end{align*}

For $I_{0}$, by Lemma $\ref{lemma:Leibniz rule}$, we have that
\begin{align*}
I_{0}=\n{|\nabla|^{s_{c}-1}(|w|^{2}w)}_{L_{t}^{2}L_{x}^{\frac{2d}{d+2}}}
\lesssim & \n{|\nabla|^{s_{c}-1}w}_{L_{t}^{2}L_{x}^{\frac{2d}{d-2}}}\n{w}_{L_{t}^{\wq}L_{x}^{d}}^{2}
\end{align*}
Then by Sobolev inequality,
\begin{align*}
\lesssim\n{|\nabla|^{s_{c}-1}w}_{L_{t}^{2}L_{x}^{\frac{2d}{d-2}}}
\n{|\nabla|^{s_{c}}w}_{L_{t}^{\wq}L_{x}^{2}}^{2}.
\end{align*}

For $I_{1}$, by Lemma $\ref{lemma:Leibniz rule}$, we have that
\begin{align*}
I_{1}=&\n{|\nabla|^{s_{c}-1}(uw^{2})}_{L_{t}^{2}L_{x}^{\frac{2d}{d+2}}}\\
\lesssim&
\n{|\nabla|^{s_{c}-1}u}_{L_{t}^{\wq}L_{x}^{\frac{2d}{d-2}}}\n{w^{2}}_{L_{t}^{2}L_{x}^{\frac{d}{2}}}
+\n{u}_{L_{t}^{\wq}L_{x}^{d}}\n{|\nabla|^{s_{c}-1}(w^{2})}_{L_{t}^{2}L_{x}^{2}}\\
\lesssim& \n{|\nabla|^{s_{c}-1}u}_{L_{t}^{\wq}L_{x}^{\frac{2d}{d-2}}}\n{w^{2}}_{L_{t}^{2}L_{x}^{\frac{d}{2}}}
+\n{u}_{L_{t}^{\wq}L_{x}^{d}}\n{|\nabla|^{s_{c}-1}w}_{L_{t}^{2}L_{x}^{\frac{2d}{d-2}}}
\n{w}_{L_{t}^{\wq}L_{x}^{d}}
\end{align*}
By H\"{o}lder and Sobolev inequalities,
\begin{align*}
\lesssim
\n{|\nabla|^{s_{c}}u}_{L_{t}^{\wq}L_{x}^{2}}\n{|\nabla|^{s_{c}-1}w}_{L_{t}^{2}L_{x}^{\frac{2d}{d-2}}}
\n{|\nabla|^{s_{c}}w}_{L_{t}^{\wq}L_{x}^{2}}.
\end{align*}

For $I_{2}$, by Lemma \ref{lemma:Leibniz rule}, we obtain
\begin{align*}
I_{2}=&\n{|\nabla|^{s_{c}-1}(u^{2}w)}_{L_{t}^{1}L_{x}^{2}}\\
\lesssim& \n{|\nabla|^{s_{c-1}}(u^{2})}_{L_{t}^{2}L_{x}^{\frac{2d}{d-2}}}\n{w}_{L_{t}^{2}L_{x}^{d}}
+\n{u}_{L_{t}^{4}L_{x}^{2d}}^{2}\n{|\nabla|^{s_{c}-1}w}_{L_{t}^{2}L_{x}^{\frac{2d}{d-2}}}\\
\lesssim& \n{|\nabla|^{s_{c-1}}u}_{L_{t}^{4}L_{x}^{\frac{2d}{d-3}}}\n{u}_{L_{t}^{4}L_{x}^{2d}}
\n{w}_{L_{t}^{2}L_{x}^{d}}
+\n{u}_{L_{t}^{4}L_{x}^{2d}}^{2}\n{|\nabla|^{s_{c}-1}w}_{L_{t}^{2}L_{x}^{\frac{2d}{d-2}}}
\end{align*}
By Sobolev inequality,
\begin{align*}
\lesssim&\n{|\nabla|^{s_{c}}u}_{L_{t}^{4}L_{x}^{\frac{2d}{d-1}}}^{2}
\n{|\nabla|^{s_{c}-1}w}_{L_{t}^{2}L_{x}^{\frac{2d}{d-2}}}.
\end{align*}
where $(4,\frac{2d}{d-1})$ is a Strichartz pair.

Together with the above estimates, we get
\begin{align*}
&\n{|\nabla|^{s_{c}-1}w}_{L_{t}^{2}L_{x}^{\frac{2d}{d-2}}}\\
\lesssim&
\n{|\nabla|^{s_{c}-1}w}_{L_{t}^{2}L_{x}^{\frac{2d}{d-2}}}
\left(\n{|\nabla|^{s_{c}}w}_{L_{t}^{\wq}L_{x}^{2}}^{2}+\n{|\nabla|^{s_{c}}u}_{L_{t}^{\wq}L_{x}^{2}}
\n{|\nabla|^{s_{c}}w}_{L_{t}^{\wq}L_{x}^{2}}
+\n{|\nabla|^{s_{c}}u}_{L_{t}^{4}L_{x}^{\frac{2d}{d-1}}}^{2}\right)
\end{align*}
Note that $w\in C_{t}^{0}\dot{H}^{s_{c}}$ and $w(0)=0$, so we can ensure $\n{|\nabla|^{s_{c}}w}_{L_{t}^{\wq}L_{x}^{2}(I\times \R^{d})}\leq \ve$ by choosing $I$ sufficiently small.
Also, from the Strichartz analysis in \cite[Theorem 3.1 and Remarks. 1.]{killip2010energy} , $|\nabla|^{s_{c}}u$ has finite $S^{0}$ norm, and in
particular it has finite $L_{t}^{4}L_{x}^{\frac{2d}{d-1}}$. Thus we can also ensure that
$\n{|\nabla|^{s_{c}}u}_{L_{t}^{4}L_{x}^{\frac{2d}{d-1}}}(I\times \R^{d})\leq
\ve$ by choosing $I$ sufficiently small. From our choice of $I$, we have
\begin{align*}
\n{|\nabla|^{s_{c}-1}w}_{L_{t}^{2}L_{x}^{\frac{2d}{d-2}}}\leq
C\ve\n{|\nabla|^{s_{c}-1}w}_{L_{t}^{2}L_{x}^{\frac{2d}{d-2}}},
\end{align*}
which implies that $w$ vanishes identically on $I\times \R^{d}$ provided that $\ve$ is sufficiently small.\\

\noindent
\textbf{Acknowledgements.}
The authors are deeply indebted to the referees for their thorough checking of the paper and their fine comments and suggestions. X. Chen was supported in part by NSF grant DMS-2005469 and a Simons Fellowship and Z. Zhang was
supported in part by NSF of China under Grant 12171010.

\bibliographystyle{abbrv}
\bibliography{references}

\end{document}